\newtheorem{theorem}{Theorem}[section]
\newtheorem{definitio}[theorem]{Definition}
\newenvironment{definition}{\begin{definitio} \rm }{\end{definitio}}
\newtheorem{lemma}[theorem]{Lemma}
\newtheorem{proposition}[theorem]{Proposition}
\newtheorem{corollary}[theorem]{Corollary}
\newtheorem{question}{Question}
\newenvironment{remark}{\begin{proof}[Remark] }{\end{proof}}
\newcommand{\fl}[1]{\textrm{\raisebox{-0.12cm}{\begin{tikzpicture}\draw (0.5,0) node{$\longrightarrow$}; \draw (0.5,0.17) node{$\scriptstyle{#1}$};
  \end{tikzpicture}}}}
\newcommand{\N}{\mathbb{N}}
\newcommand{\Z}{\mathbb{Z}}
\newcommand{\Q}{\mathbb{Q}}
\newcommand{\K}{\mathbb{K}}
\newcommand{\Qt}{\Q[t^{\pm1}]}
\newcommand{\Zt}{\mathbb{Z}[t^{\pm1}]}
\newcommand{\F}{\mathcal{F}}
\newcommand{\G}{\mathcal{G}}
\newcommand{\A}{\mathcal{A}}
\newcommand{\Ens}{\mathcal{P}}
\newcommand{\Al}{\mathfrak{A}}
\newcommand{\bl}{\mathfrak{b}}
\newcommand{\AutZ}{\textrm{Aut}_\Z}
\newcommand{\expd}{\textrm{exp}_\sqcup}
\newcommand{\Abb}{\overline{\Al},\overline{\bl}}
\newcommand{\psib}{\overline{\psi}}
\newcommand{\QS}{\textrm{$\scriptstyle{\Q}$s}}
\title{Finite type invariants of knots in homology 3--spheres with respect to null LP--surgeries}
\author{Delphine Moussard}
\date{}
\begin{document}

\maketitle

\begin{abstract}
We study a theory of finite type invariants for null-homologous knots in rational homology 3--spheres with respect to null Lagrangian-preserving surgeries. 
It is an analogue in the setting of the rational homology of the Garoufalidis--Rozansky theory for knots in integral homology 3--spheres. 
We give a partial combinatorial description of the graded space associated with our theory and determine some cases when this description is complete. 
For null-homologous knots in rational homology 3--spheres with a trivial Alexander polynomial, we show that the Kricker lift of the Kontsevich integral 
and the Lescop equivariant invariant built from integrals in configuration spaces are universal finite type invariants for this theory; 
in particular it implies that they are equivalent for such knots. 
\vspace{1ex}

\noindent \textbf{MSC}: 57M27 
\vspace{1ex}

\noindent \textbf{Keywords:} 3--manifold, knot, homology sphere, beaded Jacobi diagram, Kontsevich integral, Borromean surgery, null-move, 
Lagrangian-preserving surgery, finite type invariant.
\end{abstract}

\tableofcontents

    \section{Introduction}

The notion of finite type invariants was first introduced independently by Goussarov and Vassiliev for the study of invariants of knots in the 3--dimensional 
sphere $S^3$; in this case, finite type invariants are also called Vassiliev invariants. 
The discovery of the Kontsevich integral, which is a universal invariant among all finite type invariants of knots in $S^3$, revealed that this class 
of invariants is very prolific. It is known, for instance, that it dominates all Witten--Reshetikhin--Turaev's quantum invariants. 
The notion of finite type invariants was adapted to the setting of 3--dimensional manifolds by Ohtsuki \cite{Oht4}, who introduced the first examples for integral 
homology 3--spheres, and it has been widely developed and generalized since then. In particular, Goussarov and Habiro independently developed 
a theory which involves any 3--dimensional manifolds ---and their knots--- and which contains the Ohtsuki theory for $\Z$--spheres (see Garoufalidis--Goussarov--Polyak \cite{GGP} and Habiro \cite{Hab}). 
Another generalization of the Ohtsuki theory to general 3--dimensional manifolds was developed by Cochran and Melvin \cite{CM}. 

In general, the finite type invariants of a set of objects are defined by their polynomial behavior with respect to some elementary move. 
For Vassiliev invariants of knots in $S^3$, this move is the crossing change on a diagram of the knot. For 3--dimensional manifolds, the elementary 
move is a certain kind of surgery, for instance the Borromean surgery ---a Lagrangian-preserving replacement of a genus $3$ handlebody--- in the Goussarov--Habiro theory. 

In \cite{GR}, Garoufalidis and Rozansky studied the theory of finite type invariants for $\Z$SK--pairs, {\em i.e.} knots 
in integral homology 3--spheres, with respect to the so-called null-move, which is a Borromean surgery defined on a handlebody that is null-homologous 
in the complement of the knot. In this paper, we study a theory of finite type invariants for $\Q$SK--pairs, {\em i.e.} null-homologous knots in rational 
homology 3--spheres ($\Q$--spheres). Our elementary move is the null Lagrangian-preserving surgery introduced by Lescop \cite{Les3}, which is the Lagrangian-preserving 
replacement of a rational homology handlebody that is null-homologous in the complement of the knot. This latter theory can be understood as an adaptation 
of the Garoufalidis--Rozansky theory to the setting of the rational homology; a great part of the results in this paper are stated in both settings.

In \cite{Kri}, Kricker constructed a rational lift of the Kontsevich integral of $\Z$SK--pairs. 
In \cite{GK}, he proved with Garoufalidis that his construction provides an invariant of $\Z$SK--pairs. This invariant takes values in a diagram space 
with a stronger structure than the target diagram space of the Kontsevich integral, hence it is much more structured than the Kontsevich integral, which it lifts. 
Garoufalidis and Kricker proved in \cite{GK} that the Kricker invariant satisfies some splitting formulas with respect to the null-move (see also 
\cite{GR}). 
These formulas imply in particular that the Kricker invariant is a series of finite type invariants of all degrees with respect to the null-move. 

It appears that the null-move preserves the Blanchfield module ---the Alexander module equipped with the Blanchfield form--- of 
the $\Z$SK--pair. Hence the study of the Garoufalidis--Rozansky theory of finite type invariants can be restricted to a class of $\Z$SK--pairs with a fixed Blanchfield 
module. In the case of a trivial Blanchfield module, Garoufalidis and Rozansky gave a combinatorial description of the associated graded space. 
Together with the splitting formulas of Garoufalidis and Kricker, this proves that the Kricker invariant is a universal finite type invariant of $\Z$SK--pairs 
with trivial Blanchfield module with respect to the null-move. 

Another universal invariant in this context was constructed by Lescop in \cite{Les2}. Lescop proved in \cite{Les3} that her invariant satisfies the same splitting 
formulas as the Kricker invariant. Hence the Lescop invariant is also a universal finite type invariant of $\Z$SK--pairs with trivial 
Blanchfield module with respect to the null-move. It implies in particular that the Lescop invariant and the Kricker invariant are equivalent for 
$\Z$SK--pairs with trivial Blanchfield module. Lescop conjectured in \cite{Les3} that this equivalence holds for knots with any Blanchfield module. 

The Lescop invariant is indeed defined for $\Q$SK--pairs and Lescop's splitting formulas are stated with respect to general null 
Lagrangian-preserving surgeries. In \cite{M6}, the Kricker invariant is extended to $\Q$SK--pairs and splitting formulas for this 
invariant with respect to null Lagrangian-preserving surgeries are given. Hence a combinatorial description of the graded space associated 
with finite type invariants of $\Q$SK--pairs with respect to null Lagrangian-preserving surgeries would allow to explicit the universality 
properties of these two invariants and provide a comparison between them, answering the above conjecture of Lescop for general $\Q$SK--pairs. 

In analogy with the integral homology setting, null Lagrangian-preserving surgeries preserve the Blanchfield module defined over $\Q$ and we study finite 
type invariants of $\Q$SK--pairs with a fixed Blanchfield module. In the case of a trivial Blanchfield module, we give a complete description of 
the associated graded space. This description and the above-mentioned splitting formulas imply that the Lescop invariant and the Kricker invariant are both universal 
finite type invariants of $\Q$SK--pairs with trivial Blanchfield module, up to degree one invariants given by the cardinality of the first homology group 
of the $\Q$--sphere. In particular, the Lescop invariant and the Kricker invariant are equivalent for $\Q$SK--pairs with trivial Blanchfield module 
when the cardinality of the first homology group of the $\Q$--sphere is fixed. 

Let $(\Al,\bl)$ be any Blanchfield module with annihilator $\delta\in\Qt$. The main goal of this paper is to give a combinatorial description of the graded space 
$\G(\Al,\bl)=\oplus_{n\in\Z}\G_n(\Al,\bl)$ associated with finite type invariants of $\Q$SK--pairs with Blanchfield module $(\Al,\bl)$ ---precise definitions are given 
in the next section. The Lescop or Kricker invariant $Z=(Z_n)_{n\in\N}$ is a family of finite type invariants $Z_n$ of degree $n$ for $n$ even ($Z_n$ is trivial 
for $n$ odd). For $\Q$SK--pairs with Blanchfield module $(\Al,\bl)$, $Z_n$ takes values in a space $\A_n(\delta)$ of trivalent graphs with edges 
labelled in $\frac{1}{\delta}\Qt$. The finiteness properties imply that $Z_n$ induces a map on $\G_n(\Al,\bl)$. In order to take into account 
the degree 1 invariants, we construct from $Z$ an invariant $Z^{aug}=(Z_n^{aug})_{n\in\N}$ of $\Q$SK--pairs with $Z_n^{aug}$ of degree $n$. The invariant 
$Z_n^{aug}$ takes values in a space $\A_n^{aug}(\delta)$ of trivalent graphs as before, which may in addition contain isolated vertices labelled by prime integers. 
Again by finiteness, $Z_n^{aug}$ induces a map on $\G_n(\Al,\bl)$. This leads us to our main question.
\begin{question} \label{quconj}
 Is the map $Z_n^{aug}:\G_n(\Al,\bl)\to\A_n^{aug}(\delta)$ injective ?
\end{question}
Injectivity of this map for any Blanchfield module $(\Al,\bl)$ is equivalent to universality of the invariant $Z$ as a finite type invariant of $\Q$SK--pairs, up to 
degree 0 and 1 invariants. This would imply the equivalence of the Lescop invariant and the Kricker invariant when the Blanchfield module 
and the cardinality of the first homology group of the $\Q$--sphere are fixed. 

To deal with Question \ref{quconj}, we first construct another diagram space $\A_n^{aug}(\Al,\bl)$ together with a surjective map 
$\varphi_n:\A_n^{aug}(\Al,\bl)\twoheadrightarrow\G_n(\Al,\bl)$. 
Then we compose this map with $Z_n^{aug}$ to get a map $\psi_n:\A_n^{aug}(\Al,\bl)\to\A_n^{aug}(\delta)$. 
\begin{figure} [htb]
 \begin{center}
 \begin{tikzpicture} [xscale=1.4,yscale=0.8]
  \draw (2,2) node {$\G_n(\Al,\bl)$};
  \draw (2,-2) node {$\A_n^{aug}(\delta)$};
  \draw (0,0) node {$\A_n^{aug}(\Al,\bl)$};
  \draw[->>] (0.4,0.5) -- (1.6,1.5); \draw (0.9,1.3) node {$\varphi_n$};
  \draw[->] (0.4,-0.5) -- (1.6,-1.5); \draw (0.9,-1.3) node {$\psi_n$};
  \draw[->] (2,1.5) -- (2,-1.5); \draw (2.4,0) node {$Z_n^{aug}$};
 \end{tikzpicture}
 \end{center}
\caption{Commutative diagram}
\end{figure}
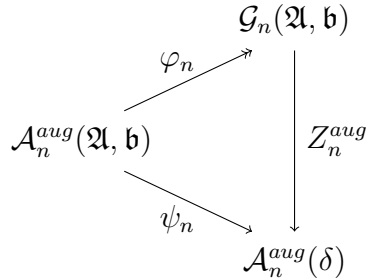
It appears that this composed map has a simple diagrammatic description. Nevertheless, it is not easy to decide whether it is injective or not in general. 
\begin{question} \label{qupsin}
 Is the map $\psi_n:\A_n^{aug}(\Al,\bl)\to\A_n^{aug}(\delta)$ injective ?
\end{question}
If Question \ref{qupsin} has a positive answer, then Question \ref{quconj} also has, and $\G_n(\Al,\bl)$ is completely described combinatorially by 
$\varphi_n:\A_n^{aug}(\Al,\bl)\fl{\cong}\G_n(\Al,\bl)$. 

Question \ref{qupsin} has a positive answer at least in the following cases ---the last two cases are treated with Audoux in \cite{AM}:
\begin{itemize}
 \item for a trivial Blanchfield module and any value of $n$,
 \item for a Blanchfield module which is a direct sum of $N$ isomorphic Blanchfield modules and $n\leq\frac{2}{3}N$,
 \item for a Blanchfield module of $\Q$--dimension 2 and $n=2$,
 \item for a Blanchfield module which is a direct sum of two isomorphic Blanchfield modules of $\Q$--dimension 2 and of order different from $t+1+t^{-1}$, and $n=2$.
\end{itemize}
In the third case, the map $\psi_n$ is not surjective, whereas in the other cases, it is an isomorphism. In particular, $Z_n^{aug}$ is not surjective in general. 
Moreover, for a Blanchfield module which is a direct sum of two isomorphic Blanchfield modules of $\Q$--dimension 2 and of order $t+1+t^{-1}$, and $n=2$, 
Question \ref{qupsin} has a negative answer \cite{AM}, but Question \ref{quconj} is open ---as well as the injectivity status of $\varphi$.

The fact that Question \ref{quconj} remains open while Question \ref{qupsin} does not have a positive answer in general leads us to the following alternative:
\begin{itemize}
 \item either Question \ref{quconj} has a positive answer in general, in which case $\G_n(\Al,\bl)$ is isomorphic to $\psi_n(\A_n^{aug}(\Al,\bl))$,
 \item or we miss some invariant to add to the augmented Lescop/Kricker invariant and the Blanchfield module to get a universal finite type invariant of $\Q$SK--pairs.
\end{itemize}

We also treat the Garoufalidis--Rozansky theory of finite type invariants of $\Z$SK--pairs in the case of a non-trivial Blanchfield module.

\paragraph{Notations}
For $\K=\Z,\Q$, a {\em $\K$--sphere} (resp. {\em $\K$--ball}, {\em $\K$--torus}, {\em genus $g$ $\K$--handlebody}) is a compact connected oriented 
3--manifold with the same homology with coefficients in $\K$ as the standard 3--sphere (resp. 3--ball, solid torus, genus $g$ standard handlebody). 
A $\K$SK--pair $(M,K)$ is a pair made of a $\K$--sphere $M$ and a knot $K$ in $M$ whose homology class in $H_1(M;\Z)$ is trivial.

\paragraph{Plan of the paper}
In Section \ref{secresults}, we introduce the necessary notions and state the main results of the paper. Section \ref{secborro} is devoted to clasper 
calculus in the equivariant setting. We apply this calculus in Section \ref{secdiagrams} to our diagrams. 
This provides a surjective map from a graded diagram space to the graded space associated with $\Z$SK--pairs 
with respect to integral null Lagrangian-preserving surgeries. To get a similar map in the case of $\Q$SK--pairs, we need 
further arguments developed in Section \ref{secsurjective}. In Section \ref{secZLes}, we show the universality property of the invariant $Z^{aug}$ 
which combines the Lescop/Kricker invariant and the cardinality of the first homology group. In Section~\ref{secpsi}, we answer Question \ref{qupsin} 
for a Blanchfield module which is a direct sum of $N$ isomorphic Blanchfield modules in degree at most $\frac{2}{3}N$.

\paragraph{Acknowledgments.}
I began this work during my PhD thesis at the Institut Fourier of the University of Grenoble. I warmly thank my advisor, Christine Lescop, for her help and advice 
during my PhD and since then. I am currently supported by a Postdoctoral Fellowship of the Ja\-pan Society for the Promotion of Science. 
I am grateful to Tomotada Ohtsuki and the Research Institute for Mathematical Sciences for their support. 
While working on the contents of this article, I have also been supported by the Italian FIRB project ``Geometry and topology of low-dimensional
manifolds'', RBFR10GHHH.

    \section{Statement of the results} \label{secresults}

  \subsection{Filtration defined by null LP--surgeries} \label{subsecsurgeries}

We first recall the definition of the Alexander module and the Blanchfield form. 
Let $(M,K)$ be a $\Q$SK--pair. Let $T(K)$ be a tubular neighborhood of $K$. The \emph{exterior} of $K$ is 
$X=M\setminus Int(T(K))$. Consider the projection $\pi : \pi_1(X) \to \frac{H_1(X;\mathbb{Z})}{torsion} \cong \mathbb{Z}$ 
and the covering map $p : \tilde{X} \to X$ associated with its kernel. The covering $\tilde{X}$ is the \emph{infinite cyclic covering} 
of $X$. The automorphism group of the covering, $Aut(\tilde{X})$, is isomorphic to $\mathbb{Z}$. It acts on 
$H_1(\tilde{X};\Q)$. Denoting the action of a generator $\tau$ of $Aut(\tilde{X})$ as the multiplication by $t$, 
we get a structure of $\Qt$--module on $\Al(M,K)=H_1(\tilde{X};\Q)$. This $\Qt$--module is called the \emph{Alexander module} of $(M,K)$. 
It is a torsion $\Qt$--module. 

On the Alexander module, the \emph{Blanchfield form}, or \emph{equivariant linking pairing}, 
$\bl : \Al\times\Al \to \frac{\Q(t)}{\Qt}$, is defined as follows. First define the equivariant linking number of two knots.
Let $J_1$ and $J_2$ be two knots in $\tilde{X}$ such that $p(J_1)\cap p(J_2)=\emptyset$. Let $\delta\in\Q(t)$ be the annihilator of $\Al$.
There is a rational 2--chain $S$ such that $\partial S=\delta(\tau)J_1$. The \emph{equivariant linking number} of $J_1$ and $J_2$ is 
$$lk_e(J_1,J_2)=\frac{1}{\delta(t)}\sum_{k\in\Z}\langle S,\tau^k(J_2)\rangle t^k,$$
where $\langle .,.\rangle$ stands for the algebraic intersection number. 
It is well-defined and we have $lk_e(J_1,J_2)\in\frac{1}{\delta(t)}\Qt$, $lk_e(J_2,J_1)(t)=lk_e(J_1,J_2)(t^{-1})$ and 
$lk_e(P(\tau)J_1,Q(\tau)J_2)(t)=P(t)Q(t^{-1})lk_e(J_1,J_2)(t)$.
Now, if $\gamma$ (resp. $\eta$) is the homology class of $J_1$ (resp. $J_2$) in $\Al$, define $\bl(\gamma,\eta)$ by:
$$\bl(\gamma,\eta)=lk_e(J_1,J_2)\ mod\ \Qt.$$
The Blanchfield form is {\em hermitian}: $\bl(\gamma,\eta)(t)=\bl(\eta,\gamma)(t^{-1})$ and 
$\bl(P(t)\gamma,Q(t)\eta)(t)=P(t)Q(t^{-1})\,\bl(\gamma,\eta)(t)$ for all $\gamma,\eta\in\Al$ and all $P,Q\in\Qt$. 
Moreover, it is {\em non degenerate} (see Blanchfield in \cite{Bla}) : $\bl(\gamma,\eta)=0$ for all $\eta\in\Al$ implies $\gamma=0$. 

The Alexander module of a $\Q$SK--pair $(M,K)$ endowed with its Blanchfield form is its {\em Blanchfield module} denoted by $(\Al,\bl)(M,K)$. 
In the sequel, by {\em a Blanchfield module $(\Al,\bl)$}, we mean a pair $(\Al,\bl)$ which can be realized as the Blanchfield module of a $\Q$SK--pair. 
An isomorphism between Blanchfield modules is an isomorphism between the underlying Alexander modules which preserves the Blanchfield form.

We now define LP--surgeries. 
Note that the boundary of a genus $g$ $\Q$--handlebody is homeomorphic to the standard genus $g$ surface. 
The \emph{Lagrangian} $\mathcal{L}_A$ of a $\Q$--handlebody $A$ is the kernel of the map $i_*: H_1(\partial A;\Q)\to H_1(A;\Q)$ 
induced by the inclusion. Two $\Q$--handlebodies $A$ and $B$ have \emph{LP--identified} boundaries if $(A,B)$ is equipped with a homeomorphism 
$h:\partial A\to\partial B$ such that $h_*(\mathcal{L}_A)=\mathcal{L}_B$.
The Lagrangian of a $\Q$--handlebody $A$ is indeed a Lagrangian subspace of $H_1(\partial A;\Q)$ 
with respect to the intersection form.

Let $M$ be a $\Q$--sphere, let $A\subset M$ be a $\Q$--handlebody and let $B$ be a $\Q$--handlebody whose boundary is LP--identified 
with $\partial A$. Set $M\left(\frac{B}{A}\right)=(M\setminus Int(A))\cup_{\partial A=_h\partial B}B$. We say that the $\Q$--sphere 
$M\left(\frac{B}{A}\right)$ is obtained from $M$ by \emph{Lagrangian-preserving surgery}, or \emph{LP--surgery}.
 
Given a $\Q$SK--pair $(M,K)$, a \emph{$\Q$--handlebody null in $M\setminus K$} is a $\Q$--handlebody $A\subset M\setminus K$ such that 
the map $i_* : H_1(A;\Q)\to H_1(M\setminus K;\Q)$ induced by the inclusion has a trivial image.
A \emph{null LP--surgery} on $(M,K)$ is an LP--surgery $\left(\frac{B}{A}\right)$ such that $A$ is null in $M\setminus K$. 
The $\Q$SK--pair obtained by surgery is denoted by $(M,K)\left(\frac{B}{A}\right)$. 

Let $\F_0$ be the rational vector space generated by all $\Q$SK--pairs up to orientation-preser\-ving homeomorphism. 
Let $\F_n$ be the subspace of $\F_0$ generated by the 
$$\left[(M,K);\left(\frac{B_i}{A_i}\right)_{1\leq i \leq n}\right]=\sum_{I\subset \{ 1,...,n\}} (-1)^{|I|} (M,K)\left(\left(\frac{B_i}{A_i}\right)_{i\in I}\right)$$ 
for all $\Q$SK--pairs $(M,K)$ and all families of $\Q$--handlebodies $(A_i,B_i)_{1\leq i \leq n}$, where the $A_i$ are null in $M\setminus K$ 
and disjoint, and each $\partial B_i$ is LP--identified with the corresponding $\partial A_i$. Here and in all the article, 
$|.|$ stands for the cardinality. Since $\F_{n+1}\subset \F_n$, this defines a filtration. 

\begin{definition} \label{definv}
 A $\Q$--linear map $\lambda: \F_0 \to \Q$ is a \emph{finite type invariant of degree at most $n$ of $\Q$SK--pairs with respect to null LP--surgeries} 
 if $\lambda(\F_{n+1})=0$. 
\end{definition}

\begin{theorem}[\cite{M3} Theorem 1.14] \label{thM3}
 A null LP--surgery induces a canonical isomorphism between the Blanchfield modules of the involved $\Q$SK--pairs. Conversely, for any isomorphism $\zeta$ 
 from the Blanchfield module of a $\Q$SK--pair $(M,K)$ to the Blanchfield module of a $\Q$SK--pair $(M',K')$, there is a finite sequence of null LP--surgeries 
 from $(M,K)$ to $(M',K')$ which induces the composition of $\zeta$ by the multiplication by a power of $t$.
\end{theorem}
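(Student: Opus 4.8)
The plan is to prove the two assertions separately, the first being essentially a naturality/functoriality statement and the second a realization result. For the forward direction, I would start from a null LP--surgery $(M,K)\left(\frac{B}{A}\right)$ with $A$ null in $M\setminus K$. The key point is that since $A$ is null-homologous in $X=M\setminus Int(T(K))$, the infinite cyclic cover $\tilde X$ restricts over $A$ to a trivial cover, i.e. a disjoint union of copies $\tau^k(\tilde A)$ of a lift $\tilde A$ of $A$; the same holds for $B$ after the surgery, because the LP--identification $h:\partial A\to\partial B$ together with nullity forces $B$ to be null in the exterior of $K$ in $M\left(\frac{B}{A}\right)$ as well (this uses that $H_1(A;\Q)\to H_1(\partial A;\Q)$ and $H_1(B;\Q)\to H_1(\partial B;\Q)$ have images with the same image under $h_*$, namely the Lagrangian). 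One then builds the new infinite cyclic cover by equivariantly cutting out the $\tau^k(\tilde A)$ and gluing in the $\tau^k(\tilde B)$ along the lifted identifications $\tilde h$. A Mayer--Vietoris argument over $\Qt$, using that $H_*(\tilde A;\Q)=H_*(A;\Q)$ and $H_*(\tilde B;\Q)=H_*(B;\Q)$ carry the trivial $\Qt$--module structure and agree via $h$ on the Lagrangians, shows that the inclusion of the common part $\tilde X\setminus\bigsqcup_k\tau^k(\tilde A)$ induces an isomorphism on $H_1(-;\Q)$ as $\Qt$--modules; this is the canonical isomorphism $\zeta$ on Alexander modules. To see it preserves the Blanchfield form, I would check it on the level of equivariant linking numbers: a knot $J$ in $\tilde X$ avoiding the $\tau^k(\tilde A)$ is unchanged, a rational $2$--chain bounding $\delta(\tau)J$ can be pushed off the surgery locus (again using that $B$ is a $\Q$--handlebody, so it does not contribute new rational homology), and algebraic intersection numbers are preserved; hence $lk_e$ and therefore $\bl$ are preserved.

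For the converse, let $\zeta:(\Al,\bl)(M,K)\to(\Al,\bl)(M',K')$ be an isomorphism of Blanchfield modules. The strategy is to realize $\zeta$ (up to multiplication by a power of $t$) by a finite sequence of null LP--surgeries, in two stages: first a "standardization" step reducing both $\Q$SK--pairs to a common normal form determined by the Blanchfield module, and then a "matching" step realizing the isomorphism $\zeta$ between the two normal forms. The main tool here is equivariant clasper (or Borromean/LP) surgery calculus: a null LP--surgery on a $\Q$SK--pair can modify the Alexander module and Blanchfield form in a controlled way, and one has a supply of elementary null LP--surgeries whose effect on $(\Al,\bl)$ is computable (for instance, surgeries supported in a ball meeting $\tilde X$ in a trivial way, which change $M$ by an LP--surgery disjoint from $K$, or surgeries realizing a given presentation of the Blanchfield module). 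I would invoke a classification of Blanchfield modules (they are direct sums of cyclic pieces $\Qt/(p^k)$ for symmetric primes $p$, plus the pairing data), and show that each elementary change of presentation — basis change, stabilization by a hyperbolic summand, modification by a unit — is realized by a null LP--surgery. Composing these brings $(M,K)$ and $(M',K')$ to the same standard pair and makes the composite induced isomorphism equal to $\zeta$ up to a power of $t$ (the power of $t$ ambiguity being exactly the indeterminacy in choosing the lift $\tilde X$, i.e. the choice of generator of $Aut(\tilde X)$ and basepoint).

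The hard part will be the realization step: showing that every isomorphism of Blanchfield modules can be achieved by null LP--surgeries requires a genuinely geometric construction, controlling simultaneously the $\Q$--homology of the ambient $\Q$--sphere (so that we stay within $\Q$--spheres and the surgeries remain Lagrangian-preserving) and the equivariant linking structure (so that we land on the prescribed Blanchfield form, not merely the prescribed Alexander module). In the integral case this is the content of the Garoufalidis--Rozansky analysis; here one additionally has to handle torsion in $H_1(M;\Z)$ and work with $\Q$--handlebodies rather than genuine handlebodies, which is where the extra flexibility of LP--surgery (versus Borromean surgery) is essential. I expect the proof in \cite{M3} to isolate a finite generating set of "moves" on Blanchfield modules, prove each is realizable, and then appeal to a normal-form theorem for hermitian linking pairings over $\Qt$ to conclude; the bookkeeping of the power-of-$t$ indeterminacy throughout is the delicate point to get right.
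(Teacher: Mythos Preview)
The paper does not contain a proof of this theorem: it is quoted verbatim from \cite{M3} (Theorem~1.14 there) and used as a black box. So there is no ``paper's own proof'' to compare your proposal against.

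That said, a brief remark on your sketch versus what the cited source actually does. Your forward direction is essentially the right argument: nullity of $A$ makes the preimage of $A$ in $\tilde X$ a disjoint union of copies of $A$, and a Mayer--Vietoris computation over $\Qt$ using the Lagrangian condition gives the canonical isomorphism of Alexander modules; preservation of equivariant linking numbers then follows. Your converse, however, is too vague to count as a plan. The paper \cite{M3} (as its title indicates) proceeds via rational $S$--equivalence: one shows that $\Q$SK--pairs with isomorphic Blanchfield modules admit Seifert surfaces whose Seifert matrices are rationally $S$--equivalent, and then realizes each elementary $S$--equivalence move (congruence over $\Q$, stabilization/destabilization) by a null LP--surgery. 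This is a concrete finite list of geometric moves tied to Seifert data, not an abstract ``generating set of moves on Blanchfield modules'' as you suggest; the latter would be hard to make precise without passing through some presentation such as a Seifert matrix. Your proposal does not identify this intermediate object and so lacks the mechanism that makes the realization step go through.
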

This result provides a splitting of the filtration $(\F_n)_{n\in\N}$, as follows. 
For an isomorphism class $(\Al,\bl)$ of Blanchfield modules, let $\Ens(\Al,\bl)$ be the set of all $\Q$SK--pairs, up to orientation-preserving homeomorphism, 
whose Blanchfield modules are isomorphic to $(\Al,\bl)$. Let $\F_0(\Al,\bl)$ be the subspace of $\F_0$ generated by the $\Q$SK--pairs $(M,K)\in\Ens(\Al,\bl)$. 
Let $(\F_n(\Al,\bl))_{n\in\N}$ be the filtration defined on $\F_0(\Al,\bl)$ by null LP--surgeries. Then, for $n\in\N$, $\F_n$ is 
the direct sum over all isomorphism classes $(\Al,\bl)$ of Blanchfield modules of the $\F_n(\Al,\bl)$. Set $\G_n(\Al,\bl)=\F_n(\Al,\bl) / \F_{n+1}(\Al,\bl)$ 
and $\G(\Al,\bl)=\oplus_{n\in\N}\G_n(\Al,\bl)$. We wish to describe the graded space $\G(\Al,\bl)$. By Theorem \ref{thM3}, $\G_0(\Al,\bl)\cong\Q$. 
In Section \ref{secsurjective}, as a consequence of Theorem \ref{thextend}, we prove:
\begin{theorem} \label{thG1}
 Let $(\Al,\bl)$ be a Blanchfield module. Let $(M,K)\in\Ens(\Al,\bl)$. 
For any prime integer $p$, let $B_p$ be a $\Q$--ball such that $H_1(B_p;\Z)\cong\Z/p\Z$. Then:
$$\G_1(\Al,\bl)=\bigoplus_{p\textrm{ prime}}\Q \left[(M,K);\frac{B_p}{B^3}\right],$$
where $B^3$ is any standard 3--ball in $M\setminus K$.
\end{theorem}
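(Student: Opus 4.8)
The plan is to establish the two inclusions separately: first that $\G_1(\Al,\bl)$ is generated by the classes $\left[(M,K);\frac{B_p}{B^3}\right]$, and second that these classes are linearly independent in $\G_1(\Al,\bl)$. For the generation statement, I would rely on Theorem \ref{thextend} (the surjectivity result alluded to just before the statement): it should provide a surjection from some diagram space onto $\G_1(\Al,\bl)$, and the degree $1$ part of that diagram space ought to be small enough to be spanned by the elementary ``ball'' generators. Concretely, a degree $1$ generator $\left[(M,K);\frac{B}{A}\right]$ is a difference $(M,K)-(M,K)\left(\frac{B}{A}\right)$ where $A$ is a null $\Q$--handlebody; using clasper/LP--calculus one reduces $A$ to a genus $0$ piece (a $\Q$--ball), since the genus contributes only at higher degree, and then the $\Q$--ball $B$ is determined up to the relevant equivalence by its first homology $H_1(B;\Z)$, a finite abelian group, which decomposes as a sum of cyclic $p$--groups. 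A telescoping/additivity argument modulo $\F_2$ then expresses $\left[(M,K);\frac{B}{A}\right]$ as a sum of the $\left[(M,K);\frac{B_p}{B^3}\right]$ with integer (hence rational) coefficients. One must also check independence of the basepoint $(M,K)\in\Ens(\Al,\bl)$: since any two such pairs are related by null LP--surgeries (Theorem \ref{thM3}), the classes $\left[(M,K);\frac{B_p}{B^3}\right]$ and $\left[(M',K');\frac{B_p}{B^3}\right]$ differ by an element of $\F_2$, so the right-hand side does not depend on the chosen representative.

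For linear independence, I would produce, for each prime $p$, a degree $1$ finite type invariant $\lambda_p$ of $\Q$SK--pairs that detects $\left[(M,K);\frac{B_p}{B^3}\right]$ and vanishes on the analogous generators for primes $q\neq p$. The natural candidate is built from the cardinality of the first homology group of the ambient $\Q$--sphere: the function $(M,K)\mapsto \log_p\bigl(\text{$p$--part of }|H_1(M;\Z)|\bigr)$, or equivalently the $p$--adic valuation $v_p(|H_1(M;\Z)|)$, extended $\Q$--linearly to $\F_0$. One checks that this is additive under LP--surgery in the appropriate sense so that it kills $\F_2$ (a second-order difference of such valuations vanishes because inserting two disjoint $\Q$--balls multiplies $|H_1|$ by the product of their orders), hence descends to a linear functional on $\G_1(\Al,\bl)$; and by construction $\lambda_p\left[(M,K);\frac{B_q}{B^3}\right]=\delta_{pq}$ since replacing a standard $3$--ball by $B_q$ multiplies $|H_1(M;\Z)|$ exactly by $q$. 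This family of functionals separates the proposed generators, giving independence; combined with the generation step it yields the claimed direct sum decomposition. The fact that $Z^{aug}_1$ incorporates precisely this homological data (via the isolated vertices labelled by primes in $\A_n^{aug}(\delta)$) is what makes the statement natural, and indeed one could alternatively use the degree $1$ part of $Z^{aug}$ itself as the separating invariant.

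The main obstacle I expect is the \emph{generation} step, specifically showing that at degree $1$ there is nothing beyond the ball generators — in particular that the genus of the surgery handlebody and the Lagrangian-gluing data contribute only to $\F_2$ and above, and that the $\Q$--ball $B$ is, modulo $\F_2$, an invariant of $H_1(B;\Z)$ alone rather than of finer diffeomorphism data. This is exactly the content that Theorem \ref{thextend} is designed to supply (a surjection from an explicit diagram space whose degree $1$ part is spanned by the prime-labelled isolated vertices), so the real work is to verify that the degree $1$ piece of that diagram space matches $\bigoplus_p \Q$ and that the surjection sends the prime-$p$ generator to $\left[(M,K);\frac{B_p}{B^3}\right]$. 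Once that identification is in place, together with the separating invariants $\lambda_p$ the surjection is forced to be an isomorphism onto $\bigoplus_{p}\Q\left[(M,K);\frac{B_p}{B^3}\right]$, completing the proof.
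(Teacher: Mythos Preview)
Your proposal is correct and matches the paper's proof essentially line for line: the paper uses the surjection $\varphi_1:\A_1^{aug}(\Al,\bl)\twoheadrightarrow\G_1(\Al,\bl)$ from Theorem~\ref{thextend} together with Lemma~\ref{lemmaodd} (which kills the Jacobi part in odd degree, leaving only $\oplus_p\Q\,\bullet_p$) for generation, and then the $p$--adic valuations $\nu_p(M,K)=v_p(|H_1(M;\Z)|)$, cited as degree~1 invariants from \cite[Proposition~1.9]{M2}, for linear independence. Your identification of the separating invariants and of the role of Theorem~\ref{thextend} is exactly right; the only thing the paper does more tersely is invoke the reference for $\nu_p$ being degree~1 rather than arguing it directly.
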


   \subsection{Borromean surgeries}

Let us define a specific type of LP--surgeries.

The {\em standard Y--graph} is the graph $\Gamma_0\subset \mathbb{R}^2$ represented in Figure \ref{figY4}. 
The looped edges of $\Gamma_0$ are the \emph{leaves}. 
The vertex incident to three different edges is the {\em internal vertex}. 
With $\Gamma_0$ is associated a regular neighborhood $\Sigma(\Gamma_0)$ of $\Gamma_0$ in the plane. 
The surface $\Sigma(\Gamma_0)$ is oriented with the usual convention. This induces an orientation of the leaves 
and an orientation of the internal vertex, {\em i.e.} a cyclic order of the three edges. 
\begin{figure}[htb] 
\begin{center}
\begin{tikzpicture} [scale=0.15]
\newcommand{\feuille}[1]{
\draw[rotate=#1,thick,color=gray] (0,-11) circle (5);
\draw[rotate=#1,thick,color=gray] (0,-11) circle (1);
\draw[rotate=#1,line width=8pt,color=white] (-2,-6.42) -- (2,-6.42);
\draw[rotate=#1,thick,color=gray] (2,-1.15) -- (2,-6.42);
\draw[rotate=#1,thick,color=gray] (-2,-1.15) -- (-2,-6.42);
\draw[white,rotate=#1,line width=5pt] (0,0) -- (0,-8);
\draw[rotate=#1] (0,0) -- (0,-8);
\draw[rotate=#1] (0,-11) circle (3);
\draw[->,rotate=#1] (-3,-10.9) -- (-3,-11.1);}
\draw (0,0) circle (1.5);
\draw[->] (0.1,1.5) -- (-0.1,1.5);
\feuille{0}
\feuille{120}
\feuille{-120}
\draw (-4,10) node{$\scriptstyle{\textrm{leaf}}$};
\draw[->] (-5,9) -- (-6.3,7.5);
\draw (11.5,-1) node{$\scriptstyle{\textrm{internal vertex}}$};
\draw[<-] (0.5,-0.1) -- (4,-1);
\draw (4.7,-9.2) node{$\Gamma_0$};
\draw[color=gray] (6.5,-16.8) node{$\Sigma(\Gamma_0)$};
\end{tikzpicture}
\end{center}
\caption{The standard Y--graph}\label{figY4}
\end{figure}
Consider a 3--manifold $M$ and an embedding $h:\Sigma(\Gamma_0)\to M$. The image $\Gamma$ of $\Gamma_0$ is a {\em Y--graph}, endowed with  
its {\em associated surface} $\Sigma(\Gamma)=h(\Sigma(\Gamma_0))$. The Y--graph $\Gamma$ is equipped with 
the framing induced by $\Sigma(\Gamma)$. 
A \emph{Y--link} in a 3--manifold is a collection of disjoint Y--graphs. 

\begin{figure}[htb] 
\begin{center}
\begin{tikzpicture} [scale=0.15]
\begin{scope}
\newcommand{\feuille}[1]{
\draw[rotate=#1] (0,0) -- (0,-8);
\draw[rotate=#1] (0,-11) circle (3);}
\feuille{0}
\feuille{120}
\feuille{-120}
\draw (3,-4) node{$\Gamma$};
\end{scope}
\draw[very thick,->] (21.5,-3) -- (23.5,-3);
\begin{scope}[xshift=1200]
\newcommand{\bras}[1]{
\draw[rotate=#1] (0,-1.5) circle (2.5);
\draw [rotate=#1,white,line width=8pt] (-0.95,-4) -- (0.95,-4);
\draw[rotate=#1] {(0,-11) circle (3) (1,-3.9) -- (1,-7.6)};
\draw[rotate=#1,white,line width=6pt] (-1,-5) -- (-1,-8.7);
\draw[rotate=#1] {(-1,-3.9) -- (-1,-8.7) (-1,-8.7) arc (-180:0:1)};}
\bras{0}
\draw [white,line width=6pt,rotate=120] (0,-1.5) circle (2.5);
\bras{120}
\draw [rotate=-120,white,line width=6pt] (-1.77,0.27) arc (135:190:2.5);
\draw [rotate=-120,white,line width=6pt] (1.77,0.27) arc (45:90:2.5);
\bras{-120}
\draw [white,line width=6pt] (-1.77,0.27) arc (135:190:2.5);
\draw [white,line width=6pt] (1.77,0.27) arc (45:90:2.5);
\draw (-1.77,0.27) arc (135:190:2.5);
\draw (1.77,0.27) arc (45:90:2.5);
\draw (3.5,-4.5) node{$L$};
\end{scope}
\end{tikzpicture}
\end{center}
\caption{Y--graph and associated surgery link}\label{figborro4}
\end{figure}

Let $\Gamma$ be a Y--graph in a 3--manifold $M$. Let $\Sigma(\Gamma)$ be its associated surface. In $\Sigma\times[-1,1]$, 
associate with $\Gamma$ the six components link $L$ represented in Figure \ref{figborro4}. 
The \emph{Borromean surgery on $\Gamma$} is the surgery along the framed link $L$. The surgered manifold is denoted $M(\Gamma)$. 
As proved by Matveev in \cite{Mat}, a Borromean surgery can be realised by cutting a genus 3 handlebody 
(a regular neighborhood of the Y--graph) and regluing it in another way, which preserves the Lagrangian. 
If $(M,K)$ is a $\Q$SK--pair and if $\Gamma$ is an $n$--component Y--link, null in $M\setminus K$, 
then $\left[(M,K);\Gamma\right]\in\F_0$ denotes the bracket defined by the $n$ disjoint null LP--surgeries on the components of $\Gamma$. 

For $n\geq 0$, let $\G_n^b(\Al,\bl)$ be the subspace of $\G_n(\Al,\bl)$ generated by the classes of the brackets defined by null Borromean surgeries. 
The following result is a consequence of Proposition \ref{propphinintro} and Lemma \ref{lemmaodd}.
\begin{proposition} \label{propquotientsimpairs}
 For any Blanchfield module $(\Al,\bl)$ and any $n\geq0$, $\G_{2n+1}^b(\Al,\bl)=0$. 
\end{proposition}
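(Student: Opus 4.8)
The plan is to show that every bracket $[(M,K);\Gamma]$ defined by a null Borromean surgery along a Y-link $\Gamma$ with an odd number $2n+1$ of components vanishes modulo $\F_{2n+2}(\Al,\bl)$. The idea is the standard ``orientation-reversal on a leaf'' trick from clasper calculus, adapted to the equivariant/null setting. Concretely, for a single Y-graph $G$ with a distinguished leaf, reversing the orientation (framing) of that leaf produces a Y-graph $G'$ such that $[(M,K);G]+[(M,K);G']$ is, up to higher-order terms, a bracket on a Y-link where that leaf has been ``doubled'' or split; more precisely one uses the clasper identity expressing the difference of the two surgeries as a union of brackets of strictly higher degree. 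Iterating over all $2n+1$ components, one writes $2^{2n+1}[(M,K);\Gamma]$ as an alternating sum over the $2^{2n+1}$ choices of leaf-orientations, and the symmetry of this sum under simultaneously flipping all leaves of a single component forces it to be $2$-torsion-like, hence zero in the rational vector space $\G_{2n+1}^b(\Al,\bl)$.

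The more robust route, and the one I would actually carry out, is to invoke the diagrammatic model: by Proposition~\ref{propphinintro} there is a surjection $\varphi_n^b$ from a space of diagrams onto $\G_n^b(\Al,\bl)$ sending a diagram with $2n+1$ internal vertices (hence an odd number of edges in the appropriate count) to the class of the corresponding Borromean bracket. In that diagram space, the AS (antisymmetry) relation at each trivalent vertex, together with the relation identifying a diagram with its image under reversing all edge-orientations and multiplying labels accordingly, shows that any diagram supported on a graph with an odd first Betti number / odd number of trivalent vertices is equal to its own negative: flipping the cyclic order at every one of the $2n+1$ vertices introduces a sign $(-1)^{2n+1}=-1$, while the flip can be undone by the global orientation-reversal symmetry (Lemma~\ref{lemmaodd}). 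Hence $2\cdot(\text{diagram})=0$, so the diagram is zero over $\Q$, and surjectivity of $\varphi_n^b$ gives $\G_{2n+1}^b(\Al,\bl)=0$.

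So the key steps, in order, are: (1) recall the surjection $\varphi_n^b:\A_n^{b}(\Al,\bl)\twoheadrightarrow\G_n^b(\Al,\bl)$ from Proposition~\ref{propphinintro}, together with the fact that its source is spanned by connected trivalent diagrams with $n$ internal vertices; (2) apply Lemma~\ref{lemmaodd}, which provides the orientation-reversal symmetry identifying a diagram with $\pm$ itself; (3) combine the $(-1)^{2n+1}$ vertex-count sign with this symmetry to deduce that every generator of $\A_{2n+1}^{b}(\Al,\bl)$ satisfies $x=-x$, hence $x=0$ in the $\Q$-vector space; (4) conclude $\G_{2n+1}^b(\Al,\bl)=\varphi_{2n+1}^b(0)=0$.

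The main obstacle is bookkeeping the signs correctly in step (3): one must check that the automorphism of the diagram space used in Lemma~\ref{lemmaodd} really does act as $(-1)^{\#\text{vertices}}$ on each diagram (so that it coincides with iterated AS up to an overall global sign), rather than as $(-1)^{\#\text{edges}}$ or some other parity, and that this is compatible with the edge-labels in $\frac1\delta\Qt$ under $t\mapsto t^{-1}$. Once the parity is pinned down and matched with the hypothesis that the number of internal vertices is odd, the vanishing is immediate; the clasper-theoretic heuristic in the first paragraph is only needed as intuition, since the diagrammatic surjection does all the work.
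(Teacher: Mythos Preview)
Your high-level structure is exactly the paper's: the proposition follows immediately from the surjection $\varphi_{2n+1}:\A_{2n+1}(\Al,\bl)\twoheadrightarrow\G_{2n+1}^b(\Al,\bl)$ of Proposition~\ref{propphinintro} together with $\A_{2n+1}(\Al,\bl)=0$ from Lemma~\ref{lemmaodd}. Nothing more is needed, and your first ``clasper heuristic'' paragraph is indeed superfluous.

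Where your write-up goes astray is in the explanation of \emph{why} Lemma~\ref{lemmaodd} holds. You propose applying AS at each of the $2n+1$ trivalent vertices to pick up $(-1)^{2n+1}$ and then ``undoing the flip by the global orientation-reversal symmetry''. This does not work: reversing the cyclic order at every trivalent vertex produces the mirror diagram, which is \emph{not} in general equal to the original in $\A_n(\Al,\bl)$; there is no relation identifying a diagram with its global mirror, and OR on edges (the $t\mapsto t^{-1}$ you mention) does not touch vertex orientations. So the AS-based sign count leads nowhere.

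The actual mechanism behind Lemma~\ref{lemmaodd} uses the Aut relation, not AS. The map $-\mathrm{id}$ is an automorphism of $(\Al,\bl)$, so Aut identifies $D$ with the diagram obtained by replacing every univalent label $\gamma_v$ by $-\gamma_v$ (linkings are unchanged since $\bl(-\gamma,-\gamma')=\bl(\gamma,\gamma')$). Applying LV at each of the $U$ univalent vertices extracts a factor $(-1)^U$; each linking $f_{vv'}$ gets negated twice in the process and returns to itself. Hence $D=(-1)^U D$. Since $3T+U$ is even (it counts half-edges), $U$ has the same parity as the number $T$ of trivalent vertices, so $T=2n+1$ forces $U$ odd and $D=-D$. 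That is the sign bookkeeping you were worried about, and it lives at the \emph{univalent} vertices, not the trivalent ones.
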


  \subsection{Spaces of diagrams} \label{subsecdiag}

Fix a Blanchfield module $(\Al,\bl)$. Let $\delta\in\Q(t)$ be the annihilator of $\Al$. 
An {\em $(\Al,\bl)$--colored diagram} $D$ is a uni-trivalent graph without strut (\raisebox{-1.7ex}{\begin{tikzpicture} 
\draw (0,-0.5) -- (0,0);
\draw (0,-0.5) node{$\scriptstyle{\bullet}$};
\draw (0,0) node{$\scriptstyle{\bullet}$};
\end{tikzpicture}}), with the following data:
\begin{itemize}
 \item trivalent vertices are oriented, where an {\em orientation of a trivalent vertex} is a cyclic order 
  of the three half-edges that meet at this vertex; 
 \item edges are oriented and colored by $\Qt$;
 \item univalent vertices are colored by $\Al$;
 \item for all $v\neq v'$ in the set $V$ of univalent vertices of $D$, a rational fraction $f_{vv'}^D(t)\in\frac{1}{\delta(t)}{\Qt}$ is fixed 
  such that $f_{vv'}^D(t)\ mod\ \Qt=\bl(\gamma,\gamma')$, where $\gamma$ (resp. $\gamma'$) is the coloring of $v$ (resp. $v'$), with 
  $f_{v'v}^D(t)=f_{vv'}^D(t^{-1})$.
\end{itemize}
In the pictures, the orientation of trivalent vertices is given by 
  \raisebox{-1.5ex}{\begin{tikzpicture} [scale=0.2]
  \newcommand{\tiers}[1]{
  \draw[rotate=#1,color=white,line width=4pt] (0,0) -- (0,-2);
  \draw[rotate=#1] (0,0) -- (0,-2);}
  \draw (0,0) circle (1);
  \draw[<-] (-0.05,1) -- (0.05,1);
  \tiers{0}
  \tiers{120}
  \tiers{-120}
  \end{tikzpicture}}. 
When it does not seem to cause confusion, we write $f_{vv'}$ for $f_{vv'}^D$. 
The {\em degree} of a colored diagram is the number of trivalent vertices of its underlying graph. The unique degree 0 diagram is the empty diagram. 
For $n\geq0$, set: 
$$\tilde{\A}_n(\Al,\bl)=\frac{\Q \langle(\Al,\bl)\mbox{--colored diagrams of degree $n$}\rangle}{\Q \langle\mbox{AS, IHX, LE, OR, Hol, LV, EV, LD}\rangle},$$ 
where the relations AS (anti-symmetry), IHX, LE (linearity for edges), OR (orientation reversal), Hol (holonomy), 
LV (linearity for vertices), EV (edge-vertex) and LD (linking difference) are described in Figure \ref{figrel}. 
\begin{figure}[htb] 
\begin{center}
\begin{tikzpicture} [scale=0.3]
%AS
\draw (0,4) -- (2,2);
\draw (2,2) -- (4,4);
\draw (2,2) -- (2,0);
\draw (5,2) node{+};
\draw (8,2) .. controls +(2,0) and +(2.5,-1) .. (6,4);
\draw[white,line width=6pt] (8,2) .. controls +(-2,0) and +(-2.5,-1) .. (10,4);
\draw (8,2) .. controls +(-2,0) and +(-2.5,-1) .. (10,4);
\draw (8,0) -- (8,2);
\draw (11,2) node{=};
\draw (12.5,2.1) node{0};
\draw (6,-1.5) node{AS};
%IHX
\draw (18,4) -- (20,3) -- (20,1) -- (18,0);
\draw (20,1) -- (22,0);
\draw (20,3) -- (22,4);
\draw (23,2) node{-};
\draw (24,4) -- (25,2) -- (27,2) -- (28,4);
\draw (24,0) -- (25,2);
\draw (27,2) -- (28,0);
\draw (29,2) node{+};
\draw (30,4) -- (33,2) -- (34,0);
\draw[white,line width=6pt] (31,2) -- (34,4);
\draw (30,0) -- (31,2) -- (34,4);
\draw (31,2) -- (33,2);
\draw (35,2) node{=};
\draw (36.5,2.1) node{0};
\draw (20.5,2) node{1};
\draw (26,1.2) node{1};
\draw (32,1.2) node{1};
\draw (27,-1.5) node{IHX};
%LE
\begin{scope} [xshift=-4cm,yshift=-8cm]
\draw (0,2) node{$x$};
\draw (1,0) -- (1,4);
\draw[->] (1,0) -- (1,3);
\draw (1.8,2.8) node{$P$};
\draw (3.2,2) node{$+$};
\draw (4.7,2) node{$y$};
\draw (5.7,0) -- (5.7,4);
\draw[->] (5.7,0) -- (5.7,3);
\draw (6.5,2.8) node{$Q$};
\draw (7.9,2) node{$=$};
\draw (9.4,0) -- (9.4,4);
\draw[->] (9.4,0) -- (9.4,3);
\draw (12.4,2.8) node{$xP+yQ$};
\draw (6.2,-1.5) node{LE};
\end{scope}
%OR
\begin{scope} [xshift=15cm,yshift=-8.5cm]
\draw (0,0) -- (0,4);
\draw[->] (0,0) -- (0,3);
\draw (1.5,2.8) node{$P(t)$};
\draw (3.5,2) node{$=$};
\draw (5,0) -- (5,4);
\draw[->] (5,4) -- (5,3);
\draw (7.2,2.8) node{$P(t^{-1})$};
\draw (3.5,-1.5) node{OR};
\end{scope}
%Hol
\begin{scope} [xshift=30cm,yshift=-7cm]
\newcommand{\edge}[2]{
\draw[rotate=#1] (0,0) -- (0,3);
\draw[rotate=#1,->] (0,0) -- (0,1.5);
\draw[rotate=#1] (1,1.3) node{#2};}
\edge{0}{$P$}
\edge{120}{$Q$}
\edge{240}{$R$}
\draw (3.8,0) node{$=$};
\begin{scope} [xshift=7.6cm]
\edge{0}{$tP$}
\edge{120}{$tQ$}
\edge{240}{$tR$}
\end{scope}
\draw (3.8,-3.5) node{Hol};
\end{scope}
%LV
\begin{scope} [xshift=-4cm,yshift=-17cm]
\draw (-1,2) node{$x$};
\draw (1,0) -- (1,4);
\draw (1,1) node[right] {$D_1$};
\draw (1,4) node{$\bullet$};
\draw (1,4) node[right] {$\gamma_1$};
\draw (1,4) node[below left] {$\scriptstyle{v}$};
\draw (4,2) node{$+$};
\draw (5.5,2) node{$y$};
\draw (7.5,0) -- (7.5,4);
\draw (7.5,1) node[right] {$D_2$};
\draw (7.5,4) node{$\bullet$};
\draw (7.5,4) node[right] {$\gamma_2$};
\draw (7.5,4) node[below left] {$\scriptstyle{v}$};
\draw (10.5,2) node{$=$};
\draw (13,0) -- (13,4);
\draw (13,1) node[right] {$D$};
\draw (13,4) node{$\bullet$};
\draw (13,4) node[right] {$x\gamma_1+y\gamma_2$};
\draw (13,4) node[below left] {$\scriptstyle{v}$};
\draw (8.5,-2) node{$xf^{D_1}_{vv'}(t)+yf^{D_2}_{vv'}(t)=f^{D}_{vv'}(t)\quad \forall\, v'\neq v$};
\draw (7,-4.5) node{LV};
\end{scope}
%EV
\begin{scope} [xshift=22cm,yshift=-17cm]
\draw (-0.8,0) -- (-0.8,4);
\draw (-0.8,4) node{$\bullet$};
\draw (-0.8,4) node[below left] {$\scriptstyle{v}$};
\draw[->] (-0.8,0) -- (-0.8,2);
\draw (-0.8,4) node[right] {$\gamma$};
\draw (0.6,1.8) node{$PQ$};
\draw (-0.8,0) node[right] {$D$};
\draw (3,2) node{$=$};
\draw (5,0) -- (5,4);
\draw (5,4) node{$\bullet$};
\draw (5,4) node[below left] {$\scriptstyle{v}$};
\draw[->] (5,0) -- (5,2);
\draw (5,4) node[right] {$Q(t)\gamma$};
\draw (6,1.8) node{$P$};
\draw (5,0) node[right] {$D'$};
\draw (10,3) node[right] {$f_{vv'}^{D'}(t)=Q(t)f_{vv'}^D(t)$};
\draw (12,1) node[right] {$\forall\, v'\neq v$};
\draw (7,-2) node{EV};
\end{scope}
%LD
\begin{scope} [xshift=10cm,yshift=-29cm]
\draw (-1,0) -- (-1,4);
\draw (-1,2) node[right] {$1$};
\draw (-1,4) node{$\bullet$};
\draw (-1,4) node[below left] {$\scriptstyle{v_1}$};
\draw (-1,4) node[right] {$\gamma_1$};
\draw (2.5,0) -- (2.5,4);
\draw (2.5,2) node[right] {$1$};
\draw (2.5,4) node{$\bullet$};
\draw (2.5,4) node[below left] {$\scriptstyle{v_2}$};
\draw (2.5,4) node[right] {$\gamma_2$};
\draw (2.5,0) node[right] {$D$};
\draw (5.5,2) node{$=$};
\draw (8,0) -- (8,4);
\draw (8,2) node[right] {$1$};
\draw (8,4) node{$\bullet$};
\draw (8,4) node[below left] {$\scriptstyle{v_1}$};
\draw (8,4) node[right] {$\gamma_1$};
\draw (11.5,0) -- (11.5,4);
\draw (11.5,2) node[right] {$1$};
\draw (11.5,4) node{$\bullet$};
\draw (11.5,4) node[below left] {$\scriptstyle{v_2}$};
\draw (11.5,4) node[right] {$\gamma_2$};
\draw (11.5,0) node[right] {$D'$};
\draw (14.5,2) node{$+$};
\draw (16,0) -- (16,2.75) (18.5,0) -- (18.5,2.75) arc (0:180:1.25);
\draw[->] (17.2,4) -- (17.3,4);
\draw (17.25,4) node[above] {$P$};
\draw (18.5,0) node[right] {$D''$};
\draw (7.5,-2) node {$f^D_{v_1v_2}=f^{D'}_{v_1v_2}+P$};
\draw (8,-4.5) node {LD};
\end{scope}
\end{tikzpicture}
\end{center}
\caption{Relations, where $x,y\in\Q$, $P,Q,R\in\Qt$, $\gamma,\gamma_1,\gamma_2\in\Al$.} \label{figrel}
\end{figure}

The automorphism group $Aut(\Al,\bl)$ of the Blanchfield module $(\Al,\bl)$ acts on $\tilde{\A}_n(\Al,\bl)$ 
by acting on the colorings of all the univalent vertices of a diagram 
simultaneously. Denote by Aut the relation which identifies two diagrams obtained from one another by the action of an element 
of $Aut(\Al,\bl)$. Set:
$$\A_n(\Al,\bl)=\tilde{\A}_n(\Al,\bl)/\langle \textrm{Aut} \rangle \qquad \textrm{and} \qquad \A(\Al,\bl)=\oplus_{n\in\N}\A_n(\Al,\bl).$$ 

Since the opposite of the identity is an automorphism of $(\Al,\bl)$, we have:
\begin{lemma} \label{lemmaodd}
 For all $n\geq 0$, $\A_{2n+1}(\Al,\bl)=0$. 
\end{lemma}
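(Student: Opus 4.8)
The plan is to exploit the automorphism $-\mathrm{id}$ of the Blanchfield module $(\Al,\bl)$ and track how it acts on a diagram of odd degree. First I would check that $-\mathrm{id}$ is indeed an automorphism of $(\Al,\bl)$: it is certainly $\Qt$-linear and bijective on $\Al$, and it preserves the Blanchfield form since $\bl(-\gamma,-\eta)=(-1)(\overline{-1})\bl(\gamma,\eta)=\bl(\gamma,\eta)$ (the bar being the involution $t\mapsto t^{-1}$, which fixes $-1$). So the Aut relation identifies, for every $(\Al,\bl)$-colored diagram $D$, the diagram $D$ with the diagram $D^-$ obtained by replacing each univalent-vertex color $\gamma$ by $-\gamma$ and keeping all edge labels and all rational fractions $f_{vv'}$ unchanged (the latter because $f^{D^-}_{vv'}=(-1)(\overline{-1})f^D_{vv'}=f^D_{vv'}$, which is consistent with $\bl(-\gamma,-\gamma')=\bl(\gamma,\gamma')$).

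Next I would use the LV (linearity for vertices) relation to pull out the sign from each univalent vertex one at a time. Applying LV at a single univalent vertex $v$ with $x=-1$, $y=0$ (or more cleanly, using that the coloring $-\gamma$ at $v$ equals $(-1)\cdot\gamma$ and LV is linear) shows that the diagram with color $-\gamma$ at $v$ equals $-1$ times the diagram with color $\gamma$ at $v$, provided the fractions $f_{vv'}$ scale correspondingly — but here is the subtlety, so let me be careful: LV at $v$ with the single color $\gamma\rightsquigarrow(-1)\gamma$ would force $f^{D^-}_{vv'}=-f^D_{vv'}$, which is \emph{not} what the Aut relation gives. So instead I should flip \emph{all} univalent vertices simultaneously via iterated LV: flipping vertices $v_1,\dots,v_k$ one after another multiplies the diagram by $(-1)^k$ at the level of colors, while the fractions get multiplied by $(-1)^2=1$ for each pair once both endpoints have been flipped, and by $(-1)$ transiently in between. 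The clean way is: let $V=\{v_1,\dots,v_k\}$ be the univalent vertices, and observe that the diagram $D$ with colors $(\gamma_i)$ and fractions $(f_{ij})$, when we apply LV simultaneously (formally, expand $(-1)\gamma_i$ at every vertex using LV and collect), equals $(-1)^{k}$ times the diagram with colors $(-\gamma_i)$ and fractions $((-1)^{?}f_{ij})$ — and since each $f_{ij}$ involves two flipped vertices its sign is $(-1)^2=1$. Hence $D^-$, which by Aut equals $D$, also equals $(-1)^k\,D$ in $\tilde\A_n(\Al,\bl)$ up to this bookkeeping; combined, $D=(-1)^k D$ whenever the parity of $k$ is odd.

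Then I would connect the parity of the number $k$ of univalent vertices to the parity of the degree $n$ (the number of trivalent vertices). In a uni-trivalent graph with no struts, counting half-edges: $2E = u + 3t$ where $u=k$ is the number of univalent vertices, $t=n$ the number of trivalent vertices, and $E$ the number of edges. Hence $u\equiv t\pmod 2$, i.e. $k\equiv n\pmod 2$. So for $n=2m+1$ odd, $k$ is odd, and the previous step gives $D=-D$, hence $2D=0$, hence $D=0$ in $\tilde\A_n(\Al,\bl)$, and a fortiori in the quotient $\A_n(\Al,\bl)$. Since $\A_{2n+1}(\Al,\bl)$ is generated by such diagrams, it vanishes.

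The main obstacle is the careful handling of the auxiliary fractions $f_{vv'}$ when applying LV repeatedly: the LV relation as stated imposes the condition $x f^{D_1}_{vv'}+y f^{D_2}_{vv'}=f^D_{vv'}$ for $v'\ne v$, and one must verify that flipping the sign at \emph{all} univalent vertices (rather than just one) is compatible with the requirement $f^{D^-}_{vv'}=f^D_{vv'}$ coming from $\bl(-\gamma,-\gamma')=\bl(\gamma,\gamma')$ — the point being that each $f_{vv'}$ picks up two sign flips and is therefore unchanged, so the iterated LV really does produce the Aut-image $D^-$ with the correct fractions, and the total scalar is $(-1)^k$. Once this compatibility is spelled out, the rest is the elementary half-edge count. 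An alternative, perhaps cleaner, route that sidesteps some of this bookkeeping: apply LV at one vertex $v$ with the relation $\gamma = (-1)\cdot(-\gamma)$ together with the EV relation to transfer the scalar $-1$ onto an adjacent edge as multiplication by $(-1)\in\Qt$, then slide it around; but I expect the direct simultaneous-LV argument above to be the most transparent, so that is what I would write up.
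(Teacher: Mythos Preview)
Your argument is correct and is precisely the one the paper has in mind when it writes ``since the opposite of the identity is an automorphism of $(\Al,\bl)$'': apply Aut with $-\mathrm{id}$, pull out $(-1)^k$ via iterated LV (the linkings $f_{vv'}$ pick up two sign flips each and are unchanged), and use the half-edge count $k\equiv n\pmod 2$. One small correction: the equality $D=D^-$ coming from Aut holds only in $\A_n(\Al,\bl)$, not in $\tilde{\A}_n(\Al,\bl)$, so the conclusion $D=-D$ (and hence $D=0$) is in $\A_n(\Al,\bl)$ rather than in $\tilde{\A}_n(\Al,\bl)$---but that is exactly what you need.
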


In Section \ref{secdiagrams}, we prove (Proposition \ref{propphin}):
\begin{proposition} \label{propphinintro}
 Fix a Blanchfield module $(\Al,\bl)$. 
 For all $n\geq 0$, there is a canonical surjective $\Q$--linear map: $$\varphi_n : \A_n(\Al,\bl) \twoheadrightarrow \G_n^b(\Al,\bl).$$
\end{proposition}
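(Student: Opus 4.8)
The plan is to construct $\varphi_n$ by a clasper-calculus realization map sending each $(\Al,\bl)$--colored diagram of degree $n$ to the class of an explicit null Borromean bracket in $\G_n^b(\Al,\bl)$, and then to check that all the relations AS, IHX, LE, OR, Hol, LV, EV, LD, and Aut are killed.

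\textbf{Construction of the map.} Fix a representative $\Q$SK--pair $(M,K)\in\Ens(\Al,\bl)$. Given an $(\Al,\bl)$--colored diagram $D$ of degree $n$, I would realize it geometrically as follows. The Alexander module $\Al=H_1(\tilde X;\Q)$ is realized by a finite collection of null knots in $M\setminus K$ (more precisely, knots that lift to $\tilde X$ and whose lifts represent the relevant homology classes); a univalent vertex colored by $\gamma\in\Al$ is realized by attaching a leaf to a knot representing $\gamma$, and the rational fraction $f_{vv'}^D$ records the equivariant linking number $lk_e$ between the chosen lifts at distinct univalent vertices (this is possible precisely because $f_{vv'}^D\bmod\Qt=\bl(\gamma,\gamma')$). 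An edge of $D$ colored by $P(t)\in\Qt$ is realized by a band summed over the $\Z$--action with coefficients read off from $P$. The trivalent vertices become internal vertices of Y--graphs; more precisely, each degree-$n$ diagram gives rise to an $n$--component null Y--link $\Gamma_D$ in $M\setminus K$ (built out of $n$ Y--graphs, one per trivalent vertex, with their leaves clasping the edge-knots in the pattern dictated by $D$), and I set
$$\varphi_n(D)=\left[(M,K);\Gamma_D\right]\in\G_n^b(\Al,\bl).$$
Surjectivity is then essentially immediate: by definition $\G_n^b(\Al,\bl)$ is generated by classes of brackets defined by $n$--component null Borromean surgeries, and equivariant clasper calculus (developed in Section~\ref{secborro}) lets me normalize any such Y--link, modulo $\F_{n+1}$ and modulo lower-degree Borromean moves, into a standard form that is visibly the image of some colored diagram; in particular the leaves can be assumed to clasp only the fixed knots realizing $\Al$, the bands can be assumed to carry polynomial holonomy, and the linking data is captured by the $f_{vv'}$.

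\textbf{Well-definedness.} The bulk of the argument is checking that each relation in Figure~\ref{figrel} (and the Aut relation) maps to $0$ in $\G_n^b(\Al,\bl)$. The topological relations AS and IHX are the standard consequences of clasper calculus and already appear in the $\Z$SK setting (Garoufalidis--Rozansky, and the integral-coefficient version established in Section~\ref{secdiagrams}): AS comes from the effect of changing the cyclic order at an internal vertex, IHX from the fundamental commutator identity for Y--graphs, both holding modulo $\F_{n+1}$. The relations LE and LV (linearity in edges and in univalent-vertex colorings) follow from the bilinearity of the clasper leaf construction: connect-summing leaves or knots and resolving corresponds to adding brackets, modulo higher degree. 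OR and Hol are book-keeping identities: reversing an edge orientation corresponds to the substitution $t\mapsto t^{-1}$ coming from the deck transformation, and multiplying all three edges at a trivalent vertex by $t$ corresponds to translating the Y--graph by the generator $\tau$ of $Aut(\tilde X)$, which does not change the knot type in $M$. EV (sliding a polynomial factor from an edge through a univalent vertex into the vertex coloring, with the corresponding change of $f_{vv'}$) and LD (changing $f_{vv'}$ by a polynomial $P\in\Qt$ at the cost of an extra ``strut-like'' term) are the genuinely equivariant relations: EV reflects that a band carrying $P(t)Q(t)$ attached to a leaf on a knot can be rewritten as a band carrying $P(t)$ on the knot pushed forward by $Q(\tau)$; LD reflects that the ambiguity in lifting a null knot to $\tilde X$ —equivalently, the indeterminacy in $lk_e$ modulo $\Qt$— changes the diagram only by a term with a polynomially-decorated edge between the two vertices, which is exactly the shape appearing in the LD picture. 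Finally the Aut relation holds because acting by $\zeta\in Aut(\Al,\bl)$ on all univalent colorings simultaneously corresponds, by Theorem~\ref{thM3}, to performing a sequence of null LP--surgeries realizing $\zeta$ (up to a power of $t$, absorbed by Hol); such a sequence changes $(M,K)$ but fixes the Blanchfield module, hence fixes $\G_n^b(\Al,\bl)$, and one checks the bracket classes agree modulo $\F_{n+1}$. Independence of the auxiliary choices (the representative $(M,K)$, the knots realizing $\Al$, the precise bands) is folded into the same verification via Theorem~\ref{thM3} and clasper calculus.

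\textbf{Main obstacle.} The hard part is the equivariant clasper calculus underlying EV, LD and the normalization step for surjectivity: one must show that arbitrary null Y--links can be brought to the ``diagrammatic'' standard form using only moves that are trivial in $\G_n^b$, and that the two equivariant relations EV and LD exactly account for the remaining indeterminacies (choice of lifts to $\tilde X$ and choice of how holonomy bands are distributed). This is precisely what Sections~\ref{secborro} and~\ref{secdiagrams} are designed to provide —in the integral case first, and then this proposition is the $(\Al,\bl)$--colored repackaging— so here I would invoke Proposition~\ref{propphin} and its supporting lemmas rather than redo the clasper manipulations. The only point requiring care beyond the integral case is the passage from $\Z[t^{\pm1}]$ to $\Qt$ coefficients and the appearance of the annihilator $\delta$ in the denominators of the $f_{vv'}$, which is handled by the rational clasper arguments of Section~\ref{secsurjective}.
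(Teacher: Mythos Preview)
Your approach is the paper's: realize colored diagrams by null Y--links, check the defining relations, and conclude surjectivity. Three points where your sketch is imprecise or off:

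\textbf{Construction.} The paper does not realize an arbitrary diagram directly. It first restricts to \emph{elementary} diagrams (internal edges colored by powers of $t$, external edges by $1$), realizes those via an admissible embedding with Hopf--link insertion on internal edges, and then \emph{extends} $\tilde\varphi_n$ to all diagrams by imposing LE and EV. So LE and EV are built into the definition; only AS, IHX, OR, Hol, LV, LD, Aut are checked afterwards. Your ``band summed over the $\Z$--action'' is vaguer than this and does not separate the two roles.

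\textbf{Rational coefficients.} Your final sentence misattributes the rational passage to Section~\ref{secsurjective}. That section concerns the extension from $\G_n^b$ to the full $\G_n$ via isolated vertices and has nothing to do with $\Qt$--coefficients. The actual device, which your construction omits, is in Section~\ref{secdiagrams}: an arbitrary class $\gamma\in\Al$ need not be representable by a knot in $\tilde X$, so one chooses integers $(s_v)_{v\in V}$ making $(s_v)\cdot D$ realizable and sets $[D]=\prod_v s_v^{-1}\,[(s_v)\cdot D]$ (Lemma~\ref{lemmarationalmultiples} and the discussion around it). Without this step your map is not defined on all of $\A_n(\Al,\bl)$.

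\textbf{Surjectivity.} No normalization is needed. Any $n$--component null Y--link is already, tautologically, a realization of the elementary diagram given by the disjoint union of $n$ degree--$1$ Y's, with labels and linkings read off from the leaves; this is the last sentence of the proof of Proposition~\ref{propphin}.
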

To get a similar surjective map onto $\G_n(\Al,\bl)$, we need more general diagrams. 
An {\em $(\Al,\bl)$--augmented diagram} is the union of an $(\Al,\bl)$--colored diagram (its {\em Jacobi part}) and of finitely many isolated 
vertices colored by prime integers. The {\em degree} of an $(\Al,\bl)$--augmented diagram is the number of its vertices of valence $0$ or $3$. 
Set:
$$\A_n^{aug}(\Al,\bl)=\frac{\Q \langle\mbox{$(\Al,\bl)$--augmented diagrams of degree $n$}\rangle}{\Q \langle\mbox{AS, IHX, LE, OR, Hol, LV, EV, LD, Aut}\rangle}\quad\textrm{for }n\geq 0,$$
$$\A^{aug}(\Al,\bl)=\oplus_{n\in\N}\A_n^{aug}(\Al,\bl).$$

In Section \ref{secsurjective}, we prove:
\begin{theorem} \label{thextend}
 Fix a Blanchfield module $(\Al,\bl)$. 
 For all $n\geq 0$, there is a canonical surjective $\Q$--linear map: $$\varphi_n : \A_n^{aug}(\Al,\bl) \twoheadrightarrow \G_n(\Al,\bl).$$
\end{theorem}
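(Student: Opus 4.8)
The plan is to extend the surjection $\varphi_n:\A_n(\Al,\bl)\twoheadrightarrow\G_n^b(\Al,\bl)$ of Proposition \ref{propphinintro} across the new isolated prime--colored vertices, and then to show that the brackets so obtained already span $\G_n(\Al,\bl)$. For the construction, fix a $\Q$SK--pair $(M,K)\in\Ens(\Al,\bl)$ and, for each prime $p$, a $\Q$--ball $B_p$ with $H_1(B_p;\Z)\cong\Z/p\Z$. Let $D$ be an $(\Al,\bl)$--augmented diagram of degree $n$, with isolated vertices colored by primes $p_1,\dots,p_k$ and Jacobi part $D_0$ (of degree $n-k$). Realize $D_0$ inside $M\setminus K$ by a null Y--link $\Gamma$ as in the proof of Proposition \ref{propphinintro} (Section \ref{secdiagrams}), so that the $\Al$--colorings of $D_0$ and the fractions $f^{D_0}_{vv'}$ are read off the homology classes in $\Al=H_1(\tilde{X};\Q)$ and the pairwise equivariant linking numbers of the leaves of $\Gamma$; choose pairwise disjoint standard $3$--balls $B^3_1,\dots,B^3_k$ in $M\setminus K$, disjoint from $\Gamma$; and set
$$\varphi_n(D)=\left[(M,K);\Gamma,\tfrac{B_{p_1}}{B^3_1},\dots,\tfrac{B_{p_k}}{B^3_k}\right]\ \bmod\ \F_{n+1}(\Al,\bl)\ \in\ \G_n(\Al,\bl),$$
extended $\Q$--linearly; for $k=0$ this recovers the map of Proposition \ref{propphinintro}.

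\emph{Well-definedness.} The auxiliary $3$--balls can be carried onto one another by an isotopy of $M\setminus(K\cup\Gamma)$, so they are immaterial. The relations AS, IHX, LE, OR, Hol, LV, EV and LD involve only the Jacobi part; to reduce them to Proposition \ref{propphinintro} one uses, for any partition $S\sqcup T$ of a null LP--surgery family, the identity
$$\left[(M,K);\left(\tfrac{B_i}{A_i}\right)_{i\in S\sqcup T}\right]=\sum_{J\subseteq T}(-1)^{|J|}\left[(M,K)\left(\left(\tfrac{B_i}{A_i}\right)_{i\in J}\right);\left(\tfrac{B_i}{A_i}\right)_{i\in S}\right],$$
together with Theorem \ref{thM3} (every pair occurring on the right still has Blanchfield module $(\Al,\bl)$) and the canonicity of $\varphi_{n-k}$: a relation among colored diagrams holding in $\G^b_{n-k}$, adjoined with the fixed disjoint family $(B_{p_j}/B^3_j)_j$, produces a combination lying in $\F_{n+1}(\Al,\bl)$. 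The same identity and Theorem \ref{thM3} give independence of the base pair $(M,K)\in\Ens(\Al,\bl)$; the residual ``up to multiplication by a power of $t$'' of Theorem \ref{thM3} is absorbed by the Aut relation, since multiplication by $t$ is an automorphism of $(\Al,\bl)$. It remains to see that replacing $B_p$ by another $\Q$--ball with first homology $\Z/p\Z$ leaves $\varphi_n(D)$ unchanged modulo $\F_{n+1}$; this follows from a $\Q$--ball replacement lemma, to the effect that two $\Q$--balls with isomorphic first homology are related by a null LP--surgery that is trivial to leading order in the filtration. Establishing this lemma is part of the work of Section \ref{secsurjective}.

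\emph{Surjectivity.} By definition $\F_n(\Al,\bl)$ is spanned by the brackets $[(M,K);(B_i/A_i)_{1\le i\le n}]$ of arbitrary null LP--surgery families. The decisive geometric step is to show that a single null LP--surgery on a $\Q$SK--pair can be expressed, modulo deeper terms of the filtration, as a combination of null Borromean surgeries and null LP--surgeries along $\Q$--balls; this is the rational analogue of Matveev's realization of Borromean surgery, carried out via the equivariant clasper calculus of Section \ref{secborro}, and it is here that the rational coefficients and the torsion of the Alexander module really intervene. Substituting this into each slot of an $n$--fold bracket and expanding with the displayed identity, every generator of $\F_n(\Al,\bl)$ becomes, modulo $\F_{n+1}(\Al,\bl)$, a combination of brackets built solely from null Borromean surgeries and null $\Q$--ball surgeries. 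A $\Q$--ball surgery whose first homology is $\Z/d\Z$ is in turn reduced modulo $\F_{n+1}(\Al,\bl)$ to prime--order ones, using the primary decomposition of finite abelian groups and the behavior of $\Q$--ball surgeries modulo $\F_2$. Finally a bracket of $j$ null Borromean surgeries and $n-j$ prime--order $\Q$--ball surgeries is, by construction, the image under $\varphi_n$ of the augmented diagram consisting of the degree--$j$ colored diagram produced from the Borromean part as in Proposition \ref{propphinintro} together with $n-j$ isolated vertices colored by the corresponding primes; hence $\varphi_n$ is surjective. As a by-product one reads off $\G_1(\Al,\bl)$, giving Theorem \ref{thG1}.

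\emph{Main obstacle.} The heart of the argument is the geometric decomposition of an arbitrary null LP--surgery into Borromean and $\Q$--ball surgeries modulo higher filtration: over $\Q$ ---in contrast with the Garoufalidis--Rozansky setting over $\Z$--- the $\Q$--ball moves cannot be avoided, and it is exactly their presence that forces the isolated prime--colored vertices into $\A^{aug}(\Al,\bl)$. The subordinate technical points are the $\Q$--ball replacement lemma and the bookkeeping of the ``power of $t$'' indeterminacy of Theorem \ref{thM3} against the Aut relation.
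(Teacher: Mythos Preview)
Your construction and well-definedness argument are essentially the paper's (Lemma \ref{lemmadiag}); the ``$\Q$--ball replacement lemma'' you invoke is close to Lemma \ref{lemmagenus0}, which in turn rests on the computation of $\G_1^{\QS}$ (Lemma \ref{lemma07}).

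The gap is in the surjectivity step. You assert that an arbitrary null LP--surgery decomposes, modulo deeper filtration, into null Borromean surgeries and $\Q$--ball surgeries, and that this comes from the clasper calculus of Section \ref{secborro}. Neither claim is correct. Section \ref{secborro} only manipulates Y--links; it says nothing about reducing a general LP--surgery. The actual decomposition is the cited Theorem \ref{thdecsur} (from \cite{M2}), and it produces \emph{three} types of elementary surgeries: genus~0 (connected sums, i.e.\ $\Q$--balls), genus~3 (Borromean), and genus~1 (replacement of a standard solid torus by a $d$--torus). You have omitted the genus~1 case entirely, and it is the crux of the proof.

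The $d$--torus surgeries are \emph{not} absorbed by the other two types at the level of generators; the paper eliminates them by a dual argument. One takes $\lambda\in(\F_n(\Al,\bl))^*$ vanishing on $\F_{n+1}$ and on the image of $\varphi_n$, and shows $\lambda=0$ on all elementary brackets by induction on the number $k$ of genus~1 surgeries. For the inductive step, freezing all but the first torus slot defines a degree~1 invariant $\bar\lambda$ of framed $\Q$--tori; one checks $\bar\lambda$ vanishes on the standard torus and on $T_0\sharp M_p$ for every prime $p$ (the latter by the induction hypothesis, since that is a genus~0 surgery), and then invokes Lemma \ref{lemmainvtori} to conclude $\bar\lambda\equiv0$. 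This is the step your outline lacks, and without it the surjectivity argument does not close.
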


We will see in the next subsection that this map is an isomorphism when the Blanchfield module $(\Al,\bl)$ is trivial.

    \subsection{The Lescop invariant and the Kricker invariant} \label{subsecLesKri}

In order to introduce the Kricker invariant of \cite{GK} and the Lescop invariant of \cite{Les2}, we first define the graded space $\A(\delta)$ 
where they takes values and we relate it to the graded space $\A(\Al,\bl)$. 

Let $\delta\in\Qt$. A {\em $\delta$--colored diagram} is a trivalent graph whose vertices are oriented and whose edges 
are oriented and colored by $\frac{1}{\delta(t)}\Qt$. The degree of a $\delta$--colored diagram is the number of its vertices. 
Set:
$$\A_n(\delta)=\frac{\Q\langle\mbox{$\delta$--colored diagrams of degree $n$}\rangle}{\Q\langle\mbox{AS, IHX, LE, OR, Hol, Hol'}\rangle},$$
where the relations AS, IHX, LE, OR, Hol are represented in Figure \ref{figrel} and the relation Hol' is represented in Figure \ref{fighol'}. Here the relations 
LE, OR and Hol hold with edges labelled in $\frac{1}{\delta(t)}\Qt$. Note that in the case of $\A_n(\Al,\bl)$, the relation Hol' is induced by the 
relations Hol, EV and LD. 
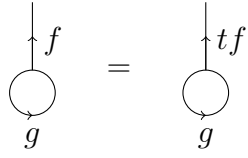
\begin{figure}[htb] 
\begin{center}
\begin{tikzpicture} [scale=0.3]
\draw (0,0) -- (0,3);
\draw (0,-1) circle (1);
\draw[->] (-0.1,-2) -- (0,-2);
\draw (0,-3) node{$g$};
\draw[->] (0,0) -- (0,1.5);
\draw (0,1.3) node[right] {$f$};
\draw (3.8,0) node{$=$};
\begin{scope} [xshift=7.6cm]
\draw (0,0) -- (0,3);
\draw (0,-1) circle (1);
\draw[->] (-0.1,-2) -- (0,-2);
\draw (0,-3) node{$g$};
\draw[->] (0,0) -- (0,1.5);
\draw (0,1.3) node[right] {$tf$};
\end{scope}
\end{tikzpicture}
\end{center}
\caption{Relation Hol', with $f,g\in\frac{1}{\delta(t)}\Qt$.} \label{fighol'}
\end{figure}
Since any trivalent graph has an even number of vertices, we have $\A_{2n+1}(\delta)=0$ for all $n\geq0$.

With an $(\Al,\bl)$--colored diagram $D$ of degree $n$, we associate a $\delta$--colored diagram $\tilde{\psi}_n(D)$. 
Let $V$ be the set of univalent vertices of $D$. A {\em pairing of $V$} is an involution of $V$ 
with no fixed point. Let $\mathfrak{p}$ be the set of pairings of $V$. Fix $p\in\mathfrak{p}$. 
Define a $\delta$--colored diagram $p(D)$ in the following way. If $v\in V$ and $v'=p(v)$, replace in $D$ 
the vertices $v$ and $v'$, and their adjacent edges, by a colored edge, as indicated in Figure \ref{figgroup}. 
Now set:
$$\tilde{\psi}_n(D)=\sum_{p\in\mathfrak{p}} p(D).$$
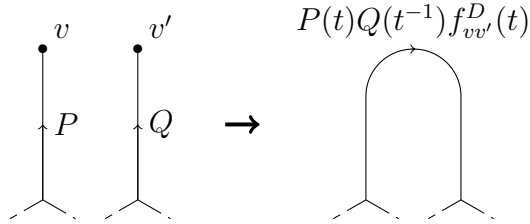
\begin{figure}[htb] 
\begin{center}
\begin{tikzpicture} [scale=0.5]
\draw (0,0) -- (0,4) (2.5,0) -- (2.5,4);
\draw (0,4) node {$\scriptstyle{\bullet}$} (2.5,4) node {$\scriptstyle{\bullet}$};
\draw (0,4) node[above right] {$v$} (2.5,4) node[above right] {$v'$};
\draw[->] (0,0) -- (0,2);
\draw[->] (2.5,0) -- (2.5,2);
\draw (0,2) node[right] {$P$} (2.5,2) node[right] {$Q$};
\draw (-0.43,-0.25) -- (0,0) -- (0.43,-0.25) (2.07,-0.25) -- (2.5,0) -- (2.93,-0.25);
\draw[dashed] (-0.86,-0.5) -- (-0.43,-0.25) (0.43,-0.25) -- (0.86,-0.5) (1.64,-0.5) -- (2.07,-0.25) (2.93,-0.25) -- (3.36,-0.5);
\draw[->,line width=1.5pt] (4.8,2) -- (5.7,2);
\begin{scope} [xshift=8.5cm]
\draw (-0.43,-0.25) -- (0,0) -- (0.43,-0.25) (2.07,-0.25) -- (2.5,0) -- (2.93,-0.25);
\draw[dashed] (-0.86,-0.5) -- (-0.43,-0.25) (0.43,-0.25) -- (0.86,-0.5) (1.64,-0.5) -- (2.07,-0.25) (2.93,-0.25) -- (3.36,-0.5);
\draw (0,0) -- (0,2.75) (2.5,0) -- (2.5,2.75) arc (0:180:1.25);
\draw[->] (1.2,4) -- (1.3,4);
\draw (1.25,4) node[above] {$P(t)Q(t^{-1})f_{vv'}^D(t)$};
\end{scope}
\end{tikzpicture}
\end{center}
\caption{Pairing of vertices} \label{figgroup}
\end{figure}
Note that $\tilde{\psi}_n(D)=0$ when the number of univalent vertices is odd. 
We obtain a $\Q$--linear map $\tilde{\psi}_n$ from the rational vector space freely generated by the 
$(\Al,\bl)$--colored diagrams of degree $n$ to $\A_n(\delta)$. One easily checks that $\tilde{\psi}_n$ induces a map:
$$\psi_n:\A_n(\Al,\bl)\to\A_n(\delta).$$

The disjoint union of diagrams defines a multiplicative operation on $\A(\delta)=\oplus_{n\in\N}\A_n(\delta)$ which endows it with a graded algebra structure.
Denote by $\expd$ the exponential map with respect to this multiplication. 

The following result asserts the existence and the properties of an invariant $Z$ which may be either the Lescop invariant or the Kricker invariant. Althought it is not known 
whether they are equal or not, they both satisfy the properties of the theorem. In the sequel, we will refer to ``the invariant $Z$''.
\begin{theorem}[\cite{Les2,Les3,Kri,GK,M6}] \label{thinvariantZ}
 There is an invariant $Z=(Z_n)_{n\in\N}$ of $\Q$SK--pairs with the following properties. 
 \begin{itemize}
  \item If $(M,K)$ is a $\Q$SK--pair with Blanchfield module $(\Al,\bl)$, then $Z_n(M,K)\in\A_n(\delta)$, where $\delta$ is the annihilator of $\Al$. 
  \item Fix a Blanchfield module $(\Al,\bl)$. Let $\delta$ be the annihilator of $\Al$. The $\Q$--linear extension of $Z_n:\Ens(\Al,\bl)\to\A_n(\delta)$ to $\F_0(\Al,\bl)$ 
  vanishes on $\F_{n+1}(\Al,\bl)$ and $Z_n\circ\varphi_n=\psi_n$, where $\varphi_n:\A_n(\Al,\bl)\twoheadrightarrow\G_n^b(\Al,\bl)$ is the surjection of Proposition \ref{propphinintro}.
  \item Let $p_n^c:\A_n(\delta)\to\A_n(\delta)$ be the map which sends a connected diagram to itself and non-connected diagrams to 0. Set $Z_n^c=p^c\circ Z_n$ and 
   $Z^c=\sum_{n>0}Z_n^c$. Then $Z^c$ is additive under connected sum and $Z=\expd(Z^c)$. 
 \end{itemize}
\end{theorem}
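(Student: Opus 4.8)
The plan is to assemble the statement from the constructions and splitting formulas in the cited references, treating the Lescop invariant and the Kricker invariant in parallel. I first recall the two constructions: the Lescop invariant is defined for $\Q$SK--pairs in \cite{Les2} via configuration space integrals, while the rational lift of the Kontsevich integral is defined by Kricker \cite{Kri} and Garoufalidis--Kricker \cite{GK} for $\Z$SK--pairs and extended to $\Q$SK--pairs in \cite{M6}. For the first bullet, one observes that in either case $Z_n(M,K)$ is a sum of trivalent graphs whose edges are beaded by rational functions whose denominators divide the Alexander polynomial $\Delta$ of $(M,K)$; since $\Delta$ divides a power of the annihilator $\delta$, and since in $\A_n(\delta)$ beadings can be rescaled using the relations LE, OR and Hol, one checks that the value lies in $\A_n(\delta)$. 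This is essentially a normalisation check, transported to the $\Q$SK setting via \cite{M6}.

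For the second bullet, the key input is the splitting formulas for $Z$ with respect to a null LP--surgery, established for the Kricker invariant over $\Z$ in \cite{GK} and over $\Q$ in \cite{M6}, and for the Lescop invariant in \cite{Les3}. Applied to a family of $n$ disjoint null LP--surgeries $\left(\frac{B_i}{A_i}\right)_{1\leq i\leq n}$ on a fixed $(M,K)\in\Ens(\Al,\bl)$, these formulas express $Z_N$ evaluated on the bracket $\bigl[(M,K);(\frac{B_i}{A_i})_i\bigr]$ as a sum of diagrams, each carrying at least $n$ trivalent vertices ``produced'' by the $n$ surgeries; hence this bracket lies in $\ker Z_N$ whenever $N<n$, i.e. the $\Q$--linear extension of $Z_n$ vanishes on $\F_{n+1}(\Al,\bl)$. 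Moreover, inspecting the splitting formula in the Borromean case, the degree--$n$ part of the sum is exactly the sum over pairings of the leaves of the associated Y--link, each pair of leaves contributing a beaded edge coloured by the corresponding equivariant linking number --- which is precisely the diagrammatic operation defining $\tilde{\psi}_n$ (Figure \ref{figgroup}) on the colored diagram that $\varphi_n$ assigns to the Y--link. Since $\varphi_n$ of Proposition \ref{propphinintro} is built from exactly these Y--link brackets, this yields $Z_n\circ\varphi_n=\psi_n$.

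The third bullet is the group-likeness of $Z$. The connected part $Z^c=\sum_{n>0}p^c\circ Z_n$ is additive under connected sum: this is the behaviour of the Kontsevich integral under connected sum of knots, inherited by the Kricker lift \cite{Kri,GK} and holding for the Lescop invariant by \cite{Les2,Les3}, the extension of \cite{M6} preserving it. Since the disjoint union of diagrams is the product in the graded algebra $\A(\delta)$ and $Z$ is by construction a group-like element for the corresponding coproduct, the standard fact that a group-like element is the exponential of its primitive part gives $Z=\expd(Z^c)$.

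The main obstacle is the second bullet, and within it the identification of the degree--$n$ term of the iterated splitting formula with the pairing map $\tilde{\psi}_n$: one must match the normalisations of the equivariant linking numbers and beadings used in \cite{GK,M6,Les3} with the conventions fixed in Section \ref{subsecdiag}, and verify that the lower-degree error terms in the splitting formula are compatible with the relations AS, IHX, LE, OR, Hol, LV, EV, LD, so that the construction descends from the free module of $(\Al,\bl)$--colored diagrams to $\A_n(\Al,\bl)$ and defines $\psi_n$ there. The remaining points --- membership in $\A_n(\delta)$ and the exponential formula --- are then essentially bookkeeping once the relevant statements of the references are available in the $\Q$SK setting.
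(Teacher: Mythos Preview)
Your proposal is correct and follows essentially the same line as the paper. The paper, like you, defers the first and third bullets to the cited references and only details the second bullet: it fixes a realization $\Gamma$ of an elementary diagram $D$, invokes the splitting formulas of \cite[Theorem~1.1]{Les3} and \cite[Theorem~1.1]{M6} to compute $Z([(M,K);\Gamma])$ modulo $\F_{n+1}(\delta)$ as the sum over all pairings of leaves weighted by equivariant linking numbers, observes that the Hopf-linked leaves coming from internal edges of $D$ contribute exactly the edge label, and reads off $Z_n\circ\varphi_n(D)=\psi_n(D)$ together with the vanishing on $\F_{n+1}$.

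One small point: your concern about ``lower-degree error terms'' being compatible with the relations AS, IHX, LE, OR, Hol, LV, EV, LD is misplaced. Both $\varphi_n$ and $\psi_n$ are already shown (in Proposition~\ref{propphin} and just before Theorem~\ref{thinvariantZ}) to be well-defined on $\A_n(\Al,\bl)$, so the identity $Z_n\circ\varphi_n=\psi_n$ need only be checked on generators, namely on elementary diagrams via their realizations. The splitting formula gives the degree-$n$ part of $Z$ on such a bracket exactly, with no error term in degree $n$; the remaining terms live in degrees $>n$ and are simply discarded by $Z_n$.
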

We will detail the second statement of this theorem in Section \ref{secdiagrams}.
Note that, in particular, if the map $\psi_n$ is injective, then the map $\varphi_n$ is an isomorphism. 

In order to take into account the whole quotient $\G_n(\Al,\bl)$, we extend the invariant~$Z$. 
Define a {\em $\delta$--augmented diagram} as the disjoint union of a $\delta$--colored diagram with finitely many isolated vertices 
colored by prime integers. The {\em degree} of such a diagram is the number of its vertices. Set:
$$\A_n^{aug}(\delta)=\frac{\Q\langle\mbox{$\delta$--augmented diagrams of degree $n$}\rangle}{\Q\langle\mbox{AS, IHX, LE, OR, Hol, Hol'}\rangle}.$$
The map $\psi_n$ naturally extends to a map $\psi_n:\A_n^{aug}(\Al,\bl)\to\A_n^{aug}(\delta)$ preserving the isolated vertices. 
We now define an invariant $Z^{aug}=(Z_n^{aug})_{n\in\N}$ of $\Q$SK--pairs such that the $\Q$--linear extension of $Z_n^{aug}$ to $\F_0(\Al,\bl)$ takes 
values in $\A_n^{aug}(\delta)$, from which the invariant $Z$ is recovered by forgetting the isolated vertices. For a prime integer $p$, 
define an invariant $\rho_p$ by $\rho_p(M,K)=-v_p(|H_1(M;\Z)|).\bullet_p$, where $v_p$ is the $p$--adic valuation. 
Once again, the disjoint union makes $\A^{aug}(\delta)=\oplus_{n\in\N}\A_n^{aug}(\delta)$ 
a graded algebra. Set:
$$Z^{aug}=Z\sqcup\expd\left(\sum_{p\textrm{ prime}}\rho_p\right).$$

In Section \ref{secZLes}, we prove:
\begin{theorem} \label{thunivinv}
 Fix a Blanchfield module $(\Al,\bl)$. Let $\delta$ be the annihilator of $\Al$. Consider the surjection $\varphi_n:\A_n^{aug}(\Al,\bl)\to\G_n(\Al,\bl)$ 
 of Theorem \ref{thextend} and the map $\psi_n:\A_n^{aug}(\Al,\bl)\to\A_n^{aug}(\delta)$. Then the $\Q$--linear extension of $Z_n^{aug}:\Ens(\Al,\bl)\to\A_n^{aug}(\delta)$ 
 to $\F_0(\Al,\bl)$ vanishes on $\F_{n+1}(\Al,\bl)$ and $Z_n^{aug}\circ\varphi_n=\psi_n$.
\end{theorem}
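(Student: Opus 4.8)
The plan is to prove Theorem \ref{thunivinv} by reducing it to the already-established Theorem \ref{thinvariantZ} for the non-augmented invariant $Z$, and then treating the isolated-vertex part separately. Recall that $Z^{aug}=Z\sqcup\expd(\sum_p\rho_p)$ and that both $\A_n^{aug}$ spaces decompose according to the isolated-vertex content: an augmented diagram of degree $n$ is uniquely a disjoint union of a Jacobi/$\delta$--colored part of some degree $k\leq n$ and a multiset of $n-k$ isolated prime-labelled vertices. The first thing I would do is record that $\psi_n$ respects this splitting (it acts as $\psi_k$ on the Jacobi part and as the identity on isolated vertices, which is exactly how it was defined), and that $\varphi_n$ does as well: the surjection of Theorem \ref{thextend} is built so that an augmented diagram is sent to the class of a bracket on a $\Q$SK--pair obtained by combining the null Borromean/LP--surgeries encoding the Jacobi part (as in Proposition \ref{propphinintro}) with the ball-replacement surgeries $\frac{B_p}{B^3}$ encoding the isolated vertices (as in Theorem \ref{thG1}). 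So the whole statement factors as a ``Jacobi part'' times an ``isolated-vertex part''.

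For the first half of the conclusion, that the $\Q$--linear extension of $Z_n^{aug}$ vanishes on $\F_{n+1}(\Al,\bl)$: I would argue that $Z_n^{aug}$ is, by construction, a polynomial expression (a product/exponential, truncated in degree $n$) in the family $(Z_k)_{k}$ and the degree-$1$ invariants $\rho_p$. Each $Z_k$ is a finite type invariant of degree $k$ with respect to null LP--surgeries by Theorem \ref{thinvariantZ}, and each $\rho_p$ is a finite type invariant of degree $1$ (indeed, the cardinality of $H_1(M;\Z)$ is multiplicative/additive in the relevant sense under LP--surgery — $v_p(|H_1|)$ changes by a controlled amount, so $\rho_p$ is degree $\leq 1$; this needs a short verification using that an LP--surgery $\frac{B}{A}$ multiplies $|H_1|$ by $|H_1(B)|/|H_1(A)|$ when $A$ is a $\Q$--ball, and more generally one checks the bracket over two handlebodies vanishes). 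Then the standard fact that a degree-$n$ component of a product of finite type invariants of total degree $\leq n$ is again finite type of degree $\leq n$ gives that $Z_n^{aug}$ kills $\F_{n+1}$. I would make the exponential-of-degree-one computation explicit: $\expd(\sum_p\rho_p)$ has its degree-$n$ part equal to $\frac{1}{n!}(\sum_p\rho_p)^{\sqcup n}$, each summand a product of $n$ degree-$1$ invariants, hence degree $\leq n$.

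For the identity $Z_n^{aug}\circ\varphi_n=\psi_n$: using the splitting above, it suffices to check it on a generating augmented diagram $D=D'\sqcup(\text{isolated vertices }p_1,\dots,p_m)$ with $D'$ a $(\Al,\bl)$--colored diagram of degree $k=n-m$. Applying $\varphi_n$ gives the class of the bracket $[(M,K);\,\text{(Borromean surgeries for }D'),\,\frac{B_{p_1}}{B^3},\dots,\frac{B_{p_m}}{B^3}]$ on a fixed $(M,K)\in\Ens(\Al,\bl)$, with all surgery supports disjoint. Now I would compute $Z^{aug}$ on this alternating sum. Since $Z^{aug}=Z\sqcup\expd(\sum\rho_p)$ and the surgeries split into two groups with disjoint support, the bracket factors and $Z^{aug}$ evaluated on it is the $\sqcup$-product of $Z$ evaluated on the $D'$-bracket and $\expd(\sum\rho_p)$ evaluated on the $m$-fold $\frac{B_p}{B^3}$-bracket. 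The first factor is $\psi_k(D')$ by the second bullet of Theorem \ref{thinvariantZ} (after passing to the degree-$k$ part, which is forced since the $D'$-bracket lies in $\F_k$). For the second factor, I would compute directly: $\rho_p(M,K)=-v_p(|H_1(M)|)\bullet_p$, and a single surgery $\frac{B_p}{B^3}$ changes $v_p(|H_1|)$ by exactly $+1$, so $\rho_p[(M,K);\frac{B_q}{B^3}] = -\delta_{pq}\bullet_p$ and higher brackets in the $\rho_p$ vanish; hence the degree-$n$ part of $\expd(\sum_p\rho_p)$ on the $m$-fold bracket is $(-1)^m\bullet_{p_1}\sqcup\cdots\sqcup\bullet_{p_m}$ (up to the sign/normalization conventions fixed in the definition of $\psi_n$ on isolated vertices), which is precisely the isolated-vertex part of $\psi_n(D)$. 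Matching the two factors gives $Z_n^{aug}(\varphi_n(D))=\psi_n(D)$.

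The main obstacle I expect is the careful bookkeeping of the degree-$1$ piece: verifying that $\rho_p$ is genuinely finite type of degree $1$ with respect to null LP--surgeries (i.e. that the order-$2$ bracket $[(M,K);\frac{B}{A},\frac{B'}{A'}]$ is annihilated), and pinning down the exact effect of $\frac{B_p}{B^3}$ on the $p$-adic valuation of $|H_1|$ together with the sign conventions so that it agrees on the nose with how isolated prime-labelled vertices were inserted into $\psi_n$ and into the surjection $\varphi_n$ of Theorem \ref{thextend}. Everything else is a formal consequence of multiplicativity of $Z^{aug}$ under disjoint surgery supports and of the non-augmented Theorem \ref{thinvariantZ}. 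One subtlety to handle cleanly is that the equality $Z_n\circ\varphi_n=\psi_n$ in Theorem \ref{thinvariantZ} is stated only for the Borromean surjection onto $\G_n^b$, so for the Jacobi factor I would restrict attention to the image of augmented diagrams whose Jacobi part is realized by Borromean surgeries — which is exactly how $\varphi_n$ in Theorem \ref{thextend} is constructed on that part — so no extension of that identity beyond what is already available is needed.
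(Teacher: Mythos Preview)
Your overall strategy---reduce to Theorem~\ref{thinvariantZ} for the Jacobi part and handle the isolated vertices via the degree~$1$ invariants $\rho_p$---is the same as the paper's. The paper makes this precise through Lemma~\ref{lemmaprodinvariants}, the product formula
\[
\Big(\prod_j\lambda_j\Big)\big([(M,K);(\tfrac{B_i}{A_i})_{i\in I}]\big)=\sum_{\emptyset=J_0\subset\cdots\subset J_n=I}\prod_j\lambda_j\big([(M,K)((\tfrac{B_i}{A_i})_{i\in J_{j-1}});(\tfrac{B_i}{A_i})_{i\in J_j\setminus J_{j-1}}]\big),
\]
applied to the expansion of $Z_n^{aug}$ as a sum of products of the $Z_k$ and the $\rho_{p_i}$; one then argues that the only surviving term is the one assigning the $k$ Borromean surgeries to $Z_k$ and each $\tfrac{B_{p_i}}{B^3}$ to a single $\rho_{p_i}$.

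Your argument has a genuine gap at exactly this point. You assert that because ``the surgeries split into two groups with disjoint support, the bracket factors'' into $Z$ on the Borromean part $\sqcup$-times $\expd(\sum\rho_p)$ on the ball part. But surgeries in a bracket \emph{always} have disjoint support, so this cannot be the reason for any factorization; a product of invariants on a bracket is in general a \emph{sum} over partitions of the surgery set, not a single product. What is special here is that the ball surgeries $\tfrac{B_p}{B^3}$ are connected sums and $Z^{aug}=\expd(Z^c+\sum_p\rho_p)$ is multiplicative under connected sum; this does yield a factorization, but with $Z^{aug}$ (not $Z$ and $\expd(\rho)$ separately) on each factor, and you then still have to argue why the cross terms vanish in degree~$n$. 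Alternatively---and this is what the paper does---you use the product formula directly and kill the unwanted terms. Either way you need the observation, which you do not make, that $Z_k$ vanishes on any degree-$k$ bracket containing a ball surgery $\tfrac{B_p}{B^3}$: the paper deduces this from the last item of Theorem~\ref{thinvariantZ} (the exponential form $Z=\expd(Z^c)$), rephrased as ``$Z_n\circ\varphi_n$ vanishes on diagrams containing isolated vertices''. Without this, nothing prevents $Z_k$ from contributing on a mixed bracket and spoiling the identification with $\psi_n$.

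Two smaller points. Your sign is off: $\rho_p([(M,K);\tfrac{B_p}{B^3}])=+\bullet_p$, not $-\bullet_p$, so the isolated-vertex factor is $\bullet_{p_1}\sqcup\cdots\sqcup\bullet_{p_m}$ with no $(-1)^m$; and when the $p_i$ are not distinct you must check that the multinomial factors from $\expd$ cancel against the number of matchings in the product formula---this is routine but should be said.
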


Let $\Al_0$ be the trivial Blanchfield module. The relations LV and LD allow to express the elements of $\A_n^{aug}(\Al_0)$ without diagrams 
with univalent vertices. It follows that this diagram space as a simpler presentation as:
$$\A_n^{aug}(\Al_0)=\frac{\Q\langle\mbox{augmented diagrams of degree $n$}\rangle}{\Q\langle\mbox{AS, IHX, LE, OR, Hol, Hol'}\rangle},$$
where an {\em augmented diagram} is the disjoint union of a trivalent part ---a trivalent graph whose vertices are oriented and whose edges 
are oriented and colored by $\Qt$--- and a finite number of isolated vertices colored by prime integers. The degree of an augmented diagram is the number of its vertices. 
The space $\A_n(\Al_0)$ admits the similar description without isolated vertices; the corresponding graded space $\A(\Al_0)$ coincides 
with the space denoted $\A(\Qt)$ in \cite{GR}.
Obviously, $\psi_n:\A_n^{aug}(\Al_0)\to\A_n^{aug}(1)$ is an isomorphism. 
Hence Theorems \ref{thextend} and \ref{thunivinv} imply the next results.
\begin{theorem} \label{th0}
 We have a graded space isomorphism $\G(\Al_0)\cong\A^{aug}(\Al_0)$.
\end{theorem}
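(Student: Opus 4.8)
The plan is to combine the two surjections that have already been established in the excerpt and show that, for the trivial Blanchfield module, one of them becomes an isomorphism, forcing the other to be an isomorphism as well. Concretely, Theorem \ref{thextend} gives a canonical surjection $\varphi_n:\A_n^{aug}(\Al_0)\twoheadrightarrow\G_n(\Al_0)$, and Theorem \ref{thunivinv} gives the commutativity relation $Z_n^{aug}\circ\varphi_n=\psi_n$ together with the fact that $Z_n^{aug}$ descends to a well-defined map on $\G_n(\Al_0)$. Since $\psi_n:\A_n^{aug}(\Al_0)\to\A_n^{aug}(1)$ is an isomorphism (as noted just before the statement, because for the trivial module the pairing-of-vertices construction is vacuous apart from relabelling, and LV/LD let one eliminate univalent vertices), the composite $Z_n^{aug}\circ\varphi_n$ is injective. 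A surjection whose post-composition with some map is injective must itself be injective; hence $\varphi_n$ is injective, and being already surjective, it is an isomorphism $\A_n^{aug}(\Al_0)\xrightarrow{\ \cong\ }\G_n(\Al_0)$.

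First I would recall the simplified presentation of $\A_n^{aug}(\Al_0)$ stated in the lines preceding the theorem: using LV and LD, every $(\Al_0,\bl)$-augmented diagram is equivalent in $\A_n^{aug}(\Al_0)$ to a $\Q$-combination of augmented diagrams with no univalent vertices, so that $\A_n^{aug}(\Al_0)$ is freely presented by honest trivalent graphs with $\Qt$-coloured oriented edges plus isolated prime-labelled vertices, modulo AS, IHX, LE, OR, Hol, Hol'. Under this presentation $\psi_n$ simply sends such a diagram to the ``same'' $\delta$-augmented diagram with $\delta=1$, and $\A_n^{aug}(1)$ has exactly the same presentation; thus $\psi_n$ is visibly a bijection on generators respecting all relations in both directions, hence an isomorphism. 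I would spell this out just enough to justify the word ``obviously'' used in the excerpt, perhaps as a one-line remark.

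Then I would assemble the diagram: $\A_n^{aug}(\Al_0)\xrightarrow{\varphi_n}\G_n(\Al_0)\xrightarrow{Z_n^{aug}}\A_n^{aug}(1)$, noting that the second arrow makes sense because Theorem \ref{thunivinv} says the linear extension of $Z_n^{aug}$ to $\F_0(\Al_0)$ kills $\F_{n+1}(\Al_0)$, and that the composite equals $\psi_n$ by the same theorem. From $\psi_n$ injective we get $\varphi_n$ injective; combined with surjectivity from Theorem \ref{thextend} we get $\varphi_n$ an isomorphism for each $n$. Taking the direct sum over $n\in\N$ gives the graded space isomorphism $\G(\Al_0)\cong\A^{aug}(\Al_0)$, and I would remark that under this identification $Z^{aug}$ corresponds, via $\psi$, to the tautological map, which also yields $\G(\Al_0)\cong\A^{aug}(1)$ and records that $Z^{aug}$ (equivalently the augmented Lescop/Kricker invariant) is a universal finite type invariant for $\Q$SK-pairs with trivial Blanchfield module.

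I do not expect a serious obstacle here: the theorem is a formal corollary of the three earlier results (Theorems \ref{thextend}, \ref{thinvariantZ}, \ref{thunivinv}) plus the elementary observation about $\psi_n$. The only point requiring any care is making the identification $\A_n^{aug}(\Al_0)\cong\A_n^{aug}(1)$ fully rigorous — i.e. checking that the elimination of univalent vertices via LV and LD is exhaustive and well-defined on equivalence classes, and that no extra relations are introduced or lost when passing to the $\delta=1$ side — but this is routine diagram bookkeeping rather than a genuine difficulty, and it is already asserted (``obviously'') in the text. If one wanted to be scrupulous one could instead argue directly that $\psi_n$ has an explicit inverse on generators, avoiding any claim about normal forms.
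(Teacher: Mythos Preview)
Your proposal is correct and follows exactly the paper's approach: the theorem is stated there as an immediate consequence of Theorems \ref{thextend} and \ref{thunivinv} together with the observation that $\psi_n:\A_n^{aug}(\Al_0)\to\A_n^{aug}(1)$ is obviously an isomorphism. Your write-up simply spells out the chase through the commutative triangle that the paper leaves implicit.
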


\begin{theorem} \label{thequiv}
 Let $Z_{Les}=(Z_{n,Les})_{n\in\N}$ and $Z_{Kri}=(Z_{n,Kri})_{n\in\N}$ denote the Lescop equivariant invariant and the Kricker invariant respectively. 
 Let $(M,K)$ and $(N,J)$ be $\Q$SK--pairs with trivial Blanchfield module such that $H_1(M;\Z)$ and $H_1(N;\Z)$ have the same cardinality. 
 Then, for any $n\in\N$, $Z_{k,Les}(M,K)=Z_{k,Les}(N,J)$ for all $k\leq n$ if and only if $Z_{k,Kri}(M,K)=Z_{k,Kri}(N,J)$ for all $k\leq n$.
\end{theorem}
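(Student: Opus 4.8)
The plan is to deduce Theorem \ref{thequiv} as a formal consequence of the universality statements contained in Theorems \ref{thextend} and \ref{thunivinv}, applied to the trivial Blanchfield module $\Al_0$. The key point is that both $Z_{Les}$ and $Z_{Kri}$ are instances of ``the invariant $Z$'' of Theorem \ref{thinvariantZ}, and hence both give rise to an augmented version $Z^{aug}_{Les}$, $Z^{aug}_{Kri}$ via the same recipe $Z^{aug}=Z\sqcup\expd(\sum_p\rho_p)$; the augmentation term $\expd(\sum_p\rho_p)$ depends only on $|H_1(M;\Z)|$, which by hypothesis is the same for $(M,K)$ and $(N,J)$. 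So it suffices to show that, for $\Q$SK--pairs with trivial Blanchfield module and fixed $|H_1|$, the finite-type filtration data of $Z^{aug}$ determines $Z$ up to order $n$ and conversely, and that this holds for either choice of $Z$.

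First I would reduce the ``same values up to degree $n$'' statement to a statement about the filtration $(\F_n(\Al_0))_{n\in\N}$. Since $Z_{k}$ is a finite type invariant of degree $k$ (second bullet of Theorem \ref{thinvariantZ}), knowing $Z_k(M,K)=Z_k(N,J)$ for all $k\le n$ is equivalent to saying that the formal difference $(M,K)-(N,J)$, viewed in $\F_0(\Al_0)$, is annihilated by $Z^{aug}_k$ for all $k\le n$; and since $Z_k$ vanishes on $\F_{k+1}(\Al_0)$, the only obstruction lives in the graded pieces $\G_k(\Al_0)$ for $k\le n$. By Theorem \ref{thunivinv} combined with Theorem \ref{th0}, on $\G_k(\Al_0)$ the map induced by $Z^{aug}_k$ is exactly $\psi_k\circ\varphi_k^{-1}$, and by the remark following Theorem \ref{thinvariantZ} (the case of a trivial Blanchfield module, where $\psi_k:\A_k^{aug}(\Al_0)\to\A_k^{aug}(1)$ is an isomorphism and $\varphi_k$ is an isomorphism too) this induced map is an isomorphism $\G_k(\Al_0)\fl{\cong}\A_k^{aug}(1)$. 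Therefore $Z^{aug}_k$ restricted to the graded piece is injective, so $Z^{aug}_k((M,K)-(N,J))=0$ for all $k\le n$ forces $(M,K)-(N,J)\in\F_{n+1}(\Al_0)$.

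Next, the same chain of isomorphisms can be read in the other direction and, crucially, is independent of whether $Z$ is $Z_{Les}$ or $Z_{Kri}$: in both cases $Z_k\circ\varphi_k=\psi_k$ by Theorem \ref{thinvariantZ}, so $Z_{k,Les}$ and $Z_{k,Kri}$ induce the \emph{same} map $\psi_k\circ\varphi_k^{-1}$ on $\G_k(\Al_0)$. Hence: $Z_{k,Les}(M,K)=Z_{k,Les}(N,J)$ for all $k\le n$ $\iff$ $(M,K)-(N,J)\in\F_{n+1}(\Al_0)$ (plus the matching of $|H_1|$, which handles the augmentation/degree-$1$ part) $\iff$ $Z_{k,Kri}(M,K)=Z_{k,Kri}(N,J)$ for all $k\le n$. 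A small bookkeeping point to handle carefully is the low-degree terms: $\G_0(\Al_0)\cong\Q$ is detected by the count of the $\Q$SK--pair itself (both invariants normalize the same way), and $\G_1(\Al_0)$ is detected, by Theorem \ref{thG1}, by the numbers $v_p(|H_1(M;\Z)|)$, which are precisely what the augmentation term $\expd(\sum_p\rho_p)$ records; since $|H_1(M;\Z)|=|H_1(N;\Z)|$ these contributions agree, so passing between $Z$ and $Z^{aug}$ is harmless for the equivalence.

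The main obstacle, such as it is, is not a deep one but a matter of correctly threading the identifications: one must make sure that the isomorphism $\G_k(\Al_0)\cong\A_k^{aug}(\Al_0)\cong\A_k^{aug}(1)$ is genuinely realized by $Z^{aug}_k$ (not merely that both sides are abstractly isomorphic), which is exactly the content of the identity $Z_n^{aug}\circ\varphi_n=\psi_n$ from Theorem \ref{thunivinv} together with the fact that $\psi_n$ and $\varphi_n$ are isomorphisms in the trivial case; and that the two invariants are plugged into identical formulas so that their induced graded maps coincide on the nose. Once these identifications are in place, the biconditional is immediate by induction on $n$ (or simply by noting that membership in $\F_{n+1}(\Al_0)$ is a symmetric condition that both invariant families detect in the same way), and no further geometric input is needed beyond what Theorems \ref{thextend}, \ref{thunivinv}, \ref{th0}, and \ref{thinvariantZ} already supply.
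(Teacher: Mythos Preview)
Your proof is correct and follows essentially the same route as the paper's: pass from $Z$ to $Z^{aug}$ using the hypothesis $|H_1(M;\Z)|=|H_1(N;\Z)|$, then use that for the trivial Blanchfield module $Z_k^{aug}$ induces an isomorphism on each graded piece $\G_k(\Al_0)$ (by Theorems~\ref{thextend} and~\ref{thunivinv} together with the fact that $\psi_k$ is an isomorphism in this case), so that equality of all $Z_k^{aug}$ up to degree~$n$ is equivalent to $(M,K)-(N,J)\in\F_{n+1}(\Al_0)$, a condition independent of whether $Z$ is $Z_{Les}$ or $Z_{Kri}$. The paper compresses this into three sentences, but the logic is identical.
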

\begin{proof}
 Let $Z=(Z_n)_{n\in\N}$ be the Lescop or Kricker invariant. Since $H_1(M;\Z)$ and $H_1(N;\Z)$ have the same cardinality, the assertion 
 ``$Z_k(M,K)=Z_k(N,J)$ for all $k\leq n$'' is equivalent to ``$Z_k^{aug}(M,K)=Z_k^{aug}(N,J)$ for all $k\leq n$''. Since the 
 $Z_k^{aug}:\G_k(\Al_0)\to\A_k^{aug}(\Al_0)$ are isomorphisms, this last assertion is equivalent to ``$(M,K)-(N,J)\in\F_{n+1}(\Al_0)$''.
\end{proof}

In general, note that ``the map $\psi_n:\A_n^{aug}(\Al,\bl)\to\A_n^{aug}(\delta)$ is injective'' is equivalent to ``the map $\psi_k:\A_k(\Al,\bl)\to\A_k(\delta)$ is 
injective for all $k\leq n$''. Hence we focus on the study of injectivity of the map $\psi_n:\A_n(\Al,\bl)\to\A_n(\delta)$.

      \subsection{About the map $\psi_n:\A_n(\Al,\bl)\to\A_n(\delta)$ and perspectives}

We now state a result about the injectivity of the map $\psi_n:\A_n(\Al,\bl)\to\A_n(\delta)$ for $n$ even. 

In Section \ref{secpsi}, we prove:
\begin{theorem} \label{th3n}
 Let $n$ be an even positive integer and $N\geq3n/2$. Fix a Blanchfield module $(\Al,\bl)$. Let $\delta$ be the annihilator of $\Al$. Define the Blanchfield 
 module $(\Abb)$ as the direct sum of $N$ copies of $(\Al,\bl)$. Then the map $\psib_n:\A_n(\Abb)\to\A_n(\delta)$ is an isomorphism.
\end{theorem}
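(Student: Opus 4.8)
The plan is to prove that $\psib_n$ is an isomorphism by constructing an explicit inverse on the diagram level, exploiting the fact that $N$ is large enough so that every $\delta$--colored diagram of degree $n$ ---having at most $3n/2$ edges and hence at most $3n/2$ colors to ``distribute''--- can be lifted to an $(\Abb)$--colored diagram in which distinct strands use distinct summands of $(\Abb)$. More precisely, I would first describe $\A_n(\delta)$ as spanned by $\delta$--colored trivalent graphs, and observe that for such a graph $G$ one may choose an injection from the set of ``beads'' (the data one needs to separate, roughly the edges, of which there are $\le 3n/2\le N$) into $\{1,\dots,N\}$; placing a univalent vertex colored by a generator of the $i$-th copy of $(\Al,\bl)$ at each end of an edge assigned to index $i$, and recording the fraction $f_{vv'}$ using the Blanchfield form of that copy, produces an $(\Abb)$--colored diagram $\sigma_n(G)$ whose image under $\tilde\psib_n$ is, up to the pairing sum, the original $G$ plus ``lower'' terms. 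The key point is that the off-diagonal Blanchfield pairings between distinct summands vanish, so in $\tilde\psib_n(\sigma_n(G))$ the only surviving pairing is the one that reconnects each pair of matched univalent vertices, recovering $G$ exactly (with coefficient $1$), while all the ``wrong'' pairings give $0$. One must check that $\sigma_n$ is well defined modulo the relations AS, IHX, LE, OR, Hol, Hol' on the target and the relations on the source, and that it does not depend on the chosen index assignment modulo the Aut relation (permuting the $N$ copies is an automorphism of $(\Abb)$) and the LV, LD, EV relations; this is the main bookkeeping.

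Second, I would show surjectivity of $\psib_n$ directly from the above: $\tilde\psib_n(\sigma_n(G)) = G$ in $\A_n(\delta)$, so $\psib_n$ is onto. For injectivity I would argue that $\sigma_n \circ \psib_n = \mathrm{id}$ on $\A_n(\Abb)$, which is the substantive half. Here one takes an $(\Abb)$--colored diagram $D$, expands $\psib_n(D) = \sum_{p} p(D)$, and applies $\sigma_n$ to each pairing term; one then needs a combinatorial/linear-algebra argument that the resulting sum equals $D$ modulo the relations. The mechanism is the one already present in the Garoufalidis--Rozansky setting and in the trivial-Blanchfield-module case of this paper: using LV to reduce to diagrams whose univalent vertices are colored by a basis, using LD/EV to normalize the edge labels and the fractions $f_{vv'}$, and then recognizing that ``pair up, then re-separate'' is the identity on such normal forms. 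The largeness hypothesis $N\ge 3n/2$ is exactly what guarantees that a normal-form diagram of degree $n$ can be taken with all its univalent vertices colored in pairwise-distinct summands, so that $\sigma_n$ can be inverted unambiguously; this is where I expect to have to be careful about whether a diagram with $k$ trivalent vertices really needs only $\le \lceil k/2\rceil + $ (something $\le k$) distinct summands, and to phrase the counting so that $3n/2$ suffices.

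The main obstacle, I expect, is precisely this well-definedness and independence-of-choices check for $\sigma_n$: one is building a map out of a space defined by many relations (AS, IHX, LE, OR, Hol, Hol') into a space with even more relations, and the lift involves an arbitrary choice of how to spread the colors over the $N$ summands. Showing that two different choices give the same class will require combining the Aut relation (to permute summands) with LV, EV, LD, and possibly an induction on the number of trivalent vertices; IHX-compatibility of the lift is the touchiest point since an IHX move rearranges edges and hence the index assignment. A secondary subtlety is handling the edge colorings in $\frac{1}{\delta}\Qt$ versus $\Qt$: under $\sigma_n$ the interior edges keep their $\Qt$-labels while the ``reconnected'' edge acquires the $\frac{1}{\delta}\Qt$-valued fraction, and one must make sure Hol' on the target matches Hol+EV+LD on the source (as the remark after Figure~\ref{fighol'} asserts). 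Once these compatibilities are in place, the equality $\sigma_n\circ\psib_n = \mathrm{id}$ and $\psib_n\circ\sigma_n = \mathrm{id}$ follow from the vanishing of cross-summand Blanchfield pairings, and the theorem is proved.
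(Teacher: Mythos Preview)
Your overall strategy---construct an explicit section $\sigma_n$ of $\psib_n$ by opening edges and distributing the resulting pairs of univalent vertices into distinct summands of $(\Abb)$, then argue it is a two-sided inverse---is exactly the paper's approach (the paper calls the section $\varPhi_n$). Two points, however, are substantially underestimated in your proposal and are \emph{not} bookkeeping.

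First, the well-definedness of $\sigma_n$ on a single edge: having fixed an index $i$ and a fraction $f\in\frac{1}{\delta}\Qt$, you must choose $\gamma_v,\gamma_w\in\Al_i$ with $\bl(\gamma_v,\gamma_w)\equiv f$ and then show the resulting diagram class does not depend on this choice. You attribute this to LV, LD, EV, but those relations alone do not suffice. The paper isolates this as Proposition~\ref{prop2vertices}, whose proof relies on the structure theorem for Blanchfield modules (Theorem~\ref{thstructureBmod}): one decomposes $\Al_i$ into cyclic and hyperbolic $\pi$-primary pieces and uses carefully chosen automorphisms of $(\Abb)$ acting only on $\Al_i$ to reduce to a canonical form $\diag{\gamma}{A\gamma}{f}$ or $\diag{\mu}{A\mu'}{f}$, after which the linking constraint forces uniqueness. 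This is the technical heart of the argument and is invisible if one thinks of it as bookkeeping.

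Second, your plan for injectivity---showing $\sigma_n\circ\psib_n=\mathrm{id}$ by ``normal forms via LV''---misses the key mechanism. Reducing via LV gets each univalent vertex into a single summand $\Al_i$, but nothing yet prevents many vertices from landing in the \emph{same} summand; LV alone cannot spread them out. The paper handles this via Proposition~\ref{propdist}, using the \emph{rotation} automorphisms $\chi_{ij}$ (built from rationals $x,y$ with $x^2+y^2=1$) to redistribute vertices among summands; this is where $N\ge 3n/2$ is actually used, and also precisely where the argument breaks in the integral setting (see the Remark after Proposition~\ref{propdist}). Once every diagram is a combination of distributed ones, the paper concludes more cleanly than you propose: rather than verifying $\sigma_n\circ\psib_n=\mathrm{id}$ directly, it observes that $\sigma_n$ is surjective (every distributed diagram is a $\sigma_n(G)$) and that $\psib_n\circ\sigma_n=\mathrm{id}$, which together force $\sigma_n$ and $\psib_n$ to be inverse isomorphisms.
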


This result provides a rewritting of the map $\psi_n$ in the general case. We have a natural map $\iota_n:\A_n(\Al,\bl)\to\A_n(\Abb)$ 
defined on a diagram by interpreting the labels of its univalent vertices as elements of the first copy of $(\Al,\bl)$ in $(\Abb)$. 
The following diagram commutes.
$$
\begin{tikzpicture} [xscale=1.4,yscale=0.8]
 \draw (2,2) node {$\A_n(\Abb)$};
 \draw (2,-2) node {$\A_n(\delta)$};
 \draw (0,0) node {$\A_n(\Al,\bl)$};
 \draw[->>] (0.4,0.5) -- (1.6,1.5); \draw (0.9,1.3) node {$\iota_n$};
 \draw[->] (0.4,-0.5) -- (1.6,-1.5); \draw (0.9,-1.3) node {$\psi_n$};
 \draw[->] (2,1.5) -- (2,-1.5); \draw (2.3,0) node {$\psib_n$}; \draw (1.8,0) node {$\cong$};
\end{tikzpicture}$$

We mention here results from \cite{AM} about the map $\psi_2:\A_2(\Al,\bl)\to\A_2(\delta)$ for small Alexander modules.
\begin{proposition}[\cite{AM}]
 If $\dim_\Q(\Al)=2$, then $\psi_2$ is injective but not surjective.
\end{proposition}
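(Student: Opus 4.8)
Since $\dim_\Q\Al=2$, the annihilator $\delta$ has degree~$2$, and the plan is to make both $\A_2(\Al,\bl)$ and $\A_2(\delta)$ completely explicit and then read off injectivity and non--surjectivity of $\psi_2$ from a finite computation. The combinatorics of degree~$2$ diagrams is very restricted: up to symmetry, the uni--trivalent graphs with two trivalent vertices and no strut component are the theta graph, the dumbbell (two self--loops joined by a bridge), the ``H'' graph (a bridge carrying two univalent legs at each end), the ``bigon'' (a double edge between the two trivalent vertices with one leg at each end), the disjoint union of two ``Y'' graphs, and a few tadpole configurations. First I would normalise the generators of $\A_2(\Al,\bl)$ using the relations of Figure~\ref{figrel}: LV reduces every univalent--vertex colour to an element of a fixed basis $\{\gamma_1,\gamma_2\}$ of $\Al$, so that only finitely many colourings occur; EV, together with Hol and OR, normalises the $\Qt$--labels on edges; LD pins the rational functions $f_{vv'}$ down to the prescribed Blanchfield values $\bl(\gamma_i,\gamma_j)$ modulo $\Qt$; and AS and IHX handle the remaining trivalent structure. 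Running the same relations in the target (with Hol$'$ playing the role of the combination EV, LD, Hol) yields an explicit finite presentation of $\A_2(\delta)$ as well. This normalisation step is the main obstacle: it requires care with the interaction of LD with EV and Hol, with deciding which tadpole--bearing diagrams survive, and most likely with a short case analysis of the possible $2$--dimensional Blanchfield modules --- the cyclic case with irreducible symmetric order $t+k+t^{-1}$, the orthogonal case $\Qt/(t-a)\oplus\Qt/(t-1/a)$ with $a\neq\pm1$, and the hyperbolic case over $\Qt/(t+1)$ --- so as to confirm that the answer is uniform.

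With explicit presentations in hand, I would evaluate $\psi_2$ on the generators via the pairing formula of Figure~\ref{figgroup}: a legless diagram is sent to itself; the bigon to a theta graph whose free edge is labelled by an $f_{vv'}\in\frac{1}{\delta}\Qt$; the ``H'' graph to one dumbbell term (from the two pairings joining the two legs on a single trivalent vertex) plus two theta terms (from the pairings across the bridge); and the disjoint union of two ``Y'' graphs to six theta terms and nine dumbbell terms, all the edge labels being read off from the $f_{vv'}$. Ordering the source by the number of univalent vertices --- a filtration, since every relation except LD preserves the leg count and LD lowers it by two --- reduces injectivity of $\psi_2$ to checking that an explicit finite matrix, with entries linear in the Blanchfield values of the fixed $2$--dimensional module, has trivial kernel: one recovers the theta--coefficients of the legless generators and the bigons, and separates the ``H'' and ``Y$\,\sqcup\,$Y'' contributions using the explicit presentation of $\A_2(\delta)$, in particular the action of IHX on the dumbbell. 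Because $\dim_\Q\Al=2$ these entries are completely determined, so this is a finite verification.

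For non--surjectivity I would conclude from the same computation that $\dim_\Q\A_2(\Al,\bl)<\dim_\Q\A_2(\delta)$; equivalently, one exhibits an explicit class of $\A_2(\delta)$ outside the image of $\psi_2$ --- a suitable theta or dumbbell graph carrying an edge label in $\frac{1}{\delta}\Qt$ that no combination of pairing images can produce --- and detects it by a linear functional on $\A_2(\delta)$ that vanishes on $\psi_2(\A_2(\Al,\bl))$. Everything past the explicit presentations is linear algebra, so the whole argument hinges on getting those presentations right and on checking that neither the injectivity nor the non--surjectivity conclusion depends on which $2$--dimensional Blanchfield module one starts from.
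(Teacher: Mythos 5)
First, a caveat: this proposition is not proved in the present paper at all --- it is quoted from \cite{AM}, so there is no internal proof to compare with; the relevant machinery here is only the related material of Section \ref{secpsi} (Proposition \ref{prop2vertices}, distributed diagrams, the filtration $\widehat{\A}_n^{(k)}$ by number of univalent vertices). Measured against what a proof must contain, your text is a strategy outline rather than an argument: the decisive steps --- ``the explicit finite matrix has trivial kernel'', ``one exhibits a class outside the image and detects it by a linear functional'' --- are exactly the content of the proposition and are left unexecuted, and you yourself flag that everything hinges on presentations you have not produced. Moreover the finiteness premise on which the plan rests is false. The spaces $\A_2(\Al,\bl)$ and $\A_2(\delta)$ are not finite-dimensional: already the theta graphs with the three edges labelled in $\frac{1}{\delta}\Qt$, taken modulo LE, OR, Hol, Hol$'$ and the graph symmetries, span an infinite-dimensional subspace of $\A_2(\delta)$ (the only vertex relation identifies a labelling $(f_1,f_2,f_3)$ with $(tf_1,tf_2,tf_3)$, which leaves infinitely many independent classes), and on the source side LV only bounds the vertex colours, not the polynomial labels of internal edges nor the choice of lifts $f_{vv'}$, whose ambiguity LD trades for diagrams with arbitrary polynomial labels on the new edge. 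Consequently there is no ``finite presentation'', no ``finite matrix'', and in particular the proposed route to non-surjectivity via $\dim_\Q\A_2(\Al,\bl)<\dim_\Q\A_2(\delta)$ is meaningless as stated; non-surjectivity must be established by producing and detecting an explicit class (or by a structural comparison of the two spaces), which you have not done.

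Two further points. The uniformity over all $2$--dimensional Blanchfield modules is asserted, not checked: by Theorem \ref{thstructureBmod} the case analysis comprises the cyclic modules $\Qt/(\pi)$ with $\pi$ of degree $2$ (symmetric irreducible, $(t+2+t^{-1})$, or a product of dual non-symmetric linear factors) and the hyperbolic module over $\Qt/(t+1)$, and the companion statement quoted just below in the paper --- injectivity of $\psi_2$ fails for the direct sum of two copies of a $2$--dimensional module precisely when $\delta=t+1+t^{-1}$ --- shows that such uniformity is genuinely delicate and cannot be waved through with ``most likely a short case analysis''. On the positive side, your combinatorial bookkeeping is essentially sound: the list of degree-$2$ uni-trivalent graphs, the leg-count filtration (which is the filtration $\widehat{\A}_n^{(k)}$ used in Section \ref{secpsi}), and the pairing counts for $\psi_2$ (bigon $\mapsto$ one theta; H $\mapsto$ two thetas plus one dumbbell; $Y\sqcup Y \mapsto$ six thetas plus nine dumbbells) are correct, and this is indeed the spirit of the computation carried out in \cite{AM}. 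But as it stands the proposal identifies the battlefield without fighting the battle, and its quantitative framework (finite dimensionality and a dimension count) would have to be abandoned, not merely refined, to turn it into a proof.
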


\begin{proposition}[\cite{AM}]
 If $\Al$ is the direct sum of two isomorphic Blanchfield modules of $\Q$--dimension 2 with annihilator $\delta$, 
 then $\psi_2$ is injective if and only if $\delta\neq t+1+t^{-1}$. In this case, it is an isomorphism. 
\end{proposition}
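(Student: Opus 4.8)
The plan is to compute $\psi_2\colon\A_2(\Al,\bl)\to\A_2(\delta)$ explicitly as a $\Q$--linear map between two finite--dimensional vector spaces and then read off its rank. Write $\mathfrak a$ for the common summand; since $\dim_\Q\mathfrak a=2$ we may take $\mathfrak a\cong\Qt/(\delta)$ with $\deg\delta=2$, generated by one element $e$, with Blanchfield form determined by the single value $\bl(e,e)=\beta\in\frac1\delta\Qt/\Qt$, the remaining two--dimensional cases being treated analogously; on $\Al=\mathfrak a_1\oplus\mathfrak a_2$ the form is the orthogonal sum, i.e. a rank--$2$ Hermitian form over $\Lambda=\Qt/(\delta)$. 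First I would write down a presentation of each side. A degree $2$ diagram has exactly two trivalent vertices, so its underlying graph is one of: the $H$--graph (four legs), the bigon with two legs, the closed theta graph, the closed dumbbell, a disjoint union of two $Y$--graphs, and pieces containing a self--loop (which $AS$ and $OR$ largely kill); using $LV$, $EV$ and $Hol$ one reduces all leg colours to $e_1,e_2$ and puts edge labels in normal form, using $LD$ one trades a change of an $f_{vv'}$ against a diagram with two fewer legs, and using $AS$, $IHX$ and the $Aut$--relation --- whose group here is the full unitary group of the rank--$2$ Hermitian form over $\Lambda$ --- one extracts a finite basis. The analogous but shorter computation gives a basis of $\A_2(\delta)$ from the theta and dumbbell graphs with labels in $\frac1\delta\Qt$ modulo $AS$, $IHX$, $LE$, $OR$, $Hol$, $Hol'$.

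With the bases fixed, $\psi_2$ is read off from the pairing rule of Figure~\ref{figgroup}: closed diagrams map to themselves under $\Qt\hookrightarrow\frac1\delta\Qt$, the bigon with two legs maps to a single theta whose new edge carries a normalization of $\bl$ of the two leg colours, and the $H$--graph and $Y\sqcup Y$ map to the sums over their leg pairings. The crucial comparison is on the closed subspaces: in $\A_2(\Al,\bl)$ the relation $Hol'$ follows from $Hol$, $EV$, $LD$ only because $EV$ and $LD$ involve univalent vertices, so on the \emph{closed} part it is absent, whereas in $\A_2(\delta)$ it is imposed; hence a $\Q$--combination of $\Qt$--labelled closed diagrams annihilated by $Hol'$ (together with $AS$, $IHX$, $LE$, $OR$, $Hol$) but not without it lies in $\ker\psi_2$. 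I expect the computation to show that, after $OR/AS$ (which forces self--loops to carry antisymmetric fractions) and $Hol$, the outstanding $Hol'$--relations are already implied unless the antisymmetric labels involved admit a $t$--periodicity forced by $\delta$, and that such periodicity is available precisely when $\frac{t^{3}-1}{\delta}\in\Qt$, i.e. when $\delta=t+1+t^{-1}$ (equivalently $\Lambda\cong\Q(\zeta_3)$ and multiplication by $t$ has order $3$). A cleaner route to part of this uses Theorem~\ref{th3n}: taking the base module to be $\mathfrak a$ and $N=3\geq 3n/2$ for $n=2$ gives an isomorphism $\psib_2\colon\A_2(\mathfrak a^{\oplus 3})\xrightarrow{\cong}\A_2(\delta)$, and since $\psi_2=\psib_2\circ\iota_2$ this reduces injectivity of $\psi_2$ to injectivity of $\iota_2\colon\A_2(\mathfrak a^{\oplus 2})\to\A_2(\mathfrak a^{\oplus 3})$; one then revisits where the hypothesis $N\geq 3n/2$ is used in the proof of Theorem~\ref{th3n} and shows the bound can be lowered to $N\geq 2$ for $n=2$ except when $\delta=t+1+t^{-1}$.

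For the ``only if'' direction I would exhibit, when $\delta=t+1+t^{-1}$, an explicit nonzero element of $\A_2(\Al,\bl)$ in the kernel --- a combination of closed dumbbell (and/or theta) diagrams of the shape $D-(\text{same with middle label multiplied by }t)$, which $Hol'$ kills in $\A_2(\delta)$ --- and then show it is nonzero in $\A_2(\Al,\bl)$ by pairing against a separating $\Q$--linear functional, for instance one built from the $Aut$--orbit data or from a degree--$2$ invariant distinct from $Z$. For the ``if'' direction one checks that when $\delta\neq t+1+t^{-1}$ the matrix of $\psi_2$ has rank $\dim\A_2(\Al,\bl)$ and is onto $\A_2(\delta)$: a finite linear--algebra verification, split over the shapes of $\delta$ (irreducible quadratic; a product of two distinct linear factors; a repeated linear factor).

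The main obstacle, in either route, is the explicit determination of $\A_2(\Al,\bl)$ --- in particular keeping control of the interaction of $LD$ with the lower--leg diagrams and of the $Aut$--relation with the unitary group over $\Lambda$ --- and the verification that the candidate kernel element is genuinely nonzero there, which requires an honest separating invariant rather than a mere quotient computation. Everything else is bookkeeping once those presentations are in hand.
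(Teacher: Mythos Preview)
This proposition is not proved in the present paper: it is quoted from \cite{AM} without argument, so there is no ``paper's own proof'' to compare your proposal against. The only hint the paper gives toward the method of \cite{AM} is in the final paragraph of Section~\ref{secpsi}, where the filtration $\A_n^{(k)}(\Abb)$ by number of univalent vertices and the auxiliary spaces $\widehat{\A}_n^{(k)}$ are introduced, and it is remarked that in \cite{AM} a Blanchfield module is exhibited for which $\widehat{\A}_2^{(4)}(\Abb)\ncong\A_2^{(4)}(\Abb)$, providing the negative instance of Question~\ref{qupsin}. So the approach in \cite{AM} appears to go through that filtration rather than through a direct matrix computation of $\psi_2$.

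As for your sketch itself: it is a reasonable plan but not yet a proof. The reduction via Theorem~\ref{th3n} to injectivity of $\iota_2:\A_2(\mathfrak a^{\oplus 2})\to\A_2(\mathfrak a^{\oplus 3})$ is a good idea, but your claim that ``the bound can be lowered to $N\geq 2$ for $n=2$ except when $\delta=t+1+t^{-1}$'' is precisely the content of the proposition and would need a genuine argument, not a revisiting of where $N\geq 3n/2$ is used. Your heuristic that the obstruction comes from the relation Hol$'$ being absent on the closed part of $\A_2(\Al,\bl)$ is misleading: Hol$'$ \emph{is} a consequence of Hol, EV and LD in $\A_n(\Al,\bl)$ (this is noted just after the definition of $\A_n(\delta)$), so a closed combination killed only by Hol$'$ is already zero in $\A_2(\Al,\bl)$ and cannot serve as a kernel element. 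The actual kernel element in the $t+1+t^{-1}$ case must involve diagrams with univalent vertices, and showing it is nonzero in $\A_2(\Al,\bl)$ is, as you acknowledge, the hard part---you would need either a full presentation of $\A_2(\Al,\bl)$ with the unitary $\mathrm{Aut}$--relation worked out, or a separating invariant, neither of which you supply.
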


\paragraph{Perspectives.}
As mentioned in the introduction, our main goal in this paper is to study Question \ref{quconj} in order to determine if the Lescop/Kricker invariant $Z$ is a 
universal finite type invariant of $\Q$SK--pairs up to degree~0 and~1 invariants. Theorem \ref{th3n} provides the following rewritting of this question.

We have a map $\A_n((\Al,\bl)^{\oplus k})\to\A_n((\Al,\bl)^{\oplus k+1})$ defined by viewing the labels of the univalent vertices 
in the direct sum of the first $k$ copies of $(\Al,\bl)$ in $(\Al,\bl)^{\oplus k+1}$. We also have a map $C_n:\G_n^b((\Al,\bl)^{\oplus k})\to\G_n^b((\Al,\bl)^{\oplus k+1})$ 
induced by the connected sum with a fixed $\Q$SK--pair $(M,K)\in\Ens(\Al,\bl)$. Using Theorem \ref{thM3}, one can check that the map $C_n$ is independent of the fixed pair $(M,K)$. 
These maps provide the following commutative diagram, where the vertical arrows are the maps $\varphi_n$ and $Z_n$, for any integer $N$ 
such that $N\geq\frac{3n}{2}$. 
\begin{center}
\begin{tikzpicture} 
%1e ligne
 \draw (0,0) node {$\A_n(\Al,\bl)$};
 \draw[->] (1,0) -- (1.4,0);
 \draw (1.9,0) node {$\dots$};
 \draw[->] (2.3,0) -- (2.7,0);
 \draw (4.25,0) node {$\A_n((\Al,\bl)^{\oplus k})$};
 \draw[->] (5.75,0) -- (6.15,0);
 \draw (6.55,0) node {$\dots$};
 \draw[->] (6.9,0) -- (7.3,0);
 \draw (9,0) node {$\A_n((\Al,\bl)^{\oplus N})$};
%fleches verticales
 \draw[->>] (0,-0.5) -- (0,-1);
 \draw[->>] (4.25,-0.5) -- (4.25,-1);
 \draw[->>] (9,-0.5) -- (9,-1);
 \draw (9.4,-0.8) node {$\scriptstyle{\cong}$};
%2e ligne
 \draw (0,-1.5) node {$\G_n^b(\Al,\bl)$};
 \draw[->] (1,-1.5) -- (1.4,-1.5);
 \draw (1.9,-1.5) node {$\dots$};
 \draw[->] (2.3,-1.5) -- (2.7,-1.5);
 \draw (4.25,-1.5) node {$\G_n^b((\Al,\bl)^{\oplus k})$};
 \draw[->] (5.75,-1.5) -- (6.15,-1.5);
 \draw (6.55,-1.5) node {$\dots$};
 \draw[->] (6.9,-1.5) -- (7.3,-1.5);
 \draw (9,-1.5) node {$\G_n^b((\Al,\bl)^{\oplus N})$};
%le reste
 \draw[->] (0.5,-2) -- (3.5,-2.8);
 \draw[->] (4.25,-2) -- (4.25,-2.7);
 \draw (4.25,-3) node {$\A_n(\delta)$};
 \draw[->] (8.5,-2) -- (5.1,-2.8);
 \draw (7,-2.6) node {$\scriptstyle{\cong}$};
\end{tikzpicture}
\end{center}
It follows that the map $Z_n:\G_n^b((\Al,\bl)^{\oplus k})\to\A_n(\delta)$ is injective for all $k$ if and only if 
$C_n:\G_n^b((\Al,\bl)^{\oplus k})\to\G_n^b((\Al,\bl)^{\oplus k+1})$ is injective for all $k$. 
This assertion is true for all $(\Al,\bl)$ and all $n$ if the space of finite type invariants of $\Q$SK--pairs is generated 
as an algebra by degree 0 invariants and invariants that are additive under connected sum.

    \subsection{The case of knots in $\Z$--spheres}

A great part of the results stated up to that point have an equivalent in the case of $\Z$SK--pairs. In this subsection, we adapt the definitions and state the results 
in this case. 

Given a $\Z$SK--pair $(M,K)$ and the infinite cyclic covering $\tilde{X}$ of the exterior of $K$ in $M$, define the {\em integral Alexander module} of $(M,K)$ 
as the $\Z[t^{\pm1}]$--module $\Al_\Z(M,K)=H_1(\tilde{X};\Z)$ and the {\em Blanchfield form} $\bl_\Z$ on this module.
The integral Alexander module of a $\Z$SK--pair $(M,K)$ endowed with its Blanchfield form is its {\em integral Blanchfield module} denoted by $(\Al_\Z,\bl_\Z)(M,K)$. 
In the sequel, by {\em an integral Blanchfield module}, we mean a pair $(\Al_\Z,\bl_\Z)$ which can be realized as the integral 
Blanchfield module of a $\Z$SK--pair. 

Replacing $\Q$ by $\Z$ in the definitions of Subsection \ref{subsecsurgeries}, define {\em integral Lagrangians}, 
{\em integral LP--surgeries} and {\em integral null LP--surgeries}. Note that a Borromean surgery is an integral LP--surgery. 

For diagram spaces, we have to adapt the relation Aut. 
Given $(\Al_\Z,\bl_\Z)$, set $(\Al,\bl)=\Q\otimes(\Al_\Z,\bl_\Z)$. Define the relation $\AutZ$ on $(\Al,\bl)$--colored diagrams as the relation Aut restricted 
to the action of the automorphisms in $Aut(\Al,\bl)$ that are induced by automorphisms of the $\Zt$--module $(\Al_\Z,\bl_\Z)$. Set:
$$\A_n^\Z(\Al_\Z,\bl_\Z)=\tilde{\A}_n(\Al,\bl)/\langle \AutZ \rangle \qquad \textrm{and} \qquad 
\A^\Z(\Al_\Z,\bl_\Z)=\oplus_{n\in\N}\A_n^\Z(\Al_\Z,\bl_\Z).$$ 
Since the opposite of the identity is an automorphism of $(\Al_\Z,\bl_\Z)$, we have: 
\begin{lemma}
 For all $n\geq 0$, $\A_{2n+1}^\Z(\Al_\Z,\bl_\Z)=0$. 
\end{lemma}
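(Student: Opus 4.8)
The plan is to copy verbatim the argument already used for the rational case, since the only thing being claimed is that $\A_{2n+1}^\Z(\Al_\Z,\bl_\Z)=0$ for all $n\geq 0$, and the relevant relation Aut has an integral analogue $\AutZ$ that still contains the action of $-\mathrm{id}$. More precisely, I would first recall that $\A_n^\Z(\Al_\Z,\bl_\Z)$ is defined as $\tilde{\A}_n(\Al,\bl)/\langle\AutZ\rangle$ with $(\Al,\bl)=\Q\otimes(\Al_\Z,\bl_\Z)$, and that the map $-\mathrm{id}:\Al_\Z\to\Al_\Z$ is an automorphism of the integral Blanchfield module (it clearly preserves $\Al_\Z$ as a $\Zt$--module and, being multiplication by a scalar, preserves the hermitian form $\bl_\Z$). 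Hence $-\mathrm{id}$ is one of the automorphisms generating the relation $\AutZ$.

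Next I would observe the effect of this particular automorphism on an $(\Al,\bl)$--colored diagram $D$ of odd degree $2n+1$. Acting by $-\mathrm{id}$ replaces the coloring $\gamma$ of every univalent vertex by $-\gamma$; the pairing fractions $f_{vv'}^D$ are unchanged because $\bl(-\gamma,-\gamma')=\bl(\gamma,\gamma')$. So $(-\mathrm{id})\cdot D$ is the same underlying diagram with every univalent-vertex label negated. On the other hand, using the relation LV (linearity for vertices, with $x=-1$, $y=0$) at each of the univalent vertices in turn, one gets that negating the label of one univalent vertex multiplies the class of $D$ in $\tilde{\A}_n(\Al,\bl)$ by $-1$, and these sign changes are compatible with the LD and EV relations. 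The subtle point is that a diagram of odd degree $2n+1$ need not have an odd number of univalent vertices, so one cannot immediately conclude a global sign $-1$ this way. The cleaner route, and the one I expect the paper to take, is the argument already invoked for Lemma~\ref{lemmaodd}: since $-\mathrm{id}\in Aut(\Al_\Z,\bl_\Z)$, the relation $\AutZ$ identifies $D$ with $(-\mathrm{id})\cdot D$; but also, by the AS relation at each trivalent vertex, a diagram with $2n+1$ trivalent vertices satisfies $D=-(-\mathrm{id})\cdot D$ after suitably re-routing via the AS, IHX and orientation relations (reversing all the cyclic orders of the $2n+1$ trivalent vertices introduces a global sign $(-1)^{2n+1}=-1$, and this operation agrees with relabelling all univalent vertices by $-\mathrm{id}$ up to the already-imposed relations). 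Combining $D=(-\mathrm{id})\cdot D$ with $D=-(-\mathrm{id})\cdot D$ forces $2D=0$, hence $D=0$ in the $\Q$--vector space $\A_{2n+1}^\Z(\Al_\Z,\bl_\Z)$.

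The main obstacle, and the point to treat carefully, is exactly the bookkeeping in the previous sentence: matching up the action of $-\mathrm{id}$ on univalent-vertex colorings with the global sign produced by reversing all trivalent-vertex orientations. The rational version of this statement, Lemma~\ref{lemmaodd}, presumably justifies this identity using the relations AS, OR, LV, EV, LD together with the action of $Aut(\Al,\bl)$; since $\AutZ$ is the restriction of Aut to a subgroup that still contains $-\mathrm{id}$, and since all the diagrammatic relations AS, IHX, LE, OR, Hol, LV, EV, LD are unchanged, the identical argument goes through word for word. I would therefore simply write: ``The proof of Lemma~\ref{lemmaodd} applies verbatim, since $-\mathrm{id}$ is an automorphism of $(\Al_\Z,\bl_\Z)$ and hence the relation $\AutZ$ contains the identification of a diagram with the diagram obtained by negating all univalent-vertex colorings.'' If a self-contained proof is preferred, I would spell out the two equalities $D=(-\mathrm{id})\cdot D$ (from $\AutZ$) and $(-\mathrm{id})\cdot D=(-1)^{2n+1}D=-D$ (from AS applied at all $2n+1$ trivalent vertices, together with LV/EV/LD to absorb the resulting relabelling), conclude $2D=0$, and note that we work over $\Q$.
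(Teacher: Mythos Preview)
Your instinct is right---the whole argument is that $-\mathrm{id}$ lies in $Aut(\Al_\Z,\bl_\Z)$, exactly as in the paper---but you take a wrong turn in the middle and end up with an unjustified step.

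The sentence ``a diagram of odd degree $2n+1$ need not have an odd number of univalent vertices'' is false. In any uni-trivalent graph with $t$ trivalent vertices and $u$ univalent vertices one has $3t+u=2(\#\text{edges})$, hence $t\equiv u\pmod 2$; the paper itself notes this parity fact near the end of Section~\ref{secpsi}. So when $t=2n+1$ the number $u$ of univalent vertices is odd, and your first approach via LV already finishes the proof: applying LV successively at all $u$ univalent vertices shows $(-\mathrm{id})\cdot D=(-1)^uD=-D$ in $\tilde{\A}_{2n+1}(\Al,\bl)$ (each linking $f_{vv'}$ gets negated twice in the process and returns to itself), while $\AutZ$ gives $D=(-\mathrm{id})\cdot D$. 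Hence $2D=0$ and $D=0$ over $\Q$.

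Because you believed the parity claim failed, you replaced this with an AS-based argument: reverse all cyclic orders at the $2n+1$ trivalent vertices to produce a sign $(-1)^{2n+1}$, and assert that the resulting diagram ``agrees with relabelling all univalent vertices by $-\mathrm{id}$ up to the already-imposed relations.'' That last assertion is not justified and is in general false: reversing all vertex orientations produces the mirror diagram, which has no reason to coincide with the diagram obtained by negating the univalent labels (for a purely trivalent diagram the latter is the identity while the former is not). So as written, your proof has a genuine gap at exactly this point. Drop the AS detour, correct the parity remark, and the LV argument you started with is complete and matches the paper's one-line justification.
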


The filtration $(\F_n)_{n\in\N}$ of Subsection \ref{subsecsurgeries} generalizes the following filtration introduced by Garoufalidis and Rozansky in \cite{GR}. 
Let $\F_0^\Z$ be the rational vector space generated by all $\Z$SK--pairs, up to orientation-preserving homeomorphism. Define a filtration $(\F_n^\Z)_{n\in\N}$ of $\F_0^\Z$ 
by means of null Borromean surgeries. 
\paragraph{Remark:} Habegger \cite[Theorem 2.5]{Habe} and Auclair and Lescop \cite[Lemma 4.11]{AL} proved that two $\Z$--handlebodies whose boundaries are LP--identified 
can be obtained from one another by a finite sequence of Borromean surgeries. Therefore, the filtration defined on $\F_0^\Z$ by integral null LP--surgeries 
is equal to the filtration $(\F_n^\Z)_{n\in\N}$.

\begin{theorem}[\cite{M3} Theorem 1.15] \label{thM3Z}
 An integral null LP--surgery induces a canonical isomorphism between the integral Blanchfield modules of the involved $\Z$SK--pairs. 
 Conversely, for any isomorphism $\zeta$ from the integral Blanchfield module of a $\Z$SK--pair $(M,K)$ to the integral Blanchfield module 
 of a $\Z$SK--pair $(M',K')$, there is a finite sequence of integral null LP--surgeries from $(M,K)$ to $(M',K')$ which induces the composition 
 of $\zeta$ by the multiplication by a power of $t$.
\end{theorem}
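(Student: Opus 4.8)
\textbf{Proof plan for Theorem \ref{thM3Z}.}

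The plan is to derive the integral statement from the rational one (Theorem \ref{thM3}) by refining the arguments of \cite{M3} so that every $\Q$--handlebody occurring as a surgery datum is replaced by a genuine $\Z$--handlebody, while keeping the surgery null in the knot complement. The first direction is the easy one: an integral null LP--surgery is in particular a rational null LP--surgery, so by Theorem \ref{thM3} it induces a canonical isomorphism of the rational Blanchfield modules; one then checks that this isomorphism carries $\Al_\Z=H_1(\tilde X;\Z)$ onto the corresponding integral submodule. This is a Mayer--Vietoris computation in the infinite cyclic covering: if $M'=M(B/A)$ with $A$ null in $M\setminus K$, then $\tilde X'$ is obtained from $\tilde X$ by cutting out $\Z$--copies of $\tilde A$ and gluing in $\Z$--copies of $\tilde B$ along boundaries that are integrally LP--identified, and nullity ensures the pieces lift; the Lagrangian-preserving condition over $\Z$ is exactly what is needed for the map on integral $H_1$ to be an isomorphism commuting with the $t$--action and preserving the Blanchfield pairing. (This half is presumably already in \cite{M3}; it suffices to cite it.)

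For the converse, I would start from the sequence of \emph{rational} null LP--surgeries provided by Theorem \ref{thM3} realizing (a power of $t$ times) the given isomorphism $\zeta$ of integral Blanchfield modules, and upgrade each step to an integral one. There are two moves to control. First, the ``change the $\Q$--sphere'' moves of the form $\frac{B_p}{B^3}$ with $H_1(B_p;\Z)\cong\Z/p\Z$ are not needed here, because $\zeta$ already matches the integral modules and in particular $|H_1(M;\Z)|=|H_1(M';\Z)|$ is \emph{not} automatically forced $\ldots$ so one must be careful: over $\Z$ the Blanchfield module does not determine $|H_1(M;\Z)|$, hence the integral statement must be exactly that only integral null LP--surgeries are allowed, and these do preserve $H_1(M;\Z)$ up to the relevant invariance. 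I would therefore argue that the realization sequence of Theorem \ref{thM3} can be chosen so that the intermediate $\Q$--spheres are actually $\Z$--spheres and all handlebodies are $\Z$--handlebodies; this is done by re-examining the proof of \cite{M3}, which builds the surgeries from (i) Borromean surgeries on null $Y$--links to kill the difference of the Blanchfield forms and (ii) handle slides, both of which are integral LP--surgeries by the Remark preceding the theorem (Habegger \cite{Habe}, Auclair--Lescop \cite{AL}). The essential input is that every elementary algebraic modification of the integral Blanchfield module used in \cite{M3} (generators, relations, the pairing on the presentation) can be realized by a \emph{null} $Y$--link in a $\Z$--sphere, which is where the annihilator $\delta$ and the integrality of the coefficients enter.

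The main obstacle I expect is precisely this integral realization of elementary moves on the Blanchfield module: over $\Q$ one has the freedom to clear denominators, so realizing an arbitrary automorphism of $(\Al,\bl)$ is comparatively soft, whereas over $\Z$ one must realize automorphisms of $(\Al_\Z,\bl_\Z)$ by surgeries that do not change the order of $H_1(M;\Z)$ and stay null in $M\setminus K$. Concretely, the danger is that a rational realization passes through $\Q$SK--pairs whose first homology has the ``wrong'' cardinality; one must reroute the sequence. I would handle this by working with a fixed Seifert-type presentation of $(\Al_\Z,\bl_\Z)$, showing that any two integral presentations differing by a $\Zt$--automorphism are connected by a sequence of Borromean (hence integral LP, hence integral null LP) surgeries, using clasper calculus in the equivariant setting as developed in Section \ref{secborro}; the composition picks up at worst a multiplication by a power of $t$ because the lift of the covering is only canonical up to the deck transformation, which accounts for the $t^{\pm k}$ ambiguity in the statement exactly as in the rational case. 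Once the realization is integral, the first half of the proof shows it induces the prescribed $\zeta$ (times $t^k$), completing the argument.
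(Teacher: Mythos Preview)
The paper does not prove this theorem; it is quoted from \cite{M3} (Theorem~1.15 there) and used as a black box, just as Theorem~\ref{thM3} is quoted as \cite{M3} Theorem~1.14. So there is no proof in the present paper to compare your proposal against.

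That said, your plan has a substantive confusion that would derail it even as a standalone argument. You worry about $|H_1(M;\Z)|$ and about surgeries of the form $\frac{B_p}{B^3}$, but for $\Z$SK--pairs the ambient manifold $M$ is by definition a $\Z$--sphere, so $H_1(M;\Z)=0$. The $\Q$--balls $B_p$ with $H_1(B_p;\Z)\cong\Z/p\Z$ never enter the integral picture: an integral null LP--surgery replaces a $\Z$--handlebody by another $\Z$--handlebody, and the result is automatically a $\Z$--sphere again. Your concern about ``rerouting the sequence'' away from pairs with the ``wrong'' cardinality of $H_1$ is therefore based on a misreading of the setup.

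More seriously, the strategy of starting from the rational realization of Theorem~\ref{thM3} and ``upgrading'' each step to an integral one is unlikely to work as stated. A sequence of rational null LP--surgeries connecting two $\Z$SK--pairs may pass through genuine $\Q$SK--pairs that are not $\Z$SK--pairs, and the $\Q$--handlebodies involved need not be $\Z$--handlebodies. Replacing them by $\Z$--handlebodies while simultaneously preserving the Lagrangian condition over $\Z$ and the nullity in the knot complement is not a formal manoeuvre; it is essentially the content of the theorem. The title of \cite{M3} indicates that the actual argument goes through S--equivalence: one shows that $\Z$SK--pairs with isomorphic integral Blanchfield modules admit S--equivalent Seifert matrices, and then realizes the elementary S--equivalence moves (stabilization and congruence) by explicit integral null LP--surgeries. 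Your final paragraph gestures toward a Seifert-type presentation, which is the right ingredient, but the detour through Theorem~\ref{thM3} does not buy you anything and the difficulties you identify are artifacts of that detour rather than of the problem itself.
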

This result provides a splitting of the filtration $(\F_n^\Z)_{n\in\N}$ as the direct sum of filtrations 
$(\F_n^\Z(\Al_\Z,\bl_\Z))_{n\in\N}$ of subspaces $\F_0^\Z(\Al_\Z,\bl_\Z)$ of $\F_0^\Z$, where $(\Al_\Z,\bl_\Z)$ runs along all isomorphism 
classes of integral Blanchfield modules. Set $\G_n^\Z(\Al_\Z,\bl_\Z)=\F_n^\Z(\Al_\Z,\bl_\Z)/\F_{n+1}^\Z(\Al_\Z,\bl_\Z)$ and 
$\G^\Z(\Al_\Z,\bl_\Z)=\oplus_{n\in\N}\G_n^\Z(\Al_\Z,\bl_\Z)$. Theorem \ref{thM3Z} implies $\G_0^\Z(\Al_\Z,\bl_\Z)=\Q$.
In \cite{GR}, Garoufalidis and Rozansky identified the graded space $\G^\Z(\Al_0)$, where $\Al_0$ is the trivial Blanchfield module, 
with the graded space $\A^\Z(\Al_0)$. Theorem \ref{th0} generalizes this result.

Proposition \ref{propphinZ} implies:
\begin{theorem} \label{thphiZ}
 Fix an integral Blanchfield module $(\Al_\Z,\bl_\Z)$. 
 For all $n\geq 0$, there is a canonical surjective $\Q$--linear map: 
 $$\varphi_n^\Z : \A_n^\Z(\Al_\Z,\bl_\Z) \twoheadrightarrow \G_n^\Z(\Al_\Z,\bl_\Z).$$
\end{theorem}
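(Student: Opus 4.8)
\textbf{Proof proposal for Theorem~\ref{thphiZ}.}
The plan is to reduce the integral statement to the rational one, imitating the construction that yields the surjection $\varphi_n:\A_n(\Al,\bl)\twoheadrightarrow\G_n^b(\Al,\bl)$ of Proposition~\ref{propphinintro} but bookkeeping coefficients over $\Z$ instead of $\Q$. First I would recall from Section~\ref{secdiagrams} how $\varphi_n$ is defined: to an $(\Al,\bl)$--colored diagram $D$ of degree $n$ one associates a Y--link in a $\Q$SK--pair $(M,K)\in\Ens(\Al,\bl)$ by realizing each trivalent vertex as an internal vertex of a Y--graph, each edge as a band carrying the prescribed element of $\Qt$ (via the action of the deck transformation on a lift), and each univalent vertex as a leaf sent, by an appropriate curve in the infinite cyclic cover, to a curve representing its $\Al$--coloring; the bracket $[(M,K);\Gamma]$ of the resulting null Y--link represents $\varphi_n([D])\in\G_n^b(\Al,\bl)$. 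For the integral version, I would perform the same construction but starting from a $\Z$SK--pair $(M,K)\in\Ens^\Z(\Al_\Z,\bl_\Z)$, observing that a Borromean surgery is an integral LP--surgery (as noted in the excerpt), that each such surgery is null in $M\setminus K$ by the nullity of the underlying handlebody in the complement of $K$, and that the curves realizing edge-labels and vertex-colorings may be chosen in the integral infinite cyclic cover because the integral Alexander module surjects onto everything in sight.

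The core of the argument is then to check well-definedness, i.e.\ that the map descends to the quotient by all the relations AS, IHX, LE, OR, Hol, LV, EV, LD and $\AutZ$ defining $\A_n^\Z(\Al_\Z,\bl_\Z)$. Here I would invoke the equivariant clasper calculus of Section~\ref{secborro} verbatim: the topological moves on Y--links that prove the diagrammatic relations for $\varphi_n$ in the rational case are local and do not care whether the ambient pair is a $\Q$SK--pair or a $\Z$SK--pair, so the same moves establish the same relations here. The relation $\AutZ$ is handled by Theorem~\ref{thM3Z}: an automorphism of $(\Al_\Z,\bl_\Z)$ is realized (up to multiplication by a power of $t$) by a sequence of integral null LP--surgeries, hence carries a Y--link to an equivalent one modulo $\F_{n+1}^\Z$, exactly as $Aut(\Al,\bl)$ is handled in the rational setting via Theorem~\ref{thM3}. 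Surjectivity follows because $\F_n^\Z(\Al_\Z,\bl_\Z)$ is, by definition, generated by brackets of null Borromean Y--links, and every such Y--link is in the image of the construction up to the diagrammatic relations --- one reads off from the Y--link the underlying uni-trivalent graph, records the edge-labels from the band holonomies and the vertex-colorings from the leaves, and checks the resulting coloring is an $(\Al,\bl)$--colored (indeed $(\Al_\Z,\bl_\Z)$--compatible) diagram.

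The main obstacle, as in the rational case, is the consistency of the coefficient assignment: showing that the element of $\A_n^\Z(\Al_\Z,\bl_\Z)$ one extracts from a null Y--link does not depend on the choices made (choice of lifts of edges and leaves to the cover, choice of the $2$--chains bounding $\delta(\tau)$ times the leaf curves that enter the definition of the $f_{vv'}$, choice of the basepoint for the holonomy along each band), and conversely that two diagrams related by a relation produce $\F_{n+1}^\Z$--equivalent Y--links. This is precisely the content that Proposition~\ref{propphinZ} is asserting, and I would expect its proof to consist of transporting, line by line, the proof of Proposition~\ref{propphin} to the integral setting, the only new input being the remark that integral null LP--surgeries generate the same filtration as null Borromean surgeries (Habegger~\cite{Habe}, Auclair--Lescop~\cite{AL}), which guarantees that restricting to Y--links loses nothing. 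I do not anticipate any genuinely new difficulty beyond this translation; the integral case is in fact slightly more rigid than the rational one, and all the heavy lifting is already done in Sections~\ref{secborro} and~\ref{secdiagrams}.
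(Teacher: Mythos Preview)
Your proposal is correct and takes essentially the same approach as the paper: the paper derives Theorem~\ref{thphiZ} from Proposition~\ref{propphinZ}, whose proof is the observation that the clasper calculus of Section~\ref{secborro} and the construction in Section~\ref{secdiagrams} go through verbatim for $\Z$SK--pairs, with Theorem~\ref{thM3Z} replacing Theorem~\ref{thM3} in the verification of the Aut relation (Lemma~\ref{lemmaind2}). One small caution on wording: the space $\A_n^\Z(\Al_\Z,\bl_\Z)$ is still built from $(\Al,\bl)$--colored diagrams over $\Q$ (where $(\Al,\bl)=\Q\otimes(\Al_\Z,\bl_\Z)$), and the only integral bookkeeping is in restricting the Aut relation to $\AutZ$---so ``coefficients over $\Z$'' is not quite the right way to phrase the difference.
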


\begin{corollary}
 Fix an integral Blanchfield module $(\Al_\Z,\bl_\Z)$ and an integer $n\geq 0$. Then $\G_{2n+1}^\Z(\Al_\Z,\bl_\Z)=0.$
\end{corollary}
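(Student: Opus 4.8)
The statement to prove is the corollary: for any integral Blanchfield module $(\Al_\Z,\bl_\Z)$ and any $n\geq 0$, $\G_{2n+1}^\Z(\Al_\Z,\bl_\Z)=0$. The plan is to derive this immediately from the surjection $\varphi_{2n+1}^\Z:\A_{2n+1}^\Z(\Al_\Z,\bl_\Z)\twoheadrightarrow\G_{2n+1}^\Z(\Al_\Z,\bl_\Z)$ provided by Theorem~\ref{thphiZ}, once we know that the source space vanishes in odd degrees. So the real content is the odd-degree vanishing of $\A_n^\Z(\Al_\Z,\bl_\Z)$, which is exactly the integral analogue of Lemma~\ref{lemmaodd}, and the excerpt already records it as a lemma: for all $n\geq 0$, $\A_{2n+1}^\Z(\Al_\Z,\bl_\Z)=0$.

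First I would recall \emph{why} that odd-vanishing lemma holds, since the corollary is only as good as that input. As noted just before the integral lemma, the opposite of the identity, $-\mathrm{id}$, is an automorphism of $(\Al_\Z,\bl_\Z)$: it is $\Zt$--linear and it preserves the Blanchfield form because $\bl_\Z$ is sesquilinear (quadratic in the sign). Hence $-\mathrm{id}$ is among the automorphisms generating the relation $\AutZ$. Acting by $-\mathrm{id}$ on an $(\Al,\bl)$--colored diagram $D$ of degree $n$ multiplies the color of each univalent vertex by $-1$; by the relation LV applied at each univalent vertex in turn (with scalars $-1$), this equals $(-1)^{\#V(D)}D$ in the quotient, where $\#V(D)$ is the number of univalent vertices. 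But for a uni-trivalent graph with $n$ trivalent vertices, $\#V(D)\equiv n \pmod 2$ — indeed the total number of half-edges $3n + \#V(D) $ is even, so $\#V(D)$ and $n$ have the same parity. Thus for $n$ odd, the relation $\AutZ$ forces $D = -D$, i.e. $2D=0$, and since we are working over $\Q$ this gives $D=0$; the generators all vanish, so $\A_{2n+1}^\Z(\Al_\Z,\bl_\Z)=0$. (This is literally the argument behind the stated lemma, and I would simply cite that lemma.)

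With the odd-vanishing of the diagram space in hand, the corollary is one line: by Theorem~\ref{thphiZ} the map $\varphi_{2n+1}^\Z:\A_{2n+1}^\Z(\Al_\Z,\bl_\Z)\twoheadrightarrow\G_{2n+1}^\Z(\Al_\Z,\bl_\Z)$ is surjective, its source is $0$, hence its target is $0$. I would phrase the proof as: ``By the lemma above, $\A_{2n+1}^\Z(\Al_\Z,\bl_\Z)=0$; since $\varphi_{2n+1}^\Z$ is surjective, $\G_{2n+1}^\Z(\Al_\Z,\bl_\Z)=0$.''

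There is no real obstacle here — the corollary is a formal consequence of two already-established facts. The only thing to be slightly careful about is the parity bookkeeping ($\#V\equiv n \bmod 2$) used in justifying the diagram-space lemma, and the fact that passing from $2D=0$ to $D=0$ uses that the coefficient ring is $\Q$ (or at least torsion-free), which is the case throughout. If one wanted a self-contained argument avoiding the rational-homology diagram spaces, one could instead note the same $-\mathrm{id}$ symmetry at the level of the filtration: multiplication by $-1$ on the Alexander module is realized, via Theorem~\ref{thM3Z}, by a sequence of integral null LP--surgeries, which on $\G_n^\Z$ acts as $(-1)^{?}$; but the diagram-space route is cleaner and is the one the paper sets up, so that is the plan.
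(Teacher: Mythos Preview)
Your proposal is correct and follows exactly the route the paper intends: the corollary is placed immediately after the lemma $\A_{2n+1}^\Z(\Al_\Z,\bl_\Z)=0$ and Theorem~\ref{thphiZ} precisely because it is the one-line consequence you spell out, namely that a surjection with trivial source has trivial target. Your justification of the odd-degree vanishing via the $-\mathrm{id}$ automorphism and the parity count $\#V\equiv n\pmod 2$ is also the argument the paper has in mind (it only records ``since the opposite of the identity is an automorphism of $(\Al_\Z,\bl_\Z)$'' before stating the lemma).
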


As in Subsection \ref{subsecLesKri}, we have a map $\psi_n^\Z:\A_n^\Z(\Al_\Z,\bl_\Z)\to\A_n(\delta)$, where $\delta$ is the annihilator of $\Al=\Q\otimes\Al_\Z$. 
Theorem \ref{thinvariantZ} implies that the degree $n$ part of the invariant $Z$ provides a $\Q$--linear map $Z_n:\F_0^\Z(\Al_\Z,\bl_\Z)\to\A_n(\delta)$ such that 
$Z_n\circ\varphi_n^\Z=\psi_n^\Z$. 

Set $\bl=id_\Q\otimes\bl_\Z$. We have a natural projection $\A_n^\Z(\Al_\Z,\bl_\Z)\to\A_n(\Al,\bl)$. The map $\psi_n^\Z$ is the composition of the map $\psi_n$ with this projection. 
Hence we could adapt Theorem~\ref{th3n} and get a surjective map $\overline{\psi_n^\Z}$, but we would not get injectivity, which is what we are mostly interested in.

	\section{Equivariant clasper calculus} \label{secborro}

For a $\Q$SK--pair $(M,K)$, let $\F_0^b(M,K)$ be the rational vector space generated by all the $\Q$SK--pairs that can be obtained from 
$(M,K)$ by a finite sequence of null Borromean surgeries, up to orientation-preserving homeomorphism. For $n>0$, let $\F_n^b(M,K)$ 
be the subspace of $\F_0^b(M,K)$ generated by the $\left[(M,K);\Gamma\right]$ for all $m$--component null Y--links with $m\geq n$.
\begin{lemma}[\cite{GGP} Lemma 2.2] \label{lemmadisk4}
 Let $\Gamma$ be a Y--graph in a 3--manifold $V$, which has a 0--framed leaf that bounds a disk in $V$ whose interior does not meet $\Gamma$. 
Then $V(\Gamma)\cong V$.
\end{lemma}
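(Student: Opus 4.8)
The plan is to realize the surgery $V(\Gamma)$ concretely as surgery along the six--component framed link $L\subset\Sigma(\Gamma)\times[-1,1]\subset V$ of Figure~\ref{figborro4}, and then to trivialize this surgery by Kirby calculus, working locally in a neighbourhood of $\Gamma$ together with the disk provided by the hypothesis. Write $L=A_1\cup A_2\cup A_3\cup\ell_1\cup\ell_2\cup\ell_3$, where $\ell_i$ is (a parallel copy of) the $i$--th leaf, $A_i$ is the ``edge component'' running along the $i$--th edge, each $A_i$ clasps the corresponding $\ell_i$ in a Hopf link, and $A_1,A_2,A_3$ together form the Borromean rings. As the three leaves are $0$--framed, all six components of $L$ are $0$--framed; in particular each $\ell_i$ bounds an embedded disk.

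First I would put the picture in standard position. Let $\ell_1$ be the given $0$--framed leaf and $D\subset V$ the disk it bounds with $Int(D)\cap\Gamma=\emptyset$. Shrinking the regular neighbourhood $\Sigma(\Gamma)\times[-1,1]$ and choosing $D$ in general position, one arranges that $Int(D)$ meets $L$ only along $A_1$, in a single transverse point --- the point at which $A_1$ pierces the ``leaf disk'' of $\ell_1$ inside the clasp. Thus $\ell_1$ is a $0$--framed unknot in $V$ bounding an embedded disk pierced exactly once, by the single strand $A_1$ and by nothing else; equivalently $\ell_1$ is a $0$--framed meridian of $A_1$ linking no other component of $L$.

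The heart of the argument is then a slam dunk. Since $\ell_1$ is a $0$--framed unknot linking only $A_1$, and $A_1,\ell_1$ form a Hopf link, we may slam dunk $\ell_1$ into $A_1$: this deletes $\ell_1$ and replaces the surgery on $A_1$ by a surgery of framing $0-1/0=\infty$, i.e.\ by the trivial surgery. (Concretely: $0$--surgery on the unknot $\ell_1$ produces a connected summand $S^1\times S^2$ in which $A_1$ is the curve $S^1\times\{\mathrm{pt}\}$, and the ensuing $0$--surgery on $A_1$ undoes this summand. This is exactly where the $0$--framing of the leaf is used: a nonzero framing would leave a lens--space factor.) The remaining components $A_2,A_3,\ell_2,\ell_3$ are untouched and lie in $V$ exactly as before. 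But $A_2\cup A_3$ is now the Borromean rings with one component removed, hence a two--component unlink, while $\ell_2$ and $\ell_3$ still clasp $A_2$ and $A_3$ in disjoint balls. So $L$ has been reduced, with no change to the ambient manifold, to two disjoint $0$--framed Hopf links $A_2\cup\ell_2$ and $A_3\cup\ell_3$. Finally, surgery on a $0$--framed Hopf link is the identity operation: one more slam dunk (or the observation that the Hopf--link exterior is $T^2\times[0,1]$, the $0$--framings giving filling slopes that meet once) shows the surgered manifold is again the unmodified one. Hence $V(\Gamma)\cong V$.

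The step I expect to be the main obstacle is the bookkeeping in the slam dunk: one must check carefully that after absorbing $\ell_1$ and $A_1$ nothing survives of their former entanglement with $A_2$ and $A_3$ beyond the statement ``the Borromean rings minus one ring is trivial'', and that no framing defect is introduced --- this is precisely where the hypotheses ``$0$--framed'' and ``bounds a disk with interior off $\Gamma$'' are both needed. Since every move above is supported in a regular neighbourhood of $\Gamma\cup D$, the fact that $V$ is an arbitrary $3$--manifold causes no difficulty.
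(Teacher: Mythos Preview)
The paper does not give its own proof of this lemma; it is simply cited from \cite{GGP}. Your Kirby--calculus argument is essentially the standard one and is on the right track.

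There is, however, a genuine slip. You assert that ``the three leaves are $0$--framed, all six components of $L$ are $0$--framed; in particular each $\ell_i$ bounds an embedded disk.'' The hypothesis does not give you this: only \emph{one} leaf, say $\ell_1$, is assumed to be $0$--framed and to bound a disk in $V$ whose interior misses $\Gamma$. The remaining leaves $\ell_2,\ell_3$ may be arbitrarily knotted, linked, and framed in $V$. So after your first slam dunk removes $\ell_1$ and $A_1$, the residual pairs $A_2\cup\ell_2$ and $A_3\cup\ell_3$ need not be $0$--framed Hopf links, and your final sentence does not apply as written.

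The repair is immediate and costs nothing: reverse the roles in the last two cancellations. The edge components $A_2,A_3$ \emph{are} $0$--framed unknots (they live in a ball inside $\Sigma(\Gamma)\times[-1,1]$), and once $A_1$ is gone, each $A_i$ bounds a disk pierced exactly once, namely by $\ell_i$. The handle--cancellation move (a $0$--framed unknotted meridian together with the single strand through it can be erased) then deletes $A_i\cup\ell_i$ regardless of the knot type or framing of $\ell_i$. With this adjustment your argument goes through.
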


\begin{lemma}[\cite{GGP} Theorem 3.1, \cite{Auc} Lemma 5.1.1] \label{lemmapreslide}
 Let $\Gamma_0$, $\Gamma_1$, $\Gamma_2$ be the Y--graphs drawn in a genus 4 handlebody in Figure \ref{figtopeq1}. Assume this handlebody 
is embedded in a 3--manifold $V$. Then $V(\Gamma_0)\cong V(\Gamma_1\cup\Gamma_2)$.
\end{lemma}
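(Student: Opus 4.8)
The plan is to reduce the statement to Kirby calculus on the surgery links associated with the Y--graphs. By definition, Borromean surgery on a Y--graph $\Gamma$ is surgery on the six--component framed link $L$ of Figure \ref{figborro4} lying in a regular neighbourhood of the associated surface $\Sigma(\Gamma)$; hence $V(\Gamma_0)$ is obtained from $V$ by surgery on a six--component link $L_0$, and $V(\Gamma_1\cup\Gamma_2)$ by surgery on the disjoint union $L_1\sqcup L_2$ of two such six--component links, all taking place inside the genus $4$ handlebody of Figure \ref{figtopeq1}. It suffices to find a sequence of Kirby moves --- handle slides and $(\pm1)$--blow-downs --- transforming $L_1\sqcup L_2$ into $L_0$ inside this handlebody, since such moves preserve the orientation-preserving homeomorphism type of the surgered $3$--manifold and are supported away from $\partial V$.

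I expect the configuration of Figure \ref{figtopeq1} to be the standard ``splitting'' one: two of the leaves of $\Gamma_0$ run around two handles of the genus $4$ handlebody, while its third leaf $z$ is a band sum, running through the other two handles, of curves $z_1$ and $z_2$; the Y--graphs $\Gamma_1$ and $\Gamma_2$ are obtained from $\Gamma_0$ by replacing $z$ with $z_1$ and with $z_2$ respectively (and taking parallel copies of the two common leaves). The argument then runs: first normalise, using the elementary clasper moves of \cite{GGP}, so that the common leaves of $\Gamma_1$ and $\Gamma_2$ are honestly parallel; on the level of surgery links, handle-slide the corresponding ``leaf'' and ``clasp'' components of $L_1$ and $L_2$ onto one another; after these slides a cluster of components becomes $0$--framed unknots bounding disks with interior disjoint from everything else, and these are cancelled --- this is exactly the mechanism of Lemma \ref{lemmadisk4}. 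What remains is a single six--component link whose leaves are the two common leaves together with the band sum of $z_1$ and $z_2$, namely $L_0$ up to isotopy.

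The main obstacle is the bookkeeping: one must track the $\pm1$ framings and the doubled--Borromean clasp pattern through every handle slide, and must check that the band sum produced by the cancellations is precisely the leaf $z$ of $\Gamma_0$ drawn in Figure \ref{figtopeq1}, with the correct framing and the correct sign of the band. This computation is carried out in \cite[Theorem 3.1]{GGP} and \cite[Lemma 5.1.1]{Auc}, so in practice one either repeats their diagrammatic Kirby calculus step by step, or invokes those results directly once one has checked that Figure \ref{figtopeq1} depicts the configuration to which they apply. An equivalent route avoids surgery links entirely: using Matveev's description of Borromean surgery as excision and regluing of a genus $3$ handlebody, one checks that the composite regluing for $\Gamma_1\cup\Gamma_2$ and the regluing for $\Gamma_0$ differ by a homeomorphism of the ambient genus $4$ handlebody that is the identity on the boundary; the crux then becomes the identification of that ambient homeomorphism, which is no easier.
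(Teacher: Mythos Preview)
The paper does not prove this lemma; it simply cites \cite[Theorem 3.1]{GGP} and \cite[Lemma 5.1.1]{Auc} and uses the result. Your overall strategy---reduce to Kirby calculus on the six--component surgery links associated with the Y--graphs, then perform handle slides and blow-downs inside the handlebody---is the strategy of those references, and invoking them directly once the configuration is matched is all that is needed.

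However, your description of the configuration in Figure \ref{figtopeq1} is wrong: you describe $\Gamma_0$ as having a leaf $z$ which is a band sum of two curves $z_1,z_2$, with $\Gamma_1,\Gamma_2$ obtained by replacing $z$ with $z_1$ and $z_2$. That is the \emph{leaf-cutting} configuration of Figure \ref{figtopeq2} and Lemma \ref{lemmaprecut}, not the edge-sliding configuration relevant here. In Figure \ref{figtopeq1}, $\Gamma_0$ is a Y--graph one of whose \emph{edges} (not leaves) has been slid around the fourth handle; $\Gamma_1$ is the unslid Y--graph (leaves on three of the handles), and $\Gamma_2$ is an additional Y--graph one of whose leaves is a small meridian of a leaf of $\Gamma_1$---see how the lemma is used in the proof of Lemma \ref{lemmaslide4}. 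The Kirby-calculus mechanism still applies, but the specific handle-slide sequence and cancellation pattern are different from what you wrote: the outcome is not a band-summed leaf but the absorption of the meridian-linked Y--graph $\Gamma_2$ into a detour of an edge of $\Gamma_1$. If you execute your sketch literally, you will have proved Lemma \ref{lemmaprecut} rather than Lemma \ref{lemmapreslide}.
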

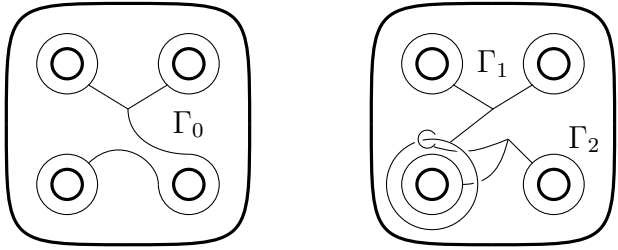
\begin{figure}[htb] 
\begin{center}
\begin{tikzpicture} [scale=0.4]
\newcommand{\handle}{\foreach \x in {0,1,2,3} 
{\draw[very thick,rotate=90*\x] (4,0) .. controls +(0,4) and +(4,0) .. (0,4) (2,2) circle (0.5);}}
\begin{scope}
\handle
\foreach \x in {0,1,2} {\draw[rotate=90*\x] (2,2) circle (1);}
\draw (-1.3,1.3) -- (0,0.5) -- (1.3,1.3) ;
\draw (1,-2) arc (-180:90:1);
\draw (0,0.5) .. controls +(0,-1) and +(-1,0) .. (2,-1) (1,-2) .. controls +(0,0.8) and +(1,1) .. (-1.3,-1.3);
\draw (2,0) node{$\Gamma_0$};
\end{scope}
\begin{scope} [xshift=12cm]
\handle
\foreach \x in {0,1,2,3} {\draw[rotate=90*\x] (2,2) circle (1);}
\draw (-1,-2) .. controls +(1,0) and +(-0.2,-0.8) .. (0.5,-0.5);
\draw (-1.95,-0.75) .. controls +(1,-0.3) and +(-1,-0.4) .. (0.5,-0.5);
\draw[white,line width=3pt] (-2,-0.5) arc (90:360:1.5);
\draw[white,line width=3pt] (-0.5,-2) arc (0:60:1.5);
\draw (-2,-2) circle (1.5);
\draw[white,line width=3pt] (-2.15,-0.2) arc (90:270:0.3);
\draw (-1.9,-0.35) arc (30:330:0.3);
\draw (-1.3,1.3) -- (0,0.5) -- (1.3,1.3) (0,0.5) -- (-1.4,-0.6);
\draw (1.3,-1.3) -- (0.5,-0.5);
\draw (0,2) node{$\Gamma_1$};
\draw (3,-0.5) node{$\Gamma_2$};
\end{scope}
\end{tikzpicture}
\end{center}
\caption{Topological equivalence for edge sliding} \label{figtopeq1}
\end{figure}

\begin{lemma} \label{lemmaslide4}
Let $\Gamma$ be an $n$--component Y--link null in $M\setminus K$. Let $J$ be a framed knot, rationally null-homologous in $M\setminus K$, and disjoint 
from $\Gamma$. Let $\Gamma'$ be obtained from $\Gamma$ be sliding an edge of $\Gamma$ along $J$ (see Figure \ref{figslide4}). 
Then $\left[(M,K);\Gamma\right]=\left[(M,K);\Gamma'\right]\ mod\ \F_{n+1}^b(M,K)$.
\end{lemma}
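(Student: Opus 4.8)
The plan is to prove this "edge-sliding" lemma by realizing the slide as a sequence of moves that are each trivial modulo $\F_{n+1}^b(M,K)$, using Lemma \ref{lemmapreslide} as the main geometric input. First I would recall that sliding an edge of $\Gamma$ along the framed knot $J$ can be encoded locally: near the relevant arc of the edge, one introduces an auxiliary Y--graph $\Gamma_2$ whose leaf follows $J$ (this is exactly the picture of Figure \ref{figtopeq1}, where $\Gamma_0$ is replaced by $\Gamma_1 \cup \Gamma_2$ and $\Gamma_1$ is the slid copy). By Lemma \ref{lemmapreslide}, the surgered manifold $M(\Gamma) = M(\Gamma_0 \cup \Gamma_{\mathrm{rest}})$ is homeomorphic to $M(\Gamma_1 \cup \Gamma_2 \cup \Gamma_{\mathrm{rest}})$, so $(M,K)$ with surgery on $\Gamma$ equals $(M,K)$ with surgery on $\Gamma' \cup \Gamma_2$, where I write $\Gamma' = \Gamma_1 \cup \Gamma_{\mathrm{rest}}$. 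The point is that $\Gamma_2$ is an extra component, so this identity lifts to an identity of brackets once I handle the extra component carefully.

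The core computation is then the following: I want to show $[(M,K);\Gamma] = [(M,K);\Gamma' \cup \Gamma_2]$ in $\F_0^b$, and then argue that, modulo $\F_{n+1}^b$, the bracket $[(M,K);\Gamma' \cup \Gamma_2]$ can be replaced by $[(M,K);\Gamma']$. For the first identity, note that doing surgery on a single component $\Gamma_2$ whose leaf bounds (after the other surgeries) is controlled: the leaf of $\Gamma_2$ runs parallel to $J$, which is rationally null-homologous in $M \setminus K$, and since $J$ is disjoint from $\Gamma$, the relevant leaf bounds a surface disjoint from the other claspers, so by a Lemma \ref{lemmadisk4}-type argument (adapted to the rationally null-homologous framed setting, possibly via the standard trick of replacing the leaf by a product of meridians of a bounding surface) surgery on $\Gamma_2$ alone is trivial. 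More precisely, I would expand the bracket $[(M,K);\Gamma \cup \text{(no $\Gamma_2$)}] $ and the bracket on $\Gamma' \cup \Gamma_2$ and match terms: for each subset of $\{1,\dots,n\}$, Lemma \ref{lemmapreslide} gives the homeomorphism $M(\Gamma_I) \cong M(\Gamma'_I \cup \Gamma_2)$ when $i_0 \in I$ (where $i_0$ indexes the slid component) and $M(\Gamma_I) \cong M(\Gamma'_I)$ when $i_0 \notin I$; since sliding along $J$ and then removing $\Gamma_2$ when $\Gamma_{i_0}$ is absent changes nothing, the alternating sums reorganize into $[(M,K);\Gamma'] + (\text{terms involving } \Gamma_2 \text{ together with other components})$.

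For the second step — absorbing the $\Gamma_2$ terms into $\F_{n+1}^b$ — I would observe that $[(M,K);\Gamma' \cup \Gamma_2] - [(M,K);\Gamma']$ is, by the definition of the filtration and the multilinearity of brackets, a sum of brackets each of which contains $\Gamma_2$ as one of its $\geq n+1$ components (since $\Gamma'$ has $n$ components and $\Gamma_2$ adds one more), hence lies in $\F_{n+1}^b(M,K)$ — \emph{provided} $\Gamma_2$ together with (the relevant sub-Y-links of) $\Gamma'$ forms an admissible null Y--link, i.e.\ $\Gamma_2$ is null in $M \setminus K$. This nullity is where I expect the main obstacle: I must check that the auxiliary Y--graph $\Gamma_2$, whose leaf follows $J$, is indeed null-homologous in the complement of $K$ in $M$. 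This follows from the hypothesis that $J$ is rationally null-homologous in $M \setminus K$: the leaves of $\Gamma_2$ are meridians/parallels of $J$ (or curves on a Seifert-type surface for $\delta(\tau) J$), so their classes in $H_1(M \setminus K; \Q)$ vanish, and since the internal vertex contributes nothing to homology, the whole handlebody neighborhood of $\Gamma_2$ maps trivially. One subtlety is that $J$ need only be \emph{rationally} null-homologous, so the bounding chain has denominators; I would handle this exactly as in the construction of $lk_e$, writing $\partial S = \delta(\tau) J$ and noting that this still forces the $\Q$-homology class of the leaf to vanish, which is all that nullity of an LP-surgery requires. With nullity of $\Gamma_2$ in hand, the bracket expansion closes and the lemma follows.
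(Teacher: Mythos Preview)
Your overall plan---use Lemma~\ref{lemmapreslide} to introduce an auxiliary Y--graph and absorb the difference into $\F_{n+1}^b$---is the same as the paper's. However, the bracket algebra you write down is incorrect in a way that breaks the argument.

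You claim first that $[(M,K);\Gamma]=[(M,K);\Gamma'\cup\Gamma_2]$ in $\F_0^b$, and second that $[(M,K);\Gamma'\cup\Gamma_2]-[(M,K);\Gamma']\in\F_{n+1}^b$. Both are false in general. For the first: the left side is a sum of $2^n$ signed pairs, the right side of $2^{n+1}$; they cannot agree unless half the terms on the right cancel, and you never produce that cancellation. For the second: the standard identity gives $[(M,K);\Gamma'\cup\Gamma_2]-[(M,K);\Gamma']=-[(M,K)(\Gamma_2);\Gamma']$, an $n$--component bracket that has no reason to lie in $\F_{n+1}^b$. If both of your claims held, you would in fact be proving $[(M,K);\Gamma]\in\F_{n+1}^b$, which is certainly wrong.

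The correct identity is the single equation
\[
[(M,K);\Gamma]-[(M,K);\Gamma']=\pm[(M,K);\Gamma'\cup\Gamma_2],
\]
and the right-hand side is directly an $(n+1)$--component null bracket. To obtain this equation from your term-by-term expansion you need one more ingredient that you misattribute: when the slid component is \emph{absent} from a subset $I$, the extra Y--graph $\Gamma_2$ has a leaf that is a \emph{meridian of a leaf of the slid component}, hence bounds a disk in the complement, and Lemma~\ref{lemmadisk4} gives $(M,K)(\Gamma_2\cup\Gamma'_I)=(M,K)(\Gamma'_I)$. It is this meridian leaf---not the $J$--parallel leaf---that makes surgery on $\Gamma_2$ alone trivial; rational null-homology of $J$ is irrelevant here and cannot invoke Lemma~\ref{lemmadisk4}. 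The hypothesis on $J$ is used only to ensure that $\Gamma_2$ is a \emph{null} Y--graph (so that the $(n+1)$--bracket lies in $\F_{n+1}^b$), exactly as you say in your last paragraph. The paper's proof records precisely this: it writes the difference as the single bracket $[(M,K);\Gamma\cup\hat{\Gamma}_0]$ and observes that $\hat{\Gamma}_0$ is null.
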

\begin{figure}[htb] 
\begin{center}
\begin{tikzpicture} [scale=0.3]
\newcommand{\edge}[1]{
\draw[rotate=#1] (0,0) -- (0,-1.5);
\draw[rotate=#1,dashed] (0,-1.5) -- (0,-3);}
\edge{120} \edge{240}
\draw (0,0) -- (0,-6);
\draw[->] (0,0) -- (0,-3);
\draw (1,-7) arc (0:180:1);
\draw[dashed] (-1,-7) arc (180:360:1);
\draw (-4,-4.5) arc (-90:90:1.5);
\draw[dashed] (-4,-1.5) arc (90:270:1.5);
\draw[->] (-2.5,-3.1) -- (-2.5,-2.9);
\draw (-4,-5.5) node{$J$};
\draw (1,-1) node{$\Gamma$};
\draw[->,line width=1.5pt] (6.5,-3) -- (7.5,-3);
\begin{scope} [xshift=18cm]
\edge{120} \edge{240}
\draw (0,0) -- (0,-2.5) (0,-3.5) -- (0,-6);
\draw[->] (0,0) -- (0,-2);
\draw (1,-7) arc (0:180:1);
\draw[dashed] (-1,-7) arc (180:360:1);
\draw (-4,-4.5) arc (-90:90:1.5);
\draw[dashed] (-4,-1.5) arc (90:270:1.5);
\draw (-4,-4.8) arc (-90:90:1.8);
\draw[dashed] (-4,-1.2) arc (90:270:1.8);
\draw[color=white,line width=3pt] (-2.26,-2.53) -- (-2.26,-3.48);
\draw (-2.26,-2.5) -- (0,-2.5) (-2.26,-3.5) -- (0,-3.5);
\draw (1,-1) node{$\Gamma'$};
\end{scope}
\end{tikzpicture}
\caption{Sliding an edge} \label{figslide4}
\end{center}
\end{figure}
\begin{proof}
Let $\Gamma_0'$ be the component of $\Gamma'$ that contains the slided edge and let $\Gamma_0$ be the corresponding component of $\Gamma$. 
By Lemma \ref{lemmapreslide}, the surgery on $\Gamma_0'$ is equivalent to the simultaneous surgeries on $\Gamma_0$ and on a null Y--graph 
$\hat{\Gamma}_0$ which has a leaf which is a meridian of a leaf of $\Gamma_0$. 
It follows that $\left[(M,K);\Gamma\right]-\left[(M,K);\Gamma'\right]=\left[(M,K);\Gamma\cup\hat{\Gamma}_0\right]\in\F_{n+1}^b(M,K)$.
\end{proof}
In particular, the above lemma shows that the class of $\left[(M,K);\Gamma\right]\ mod\ \F_{n+1}^b(M,K)$ is invariant under full 
twists of the edges. 

\begin{lemma}[\cite{GGP} Theorem 3.1] \label{lemmaprecut}
 Let $\Gamma_0$, $\Gamma_1$, $\Gamma_2$ be the Y--graphs drawn in a genus 4 handlebody in Figure \ref{figtopeq2}. Assume this handlebody 
is embedded in a 3--manifold $V$. Then $V(\Gamma_0)\cong V(\Gamma_1\cup\Gamma_2)$.
\end{lemma}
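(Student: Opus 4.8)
The plan is to prove the homeomorphism \emph{locally}, inside the genus~$4$ handlebody pictured in Figure~\ref{figtopeq2}, and then import it into the ambient manifold. Write $H$ for that handlebody, and recall that $V(\Gamma_0)$ (resp.\ $V(\Gamma_1\cup\Gamma_2)$) is obtained from $V$ by removing $\mathrm{Int}(H)$ and regluing the surgered block $H(\Gamma_0)$ (resp.\ $H(\Gamma_1\cup\Gamma_2)$). Hence it suffices to construct a homeomorphism $H(\Gamma_0)\to H(\Gamma_1\cup\Gamma_2)$ that restricts to the identity on $\partial H$; it then extends by the identity over $V\setminus\mathrm{Int}(H)$ to give the desired homeomorphism of $V$. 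From now on everything happens inside the fixed handlebody~$H$.

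The next step is to turn each Borromean surgery into an ordinary Dehn surgery. By the Matveev description recalled after Figure~\ref{figborro4}, surgery on a Y--graph $\Gamma$ is surgery on the framed six--component link $L(\Gamma)$ sitting in a product neighborhood $\Sigma(\Gamma)\times[-1,1]$ of its associated surface. Applying this to $\Gamma_0$ and to $\Gamma_1,\Gamma_2$ presents $H(\Gamma_0)$ as surgery on a framed link $L_0\subset H$ and $H(\Gamma_1\cup\Gamma_2)$ as surgery on a framed link $L_1\sqcup L_2\subset H$. By inspection of Figure~\ref{figtopeq2}, $\Gamma_1$ and $\Gamma_2$ are obtained from $\Gamma_0$ by ``cutting'' one distinguished leaf of $\Gamma_0$ --- a leaf that runs along two of the handles of $H$ --- into the two leaves of $\Gamma_1$ and $\Gamma_2$, each running along a single handle, together with the corresponding duplication of the body edge; so $L_1\sqcup L_2$ should be obtainable from $L_0$ by splitting that leaf component and introducing the clasp/meridian components the figure prescribes.

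The heart of the argument is then a Kirby--calculus computation: starting from $L_0$, rewrite the leaf component running along two handles as a band sum of two parallel strands, each going along one handle, by a handle slide, updating all framings and linking numbers accordingly; then cancel the now--redundant components by $\pm1$--framed blow--downs --- equivalently, by Lemma~\ref{lemmadisk4}, which discards a Y--graph having a $0$--framed leaf bounding an embedded disk met by nothing else. After an isotopy inside $H$ the result should coincide with $L_1\sqcup L_2$. Since $L_0$ and $L_1\sqcup L_2$ are thereby related by handle slides and blow--downs, Kirby's theorem yields a homeomorphism of the surgered $3$--manifolds, and because every move is supported away from $\partial H$ this homeomorphism is the identity near $\partial H$, as required. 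This is in essence the picture argument of \cite{GGP}; alternatively one could package the same manipulations as a combination of the edge--slide equivalence of Lemma~\ref{lemmapreslide} with Lemma~\ref{lemmadisk4}.

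The step I expect to be the real obstacle is precisely this diagram chasing: keeping the framings correct through the handle slides, checking that after the blow--downs exactly the components of $L_1\sqcup L_2$ survive --- with the clasping dictated by Figure~\ref{figtopeq2} and nothing extra --- and verifying that each intermediate isotopy can be taken rel $\partial H$. There is no conceptual difficulty beyond this bookkeeping, which is presumably why the result is quoted from \cite{GGP}.
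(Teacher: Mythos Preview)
The paper does not supply a proof of this lemma: it is stated with the attribution ``\cite{GGP} Theorem~3.1'' and no argument is given in the text. So there is nothing to compare your proposal against beyond the citation itself.

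Your outline is a fair description of the shape of the argument in \cite{GGP}: localize to the handlebody, replace each Y--graph by its associated six--component framed link, and relate the two surgery presentations by Kirby moves supported away from $\partial H$. You are also right that the entire content lies in the explicit diagram manipulation, which you do not carry out. As written, your proposal is a plan rather than a proof; in particular, the sentence ``after an isotopy inside $H$ the result should coincide with $L_1\sqcup L_2$'' is exactly the assertion to be verified, and invoking Lemma~\ref{lemmadisk4} or Lemma~\ref{lemmapreslide} does not by itself produce the required sequence of moves. If you want a self--contained proof, you must actually draw the links and perform the slides and blow--downs; otherwise, citing \cite{GGP} as the paper does is the honest option.
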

\begin{figure}[htb] 
\begin{center}
\begin{tikzpicture} [scale=0.4]
\newcommand{\handle}{\foreach \x in {0,1,2,3} 
{\draw[very thick,rotate=90*\x] (4,0) .. controls +(0,4) and +(4,0) .. (0,4) (2,2) circle (0.5);}}
\begin{scope}
\handle
\foreach \x in {0,3} {\draw[rotate=90*\x] (2,2) circle (1);}
\draw (1.3,-1.3) -- (0.2,0) -- (1.3,1.3) (0.2,0) -- (-1,0);
\draw (-1,2) arc (0:180:1) (-3,-2) arc (-180:0:1);
\draw (-1,2) -- (-1,-2) (-3,2) -- (-3,-2);
\draw (2,0) node{$\Gamma_0$};
\end{scope}
\begin{scope} [xshift=12cm]
\handle
\foreach \x in {0,1,2,3} {\draw[rotate=90*\x] (2,2) circle (1);}
\draw (-1.3,1.3) -- (0.5,0.5) -- (1.3,-1.3);
\draw[white,line width=5pt] (2,-2) circle (1.5);
\foreach \x in {0,3} {\draw[rotate=90*\x] (2,2) circle (1.5);}
\draw (-1.3,-1.3) -- (-0.5,-0.5) -- (0.7,-1.3);
\draw (-0.5,-0.5) .. controls +(0,0.5) and +(1,0) .. (-2,0.5);
\draw (-2,3.5) arc (90:270:1.5);
\draw (-2,3.5) .. controls +(1,0) and +(-1,0) .. (0.5,2);
\draw[white,line width=5pt] (0.5,0.5) -- (1.3,1.3);
\draw (0.5,0.5) -- (1.3,1.3);
\draw (-2,-0.5) node{$\Gamma_1$};
\draw (-0.4,1.5) node{$\Gamma_2$};
\end{scope}
\end{tikzpicture}
\end{center}
\caption{Topological equivalence for leaf cutting} \label{figtopeq2}
\end{figure}
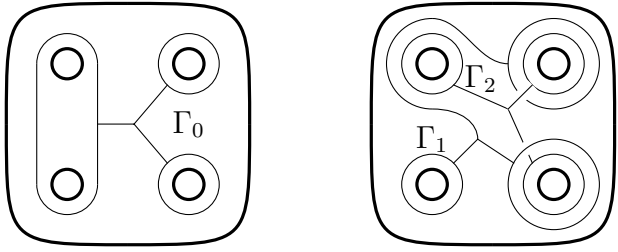

\begin{lemma} \label{lemmacut4}
Let $\Gamma$ be an $n$--component Y--link null in $M\setminus K$. Let $\ell$ be a leaf of $\Gamma$. Let $\gamma$ be a framed arc 
starting at the vertex incident to $\ell$ and ending in another point of $\ell$, embedded in $M\setminus K$ as the core of a band glued to 
the associated surface of $\Gamma$ as shown in Figure \ref{figcut4}. The arc $\gamma$ splits the leaf $\ell$ into two leaves 
$\ell'$ and $\ell''$. Denote by $\Gamma'$ (resp. $\Gamma''$) the Y--link obtained from $\Gamma$ by replacing the leaf $\ell$ by $\ell'$ (resp. $\ell''$). 
If $\ell'$ and $\ell''$ are rationally null-homologous in $M\setminus K$, then $\Gamma'$ and $\Gamma''$ 
are null Y--links and $[(M,K);\Gamma]=[(M,K);\Gamma']+[(M,K);\Gamma'']\ mod\ \F_{n+1}^b(M,K)$.
\end{lemma}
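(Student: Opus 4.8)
The plan is to follow the pattern of the proof of Lemma \ref{lemmaslide4}: the topological input is Lemma \ref{lemmaprecut} (leaf cutting inside a genus $4$ handlebody), and everything else is inclusion--exclusion bookkeeping. First I would single out the component $\Gamma_0$ of $\Gamma$ carrying $\ell$, and set $\Delta=\Gamma\setminus\Gamma_0$, an $(n-1)$--component null Y--link disjoint from $\Gamma_0$. A regular neighbourhood of $\Gamma_0$ together with the band with core $\gamma$ is a genus $4$ handlebody in which the configuration of Figure \ref{figtopeq2} is realised, so Lemma \ref{lemmaprecut} applies there and shows that surgery on $\Gamma_0$ is diffeomorphic to simultaneous surgery on $\Gamma_0'\cup\Gamma_0''$, where $\Gamma_0'$ (resp.\ $\Gamma_0''$) is the component of $\Gamma'$ (resp.\ $\Gamma''$) carrying $\ell'$ (resp.\ $\ell''$), once the two untouched leaves of one of the two copies are replaced by small disjoint parallel push-offs. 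Such push-offs do not affect the surgered manifold (a leaf and a parallel push-off cobound an annulus disjoint from everything else, so one slides and cancels as in Lemmas \ref{lemmadisk4} and \ref{lemmaslide4}), so I may regard $\tilde\Gamma:=\Gamma_0'\cup\Gamma_0''\cup\Delta$ as an honest $(n+1)$--component Y--link. Since $\ell'$, $\ell''$ and the leaves of $\Gamma$ are rationally null-homologous in $M\setminus K$, all the leaves of $\tilde\Gamma$ are too, hence $\tilde\Gamma$ is null and $[(M,K);\tilde\Gamma]\in\F_{n+1}^b(M,K)$; the same remark shows that $\Gamma'$ and $\Gamma''$ are null Y--links.

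Next, because the equivalence of Lemma \ref{lemmaprecut} is supported in a handlebody disjoint from $\Delta$ and from $K$, it commutes with surgery on any sub-family $\Delta_J$ of $\Delta$, so $(M,K)(\Gamma_0\cup\Delta_J)\cong(M,K)(\Gamma_0'\cup\Gamma_0''\cup\Delta_J)$ for every $J\subseteq\{1,\dots,n-1\}$. I would then expand $[(M,K);\Gamma]$, $[(M,K);\Gamma']$, $[(M,K);\Gamma'']$ and $[(M,K);\tilde\Gamma]$ as alternating sums over the subsets of $\Delta$ --- using that $\Gamma$, $\Gamma'$, $\Gamma''$ differ only in the $\Gamma_0$ component --- and compare them term by term. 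One finds
\[
[(M,K);\Gamma']+[(M,K);\Gamma'']-[(M,K);\Gamma]=\sum_{J\subseteq\{1,\dots,n-1\}}(-1)^{|J|}\,S_J,
\]
where $S_J=(M,K)(\Delta_J)-(M,K)(\Gamma_0'\cup\Delta_J)-(M,K)(\Gamma_0''\cup\Delta_J)+(M,K)(\Gamma_0\cup\Delta_J)$; replacing the last summand by the homeomorphic $(M,K)(\Gamma_0'\cup\Gamma_0''\cup\Delta_J)$ turns the right-hand side into the expansion of $[(M,K);\tilde\Gamma]$. Therefore $[(M,K);\Gamma']+[(M,K);\Gamma'']-[(M,K);\Gamma]\in\F_{n+1}^b(M,K)$, which is the assertion.

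The step I expect to require the most care --- rather than being a genuine obstacle --- is matching the two Y--graphs produced by Lemma \ref{lemmaprecut} with $\Gamma_0'$ and $\Gamma_0''$, framings and all, and verifying that the parallel push-offs of the untouched leaves introduced along the way leave the surgered manifolds unchanged, so that the move applies verbatim. Once that identification is in place, the telescoping inclusion--exclusion is routine, exactly as at the end of the proof of Lemma \ref{lemmaslide4}.
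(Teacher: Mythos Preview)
Your overall strategy is the same as the paper's: invoke Lemma~\ref{lemmaprecut}, do an inclusion--exclusion to produce an $(n+1)$--component bracket, and conclude. The bookkeeping you outline is fine. The gap is in your identification of the two Y--graphs output by Lemma~\ref{lemmaprecut}.

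Those two Y--graphs are \emph{not} simply $\Gamma_0'$ and $\Gamma_0''$ with the two untouched leaves of one of them pushed off to parallels. If you read Figure~\ref{figtopeq2} carefully, one of the two resulting Y--graphs (say the one carrying $\ell'$) has, in addition to the parallel push-offs you mention, an \emph{edge} that detours once around the handle corresponding to $\ell''$. In other words, Lemma~\ref{lemmaprecut} gives $V(\Gamma_0)\cong V(\Gamma_0''\cup\hat{\Gamma}_0')$ where $\hat{\Gamma}_0'$ is obtained from $\Gamma_0'$ by sliding an edge along $\ell''$. Your annulus argument handles the parallel leaves but does nothing for this slid edge: $V(\hat{\Gamma}_0')$ and $V(\Gamma_0')$ are generally not diffeomorphic, so the assertion ``surgery on $\Gamma_0$ is diffeomorphic to simultaneous surgery on $\Gamma_0'\cup\Gamma_0''$'' is false as stated.

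The fix is exactly what the paper does: run your inclusion--exclusion with $\hat{\Gamma}_0'$ in place of $\Gamma_0'$ to get
\[
[(M,K);\hat{\Gamma}']+[(M,K);\Gamma'']-[(M,K);\Gamma]=[(M,K);\Delta\cup\hat{\Gamma}_0'\cup\Gamma_0'']\in\F_{n+1}^b(M,K),
\]
with $\hat{\Gamma}'=\Delta\cup\hat{\Gamma}_0'$, and then apply Lemma~\ref{lemmaslide4} (edge sliding along the null-homologous curve $\ell''$) to replace $[(M,K);\hat{\Gamma}']$ by $[(M,K);\Gamma']$ modulo $\F_{n+1}^b(M,K)$. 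So the ``care'' you anticipated is a genuine extra step, and it costs one more invocation of Lemma~\ref{lemmaslide4}, not just Lemmas~\ref{lemmadisk4}--\ref{lemmaslide4} applied to parallel leaves.
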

\begin{figure}[htb] 
\begin{center}
\begin{tikzpicture} [scale=0.4]
\newcommand{\edge}[1]{
\draw[rotate=#1] (0,0) -- (0,-1);
\draw[rotate=#1,gray,very thick] (1,-0.6) -- (1,-1);
\draw[rotate=#1,gray,very thick] (-1,-0.6) -- (-1,-1);
\draw[rotate=#1,dashed] (0,-1) -- (0,-3);
\draw[rotate=#1,dashed,gray,very thick] (1,-1) -- (1,-3);
\draw[rotate=#1,dashed,gray,very thick] (-1,-1) -- (-1,-3);}
\edge{60} \edge{-60}
\draw (0,0) -- (0,3);
\draw (-2,3) -- (2,3) (-2,6) -- (2,6);
\draw[dashed] (0,3) -- (0,6);
\draw (0.5,4.5) node{$\gamma$};
\draw (2,3) arc (-90:90:1.5);
\draw (-2,6) arc (90:270:1.5);
\draw (3.2,3) node{$\ell$};
%surface
\begin{scope} [gray,very thick]
\draw (2,4) -- (1,4) -- (1,5) -- (2,5);
\draw (-2,4) -- (-1,4) -- (-1,5) -- (-2,5);
\draw (2,4) arc (-90:90:0.5);
\draw (-2,5) arc (90:270:0.5);
\draw (1,0.6) -- (1,2) -- (2,2) (2,7) -- (-2,7);
\draw (-1,0.6) -- (-1,2) -- (-2,2);
\draw (2,2) arc (-90:90:2.5);
\draw (-2,7) arc (90:270:2.5);
\end{scope}
%flÃ¨che
\draw[->,line width=1.5pt] (8,2) -- (9,2);
%\Gamma''
\begin{scope} [xshift=26cm]
\edge{60} \edge{-60}
\draw (0,0) -- (0,3);
\draw (0,3) -- (2,3) (0,6) -- (2,6);
\draw (0,3) -- (0,6);
\draw (2,3) arc (-90:90:1.5);
\draw (3.2,3) node{$\ell''$};
%surface
\begin{scope} [gray,very thick]
\draw (2,4) -- (1,4) -- (1,5) -- (2,5);
\draw (2,4) arc (-90:90:0.5);
\draw (1,0.6) -- (1,2) -- (2,2) (2,7) -- (-1,7);
\draw (-1,0.6) -- (-1,7);
\draw (2,2) arc (-90:90:2.5);
\end{scope}
\end{scope}
%\Gamma'
\begin{scope} [xshift=18cm]
\edge{60} \edge{-60}
\draw (0,0) -- (0,3);
\draw (-2,3) -- (0,3) (-2,6) -- (0,6);
\draw (0,3) -- (0,6);
\draw (-2,6) arc (90:270:1.5);
\draw (0.5,4.5) node{$\ell'$};
%surface
\begin{scope} [gray,very thick]
\draw (-2,4) -- (-1,4) -- (-1,5) -- (-2,5);
\draw (-2,5) arc (90:270:0.5);
\draw (1,0.6) -- (1,7) (1,7) -- (-2,7);
\draw (-1,0.6) -- (-1,2) -- (-2,2);
\draw (-2,7) arc (90:270:2.5);
\end{scope}
\end{scope}
\end{tikzpicture}
\caption{Cutting a leaf} \label{figcut4}
\end{center}
\end{figure}
\begin{proof}
Let $\Gamma_0$ (resp. $\Gamma_0'$, $\Gamma_0''$) be the component of $\Gamma$ (resp. $\Gamma'$, $\Gamma''$) that contains the leaf $\ell$ (resp. 
$\ell'$, $\ell''$). By Lemma \ref{lemmaprecut}, the surgery on $\Gamma_0$ is equivalent to simultaneous surgeries on $\Gamma_0''$ and on 
a null Y--graph $\hat{\Gamma}_0'$ obtained from $\Gamma_0'$ by sliding an edge along $\ell''$. 
Set $\hat{\Gamma}'=(\Gamma\setminus\Gamma_0)\cup\hat{\Gamma}_0'$. We have $\left[(M,K);\hat{\Gamma}'\right]+\left[(M,K);\Gamma''\right]-\left[(M,K);\Gamma\right]
=\left[(M,K);(\Gamma\setminus\Gamma_0)\cup\hat{\Gamma}_0'\cup\Gamma_0''\right]\in\F_{n+1}^b(M,K)$. 
Conclude with Lemma \ref{lemmaslide4}.
\end{proof}

The next lemma is a consequence of \cite[Lemma 4.8]{GGP}. 
\begin{lemma}
Let $\Gamma$ be an $n$--component Y--link null in $M\setminus K$. If a leaf $\ell$ of $\Gamma$ bounds a disk in $(M\setminus K)\setminus(\Gamma\setminus\ell)$ 
and has framing 1, {\em i.e.} the linking number of $\ell$ with its parallel induced by the framing of $\Gamma$ is 1, then 
$\left[(M,K);\Gamma\right]=0\ mod\ \F_{n+1}^b(M,K)$.
\end{lemma}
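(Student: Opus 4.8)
Let $\Gamma$ be an $n$--component Y--link null in $M\setminus K$, and suppose a leaf $\ell$ of $\Gamma$ bounds a disk $D$ in $(M\setminus K)\setminus(\Gamma\setminus\ell)$ with framing $1$. Then $[(M,K);\Gamma]=0\bmod\F_{n+1}^b(M,K)$.

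The plan is to reduce to the single component of $\Gamma$ carrying the distinguished leaf, to invoke the cited result locally, and then to close up with a routine filtration count, exactly in the spirit of the proofs of Lemmas \ref{lemmaslide4} and \ref{lemmacut4}.

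First I would write $\Gamma=\Gamma_0\sqcup\hat\Gamma$, where $\Gamma_0$ is the component containing $\ell$ and $\hat\Gamma=\Gamma_1\sqcup\dots\sqcup\Gamma_{n-1}$ are the others. A regular neighborhood $N(\Gamma_0)$ is a genus $3$ handlebody which, because $\Gamma$ is null in $M\setminus K$, is itself null in $M\setminus K$; hence any Y--link or clover contained in $N(\Gamma_0)$ is null in $M\setminus K$. Since $\Gamma\setminus\ell$ is compact and the disk $D$ bounded by $\ell$ lies in $(M\setminus K)\setminus(\Gamma\setminus\ell)$, the disk $D$ is disjoint from $K$ and from a neighborhood of $\Gamma\setminus\ell$; in particular it is disjoint from the surgery data of every $\Gamma_i$, $i\geq1$, so $D$ persists and still bounds $\ell$ disjoint from $\Gamma_0\setminus\ell$ after surgery on any subfamily $\hat\Gamma_I$, $I\subseteq\{1,\dots,n-1\}$.

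The local step is the heart of the argument: by \cite[Lemma 4.8]{GGP}, a leaf with framing $1$ bounding a disk disjoint from the rest of a Y--graph can be blown down, and since the only strand of the surgery link of $\Gamma_0$ meeting $D$ is the clasp associated with $\ell$, this blow--down is supported in $N(\Gamma_0)$ and replaces the surgery on $\Gamma_0$ by surgery on a connected clover $G\subset N(\Gamma_0)$ of degree at least $2$ (should $G$ be empty, then $(M,K)(\Gamma_0\cup\hat\Gamma_I)\cong(M,K)(\hat\Gamma_I)$ for all $I$ and the sum below already vanishes). Because $D$ is disjoint from the surgery data of $\hat\Gamma$, this rewriting is uniform over the subfamilies: $(M,K)(\Gamma_0\cup\hat\Gamma_I)\cong(M,K)(G\cup\hat\Gamma_I)$ for every $I$. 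Splitting the bracket according to whether $\Gamma_0$ is surgered,
$$[(M,K);\Gamma]=\sum_{I\subseteq\{1,\dots,n-1\}}(-1)^{|I|}\bigl((M,K)(\hat\Gamma_I)-(M,K)(G\cup\hat\Gamma_I)\bigr)=\sum_{I}(-1)^{|I|}\bigl[(M,K)(\hat\Gamma_I);G\bigr].$$
By \cite{GGP}, the bracket of a connected clover of degree at least $2$ is a $\Q$--combination of brackets of Y--links with at least $2$ components, here chosen inside $N(\Gamma_0)$, hence null in $M\setminus K$ and disjoint from $\hat\Gamma$; substituting this expansion, the right--hand side becomes a $\Q$--combination of brackets of $(n+1)$--component null Y--links, and therefore lies in $\F_{n+1}^b(M,K)$, which is the claim.

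I expect the real obstacle to be the local step: pinning down precisely what \cite[Lemma 4.8]{GGP} yields for a framing $1$ disk--bounding leaf (triviality, or a genuine increase of clover degree) and carrying it out inside $N(\Gamma_0)$, so that nullity of all auxiliary Y--links is automatic and so that the rewriting survives the surgeries on $\hat\Gamma$. The bracket bookkeeping and the composition--of--filtrations estimate at the end are standard equivariant clasper calculus.
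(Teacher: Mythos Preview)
Your proposal is correct and aligns with the paper's approach, which consists entirely of the sentence ``The next lemma is a consequence of \cite[Lemma 4.8]{GGP}.'' You are supplying the details the paper omits: the local input from \cite{GGP}, the observation that everything happens inside $N(\Gamma_0)$ so that all auxiliary Y--links produced are automatically null in $M\setminus K$, and the bracket bookkeeping that promotes the local statement to the $n$--component one.

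One remark on efficiency: \cite[Lemma 4.8]{GGP} is already phrased for $n$--component Y--links, asserting directly that $[M;\Gamma]\in\F_{n+1}$ when a leaf bounds a $(\pm1)$--framed disk disjoint from the rest. Hence your decomposition $\sum_I(-1)^{|I|}[(M,K)(\hat\Gamma_I);G]$ and the subsequent expansion of the clover bracket are not strictly necessary; one can invoke \cite[Lemma 4.8]{GGP} once for the whole $\Gamma$ and simply note that the proof there is local (supported near $\Gamma$), so the $(n+1)$--component Y--links it produces are null in $M\setminus K$. Your route is nonetheless valid and makes explicit the mechanism by which the degree rises, which is what the paper leaves to the reader. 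Your closing worry about ``pinning down precisely what \cite[Lemma 4.8]{GGP} yields'' is the right place to look, but the content you need---that the $(\pm1)$--framed trivial leaf move raises the filtration degree by one, and does so by a move supported in a neighborhood of the component---is exactly what that lemma provides.
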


The above two lemmas imply that the class of $\left[(M,K);\Gamma\right]\ mod\ \F_{n+1}^b(M,K)$ does not depend on the framing of the leaves of $\Gamma$.

\begin{lemma} \label{lemmahomotopy}
 Let $\Gamma$ be an $n$--component Y--link null in $M\setminus K$. Let $\ell$ be a leaf of $\Gamma$. Assume $\Gamma\setminus\ell$ is fixed. 
Then $\left[(M,K);\Gamma\right]\ mod\ \F_{n+1}^b(M,K)$ only depends on the homotopy class of $\ell$ in $(M\setminus K)\setminus(\Gamma\setminus\ell)$.
\end{lemma}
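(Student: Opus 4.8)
The plan is to show that a homotopy of the leaf $\ell$ in $(M\setminus K)\setminus(\Gamma\setminus\ell)$ can be realized, up to a sequence of the moves already established, by three elementary operations: (i) ambient isotopy of $\ell$, (ii) crossing changes of $\ell$ with itself, and (iii) crossing changes of $\ell$ with an edge of $\Gamma$ (possibly the edge incident to $\ell$). A generic homotopy of a loop in a $3$--manifold is, after perturbation, a finite sequence of isotopies together with finger moves, and each finger move is undone by a self-crossing change of $\ell$ followed by isotopy; since $\ell$ is also allowed to pass transversally through edges of $\Gamma$, one likewise needs crossing changes between $\ell$ and those edges. So it suffices to check that each of (i), (ii), (iii) leaves $\left[(M,K);\Gamma\right]$ unchanged modulo $\F_{n+1}^b(M,K)$.

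Operation (i) is immediate: the bracket depends only on the homeomorphism type of the surgery, hence is invariant under ambient isotopy of $\Gamma$ in $M\setminus K$. For operations (ii) and (iii) the idea is to express the difference of the two brackets as a bracket of an $(n+1)$--component null Y--link, which lies in $\F_{n+1}^b(M,K)$ by definition. Concretely, a crossing change of $\ell$ (with itself or with an edge of $\Gamma$) can be achieved by a further Borromean/clasper surgery: one inserts a small ``commutator'' Y--graph $\hat\Gamma_0$ whose leaves are meridians of the two strands involved in the crossing, and surgery on $\Gamma\cup\hat\Gamma_0$ reproduces $\Gamma$ with the crossing switched, by the same mechanism used in the proofs of Lemmas \ref{lemmaslide4} and \ref{lemmacut4} (these are the standard consequences of Lemmas \ref{lemmapreslide} and \ref{lemmaprecut}). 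One must check that $\hat\Gamma_0$ is null in $M\setminus K$: its leaves are meridians of arcs of $\ell$ or of edges of $\Gamma$, and meridians are rationally null-homologous, so $\hat\Gamma_0$ is null; hence $\Gamma\cup\hat\Gamma_0$ is an $(n+1)$--component null Y--link and $\left[(M,K);\Gamma\cup\hat\Gamma_0\right]\in\F_{n+1}^b(M,K)$. Expanding this bracket gives $\left[(M,K);\Gamma\right]-\left[(M,K);\Gamma_{\text{switched}}\right]\in\F_{n+1}^b(M,K)$, as desired. (We may ignore framings of leaves and full twists of edges throughout, since the preceding lemmas show the class mod $\F_{n+1}^b(M,K)$ is insensitive to them.)

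The main obstacle is the clean passage from the analytic statement ``$\ell$ and $\ell'$ are homotopic in $(M\setminus K)\setminus(\Gamma\setminus\ell)$'' to the combinatorial statement ``$\ell'$ is obtained from $\ell$ by isotopies and a finite sequence of self/edge crossing changes.'' This requires a transversality/general position argument: choosing a generic homotopy $H:S^1\times[0,1]\to (M\setminus K)\setminus(\Gamma\setminus\ell)$, one shows the singular set of $H$ consists of finitely many isolated double points, each of which corresponds to exactly one crossing change, and that between consecutive singular times the loops are ambiently isotopic in the complement of $\Gamma\setminus\ell$. One also has to take care that, when $\ell'$ is allowed to cross an edge $e$ of $\Gamma$, the relevant clasper move (sliding an edge of $\Gamma$ over a meridian, as in Lemma \ref{lemmaslide4}) indeed realizes the crossing change; this is where Lemma \ref{lemmapreslide} is invoked. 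Once this general-position reduction is granted, the remaining verifications are routine applications of the clasper identities already proved in this section.
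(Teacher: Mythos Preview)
Your reduction of the homotopy to isotopies and self-crossing changes is the right framework, and case (i) is fine. The gap is in your treatment of case (ii). You assert that a self-crossing change of $\ell$ can be achieved by Borromean surgery on an auxiliary Y--graph $\hat\Gamma_0$ ``whose leaves are meridians of the two strands involved in the crossing,'' and that this follows from the mechanism of Lemmas~\ref{lemmaslide4} and~\ref{lemmacut4}. But a Y--graph has three leaves, not two, and more importantly Borromean surgery on a Y--graph does not realize a crossing change between two strands: surgery on a Y--clasper with meridional leaves inserts a Borromean tangle (a delta move on three strands), which is not the same operation as switching a single crossing. Neither Lemma~\ref{lemmapreslide} nor Lemma~\ref{lemmaprecut} provides an identity of the form $(M,K)(\Gamma\cup\hat\Gamma_0)\cong(M,K)(\Gamma')$ with $\Gamma'$ obtained from $\Gamma$ by a crossing change of $\ell$; those lemmas relate one Y--surgery to a pair of Y--surgeries via edge-sliding and leaf-splitting respectively, which is a different kind of move. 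So the key topological equality your bracket computation relies on is unjustified.

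The paper's argument does not attempt to realize the crossing change by an extra Y--graph. Instead it applies Lemma~\ref{lemmacut4} directly at the self-crossing: one inserts a cutting arc (see Figure~\ref{figcrossing}) so that the leaf decomposes as the sum of a small $0$--framed loop bounding a disk in the complement of the rest of $\Gamma$, and the original leaf with the crossing switched. Lemma~\ref{lemmacut4} then expresses $[(M,K);\Gamma]$ modulo $\F_{n+1}^b(M,K)$ as the sum of the two corresponding brackets, and Lemma~\ref{lemmadisk4} kills the bracket coming from the disk-bounding piece. Incidentally, your case (iii) is unnecessary: the homotopy is by hypothesis in $(M\setminus K)\setminus(\Gamma\setminus\ell)$, so $\ell$ never crosses an edge of $\Gamma$ and no such crossing change is required. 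That case is genuinely needed only later, in Lemma~\ref{lemmahomhom}, where the homotopy is merely in $M\setminus K$.
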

\begin{proof}
 If the leaf $\ell$ is modified by an isotopy in $(M\setminus K)\setminus(\Gamma\setminus\ell)$, then the homeomorphism class 
of $(M,K)(\Gamma)$ is preserved. If the leaf $\ell$ crosses itself during a homotopy, apply Lemma~\ref{lemmacut4}, 
as shown in Figure \ref{figcrossing}, and conclude that $\left[(M,K);\Gamma\right]\ mod\ \F_{n+1}^b(M,K)$ is unchanged by applying Lemma \ref{lemmadisk4}.
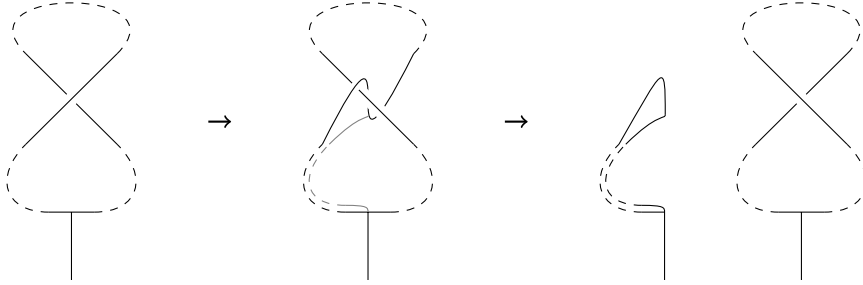
\begin{figure}[htb]
\begin{center}
\begin{tikzpicture} [scale=0.3]
\newcommand{\around}{
\draw (0,-2) -- (0,1) (-1,1) -- (1,1);
\draw[dashed] (1,1) .. controls +(2,0) and +(1.4,-1.4) .. (2,4);
\draw[dashed] (-1,1) .. controls +(-2,0) and +(-1.4,-1.4) .. (-2,4);
\draw[dashed] (2,8) .. controls +(3,3) and +(-3,3) .. (-2,8);}
\begin{scope}
\around
\draw (-2,8) -- (2,4);
\draw[white,line width=5pt] (-2,4) -- (2,8);
\draw (-2,4) -- (2,8);
\end{scope}
\draw[->,thick] (6,5) -- (7,5);
\begin{scope} [xshift=13cm]
\around
\draw (-2,8) -- (-0.7,6.7);
\draw (-2,4) .. controls +(4,8) and +(-4,-8) .. (2,8);
\draw[white,line width=5pt] (-0.5,6.5) -- (2,4);
\draw (-0.4,6.4) -- (2,4);
\draw[gray] (0,1) .. controls +(0,0.2) and +(1,0) .. (-1,1.3);
\draw[dashed,gray] (-1,1.3) .. controls +(-1.8,0) and +(-1.4,-1.4) .. (-1.7,4);
\draw[gray] (-1.7,4) .. controls +(1.4,1.4) and +(0,-0.2) .. (0.03,5.3);
\end{scope}
\draw[->,thick] (19,5) -- (20,5);
\begin{scope} [xshift=26cm]
\draw (0,-2) -- (0,1) (-1,1) -- (0,1);
\draw[dashed] (-1,1) .. controls +(-2,0) and +(-1.4,-1.4) .. (-2,4);
\draw (0,1) .. controls +(0,0.2) and +(1,0) .. (-1,1.3);
\draw[dashed] (-1,1.3) .. controls +(-1.8,0) and +(-1.4,-1.4) .. (-1.7,4);
\draw (-1.7,4) .. controls +(1.4,1.4) and +(0,-0.2) .. (0.03,5.3);
\draw (-2,4) .. controls +(2,3.5) and +(0,2.5) .. (0.03,5.3);
\end{scope}
\begin{scope} [xshift=32cm]
\around
\draw (-2,4) -- (2,8);
\draw[white,line width=5pt] (-2,8) -- (2,4);
\draw (-2,8) -- (2,4);
\end{scope}
\end{tikzpicture} \caption{Self-crossing of a leaf} \label{figcrossing}
\end{center}
\end{figure}
\end{proof}

\begin{lemma} \label{lemmahomhom}
 Let $\Gamma$ be an $n$--component Y--link null in $M\setminus K$. Let $\ell$ be a leaf of $\Gamma$. Let $\Gamma'$ be an $n$--component null Y--link 
such that $\Gamma'\setminus \ell'$ coincides with $\Gamma\setminus\ell$, where $\ell'$ is a leaf of $\Gamma'$. 
Let $\widetilde{\Gamma\setminus\ell}$ be the preimage of $\Gamma\setminus\ell$ in the infinite cyclic covering $\tilde{X}$ associated 
with $(M,K)$. Let $\tilde{\ell}$ and $\tilde{\ell'}$ be lifts of $\ell$ and $\ell'$ respectively, with the same basepoint. 
If $\ell$ and $\ell'$ are homotopic in $M\setminus K$ and $\tilde{\ell}$ and $\tilde{\ell}'$ are rationally homologous 
in $\tilde{X}\setminus(\widetilde{\Gamma\setminus\ell})$, then $[(M,K);\Gamma]=[(M,K);\Gamma']\ mod\ \F_{n+1}^b(M,K)$.
\end{lemma}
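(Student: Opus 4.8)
The plan is to reduce the statement to the already-established invariance lemmas by exhibiting an explicit sequence of modifications taking $\Gamma$ to $\Gamma'$, each of which changes the bracket only modulo $\F_{n+1}^b(M,K)$. First I would use the hypothesis that $\ell$ and $\ell'$ are homotopic in $M\setminus K$ together with Lemma \ref{lemmahomotopy}: replacing $\ell'$ by any leaf homotopic to it in $(M\setminus K)\setminus(\Gamma\setminus\ell)$ does not change $[(M,K);\Gamma']$ mod $\F_{n+1}^b(M,K)$. The difficulty is that $\ell$ and $\ell'$ are only assumed homotopic in $M\setminus K$, not in the complement of $\Gamma\setminus\ell$; so I first push $\ell'$ by a homotopy in $M\setminus K$ until it agrees with $\ell$ outside a ball, the homotopy being supported so that the only discrepancy between $\ell$ and (the new) $\ell'$ comes from finitely many times the leaf is pushed across a component of $\Gamma\setminus\ell$. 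Each such crossing, by Lemma \ref{lemmacut4} applied as in the self-crossing picture (Figure \ref{figcrossing}), replaces the bracket by the sum of the bracket for the ``un-crossed'' leaf and the bracket for an $(n+1)$--component Y--link (the extra Y--graph having a leaf that is a small meridian of a strand of $\Gamma\setminus\ell$), and the latter lies in $\F_{n+1}^b(M,K)$; so modulo $\F_{n+1}^b(M,K)$ we may assume $\ell$ and $\ell'$ are homotopic in $(M\setminus K)\setminus(\Gamma\setminus\ell)$ if and only if a certain homological obstruction vanishes — which is exactly where the covering-space hypothesis enters.

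Next I would translate the obstruction to the infinite cyclic cover. After the reduction of the previous paragraph, $\ell$ and $\ell'$ are freely homotopic in $M\setminus K$ and we may take lifts $\tilde\ell,\tilde\ell'$ in $\tilde X$ with the same basepoint. The difference $\tilde\ell-\tilde\ell'$ is then a rational $1$--cycle in $\tilde X\setminus(\widetilde{\Gamma\setminus\ell})$, and by hypothesis it is a rational boundary there, say $\partial \Sigma = \tilde\ell-\tilde\ell'$ for a rational $2$--chain $\Sigma$ in $\tilde X\setminus(\widetilde{\Gamma\setminus\ell})$. Projecting $\Sigma$ down to $M\setminus K$ and using it as a guide, I realize the passage from $\ell'$ to $\ell$ by a finite sequence of moves of the following two kinds: homotopies of the leaf in $(M\setminus K)\setminus(\Gamma\setminus\ell)$ (covered by Lemma \ref{lemmahomotopy}), and edge/leaf slides of $\ell$ along framed loops in $M\setminus K$ that are rationally null-homologous in $M\setminus K$ — since $\Sigma$ is a chain in the cover, the loops along which we slide are rationally null-homologous even in $(M\setminus K)\setminus(\Gamma\setminus\ell)$, hence rationally null-homologous in $M\setminus K$. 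Each such slide is handled by Lemma \ref{lemmaslide4}, which changes the bracket only modulo $\F_{n+1}^b(M,K)$. Chaining these moves gives $[(M,K);\Gamma']=[(M,K);\Gamma]$ mod $\F_{n+1}^b(M,K)$.

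The main obstacle I anticipate is the geometric bookkeeping in turning the rational $2$--chain $\Sigma$ into an honest finite sequence of slides and homotopies: $\Sigma$ has rational (not integral) coefficients and lives in a possibly complicated $3$--manifold, so one must argue that, working modulo $\F_{n+1}^b(M,K)$, it suffices to know that a suitable integral multiple of $\tilde\ell-\tilde\ell'$ bounds, and then decompose that boundary into finitely many ``elementary'' $2$--chains each of which is swept out by sliding the leaf once along a single loop. Here the fact that we are free to absorb $\F_{n+1}^b(M,K)$ at each step — i.e. we may at any moment introduce an extra Y--graph with a meridional leaf and discard it — is what makes the rational, rather than integral, hypothesis enough; this is the same mechanism already exploited in the proofs of Lemmas \ref{lemmaslide4} and \ref{lemmacut4}. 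A clean way to organize this is: (i) reduce to $\ell,\ell'$ homotopic rel $\Gamma\setminus\ell$ up to $\F_{n+1}^b$ via self-crossing moves; (ii) in that case write $\tilde\ell-\tilde\ell'$ as a rational boundary; (iii) note that being a rational boundary in $\tilde X\setminus(\widetilde{\Gamma\setminus\ell})$ means the two leaves differ, in $(M\setminus K)\setminus(\Gamma\setminus\ell)$, by a product of slides along rationally null-homologous loops; (iv) apply Lemmas \ref{lemmaslide4} and \ref{lemmahomotopy}. Everything else is routine.
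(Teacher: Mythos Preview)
There is a genuine gap in your first paragraph. When the moving leaf crosses a strand of $\Gamma\setminus\ell$, the error term that Lemma~\ref{lemmacut4} produces is an $n$--component bracket, not an $(n+1)$--component one: the leaf $\ell$ is \emph{replaced} by a small meridian of the crossed strand (see Figure~\ref{figcross}), no extra Y--graph appears. You are importing the mechanism of Figure~\ref{figcrossing}, but that figure treats a \emph{self}-crossing, where the little loop bounds a disk disjoint from $\Gamma$ and Lemma~\ref{lemmadisk4} kills it. When the crossed strand is an \emph{edge} of $\Gamma\setminus\ell$, the meridian again bounds a disk after an edge slide, and the error does vanish (Lemmas~\ref{lemmadisk4} and~\ref{lemmaslide4}). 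But when the crossed strand is a \emph{leaf} $\ell_0$ of another component, the meridian is nontrivial and the resulting $n$--component bracket has no reason to lie in $\F_{n+1}^b(M,K)$. So you cannot, for free, reduce to $\ell$ and $\ell'$ being homotopic in the complement of $\Gamma\setminus\ell$.

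The actual role of the covering hypothesis is precisely to make these $n$--component error brackets cancel. The paper collects, for each fixed leaf $\ell_0$, all the error brackets $[(M,K);\hat\Gamma_i]$ arising from crossings of $\ell_0$; after pulling basepoints back, each replacement leaf $\tilde\ell_i$ satisfies $lk_e(\tilde\ell_i,\tilde\ell_0)=\pm t^{k_i}$, and in $H_1$ of the complement one has $\tilde\ell=\sum_i\tilde\ell_i+\tilde\ell'$. The hypothesis $\tilde\ell\sim\tilde\ell'$ then forces $\sum_i lk_e(\tilde\ell_i,\tilde\ell_0)=0$, so the monomials pair off and the corresponding brackets cancel via Lemmas~\ref{lemmadisk4}, \ref{lemmaslide4}, \ref{lemmacut4}. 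Your second paragraph, with a rational $2$--chain guiding edge slides, does not substitute for this: Lemma~\ref{lemmaslide4} moves the \emph{edge}, not the leaf, and leaves the equivariant linking of $\ell$ with the other leaves unchanged, so it cannot absorb the leaf--crossing errors. The step you are missing is exactly this pairing and cancellation of the meridional $n$--component error terms.
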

\begin{proof}
 Consider a homotopy from $\ell$ to $\ell'$ in $M\setminus K$. Thanks to Lemma \ref{lemmahomotopy}, it suffices to treat the case when 
the leaf crosses some edges or leaves of $\Gamma\setminus\ell$ during the homotopy. 
\begin{figure}[htb]
\begin{center}
\begin{tikzpicture} [scale=0.3]
\newcommand{\around}{
\draw (0,-2) -- (0,1) (-1,1) -- (1,1);
\draw[dashed] (1,1) .. controls +(2,0) and +(1.4,-1.4) .. (2,4);
\draw[dashed] (-1,1) .. controls +(-2,0) and +(-1.4,-1.4) .. (-2,4);}
\begin{scope}
\around
\draw (-2,4) .. controls +(2.5,3.5) and +(-2.5,3.5) .. (2,4);
\draw[white,line width=5pt] (-0.4,6) -- (2,6);
\draw[very thick] (-3,6) -- (-1,6) (-0.4,6) -- (3,6);
\draw[gray] (0,1) .. controls +(0,0.2) and +(1,0) .. (-1,1.3);
\draw[dashed,gray] (-1,1.3) .. controls +(-1.8,0) and +(-1.4,-1.4) .. (-1.7,4);
\draw[gray] (-1.7,4) .. controls +(1,1) and +(-1,-1) .. (0.9,5.5);
\end{scope}
\draw[->,thick] (5.5,3) -- (6.5,3);
\begin{scope} [xshift=13cm,yshift=0.1cm]
\draw[very thick] (-3,6) -- (-1.2,6) (-0.6,6) -- (3,6);
\draw (0,-2) -- (0,1) (0,1) -- (1,1);
\draw[dashed] (1,1) .. controls +(2,0) and +(1.4,-1.4) .. (2,4);
\draw (0,1) .. controls +(0,0.2) and +(1,0) .. (-1,1.3);
\draw[dashed] (-1,1.3) .. controls +(-1.8,0) and +(-1.4,-1.4) .. (-1.7,4);
\draw (-1.7,4) .. controls +(1,1) and +(-1,1) .. (2,4);
\end{scope}
\begin{scope} [xshift=12.5cm]
\draw (0,1) .. controls +(0,0.1) and +(1,0) .. (-1,1.2);
\draw[dashed] (-1,1.2) .. controls +(-1.8,0) and +(-1.4,-1.4) .. (-1.7,4);
\draw (-1.7,4) .. controls +(1,1) and +(0,-0.4) .. (0.8,5.8);
\draw (-2,4) .. controls +(1.5,2.1) and +(-0.4,0.6) .. (0.6,6.4);
\draw[dashed] (-1,1) .. controls +(-2,0) and +(-1.4,-1.4) .. (-2,4);
\draw (0,-2) -- (0,1) (-1,1) -- (0,1);
\end{scope}
\draw[->,thick] (19.5,3) -- (20.5,3);
\begin{scope} [xshift=26cm]
\draw (0,-2) -- (0,0.8) -- (-1,0.8);
\draw[dashed] (-1,0.8) .. controls +(-2.2,0) and +(-1.4,-1.4) .. (-2,4.2);
\draw (-2,4.2) .. controls +(1,1) and +(-0.1,-0.5) .. (0,5.2);
\draw (0,6) circle (0.8);
\draw[white,line width=5pt] (-0.4,6) -- (2,6);
\draw[very thick] (-3,6) -- (-1.1,6) (-0.5,6) -- (3,6);
\end{scope}
\begin{scope} [xshift=26.2cm]
\around
\draw (-2,4) .. controls +(2,1) and +(-2,1) .. (2,4);
\end{scope}
\end{tikzpicture} \caption{Crossing of an edge or a leaf} \label{figcross}
\end{center}
\end{figure}
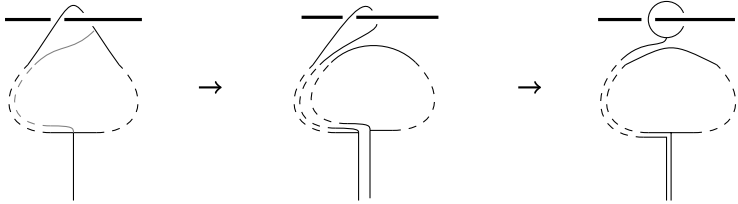
As shown in Figure \ref{figcross}, 
Lemma \ref{lemmacut4} implies that the bracket $\left[(M,K);\Gamma\right]$ is then added brackets $\left[(M,K);\hat{\Gamma}\right]$, where $\hat{\Gamma}$ 
is the null Y--link obtained from $\Gamma$ by adding the cutting arc to the edge adjacent to $\ell$, and 
by replacing $\ell$ by a meridian of the crossed edge or leaf. In the case of a meridian of an edge, 
Lemmas \ref{lemmadisk4} and \ref{lemmaslide4} show that the added bracket vanishes. 

Fix a leaf $\ell_0$ of $\Gamma\setminus \ell$. Let $\left[(M,K);\hat{\Gamma}_i\right]$, for $i\in I$, be the brackets added during the homotopy 
when the leaf $\ell$ crosses the leaf $\ell_0$. In each $\hat{\Gamma}_i$, pull the basepoint of the leaf replacing the leaf $\ell$ 
onto the initial basepoint of $\ell$. Let $\ell_i$ be the obtained leaf. Let $\tilde{\ell}_i$ be the lift of $\ell_i$ which has the 
same basepoint as $\tilde{\ell}$. Let $Y$ be the complement in $\tilde{X}$ of the preimage of $\ell_0$. In 
$H_1(Y;\Q)$, we have $\tilde{\ell}=\sum_{i\in I}\tilde{\ell}_i+\tilde{\ell}'$. 
Since $\tilde{\ell}$ and $\tilde{\ell}'$ are homologous in $\tilde{X}\setminus(\widetilde{\Gamma\setminus\ell})$, 
it implies that $\sum_{i\in I} lk_e(\tilde{\ell}_i,\tilde{\ell}_0)=0$, where $\tilde{\ell}_0$ is a lift of $\ell_0$. 
By construction of the $\tilde{\ell}_i$, each $lk_e(\tilde{\ell}_i,\tilde{\ell}_0)$ is equal to $\pm t^k$ for some $k\in\Z$. 
Thanks to Lemmas \ref{lemmadisk4}, \ref{lemmaslide4} and \ref{lemmacut4}, it follows that the 
$\hat{\Gamma}_i$ can be grouped by pairs with opposite corresponding brackets. Hence $\left[(M,K);\Gamma\right]=\left[(M,K);\Gamma'\right]\ mod\ \F_{n+1}^b(M,K)$.
\end{proof}

\begin{lemma} \label{lemmatriv4}
 Let $\Gamma$ be an $n$--component Y--link null in $M\setminus K$. Let $\ell$ be a leaf of $\Gamma$. Let $\widetilde{\Gamma\setminus\ell}$ 
be the preimage of $\Gamma\setminus\ell$ in the infinite cyclic covering $\tilde{X}$ associated with $(M,K)$. Let $\tilde{\ell}$ be a lift 
of $\ell$. If $\tilde{\ell}$ is trivial in $H_1(\tilde{X}\setminus(\widetilde{\Gamma\setminus\ell});\Q)$, 
then $[(M,K);\Gamma]=0\ mod\ \F_{n+1}^b(M,K)$.
\end{lemma}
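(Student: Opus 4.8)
Throughout write $X=M\setminus\mathrm{Int}\,T(K)$ and $Y=\tilde X\setminus(\widetilde{\Gamma\setminus\ell})$, and for a framed curve $c$ in $M\setminus K$ disjoint from $\Gamma\setminus\ell$ let $\Gamma[c]$ denote the $Y$--link obtained from $\Gamma$ by replacing the leaf $\ell$ with $c$, so that $\Gamma=\Gamma[\ell]$. The plan is to use Lemmas \ref{lemmacut4}, \ref{lemmahomotopy} and \ref{lemmahomhom} to turn the vanishing of $[(M,K);\Gamma]$ modulo $\F_{n+1}^b(M,K)$ into a group-theoretic identity in $\pi_1(Y)$ coming from the hypothesis that the lift $\tilde\ell$ is rationally null-homologous in $Y$.

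First I would record two bracket identities, both consequences of lemmas already proved. \emph{(Additivity.)} If $c_1,\dots,c_k$ are embedded loops based near the internal vertex incident to $\ell$, pairwise disjoint, disjoint from $\Gamma\setminus\ell$, each rationally null-homologous in $M\setminus K$, and $c$ is their concatenation $c_1\ast\cdots\ast c_k$ realised by bands near that vertex, then $[(M,K);\Gamma[c]]\equiv\sum_{j=1}^k[(M,K);\Gamma[c_j]]$ modulo $\F_{n+1}^b(M,K)$: this is $k-1$ applications of Lemma \ref{lemmacut4}, taking the connecting bands as cutting arcs, all intermediate leaves staying rationally null-homologous since homology classes add under band-sum. \emph{(Reversal.)} For such a loop $c$, one has $[(M,K);\Gamma[\bar c]]\equiv-[(M,K);\Gamma[c]]$ modulo $\F_{n+1}^b(M,K)$, where $\bar c$ is $c$ with reversed orientation. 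Indeed, band-summing $c$ with a parallel push-off $c'$ carrying the opposite orientation, along an arc contained in an embedded annulus that $c$ and $c'$ cobound inside a tubular neighbourhood disjoint from $\Gamma\setminus\ell$, produces a leaf bounding a disk in $(M\setminus K)\setminus(\Gamma\setminus\ell)$; its bracket vanishes by the lemma on leaves bounding a disk in the complement together with the ensuing remark that the bracket is independent of the framing of the leaves, and splitting this band-sum by Lemma \ref{lemmacut4} (using Lemma \ref{lemmahomotopy} to replace $c'$ by $c$) gives the identity.

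Now for the main argument. Since $\Gamma$ is null, $\ell$ is rationally null-homologous in $M\setminus K\simeq X$, hence maps to $0$ in $H_1(X;\Z)/\mathrm{torsion}\cong\Z$, so $\ell$ lifts to a loop in $Y$. Fix a base point $x_0$ near the internal vertex incident to $\ell$ and a lift $\tilde x_0$; let $w\in\pi_1(X,x_0)$ be the class of $\ell$ and $\tilde w\in\pi_1(Y,\tilde x_0)$ its lift. Replacing the given $\tilde\ell$ by its translate based at $\tilde x_0$ (which preserves its rational homology class, covering translations acting on $H_1(Y;\Q)$), the hypothesis says the class of $\tilde w$ in $H_1(Y;\Z)=\pi_1(Y)^{\mathrm{ab}}$ is torsion, of some order $m\geq1$; hence $\tilde w^{\,m}=\prod_{i=1}^r[\tilde a_i,\tilde b_i]$ for suitable $\tilde a_i,\tilde b_i\in\pi_1(Y,\tilde x_0)$. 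Perturbing their projections to general position, pick pairwise disjoint embedded based loops $a_i,b_i$ in $X\setminus(\Gamma\setminus\ell)$ representing the images of $p_*\tilde a_i,p_*\tilde b_i$; each lifts to $Y$, hence is rationally null-homologous in $M\setminus K$. Let $\ell^{(m)}$ be the leaf obtained by band-summing $m$ parallel copies of $\ell$ near $x_0$ (representing $w^{\,m}$), and $\ell'$ the leaf obtained by band-summing $a_1,b_1,\bar a_1,\bar b_1,\dots,a_r,b_r,\bar a_r,\bar b_r$ near $x_0$ in this order. Both represent $w^{\,m}$ in $\pi_1(X,x_0)$, so are freely homotopic in $M\setminus K$; their lifts based at $\tilde x_0$ both lie in the commutator subgroup of $\pi_1(Y)$, hence are $0$ in $H_1(Y;\Q)$, so Lemma \ref{lemmahomhom} gives $[(M,K);\Gamma[\ell^{(m)}]]\equiv[(M,K);\Gamma[\ell']]$ modulo $\F_{n+1}^b(M,K)$. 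By additivity together with Lemma \ref{lemmahomotopy} and framing-independence, each parallel copy of $\ell$ contributes $[(M,K);\Gamma]$, so the left-hand side equals $m\,[(M,K);\Gamma]$. By additivity and reversal, the right-hand side equals $\sum_{i=1}^r\bigl([(M,K);\Gamma[a_i]]+[(M,K);\Gamma[b_i]]-[(M,K);\Gamma[a_i]]-[(M,K);\Gamma[b_i]]\bigr)=0$. Hence $m\,[(M,K);\Gamma]\in\F_{n+1}^b(M,K)$, and since $\F_0^b(M,K)$ is a rational vector space and $m\neq0$, we conclude $[(M,K);\Gamma]\in\F_{n+1}^b(M,K)$.

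The step I expect to require the most care is bookkeeping rather than a real difficulty: one must check that every auxiliary leaf produced by a band-sum remains rationally null-homologous in $M\setminus K$ (so that each invocation of Lemma \ref{lemmacut4} is legitimate), that the representatives $a_i,b_i$ genuinely lift to $Y$ with vanishing rational homology class there, and, above all, that performing band-sums of loops near the internal vertex corresponds faithfully to multiplication in $\pi_1$, so that the covering-space identity $\tilde w^{\,m}=\prod_i[\tilde a_i,\tilde b_i]$ transcribes verbatim into the equalities of brackets used above.
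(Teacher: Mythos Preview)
Your argument is correct and follows essentially the same route as the paper: write (a multiple of) the lift of $\ell$ as a product of commutators, use Lemma~\ref{lemmahomhom} to replace $\ell$ by a leaf built from that commutator expression, then cut it with Lemma~\ref{lemmacut4} and pair each factor with its inverse so that everything cancels. The only cosmetic differences are that the paper first reduces to $\tilde\ell=0$ in $H_1(\tilde X;\Z)$ (via Lemma~\ref{lemmacut4}) rather than carrying the factor $m$ to the end, works with commutators in $\pi_1(\tilde X)$ rather than $\pi_1(Y)$ and realises the commutator leaf as the boundary of an explicit surface, and phrases the cancellation step as ``reglue $\alpha_i$ with $\alpha_i^{-1}$ and apply Lemma~\ref{lemmahomotopy}'' instead of isolating your reversal identity; the content is the same.
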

\begin{proof}
Since $\tilde{\ell}$ has a multiple which is trivial in $H_1(\tilde{X};\Z)$, Lemma \ref{lemmacut4} allows us to assume $\tilde{\ell}$ itself 
is trivial in $H_1(\tilde{X};\Z)$. Hence $\tilde{\ell}$ is a product of commutators of loops in $\tilde{X}$. 
It follows that $\ell$ is homotopic to $\prod_{i\in I}[\alpha_i,\beta_i]$ in $M\setminus K$, where 
$I$ is a finite set and the $\alpha_i$ and $\beta_i$ satisfy $lk(\alpha_i,K)=0$, $lk(\beta_i,K)=0$. 
Construct a surface $\Sigma$ in $(M\setminus K)\setminus\Gamma$ whose handles are bands around the $\alpha_i$ and $\beta_i$, 
so that $\partial\Sigma$ is homotopic to $\ell$ in $M\setminus K$. 
Let $\Gamma'$ be the Y--link obtained from $\Gamma$ by replacing $\ell$ by $\partial \Sigma$. 
Note that the lifts of $\partial \Sigma$ are null-homologous in $\tilde{X}\setminus(\widetilde{\Gamma\setminus\ell})$. 
Hence, by Lemma \ref{lemmahomhom}, $[(M,K);\Gamma]=[(M,K);\Gamma']\ mod\ \F_{n+1}^b(M,K)$.

Let us prove that $[(M,K);\Gamma']=0\ mod\ \F_{n+1}^b(M,K)$.
Apply Lemma \ref{lemmacut4} to cut the leaf $\partial \Sigma$ into leaves $\alpha_i$, $\beta_i$, $\alpha_i^{-1}$, $\beta_i^{-1}$. 
Apply it again to reglue each leaf $\alpha_i$ with the corresponding leaf $\alpha_i^{-1}$ and each leaf $\beta_i$ with the corresponding 
leaf $\beta_i^{-1}$. The obtained Y--links all have a leaf which is homotopically trivial in the complement of $K$ and of the complement 
of the leaf in the Y--link. Then the result follows from Lemma \ref{lemmahomotopy}. 
\end{proof}

\begin{lemma} \label{lemmah4}
 Let $\Gamma$ be an $n$--component Y--link null in $M\setminus K$. Let $\ell$ be a leaf of $\Gamma$. Let $\widetilde{\Gamma\setminus\ell}$ 
be the preimage of $\Gamma\setminus\ell$ in the infinite cyclic covering $\tilde{X}$ associated with $(M,K)$. Let $\tilde{\ell}$ be a lift 
of $\ell$. Fix $\Gamma\setminus\ell$. Then the class of $[(M,K);\Gamma]\ mod\ \F_{n+1}^b(M,K)$ only depends on the class of 
$\tilde{\ell}$ in $H_1(\tilde{X}\setminus(\widetilde{\Gamma\setminus\ell});\Q)$ and this dependance is $\Q$--linear.
\end{lemma}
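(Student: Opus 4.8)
The plan is to combine the three preceding lemmas into a single functoriality statement. First I would establish \emph{well-definedness}: suppose $\ell$ and $\ell'$ are leaves such that $\Gamma\setminus\ell=\Gamma'\setminus\ell'$ and the lifts $\tilde\ell$, $\tilde\ell'$ (with the same basepoint) represent the same class in $H_1(\tilde X\setminus(\widetilde{\Gamma\setminus\ell});\Q)$. Since homology in the infinite cyclic cover with $\Q$ coefficients does not see $\pi_1$, I cannot directly invoke Lemma \ref{lemmahomhom}, which also requires $\ell\simeq\ell'$ in $M\setminus K$. So the first move is to replace $\ell'$ by a \emph{standard representative} of its homology class: using Lemma \ref{lemmacut4} to cut and reglue leaves, I would reduce to the case where each leaf involved is a product of loops realizing a fixed generating set of cycles in $\tilde X\setminus(\widetilde{\Gamma\setminus\ell})$ lying over loops in $M\setminus K$, together with commutator corrections that are killed by Lemma \ref{lemmatriv4}. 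The point is that two leaves with the same $\Q$-homology class over the cover differ, after cutting, by loops whose lifts are null-homologous over the cover and hence (by Lemma \ref{lemmatriv4}) contribute nothing mod $\F_{n+1}^b(M,K)$; this identifies the bracket with a quantity depending only on the homology class.

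Next I would establish \emph{$\Q$-linearity}. Additivity: given two classes $c_1, c_2 \in H_1(\tilde X\setminus(\widetilde{\Gamma\setminus\ell});\Q)$ represented by leaves $\ell_1,\ell_2$ sharing a basepoint, form the band sum leaf $\ell_1\cdot\ell_2$ realizing $c_1+c_2$; Lemma \ref{lemmacut4} applied in reverse (regluing $\ell_1$ and $\ell_2$ along a connecting arc) gives exactly $[(M,K);\Gamma_{\ell_1}] + [(M,K);\Gamma_{\ell_2}] = [(M,K);\Gamma_{\ell_1\cdot\ell_2}] \bmod \F_{n+1}^b(M,K)$. Scaling by positive integers then follows by iteration, and scaling by $-1$ follows from the framing- and orientation-independence already noted after Lemma \ref{lemmaslide4} together with the observation that reversing the orientation of a leaf reverses its homology class; extending to all of $\Q$ uses that a rational class has an integer multiple realized by an honest leaf and that the bracket of the $k$-fold class is $k$ times the bracket (again via Lemma \ref{lemmacut4}), so the map factors through $H_1(\cdot;\Q)$ and is $\Q$-linear there. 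One must check that all these cut-and-reglue operations take place inside $M\setminus K$ away from $\Gamma\setminus\ell$ and that the resulting leaves remain rationally null-homologous in $M\setminus K$ so that the Y-links stay null; this is automatic because we only ever manipulate a cycle representing a class in the Alexander-type module, which by construction lies in the kernel of $H_1(\text{cover})\to$ (image of a meridian), hence projects to a rationally null-homologous curve.

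The main obstacle is the passage from \emph{homotopy} data (which Lemmas \ref{lemmahomotopy} and \ref{lemmahomhom} are phrased in) to pure \emph{$\Q$-homology} data in the cover: I need to be sure that the commutator corrections and the denominators introduced when clearing a rational class to an integral one are genuinely killed mod $\F_{n+1}^b(M,K)$. This is exactly the content of Lemma \ref{lemmatriv4} (a leaf null-homologous over $\Q$ in the cover gives a vanishing bracket) together with Lemma \ref{lemmacut4} (linearity of cutting), so the proof is really a bookkeeping argument assembling Lemmas \ref{lemmadisk4}, \ref{lemmaslide4}, \ref{lemmahomotopy}, \ref{lemmahomhom}, \ref{lemmatriv4}; no new geometric input is needed. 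The one place to be careful is that Lemma \ref{lemmatriv4} already presupposes the cut-and-reglue machinery, so I would present the argument as: (1) reduce arbitrary $\ell$ to a normal form by cutting; (2) show the normal-form bracket depends only on the homology class via Lemma \ref{lemmatriv4}; (3) read off additivity and $\Q$-scaling from the normal form.
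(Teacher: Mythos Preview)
Your plan is correct and uses the right ingredients (Lemmas \ref{lemmacut4} and \ref{lemmatriv4}), but it takes a considerably longer route than the paper. The paper's proof is a two-liner: given leaves $\ell,\ell'$ with $\Gamma\setminus\ell=\Gamma'\setminus\ell'$ and rationally homologous lifts, form a single ``difference leaf'' $\ell-\ell'$ by band-summing $\ell$ with the orientation-reversed $\ell'$ near the common basepoint. Its lift is null-homologous over $\Q$ in $\tilde X\setminus(\widetilde{\Gamma\setminus\ell})$, so Lemma \ref{lemmatriv4} gives $[(M,K);\Gamma^{\ell-\ell'}]=0$ directly, and one application of Lemma \ref{lemmacut4} yields $[(M,K);\Gamma]=[(M,K);\Gamma']$. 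Linearity is then immediate from Lemma \ref{lemmacut4}.

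Your detour through ``normal-form'' representatives and your concern about passing from homotopy to $\Q$-homology data are unnecessary: Lemma \ref{lemmatriv4} already has a purely $\Q$-homological hypothesis, so there is nothing to bridge, and Lemma \ref{lemmahomhom} plays no role in this proof. What your approach buys is perhaps a clearer picture of how an arbitrary leaf decomposes into generators, but at the cost of bookkeeping that the difference-leaf trick sidesteps entirely.
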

\begin{proof}
 Let $\Gamma'$  be a null Y--link which has a leaf $\ell'$ such that $\Gamma'\setminus\ell'$ coincides with 
$\Gamma\setminus\ell$, and $\tilde{\ell'}$ is homologous to $\tilde{\ell}$ in $\tilde{X}\setminus(\widetilde{\Gamma\setminus\ell})$, 
where $\tilde{\ell'}$ is the lift of $\ell'$ which has the same basepoint as $\tilde{\ell}$. Construct another 
null Y--link $\Gamma^\delta$ by replacing the leaf $\ell$ by $\ell-\ell'$ in $\Gamma$ (see Figure \ref{figll'4}). 
\begin{figure}[htb] 
\begin{center}
\begin{tikzpicture} [scale=0.5]
\newcommand{\leaf}[3]{
\draw[xshift=#1,yshift=#2] (0,0) -- (0,-2);
\draw[xshift=#1,yshift=#2] (-2,2) arc (-180:0:2);
\draw[xshift=#1,yshift=#2,dashed] (2,2) arc (0:180:2);
\draw[xshift=#1,yshift=#2,->] (1.3,0.5) -- (1.5,0.7);
\draw[xshift=#1,yshift=#2] (1.8,0.2) node{#3};}
\leaf{1cm}{1cm}{$\ell'$}
\draw[line width=4pt,color=white] (0,2) circle (2);
\leaf{0}{0}{$\ell$}
\draw[dashed] (-0.4,0.04) -- (0.6,1.04) (-0.8,0.17) -- (0.2,1.17);
\draw[->,line width=2pt] (6,1) -- (7,1);
\begin{scope} [xshift=12cm]
\newcommand{\sleaf}[2]{
\draw[xshift=#1,yshift=#2] (-2,2) arc (-180:0:2);
\draw[xshift=#1,yshift=#2,dashed] (2,2) arc (0:180:2);}
\sleaf{1cm}{1cm}
\draw[line width=4pt,color=white] (0,2) circle (2);
\sleaf{0}{0}
\draw (0,0) -- (0,-2);
\draw[->] (1.3,0.5) -- (1.5,0.7);
\draw (2.5,0.3) node{$\ell-\ell'$};
\draw (-0.4,0.04) -- (0.6,1.04) (-0.8,0.17) -- (0.2,1.17);
\draw[line width=4.5pt,color=white] (-0.7,0) -- (0.5,1.21);
\end{scope}
\end{tikzpicture}
\end{center}
\caption{The leaf $\ell-\ell'$} \label{figll'4}
\end{figure}
By Lemma \ref{lemmatriv4}, 
$\left[(M,K);\Gamma^\delta\right]=0\ mod\ \F_{n+1}^b(M,K)$. Thus Lemma \ref{lemmacut4} implies $\left[(M,K);\Gamma\right]=\left[(M,K);\Gamma'\right]\ mod\ \F_{n+1}^b(M,K)$. 
Linearity follows from Lemma \ref{lemmacut4}. 
\end{proof}

   \section{Colored diagrams and Y--links} \label{secdiagrams}

In this section, we apply clasper calculus to obtain the maps from diagrams spaces to graded quotients of Proposition \ref{propphinintro} and Theorem \ref{thphiZ}. 

Fix a Blanchfield module $(\Al,\bl)$. An $(\Al,\bl)$--colored diagram is an {\em elementary ($(\Al,\bl)$--colored) diagram} if its edges 
that connect two trivalent vertices are colored by powers of $t$ and if its edges adjacent to univalent vertices are colored by $1$. 
Below, we associate a null Y--link with some elementary diagrams that generate $\tilde{\A}_n(\Al,\bl)$. 
Let $(M,K)\in\Ens(\Al,\bl)$. Let $\xi: (\Al,\bl) \to (\Al,\bl)(M,K)$ be an isomorphism. Let $m(K)$ be a meridian of $K$. 

Let $D$ be an elementary diagram. An embedding of $D$ in $M\setminus K$ is {\em admissible} if the following conditions are satisfied.
\begin{itemize}
 \item The vertices of $D$ are embedded in some ball $B\subset M\setminus K$.
 \item Consider an edge colored by $t^k$. The homology class in $H_1(M\setminus K;\Z)$ of the closed curve obtained by connecting 
  the extremities of this edge by a path in $B$ is $k\,m(K)$.
\end{itemize}
Such an embedding always exists. It suffices to embed the diagram in $B$, and to let each edge colored by $t^k$ turn around $K$, $k$ times. 
With an admissible embedding of an elementary diagram, we wish to associate a null Y--link. 

Let $\Gamma$ be a Y--graph null in $M\setminus K$. Let $p$ be the internal vertex of $\Gamma$. 
Let $\ell$ be a leaf of $\Gamma$. The curve $\hat{\ell}$ drawn in Figure \ref{figextension} is the {\em extension of $\ell$ in $\Gamma$}. 
\begin{figure}[htb]
\begin{center}
\begin{tikzpicture} [scale=0.4]
\begin{scope} [xshift=-5cm]
\draw (0,0) -- (0,3);
\draw (0,4) circle (1);
\draw[->] (1,3.9) -- (1,4.1);
\draw (0,0) node[right] {$p$};
\draw (0,0) node {$\scriptstyle{\bullet}$};
\draw (1,4) node[right] {$\ell$};
\end{scope}
\draw[->,very thick] (-0.5,2.5) -- (0.5,2.5);
\begin{scope} [xshift=5cm]
\draw (0,0) .. controls +(0.2,0) and +(0,-3) .. (0.2,3);
\draw (0,0) .. controls +(-0.2,0) and +(0,-3) .. (-0.2,3);
\draw (0,4) circle (1);
\draw[white,very thick] (-0.18,3) -- (0.18,3);
\draw[->] (1,3.9) -- (1,4.1);
\draw (0,0) node[right] {$p$};
\draw (0,0) node {$\scriptstyle{\bullet}$};
\draw (1,4) node[right] {$\hat{\ell}$};
\end{scope}
\end{tikzpicture} 
\end{center}
\caption{Extension of a leaf in a Y--graph} \label{figextension}
\end{figure}
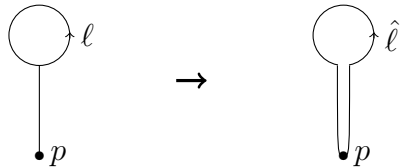

Let $D$ be an elementary diagram, equipped with an admissible embedding in $M\setminus K$. 
Equip $D$ with the framing induced by an immersion in the plane which induces the fixed orientation of the trivalent vertices. 
If an edge connects two trivalent vertices, then insert a little Hopf link in this edge, as shown in Figure \ref{figrempla4}. 
\begin{figure}[htb] 
\begin{center}
\begin{tikzpicture} [scale=0.2]
\draw (-36,0) -- (-18,0);
\draw (-36,0) node{$\scriptscriptstyle{\bullet}$};
\draw (-18,0) node{$\scriptscriptstyle{\bullet}$};
\draw[->,line width=1.5pt,>=latex] (-10.5,0) -- (-7.5,0);
\draw (0,0) node{$\scriptscriptstyle{\bullet}$};
\draw (0,0) -- (6,0);
\draw (8,0) circle (2);
\draw[color=white,line width=6pt] (8,0) arc (-180:-90:2);
\draw (10,0) circle (2);
\draw[color=white,line width=6pt] (10,0) arc (0:90:2);
\draw (10,0) arc (0:90:2);
\draw[->] (11.5,1.3) -- (11.3,1.5);
\draw[->] (6.5,-1.3) -- (6.7,-1.5);
\draw (12,0) -- (18,0);
\draw (18,0) node{$\scriptscriptstyle{\bullet}$};
\end{tikzpicture}
\end{center}
\caption{Replacement of an edge} \label{figrempla4}
\end{figure}
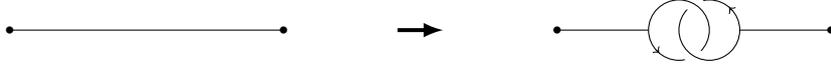
At each univalent vertex $v$, 
glue a leaf $\ell_v$, trivial in $H_1(M\setminus K;\Q)$, in order to obtain a null Y--link $\Gamma$. Let $V$ be the set of all univalent 
vertices of $D$. Let $B$ be the ball in the definition of the admissible embedding of $D$. Let $\tilde{B}$ be a lift of $B$ in the infinite 
cyclic covering $\tilde{X}$ of the exterior of $K$ in $M$. 
For $v\in V$, let $\gamma_v$ be the coloring of $v$, let $\hat{\ell}_v$ be the extension of $\ell_v$ in $\Gamma$ 
and let $\tilde{\ell}_v$ be the lift of $\hat{\ell}_v$ in $\tilde{X}$ defined by lifting the basepoint in $\tilde{B}$. 
The null Y--link $\Gamma$ is a {\em realization of $D$ in $(M,K)$ with respect to $\xi$} if the following conditions are satisfied:
\begin{itemize}
 \item for all $v\in V$, $\tilde{\ell}_v$ is homologous to $\xi(\gamma_v)$, 
 \item for all $(v,v')\in V^2$, $lk_e(\tilde{\ell}_v,\tilde{\ell}_{v'})=f_{vv'}$. 
\end{itemize}
If such a realization exists, the elementary diagram $D$ is {\em $\xi$--realizable}. 

\begin{lemma} \label{lemmaind}
 Let $(M,K)\in\Ens(\Al,\bl)$. Let $\xi: (\Al,\bl)\to(\Al,\bl)(M,K)$ be an isomorphism. 
Let $D\in\tilde{\A}_n(\Al,\bl)$ be an elementary diagram of degree $n>0$, $\xi$--realizable. Let $\Gamma$ be a realization of $D$ in $(M,K)$ with respect to $\xi$. 
Then the class of $[(M,K);\Gamma]\ mod\ \F_{n+1}^b(M,K)$ does not depend on the realization of $D$. 
\end{lemma}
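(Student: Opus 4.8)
The plan is to show that any two realizations of $D$ differ by a finite sequence of the local moves on Y--links controlled in Section~\ref{secborro}, none of which changes $[(M,K);\Gamma]$ modulo $\F_{n+1}^b(M,K)$. Let $\Gamma$ and $\Gamma'$ be realizations of $D$ in $(M,K)$ with respect to $\xi$, with associated free leaves $(\ell_v)_{v\in V}$ and $(\ell_v')_{v\in V}$ respectively. First I would remove the framing from the discussion: the framings of the leaves are immaterial (see the remarks following Lemma~\ref{lemmacut4}), and the framings of the edges joining two trivalent vertices are invariant under full twists (the remark after Lemma~\ref{lemmaslide4}) and in any case are absorbed by the little Hopf links inserted in those edges. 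Next, using Lemmas~\ref{lemmadisk4}, \ref{lemmapreslide}, \ref{lemmaslide4}, \ref{lemmaprecut}, \ref{lemmacut4} and~\ref{lemmahomotopy} --- the standard clasper moves: sliding edges, cutting leaves, discarding trivial leaves, homotoping leaves --- one brings the portion of the Y--link built from the internal vertices and the edges between trivalent vertices (together with their Hopf links) into a fixed standard position, its homology in $M\setminus K$ being prescribed by the colors of $D$. This reduces us to two realizations that agree everywhere except on the free leaves.

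It then suffices to pass from $\ell_v$ to $\ell_v'$ one vertex at a time, keeping the rest fixed. So fix $v\in V$, write $Y=\tilde X\setminus(\widetilde{\Gamma\setminus\ell_v})$, and recall (Lemma~\ref{lemmah4}) that $[(M,K);\Gamma]\ \mathrm{mod}\ \F_{n+1}^b(M,K)$ depends $\Q$--linearly on the class of the lift $\tilde\ell_v$ in $H_1(Y;\Q)$. The inclusion $Y\hookrightarrow\tilde X$ induces a surjection $H_1(Y;\Q)\twoheadrightarrow H_1(\tilde X;\Q)$ whose kernel is spanned by the meridians of the components of $\widetilde{\Gamma\setminus\ell_v}$, that is, of the translates $\tau^k\tilde\ell_{v'}$ ($v'\neq v$, $k\in\Z$) of the other free leaves and of the (lifts of the) edges and Hopf--linked leaves of the standardized part. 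Since both realizations satisfy the first condition, $[\tilde\ell_v]$ and $[\tilde\ell_v']$ have the same image $\xi(\gamma_v)$ in $H_1(\tilde X;\Q)$, so their difference lies in that kernel; and computing intersection numbers through a rational $2$--chain $S$ with $\partial S=\delta(\tau)\tilde\ell_v$, the second condition $lk_e(\tilde\ell_v,\tilde\ell_{v'})=f_{vv'}=lk_e(\tilde\ell_v',\tilde\ell_{v'})$ forces the coefficients of $[\tilde\ell_v]-[\tilde\ell_v']$ along the meridians $\mu(\tau^k\tilde\ell_{v'})$ to vanish. Hence $[\tilde\ell_v]-[\tilde\ell_v']$ is a combination of meridians of edges and of Hopf--linked leaves of the standardized part.

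It remains to see that replacing the free leaf $\ell_v$ by such a meridian $\mu$ produces a bracket that is $0$ modulo $\F_{n+1}^b(M,K)$: such a leaf bounds a disk meeting the rest of the Y--link only in one transverse point of an edge, and standard clasper calculus (Lemmas~\ref{lemmadisk4}, \ref{lemmaslide4} and~\ref{lemmacut4}) shows the surgery is then trivial modulo $\F_{n+1}^b(M,K)$. By the $\Q$--linearity of Lemma~\ref{lemmah4}, replacing $\ell_v$ by $\ell_v'$ leaves $[(M,K);\Gamma]$ unchanged modulo $\F_{n+1}^b(M,K)$; doing this for every $v\in V$ gives the claim.

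The hard part is the two intermediate steps rolled into one: making precise that the internal--vertex/edge part of the realization can always be standardized without affecting the bracket, and that the only residual indeterminacy in the homology class of a free leaf is the span of those meridians, each of which is killed. Both amount to an unglamorous but careful combination of the lemmas of Section~\ref{secborro} with the homological description of $H_1(\tilde X\setminus(\widetilde{\Gamma\setminus\ell_v});\Q)$ in terms of $\xi(\gamma_v)$ and the equivariant linking form.
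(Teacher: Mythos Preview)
Your overall architecture matches the paper's: standardize the ``internal'' part of the Y--link using edge slides (Lemma~\ref{lemmaslide4}), then invoke Lemma~\ref{lemmah4} for the free leaves. The paper's proof is terse on the first point (``follows from Lemmas~\ref{lemmaslide4} and~\ref{lemmah4}'') and spends its space instead on two issues you do not address: independence of the choice of the ball $B$ (handled by an inclusion argument: any two balls sit inside a common one, or share a common sub-ball) and independence of the lift $\tilde B$ (handled by an ambient isotopy letting all internal vertices wind once around $K$). Your ``fixed standard position'' might absorb the ball choice, but the lift choice genuinely changes what it means for $\tilde\ell_v$ to represent $\xi(\gamma_v)$, and you should say how this is reconciled.

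More seriously, your one-leaf-at-a-time argument has a gap. When you replace $\ell_v$ by the leaf $\ell_v'$ coming from $\Gamma'$ while keeping the other leaves $\ell_{v'}$ from $\Gamma$, you assert $lk_e(\tilde\ell_v',\tilde\ell_{v'})=f_{vv'}$. But the realization condition for $\Gamma'$ only gives $lk_e(\tilde\ell_v',\tilde\ell_{v'}')=f_{vv'}$, with the \emph{primed} leaf at $v'$; the mixed linking $lk_e(\tilde\ell_v',\tilde\ell_{v'})$ is unconstrained. So you cannot conclude that $[\tilde\ell_v]-[\tilde\ell_v']$ has vanishing coefficient along the meridians of the $\tau^k\tilde\ell_{v'}$. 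The fix is not to jump directly to $\ell_v'$: instead interpolate through realizations whose leaf at $v$ is chosen to have the correct homology class in $\tilde X$ \emph{and} the prescribed equivariant linkings $f_{vv'}$ with the currently present leaves (this exists: add small meridians of those leaves to adjust linkings without touching anything else). Each such step is a legitimate application of Lemma~\ref{lemmah4}, and after all vertices are processed you land on some realization of $D$; since both $\Gamma$ and $\Gamma'$ can be connected to the same such realization, they give equal brackets. The paper's two-line invocation of Lemmas~\ref{lemmaslide4} and~\ref{lemmah4} is meant to encode exactly this.
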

\begin{proof}
 If the ball $B$ and its lift $\tilde{B}$ are fixed, then the result follows 
from Lemmas \ref{lemmaslide4} and \ref{lemmah4}. Fix the ball $B$ and consider another lift $\tilde{B}'=\tau^k(\tilde{B})$ of $B$, 
where $\tau$ is the automorphism of $\tilde{X}$ which induces the action of $t$ and $k\in\Z$. A realization of $D$ with respect to $\tilde{B}'$ 
can be obtained from $\Gamma$ by letting the internal vertex of each Y--graph in $\Gamma$ turn around $K$, $k$ times, and come back into $B$, 
by an isotopy of $(M,K,\Gamma)$. 
This does not change the result of the surgeries on these Y--graphs, hence this does not modify the bracket $[(M,K);\Gamma]$. 
Now consider two balls $B_1$ and $B_2$ in $M\setminus K$. If $B_1\subset B_2$, a realization of $D$ with respect to $B_1$ is a realization 
of $D$ with respect to $B_2$. If $B_1\cap B_2\neq \emptyset$, there is a ball $B_3\subset B_1\cap B_2$. If $B_1\cap B_2=\emptyset$, 
there is a ball $B_3\supset B_1\cup B_2$. Hence the class of the bracket $[(M,K);\Gamma]$ does not depend on the chosen ball $B$. 
\end{proof}
In the sequel, if $D$ is a $\xi$--realizable elementary diagram, $[(M,K);D]_\xi$ denotes the class of $[(M,K);\Gamma]$ modulo $\F_{n+1}^b(M,K)$. 

Let $D$ be any elementary diagram. Let $V$ be the set of all univalent vertices of $D$. For any family of rational numbers $(q_v)_{v\in V}$, 
define an elementary diagram $D'=(q_v)_{v\in V}\cdot D$ from $D$ in the following way. Keep the same graph and the same colorings of the edges. 
For $v\in V$, multiply the coloring of $v$ by $q_v$. For $v\neq v'\in V$, set $f_{vv'}^{D'}=q_vq_{v'}f_{vv'}^D$. 
\begin{lemma}
 Let $(M,K)\in\Ens(\Al,\bl)$. Let $\xi: (\Al,\bl) \to (\Al,\bl)(M,K)$ be an isomorphism. 
Let $D$ be any elementary diagram. Let $V$ be the set of all univalent vertices of $D$. 
Then there exists a family of positive integers $(s_v)_{v\in V}$ such that $(s_v)_{v\in V}\cdot D$ is $\xi$--realizable.
\end{lemma}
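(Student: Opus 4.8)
The plan is to realize $D$ by first embedding the graph admissibly (which is always possible, as recalled before the statement), and then to adjust the leaves so that the two homological conditions of a realization can be met. Fix a ball $B$ containing the vertices of $D$ and a lift $\tilde{B}$ in $\tilde{X}$. For each univalent vertex $v\in V$, we must glue a leaf $\ell_v$, null in $H_1(M\setminus K;\Q)$, whose extension $\hat\ell_v$ has a lift $\tilde{\ell}_v$ representing $\xi(\gamma_v)$ in $\Al=H_1(\tilde{X};\Q)$, and with prescribed equivariant linking numbers $lk_e(\tilde{\ell}_v,\tilde{\ell}_{v'})=f_{vv'}$. The point is that $\xi(\gamma_v)\in H_1(\tilde{X};\Q)$ and the $f_{vv'}\in\frac{1}{\delta(t)}\Qt$ are only \emph{rational}, while an actual knot in $\tilde{X}$ represents an \emph{integral} class and has equivariant linking numbers landing (after multiplication by $\delta$) in $\Zt$ up to controllable denominators.

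First I would choose, for each $v$, a knot $J_v$ in $\tilde{X}$ whose class is a suitable \emph{integer} multiple of $\xi(\gamma_v)$: since $\Al_\Z=H_1(\tilde{X};\Z)$ generates $\Al=H_1(\tilde{X};\Q)$, there is a positive integer $N_v$ with $N_v\,\xi(\gamma_v)\in\mathrm{Im}(\Al_\Z\to\Al)$, so pick $J_v$ representing that integral class, arranged (by a general-position isotopy) so that the projections $p(J_v)$ are pairwise disjoint and disjoint from $\Gamma\setminus(\text{leaves})$ and from $K$; pushing $J_v$ down to $M\setminus K$ and joining it to the internal vertex by a framed band produces a candidate leaf $\ell_v$. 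Taking $J_v$ to be, as needed, a connected sum with a small null-homologous summand, one can moreover arrange each $lk_e(\tilde{J}_v,\tilde{J}_v)$ (the self-linking, read off the framing) and each $lk_e(\tilde{J}_v,\tilde{J}_{v'})$ to be whatever is compatible; but the key observation is that after scaling by $s_v:=N_v\cdot(\text{common denominator of the }f_{vv'}\text{'s})$, the required value $s_vs_{v'}f_{vv'}$ of $lk_e(\tilde{\ell}_v,\tilde{\ell}_{v'})$ lies in $\Zt$, hence is realizable as an honest equivariant linking number of integral curves. Concretely, set $D'=(s_v)_{v\in V}\cdot D$; its univalent vertex $v$ is colored by $s_v\gamma_v$, and one checks using the identity $lk_e(P(\tau)J_1,Q(\tau)J_2)(t)=P(t)Q(t^{-1})lk_e(J_1,J_2)(t)$ and the non-degeneracy of $\bl$ (Blanchfield) together with the realizability of any prescribed integral equivariant linking matrix by a link in $\tilde{X}$ — which follows by the same band-summing / clasper construction as in the integral case of \cite{GR} — that these data can be matched exactly. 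Thus $D'=(s_v)_{v\in V}\cdot D$ is $\xi$--realizable.

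The main obstacle is the simultaneous adjustment of \emph{all} the equivariant linking numbers $lk_e(\tilde{\ell}_v,\tilde{\ell}_{v'})$, including the diagonal ones hidden in the framings, without disturbing the already-fixed homology classes $[\tilde{\ell}_v]=s_v\xi(\gamma_v)$. I expect to handle this by the standard move of band-summing $\ell_v$ with small commutator loops and small meridional circles in $\tilde{X}\setminus(\widetilde{\Gamma\setminus\ell_v})$: adding a product of commutators changes neither the homology class nor (up to $\Qt$, and in fact exactly, if chosen null-homologous) the equivariant linking, while adding $\tau^k$(meridian of a lift of $\ell_{v'}$)-type loops shifts $lk_e(\tilde{\ell}_v,\tilde{\ell}_{v'})$ by a prescribed element of $\Zt$ and leaves the other linkings controlled; iterating over pairs and using that $\Zt$ is generated by the $t^k$, one drives the linking matrix to the target. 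Verifying that these modifications keep every $\ell_v$ null in $H_1(M\setminus K;\Q)$ — so that $\Gamma$ remains a null Y--link — and that the self-framings can be normalized (using that, by the lemma above, the bracket does not depend on leaf framings anyway, so only the \emph{off-diagonal} $f_{vv'}$ truly need to be hit) is the remaining bookkeeping.
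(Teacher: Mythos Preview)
Your approach is essentially the paper's: scale the vertex labels so that each $\xi(s_v\gamma_v)$ is represented by an actual knot in $\tilde{X}$, build the null Y--link with those leaves, and then correct the equivariant linkings by band-summing in meridians. The paper's proof is just those three sentences.

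However, one statement in your write-up is wrong and obscures the actual mechanism. You assert that after scaling, ``the required value $s_v s_{v'} f_{vv'}$ of $lk_e(\tilde{\ell}_v,\tilde{\ell}_{v'})$ lies in $\Zt$''. It does not: $f_{vv'}\in\frac{1}{\delta}\Qt$, and clearing rational coefficient denominators only puts $s_vs_{v'}f_{vv'}$ into $\frac{1}{\delta}\Zt$, which is generically non-polynomial. The point (which the paper states explicitly) is rather that the \emph{difference}
\[
P_{vv'}=lk_e(\tilde{\ell}_v,\tilde{\ell}_{v'})-s_vs_{v'}f_{vv'}
\]
lies in $\Qt$, because both terms reduce to $s_vs_{v'}\bl(\gamma_v,\gamma_{v'})$ modulo $\Qt$; one then enlarges the $s_v$ so that every $P_{vv'}\in\Zt$, and kills each $P_{vv'}$ by adding to $\ell_{v'}$ suitable $\tau^k$--translates of a meridian of $\ell_v$. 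Your later meridian paragraph is exactly this move, so the argument survives, but you should replace the incorrect ``$s_vs_{v'}f_{vv'}\in\Zt$'' claim by the correct statement about $P_{vv'}$.

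The commutator-loop discussion and the framing/self-linking worries are unnecessary here. Adding a meridian of $\ell_v$ to $\ell_{v'}$ already preserves the homology class of $\tilde{\ell}_{v'}$ (a meridian lifts to something null-homologous in $\tilde{X}$) and alters exactly the one linking $lk_e(\tilde{\ell}_v,\tilde{\ell}_{v'})$; no commutator surgery is needed. And the definition of a realization imposes no condition on $lk_e(\tilde{\ell}_v,\tilde{\ell}_v)$, so there is nothing to normalize on the diagonal.
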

\begin{proof}
 Let $\tilde{X}$ be the infinite cyclic covering associated with $(M,K)$. 
Since any homology class in $\Al$ has a multiple which can be represented by a knot in $\tilde{X}$, we can assume that 
the color $\gamma_v$ of each vertex $v$ in $V$ can be represented by a knot in $\tilde{X}$. From $D$, define as above a Y--link $\Gamma$, 
null in $M\setminus K$, with leaves $\ell_v$ which satisfy the condition that $\tilde{\ell}_v$ is homologous to $\xi(\gamma_v)$ 
for all $v\in V$. For $v\neq v'\in V$, set $P_{vv'}=lk_e(\tilde{\ell}_v,\tilde{\ell}_{v'})-f_{vv'}$. We can assume that $P_{vv'}\in\Z[t^{\pm1}]$ 
for all $v\neq v'\in V$. Add well chosen meridians of $\ell_v$ to $\ell_{v'}$ to get $P_{vv'}=0$. 
\end{proof}

\begin{lemma} \label{lemmarationalmultiples}
 Let $(M,K)\in\Ens(\Al,\bl)$. Let $\xi: (\Al,\bl)\to(\Al,\bl)(M,K)$ be an isomorphism. Let $D$ be an elementary $(\Al,\bl)$--colored diagram. 
Let $V$ be the set of all univalent vertices of $D$. Let $(s_v)_{v\in V}$ and $(s'_v)_{v\in V}$ be families of integers 
such that $(s_v)_{v\in V}\cdot D$ and $(s'_v)_{v\in V}\cdot D$ are $\xi$--realizable. Then:
$$\prod_{v\in V} s'_v \left[(M,K);(s_v)_{v\in V}\cdot D\right]_\xi=\prod_{v\in V} s_v \left[(M,K);(s'_v)_{v\in V}\cdot D\right]_\xi.$$
\end{lemma}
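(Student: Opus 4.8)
The plan is to reduce the claimed identity to repeated applications of Lemma \ref{lemmah4} (the $\Q$--linear dependence of the bracket on the homology class of the lifted leaf), using the cutting operation of Lemma \ref{lemmacut4} to pass between a diagram whose univalent vertex $v$ carries a leaf realizing a class $\gamma$ and one carrying $n$ parallel copies of that leaf, which realizes $n\gamma$. The point is that, for a single univalent vertex $v$, multiplying its color by a positive integer $s_v$ and correspondingly scaling all $f_{vv'}$ by $s_v$ amounts, at the level of Y--links, to replacing the leaf $\ell_v$ by $s_v$ parallel pushoffs of $\ell_v$ lying in the associated surface; cutting this multiple leaf via Lemma \ref{lemmacut4} shows modulo $\F_{n+1}^b(M,K)$ that the bracket scales by $s_v$. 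By Lemma \ref{lemmah4} this scaling is compatible with the already-established well-definedness ($[(M,K);D]_\xi$ depends only on the homology class of each $\tilde\ell_v$).

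Concretely, I would argue one vertex at a time. Fix $v\in V$; write $D_a$ for the elementary diagram obtained from $D$ by multiplying the color of $v$ by an integer $a$ and setting $f_{vv'}^{D_a}=a\,f_{vv'}^D$ for all $v'\neq v$ (leaving the other vertices untouched). I would first show: if $D_a$ and $D_b$ are both $\xi$--realizable then $b\,[(M,K);D_a]_\xi=a\,[(M,K);D_b]_\xi$ modulo $\F_{n+1}^b(M,K)$. This is precisely the content of Lemma \ref{lemmah4} applied at the leaf $\ell_v$: choosing realizations, the lift $\tilde\ell_v$ of the leaf realizing the color $a\gamma_v$ is homologous to $a$ times a fixed class in $H_1(\tilde X\setminus(\widetilde{\Gamma\setminus\ell_v});\Q)$, and the dependence of the bracket on that class is $\Q$--linear, so the two brackets satisfy $[(M,K);D_a]_\xi = (a/b)\,[(M,K);D_b]_\xi$ in the appropriate sense; clearing denominators gives the displayed equality. (One must check that the data $f_{vv'}$ scale correctly under this leaf replacement, which follows from the bilinearity property $lk_e(P(\tau)J_1,Q(\tau)J_2)=P(t)Q(t^{-1})lk_e(J_1,J_2)$ recorded in Subsection \ref{subsecsurgeries}, applied with $P=Q$ a constant integer.)

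Then I would iterate over the vertices of $V$. Starting from $(s_v)_{v\in V}\cdot D$, successively rescale the color at $v_1$ from $s_{v_1}$ to $s'_{v_1}$ (picking up a factor $s'_{v_1}/s_{v_1}$ by the one-vertex statement), then at $v_2$ from $s_{v_2}$ to $s'_{v_2}$, and so on; after treating all vertices one reaches $(s'_v)_{v\in V}\cdot D$ and has multiplied the bracket by $\prod_{v\in V} s'_v/s_v$. A subtlety here is that the intermediate diagrams (with mixed families $(s'_{v_1},\dots,s'_{v_j},s_{v_{j+1}},\dots)$) may not themselves be $\xi$--realizable; to handle this cleanly I would instead work with a common refinement, i.e. prove the statement first when $s'_v = m\,s_v$ for a single integer $m$ (so $(s'_v)_{v\in V}\cdot D = (m,\dots,m)\cdot((s_v)_{v\in V}\cdot D)$, and each of the $|V|$ one-vertex rescalings by $m$ produces a $\xi$--realizable diagram, using the existence lemma just above), and then deduce the general case by applying this to a common multiple of $\prod s_v$ and $\prod s'_v$. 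This is the step I expect to require the most care: organizing the bookkeeping so that every diagram appearing in the chain of equalities is genuinely $\xi$--realizable, and confirming that Lemma \ref{lemmah4}'s linearity is being invoked with $\Gamma\setminus\ell_v$ held fixed at each stage.

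Once the scaling-by-a-common-multiple identity $\bigl(\prod_v m_v\bigr)\,[(M,K);(s_v)_v\cdot D]_\xi = [(M,K);(m_v s_v)_v\cdot D]_\xi$ is in hand for all integer tuples $(m_v)$ making both sides realizable, the proposition follows formally: given $(s_v)$ and $(s'_v)$ with both $(s_v)_v\cdot D$ and $(s'_v)_v\cdot D$ realizable, apply the identity with $m_v = s'_v$ to the left-hand side and with $m_v = s_v$ to $(s'_v)_v\cdot D$ on the right-hand side, obtaining that both $\prod_v s'_v\,[(M,K);(s_v)_v\cdot D]_\xi$ and $\prod_v s_v\,[(M,K);(s'_v)_v\cdot D]_\xi$ equal $[(M,K);(s_v s'_v)_v\cdot D]_\xi$ modulo $\F_{n+1}^b(M,K)$, which is exactly the asserted equality. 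The only genuinely geometric inputs are Lemmas \ref{lemmacut4}, \ref{lemmah4} and \ref{lemmaind}; everything else is elementary manipulation.
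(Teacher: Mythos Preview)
Your last paragraph is exactly the paper's proof: take a realization $\Gamma$ of the product diagram $(s_v s'_v)_{v\in V}\cdot D$ and observe, via Lemma~\ref{lemmah4}, that $[(M,K);\Gamma]$ equals both $\prod_v s'_v\,[(M,K);(s_v)_v\cdot D]_\xi$ and $\prod_v s_v\,[(M,K);(s'_v)_v\cdot D]_\xi$. The paper does this in two lines.

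The detour through one-vertex rescalings and the uniform special case $s'_v = m s_v$ is unnecessary, and there is a small slip in it: you propose to prove the uniform case and then ``deduce the general case by applying this to a common multiple of $\prod s_v$ and $\prod s'_v$'', but a common multiple of those two products does not give a single integer ratio $m$ valid at every vertex, so that deduction does not go through as stated. The concern that prompted the detour---intermediate diagrams possibly failing to be $\xi$--realizable---is a red herring, because Lemma~\ref{lemmah4} is a statement about Y--links, not about diagrams. Starting from a realization of $(s_v)_v\cdot D$ and replacing each leaf $\ell_{v}$ in turn by $s'_{v}$ parallel copies, the intermediate objects are honest null Y--links (they need not realize any named diagram) to which Lemma~\ref{lemmah4} applies directly; after all replacements one has a realization of $(s_v s'_v)_v\cdot D$ whose bracket is $\prod_v s'_v$ times the original. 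Symmetrically from $(s'_v)_v\cdot D$. That, together with Lemma~\ref{lemmaind}, is the entire argument; Lemma~\ref{lemmacut4} is already absorbed into Lemma~\ref{lemmah4} and need not be invoked separately.
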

\begin{proof}
 Let $\Gamma$ be a realization of $(s_v)_{v\in V}\cdot (s'_v)_{v\in V}\cdot D$ in $(M,K)$ with respect to $\xi$. 
By Lemma~\ref{lemmah4}, $\left[(M,K);\Gamma\right]$ is equal to both sides of the equality. 
\end{proof}

Let $D$ be an elementary $(\Al,\bl)$--colored diagram. Let $V$ be the set of all univalent vertices of $D$. 
The above result allows us to define: $$\left[(M,K);D\right]_\xi=\prod_{v\in V}\frac{1}{s_v}\left[(M,K);(s_v)_{v\in V}\cdot D\right]_\xi\ \in\G_n^b(M,K),$$ 
where $(s_v)_{v\in V}$ is any family of integers such that $(s_v)_{v\in V}\cdot D$ is $\xi$--realizable. 

\begin{lemma} \label{lemmaind2}
 Let $D$ be an elementary $(\Al,\bl)$--colored diagram. Let $(M,K)$ and $(M',K')$ be $\Q$SK--pairs in $\Ens(\Al,\bl)$. 
Let $\xi: (\Al,\bl)\to(\Al,\bl)(M,K)$ and $\xi': (\Al,\bl)\to(\Al,\bl)(M',K')$ be isomorphisms. 
Then $\left[(M',K');D\right]_{\xi'}=\left[(M,K);D\right]_\xi\ mod\ \F_{n+1}(\Al,\bl)$.
\end{lemma}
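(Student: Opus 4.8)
The goal is to show that the bracket $[(M,K);D]_\xi \in \G_n^b(M,K)$ defined via a realization of an elementary diagram $D$ does not depend on the choice of $\Q$SK--pair $(M,K) \in \Ens(\Al,\bl)$ nor on the choice of isomorphism $\xi$, once we pass to the common quotient $\F_n(\Al,\bl)/\F_{n+1}(\Al,\bl)$. The plan is to reduce to the case $(M',K')=(M,K)$ and then connect the two isomorphisms $\xi,\xi'$ by a sequence of null LP--surgeries, using Theorem \ref{thM3}.

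First I would treat the dependence on $\xi$ for a fixed pair $(M,K)$. Given two isomorphisms $\xi,\xi':(\Al,\bl)\to(\Al,\bl)(M,K)$, the composite $\zeta = \xi^{-1}\circ\xi'$ is an automorphism of $(\Al,\bl)$; but rather than work with automorphisms directly, I would apply Theorem \ref{thM3} to $\zeta$ (viewed as an isomorphism from $(\Al,\bl)(M,K)$ to itself): there is a finite sequence of null LP--surgeries from $(M,K)$ to $(M,K)$ realizing $\zeta$ up to multiplication by a power of $t$. Since we are working modulo $\F_{n+1}$ and the bracket $[(M,K);D]_\xi$ was defined by clasper calculus modulo $\F_{n+1}^b(M,K)$, performing one of these auxiliary null LP--surgeries far away from a realization $\Gamma$ of $(s_v)\cdot D$ changes $[(M,K);\Gamma]$ only by elements of $\F_{n+1}(\Al,\bl)$ (because a realization of $D$ with respect to $\xi'$ can be obtained from one with respect to $\xi$ by pushing the leaves through the surgery region, and the difference is a bracket with at least $n+1$ components). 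The multiplication by a power of $t$ is absorbed by the argument already used in Lemma \ref{lemmaind}, where letting the internal vertices turn around $K$ does not change the bracket.

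Next I would handle the dependence on $(M,K)$ itself. Given $(M,K),(M',K')\in\Ens(\Al,\bl)$ with isomorphisms $\xi,\xi'$, Theorem \ref{thM3} provides a finite sequence of null LP--surgeries from $(M,K)$ to $(M',K')$ inducing $\xi'\circ\xi^{-1}$ up to a power of $t$. I would realize this sequence by disjoint null Y--links (using Matveev's theorem, or directly by rational-homology-handlebody surgeries handled in the setting of Section \ref{secborro}) chosen disjoint from a fixed realization $\Gamma$ of $(s_v)_{v\in V}\cdot D$ inside $(M,K)$. Because these surgeries induce on the Alexander module exactly the identification between $\xi$-colorings and $\xi'$-colorings, the image of $\Gamma$ after the surgeries is a realization of $(s_v)_{v\in V}\cdot D$ in $(M',K')$ with respect to $\xi'$ (up to the power of $t$, which is harmless). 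Then, by a telescoping argument exactly as in the proof of Lemma \ref{lemmaslide4} or Lemma \ref{lemmacut4}, $[(M,K);\Gamma]$ and $[(M',K');\Gamma']$ differ by brackets on unions of $\Gamma$ (which has $\geq n$ components) with at least one extra null LP--surgery component, hence by elements of $\F_{n+1}(\Al,\bl)$. Dividing by $\prod_v s_v$ gives $[(M',K');D]_{\xi'}=[(M,K);D]_\xi \bmod \F_{n+1}(\Al,\bl)$.

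The main obstacle I anticipate is the careful bookkeeping at the interface between the two kinds of moves: the bracket $[(M,K);D]_\xi$ lives a priori only in $\G_n^b(M,K)$, and one must verify that its image under the surgeries-inducing map of Theorem \ref{thM3} is well-defined and lands in the correct bracket in $\G_n^b(M',K')$, i.e.\ that the homological conditions ($\tilde\ell_v$ homologous to $\xi'(\gamma_v)$ and $lk_e(\tilde\ell_v,\tilde\ell_{v'})=f_{vv'}$) are preserved under the identification of infinite cyclic coverings furnished by the null LP--surgery sequence. This compatibility is essentially the content of the ``canonical isomorphism'' clause in Theorem \ref{thM3}, but tracing through the equivariant linking numbers requires some care; the power-of-$t$ ambiguity is dealt with by the same turning-around-$K$ isotopy already used in Lemma \ref{lemmaind}.
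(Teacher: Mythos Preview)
Your approach is essentially the paper's: invoke Theorem \ref{thM3} to connect the two pairs by a finite sequence of null LP--surgeries, realize $\Gamma$ disjoint from the surgery regions, and observe that the difference of brackets is an $(n{+}1)$--fold bracket lying in $\F_{n+1}(\Al,\bl)$. Two remarks are worth making.

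First, your split into ``dependence on $\xi$ for fixed $(M,K)$'' and ``dependence on $(M,K)$'' is redundant: the paper treats both at once by applying Theorem \ref{thM3} directly to $\zeta=\xi'\circ\xi^{-1}$, and indeed your second case already subsumes the first.

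Second, your appeal to Matveev's theorem to convert the null LP--surgeries into Y--links is not valid in the rational setting: that result (and the Habegger/Auclair--Lescop version) applies only to $\Z$--handlebodies, whereas Theorem \ref{thM3} yields genuine rational null LP--surgeries which need not decompose into Borromean surgeries. Fortunately this conversion is unnecessary. The paper simply writes, for a single null LP--surgery $\big(\frac{A'}{A}\big)$,
\[
\Big[(M,K);\Gamma,\tfrac{A'}{A}\Big]=\big[(M,K);\Gamma\big]-\big[(M',K');\Gamma\big],
\]
and this mixed bracket already lies in $\F_{n+1}(\Al,\bl)$ by definition of the filtration---there is no need to stay inside $\F_{n+1}^b$. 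Your final paragraph correctly identifies the remaining technical point, namely that $\Gamma$ is again a realization of $(s_v)\cdot D$ in $(M',K')$ with respect to $\xi'$ (up to the power of $t$, absorbed by changing the lift $\tilde B$ as in Lemma \ref{lemmaind}).
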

\begin{proof}
 Set $\zeta=\xi'\circ\xi^{-1}$. By Theorem \ref{thM3}, $(M',K')$ can be obtained from $(M,K)$ by a finite sequence of null LP--surgeries 
which induces $\zeta\circ m_k$, for $k\in\Z$, where $m_k$ is the multiplication by $t^k$. 
Assume the sequence contains a single surgery $\left(\frac{A'}{A}\right)$. Let $V$ be the set of all univalent vertices of $D$. 
Let $(s_v)_{v\in V}$ be a family of integers such that $(s_v)_{v\in V}\cdot D$ 
is $\xi$--realizable by a null Y--link $\Gamma$ in $(M\setminus K)\setminus A$. Then:
$$\left[(M,K);\Gamma,\frac{A'}{A}\right]=\left[(M,K);\Gamma\right]-\left[(M',K');\Gamma\right].$$
In $(M',K')$, $\Gamma$ is a realization of $(s_v)_{v\in V}\cdot D$ with respect to $\xi'\circ m_k$. Hence it is also a realization of $(s_v)_{v\in V}\cdot D$ 
with respect to $\xi'$ (it suffices to change the lift $\tilde{B}$ of the ball $B$, see Lemma \ref{lemmaind}). 

The case of several surgeries easily follows.
\end{proof}

In the sequel, the class of $[(M,K);D]_\xi$ modulo $\F_{n+1}(\Al,\bl)$ is denoted by $[D]$.

\begin{proposition} \label{propphin}
 Fix a Blanchfield module $(\Al,\bl)$. Let $n> 0$. 
There is a canonical, $\Q$--linear and surjective map $\varphi_n : \A_n(\Al,\bl) \twoheadrightarrow \G_n^b(\Al,\bl)$ given by 
$D\mapsto [D]$ for any elementary diagram $D$.
\end{proposition}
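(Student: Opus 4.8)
The plan is to define $\varphi_n$ by the formula $D\mapsto[D]$ on elementary diagrams, to extend it $\Q$--linearly to all $(\Al,\bl)$--colored diagrams of degree $n$, and then to check that it kills the defining relations of $\A_n(\Al,\bl)$ and is onto $\G_n^b(\Al,\bl)$. First I would record that the classes of elementary diagrams span $\tilde\A_n(\Al,\bl)$: given an arbitrary colored diagram, relation LE lets me expand the $\Qt$--label of each edge joining two trivalent vertices into a $\Q$--combination of powers of $t$, and relation EV lets me transfer the label of each edge adjacent to a univalent vertex $v$ onto the coloring $\gamma_v\in\Al$ and onto the fractions $f_{vv'}$, leaving that edge labelled $1$; iterating, every colored diagram equals in $\tilde\A_n(\Al,\bl)$ a $\Q$--combination of elementary ones. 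For an elementary $\xi$--realizable diagram $D$, Lemma~\ref{lemmaind} shows that $[(M,K);\Gamma]\bmod\F_{n+1}^b(M,K)$ is independent of the realization $\Gamma$; Lemma~\ref{lemmarationalmultiples} extends the construction to all elementary $D$ by rational weights; and Lemma~\ref{lemmaind2} shows the resulting class $[D]\in\G_n^b(\Al,\bl)$ depends neither on $(M,K)\in\Ens(\Al,\bl)$ nor on $\xi$, so $\varphi_n$ is \emph{canonical}. I then define $\hat\varphi_n$ on the free vector space on all degree--$n$ colored diagrams by sending a diagram to the combination of $[D_i]$'s produced by the LE/EV reduction; independence of the reduction is exactly the statement that $[\cdot]$ satisfies LE and EV on elementary diagrams, which I check geometrically --- LE on a monomial--labelled internal edge is the rescaling of a Hopf--linked clasper edge and follows from equivariant clasper calculus, while EV is built into the notion of an admissible embedding, since letting a leaf travel once around $K$ multiplies the homology class of its lift and all its equivariant linking numbers by $t$ (compare relation Hol in Figure~\ref{figrel}).

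Next I would verify that $\hat\varphi_n$ annihilates AS, IHX, OR, Hol, LV, LD and Aut; by the reduction step it suffices to treat each relation with all diagrams involved elementary, using EV first to restore elementarity whenever Hol or LV pushes a power of $t$ or an $\Al$--coefficient onto a leg. Several of these are immediate from Section~\ref{secborro}: LV is precisely the $\Q$--linearity in the homology class of a leaf--lift established in Lemma~\ref{lemmah4}; LD follows from Lemmas~\ref{lemmacut4} and~\ref{lemmah4}, the defect in changing an equivariant linking number by $\Qt$--meridians of a leaf being a Y--link whose extra leaf fuses the two relevant internal vertices along an edge; OR is the remark that reversing the direction in which a clasper edge winds around $K$ turns $t^k$ into $t^{-k}$ in the admissibility condition; Hol is the move of pushing the internal vertex of the corresponding Y--graph once around $K$, already used in the proof of Lemma~\ref{lemmaind}; and Aut is immediate from Lemma~\ref{lemmaind2}, since realizing the diagram $\alpha\cdot D$ with $\xi$ is the same as realizing $D$ with the isomorphism $\xi\circ\alpha$.

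The remaining two relations, AS and IHX, are the antisymmetry and Jacobi relations for claspers. After normalizing the internal edges by Hol, AS corresponds to reversing the orientation of the associated surface at the internal vertex of a Y--graph, and IHX is the fundamental three--term clasper relation among Y--links differing in a genus--$4$ handlebody; both hold modulo $\F_{n+1}^b(M,K)$ by the clasper calculus of \cite{GGP} (together with the leaf--cutting Lemma~\ref{lemmacut4} to bring the Hopf--link edges into the required position). This completes the check that $\varphi_n:\A_n(\Al,\bl)\to\G_n^b(\Al,\bl)$ is a well--defined $\Q$--linear map.

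For surjectivity, recall that $\G_n^b(\Al,\bl)$ is spanned by the classes $[(M,K);\Gamma]$ with $(M,K)\in\Ens(\Al,\bl)$ and $\Gamma$ an $n$--component null Y--link. Given such a $\Gamma$, I would use Lemmas~\ref{lemmaslide4}, \ref{lemmacut4}, \ref{lemmahomotopy}, \ref{lemmahomhom}, \ref{lemmatriv4} and~\ref{lemmah4} to normalize each of its $n$ Y--graphs modulo $\F_{n+1}^b(M,K)$: replace each leaf by a standard representative of the homology class of its lift in $\tilde X$, convert pairs of mutually linked leaves into Hopf--link edges joining the corresponding internal vertices, and keep track of the remaining free leaves' $\Al$--colorings and equivariant linking numbers; this exhibits $[(M,K);\Gamma]$ modulo $\F_{n+1}^b(M,K)$, hence modulo $\F_{n+1}(\Al,\bl)$, as a $\Q$--combination of classes $[D_i]$ of elementary diagrams of degree $n$. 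I expect the genuinely delicate points to be, first, the verification of AS and IHX by clasper calculus after edge--normalization, and second, this surjectivity argument --- namely showing that an arbitrary $n$--component null Y--link can be brought, modulo $\F_{n+1}^b$, into the realization of a combination of elementary diagrams while keeping the bookkeeping of colorings and equivariant linking numbers coherent; the algebraic reductions (LE, EV, OR, Hol, LV, LD, Aut) are comparatively routine given the lemmas of Section~\ref{secborro}.
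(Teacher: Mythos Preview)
Your approach is essentially the same as the paper's, and the verification of the relations matches closely: the paper also uses Lemma~\ref{lemmah4} for LV, Lemma~\ref{lemmaind2} for Aut, the move of letting the internal vertex travel around $K$ for Hol, and cites \cite[Corollary~4.6, Lemma~4.10]{GGP} for AS and IHX. Your treatment of LD via leaf--cutting and meridians is a slightly more geometric variant of the paper's argument, which instead introduces auxiliary diagrams $D_0',D_0'',D_{00}$ with zero--colored vertices and reduces everything to repeated applications of LV; both routes are valid.

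You are, however, working far too hard for surjectivity. No normalization of the Y--link is needed: an arbitrary $n$--component null Y--link $\Gamma$ in $(M,K)$ is \emph{already} a realization of an elementary diagram, namely the disjoint union of $n$ degree--$1$ tripods, with all edge labels equal to $1$, univalent vertices colored by the homology classes of the lifts of the leaves of $\Gamma$, and linkings $f_{vv'}$ given by the equivariant linking numbers of those lifts. The paper's surjectivity argument is exactly this one sentence; there is no need to convert linked leaves into Hopf edges or otherwise massage $\Gamma$.
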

\begin{proof}
Let $\mathcal{D}_n$ be the rational vector space freely generated by the $(\Al,\bl)$--colored diagrams of degree $n$. 
If $D$ is an elementary $(\Al,\bl)$--colored diagram, set $\tilde{\varphi}_n(D)=[D]$. 
Define $\tilde{\varphi}_n(D)$ for any $(\Al,\bl)$--colored diagram $D$ so that the obtained $\Q$--linear map 
$\tilde{\varphi}_n:\mathcal{D}_n\to\G_n^b(\Al,\bl)$ satisfies the relation LE and EV. Let us check that 
$\tilde{\varphi}_n$ satisfies the relations AS, IHX, OR, Hol, LV, LD and Aut. 
OR is trivial. LV follows from Lemma \ref{lemmah4}. Hol is obtained by letting the corresponding 
vertex of a realization of $D$ turn around the knot $K$. AS and IHX respectively follow from \cite[Corollary 4.6]{GGP} and \cite[Lemma 4.10]{GGP}. 
Aut follows from Lemma \ref{lemmaind2}. For the relation LD, it suffices to prove that 
$\tilde{\varphi}_n(D)=\tilde{\varphi}_n(D')+\tilde{\varphi}_n(D_0)$, where $D$, $D'$ and $D_0$ are elementary diagrams identical, 
except for the part drawn in Figure \ref{figdiag1}. Note that the edges adjacent to $v_1$ and $v_2$ are colored by~$1$. 
Since the diagram $D_0$ and the diagram $D_0'$ drawn in Figure \ref{figdiag2} can be realized by the same null Y--link, we have 
$\tilde{\varphi}_n(D_0)=\tilde{\varphi}_n(D_0')$. To see that $\tilde{\varphi}_n(D_0')=\tilde{\varphi}_n(D_0'')$, 
apply the relation LV at the vertex $v_2$ to obtain $\tilde{\varphi}_n(D_0'')=\tilde{\varphi}_n(D_0')+\tilde{\varphi}_n(D_{00})$ 
and then apply the relation LV at the vertex $v_1$ to obtain $\tilde{\varphi}_n(D_{00})=0$. 
Apply again the relation LV at the vertex $v_1$ to get $\tilde{\varphi}_n(D)=\tilde{\varphi}_n(D')+\tilde{\varphi}_n(D_0'')$. 
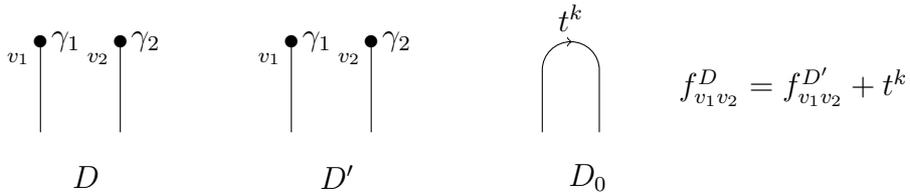
\begin{figure}[htb] 
\begin{center}
\begin{tikzpicture} [scale=0.3]
%D
\draw (-3,0) -- (-3,4);
\draw (-3,4) node{$\bullet$};
\draw (-3,4) node[below left] {$\scriptstyle{v_1}$};
\draw (-3,4) node[right] {$\gamma_1$};
\draw (0.5,0) -- (0.5,4);
\draw (0.5,4) node{$\bullet$};
\draw (0.5,4) node[below left] {$\scriptstyle{v_2}$};
\draw (0.5,4) node[right] {$\gamma_2$};
\draw (-1,-2) node {$D$};
%D'
\draw (8,0) -- (8,4);
\draw (8,4) node{$\bullet$};
\draw (8,4) node[below left] {$\scriptstyle{v_1}$};
\draw (8,4) node[right] {$\gamma_1$};
\draw (11.5,0) -- (11.5,4);
\draw (11.5,4) node{$\bullet$};
\draw (11.5,4) node[below left] {$\scriptstyle{v_2}$};
\draw (11.5,4) node[right] {$\gamma_2$};
\draw (10,-2) node {$D'$};
%D_0
\draw (19,0) -- (19,2.75) (21.5,0) -- (21.5,2.75) arc (0:180:1.25);
\draw[->] (20.2,4) -- (20.3,4);
\draw (20.25,4) node[above] {$t^k$};
\draw (21,-2) node {$D_0$};
%f
\draw (30,2) node {$f^D_{v_1v_2}=f^{D'}_{v_1v_2}+t^k$};
\end{tikzpicture}
\end{center}
\caption{The diagrams $D$, $D'$, and $D_0$, where $\gamma_1,\gamma_2\in\Al$ and $k\in\Z$.} \label{figdiag1}
\end{figure}
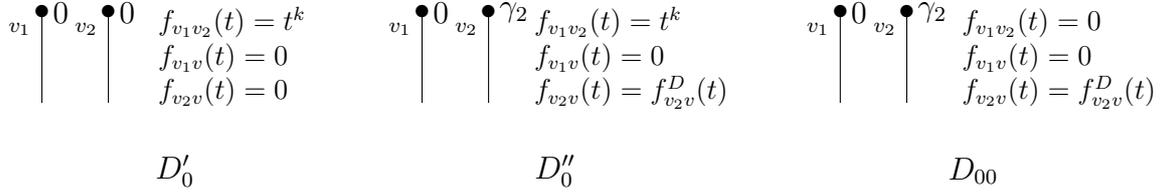
\begin{figure}[htb] 
\begin{center}
\begin{tikzpicture} [xscale=0.25,yscale=0.3]
%D_0'
\draw (-3,0) -- (-3,4);
\draw (-3,4) node{$\bullet$};
\draw (-3,4) node[below left] {$\scriptstyle{v_1}$};
\draw (-3,4) node[right] {$0$};
\draw (0.5,0) -- (0.5,4);
\draw (0.5,4) node{$\bullet$};
\draw (0.5,4) node[below left] {$\scriptstyle{v_2}$};
\draw (0.5,4) node[right] {$0$};
\draw (4,-3) node {$D_0'$};
\draw (7,2) node {\scalebox{0.9}{$\begin{array}{l} f_{v_1v_2}(t)=t^k \\ f_{v_1v}(t)=0 \\ f_{v_2v}(t)=0 \end{array}$}};
%D_0''
\begin{scope} [xshift=20cm]
\draw (-3,0) -- (-3,4);
\draw (-3,4) node{$\bullet$};
\draw (-3,4) node[below left] {$\scriptstyle{v_1}$};
\draw (-3,4) node[right] {$0$};
\draw (0.5,0) -- (0.5,4);
\draw (0.5,4) node{$\bullet$};
\draw (0.5,4) node[below left] {$\scriptstyle{v_2}$};
\draw (0.5,4) node[right] {$\gamma_2$};
\draw (4,-3) node {$D_0''$};
\draw (8,2) node {\scalebox{0.9}{$\begin{array}{l} f_{v_1v_2}(t)=t^k \\ f_{v_1v}(t)=0 \\ f_{v_2v}(t)=f^{D}_{v_2v}(t) \end{array}$}};
\end{scope}
%D_0'''
\begin{scope} [xshift=42cm]
\draw (-3,0) -- (-3,4);
\draw (-3,4) node{$\bullet$};
\draw (-3,4) node[below left] {$\scriptstyle{v_1}$};
\draw (-3,4) node[right] {$0$};
\draw (0.5,0) -- (0.5,4);
\draw (0.5,4) node{$\bullet$};
\draw (0.5,4) node[below left] {$\scriptstyle{v_2}$};
\draw (0.5,4) node[right] {$\gamma_2$};
\draw (4,-3) node {$D_{00}$};
\draw (8.5,2) node {\scalebox{0.9}{$\begin{array}{l} f_{v_1v_2}(t)=0 \\ f_{v_1v}(t)=0 \\ f_{v_2v}(t)=f^{D}_{v_2v}(t) \end{array}$}};
\end{scope}
\end{tikzpicture}
\end{center}
\caption{The diagrams $D_0'$, $D_0''$ and $D_{00}$, with $v\neq v_1,v_2$.} \label{figdiag2}
\end{figure}

Finally, the map $\tilde{\varphi}_n$ induces a canonical $\Q$--linear map $\varphi_n:\A_n(\Al,\bl)\to\G_n^b(\Al,\bl)$. 
For $(M,K)\in\Ens(\Al,\bl)$, any $n$--component Y--link null in $M\setminus K$ is a realization of an 
elementary $(\Al,\bl)$--colored diagram, which is the disjoint union of $n$ diagrams of degree 1. Hence $\varphi_n$ is surjective. 
\end{proof}

Now that the map $\varphi_n$ is well-defined, we can prove the second point of Theorem \ref{thinvariantZ}.
\begin{proof}[Proof of the second statement of Theorem \ref{thinvariantZ}]
 Take an $(\Al,\bl)$--colored diagram of degree $n$. Let $\Gamma=\Gamma_1\sqcup\dots\sqcup\Gamma_n$ be a realization of $D$ in some $\Q$SK--pair 
 $(M,K)\in\Ens(\Al,\bl)$. For each $i\in\{1,\dots,n\}$, fix a lift $\tilde{\Gamma}_i$ of $\Gamma_i$ in the infinite cyclic covering $\tilde{X}$ 
 of $M\setminus K$, and represent it schematically as
 \begin{center}
 \begin{tikzpicture} [scale=0.15]
 \foreach \o in {0,120,240} {\draw[rotate=\o] (0,0) -- (0,-8) node {$\scriptstyle{\bullet}$};}
 \draw (3,-3) node{$G_i$};
 \draw (0,-8) node[right] {$\ell_1$};
 \draw (6.9,4) node[right] {$\ell_2$};
 \draw (-6.9,4) node[left] {$\ell_3$};
 \end{tikzpicture}
 \end{center}
 where $\ell_1,\ell_2,\ell_3$ are the leaves of $\tilde{\Gamma}_i$. 
 By \cite[Theorem 1.1]{Les3} for the Lescop invariant and \cite[Theorem 1.1]{M6} for the Kricker invariant, 
 the image by $Z$ of the bracket $[(M,K);\Gamma]$ is, modulo $\F_{n+1}(\delta)$, the sum of all diagrams obtained from 
 $G=\sqcup_{1\leq i\leq n}G_i$ by pairwise gluing all univalent vertices as follows ---note that the choice of the lifts of the $\Gamma_i$ 
 has no importance thanks to the relation Hol.
 \begin{center}
 \begin{tikzpicture} [yscale=0.7]
 \foreach \x in {0,1.5} {
 \draw (\x-0.5,0) -- (\x,0.5) -- (\x,2) node {$\scriptstyle{\bullet}$} (\x+0.5,0) -- (\x,0.5);}
 \draw (0,2.5) node {$\ell$} (1.5,2.5) node {$\ell'$} (3,1) node {$\rightsquigarrow$};
 \draw (4.5,0) -- (4,0.5) (3.5,0) -- (4,0.5) .. controls +(0,0.9) and +(-0.5,0) .. (4.75,2) .. controls +(0.5,0) and +(0,0.9) .. (5.5,0.5) -- (5,0) (5.5,0.5) -- (6,0);
 \draw[->] (4.72,2) -- (4.78,2) node[above] {$lk_e(\ell,\ell')$};
 \end{tikzpicture}
 \end{center}
 When an edge of $D$ joins two trivalent vertices, then the corresponding two univalent vertices in $G$ are labelled by curves $\ell$ 
 and $\ell'$ such that the equivariant linking of $\ell$ is 0 with any curve labelling a vertex of $G$ other than $\ell'$, and reciprocally. 
 Moreover, relevant choices of the lifts of the $\Gamma_i$ ensure that $lk_e(\ell,\ell')=1$. Finally, modulo $\F_{n+1}(\delta)$, we have
 $Z([(M,K);\Gamma])=\psi_n(D)$. Hence $Z_k([(M,K);\Gamma])=0$ if $k<n$ and $Z_n\circ\varphi_n(D)=\psi_n(D)$. 
\end{proof}

In the setting of $\Z$SK--pairs, all the results of Section \ref{secborro} apply since we work modulo $\F_{n+1}^b(M,K)$. 
All the results of the current section apply as well. In Lemma \ref{lemmaind2}, note that we use Theorem \ref{thM3Z} instead of Theorem \ref{thM3}. 
We finally get the similar result to the above proposition. 
\begin{proposition} \label{propphinZ}
 Fix an integral Blanchfield module $(\Al_\Z,\bl_\Z)$. Let $n> 0$. 
There is a canonical, $\Q$--linear and surjective map $\varphi_n^\Z : \A_n^\Z(\Al_\Z,\bl_\Z) \twoheadrightarrow \G_n^\Z(\Al_\Z,\bl_\Z)$ 
given by $D\mapsto [(M,K);D]_\xi$ for any elementary diagram $D$, where $(M,K)$ is any $\Z$SK--pair with Blanchfield module $(\Al_\Z,\bl_\Z)$ and 
$\xi: (\Al_\Z,\bl_\Z) \to (\Al_\Z,\bl_\Z)(M,K)$ is any isomorphism.
\end{proposition}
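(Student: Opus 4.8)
The plan is to mirror, almost verbatim, the construction and proof of Proposition \ref{propphin}, replacing the rational Blanchfield module by the integral one and the rational realizability arguments by integral ones. The point of stating this separately is precisely that \emph{everything} in Sections \ref{secborro} and \ref{secdiagrams} was written so as to work modulo $\F_{n+1}^b(M,K)$, and the clasper-calculus lemmas (Lemmas \ref{lemmadisk4}--\ref{lemmah4}) as well as the combinatorial lemmas (Lemmas \ref{lemmaind}--\ref{lemmaind2}) do not use rationality in an essential way. So the first step is to observe that for a $\Z$SK--pair $(M,K)$ with integral Blanchfield module $(\Al_\Z,\bl_\Z)$, setting $(\Al,\bl)=\Q\otimes(\Al_\Z,\bl_\Z)$, every elementary $(\Al,\bl)$--colored diagram $D$ admits (after multiplying the univalent-vertex colorings by suitable positive integers $s_v$) a $\xi$--realizing null $Y$--link $\Gamma$ in $M\setminus K$, by the same argument as the lemma preceding Lemma \ref{lemmarationalmultiples}: here one uses that a multiple of any class in $\Al_\Z=H_1(\tilde X;\Z)$ is represented by a knot in $\tilde X$, and one corrects the equivariant linkings $f_{vv'}$ by adding integral-coefficient combinations of meridians.

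Next I would define $[(M,K);D]_\xi\in\G_n^b(M,K)$ exactly as before: by Lemma \ref{lemmaind} the class modulo $\F_{n+1}^b(M,K)$ of $[(M,K);\Gamma]$ is independent of the chosen realization, then divide by $\prod_v s_v$ using the analogue of Lemma \ref{lemmarationalmultiples}, which is purely an application of Lemma \ref{lemmah4} and hence insensitive to the $\Z$ versus $\Q$ distinction. The crucial step where the integral setting genuinely intervenes is the analogue of Lemma \ref{lemmaind2}: to see that $[(M,K);D]_\xi$ depends only on the isomorphism class of $(\Al_\Z,\bl_\Z)$ and not on $(M,K)$ or $\xi$, one invokes Theorem \ref{thM3Z} in place of Theorem \ref{thM3}. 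Theorem \ref{thM3Z} says precisely that any isomorphism of integral Blanchfield modules (up to multiplication by $t^k$) is realized by a finite sequence of integral null LP--surgeries; and by the Remark after Theorem \ref{thM3Z} (Habegger, Auclair--Lescop) integral null LP--surgeries are generated by null Borromean surgeries, so one may assume the connecting sequence consists of null Borromean surgeries and the bracket-splitting argument of Lemma \ref{lemmaind2} goes through word for word in $\F_{n+1}^\Z(\Al_\Z,\bl_\Z)$.

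Having defined $[D]:=[(M,K);D]_\xi \bmod \F_{n+1}^\Z(\Al_\Z,\bl_\Z)$, one builds $\tilde\varphi_n^\Z$ on the free vector space on $(\Al,\bl)$--colored diagrams by extending via LE and EV, and checks the relations AS, IHX, OR, Hol, LV, LD, and now $\AutZ$ (rather than the full Aut). All of these are verified exactly as in the proof of Proposition \ref{propphin}: OR is trivial, Hol is a rotation of a vertex around $K$, LV and LD follow from Lemma \ref{lemmah4} (with the auxiliary diagram chase through Figures \ref{figdiag1}--\ref{figdiag2}), AS and IHX from \cite[Corollary 4.6, Lemma 4.10]{GGP}, and $\AutZ$ from the $\Z$--version of Lemma \ref{lemmaind2} above --- note it is essential here that we only quotient by automorphisms induced by $\Zt$--module automorphisms of $(\Al_\Z,\bl_\Z)$, since Theorem \ref{thM3Z} only produces surgery sequences realizing such automorphisms. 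Finally, surjectivity is immediate: for $(M,K)\in\Ens(\Al_\Z,\bl_\Z)$, any $n$--component null $Y$--link is the realization of an elementary diagram (a disjoint union of $n$ degree-one pieces), so every generator of $\G_n^\Z(\Al_\Z,\bl_\Z)$ is in the image.

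I expect the only genuine subtlety --- as opposed to routine translation --- to be making sure that the independence-of-$(M,K)$ step uses $\Z$--coefficient homology consistently: the realizing $Y$--links must have leaves $\tilde\ell_v$ that are \emph{integrally} (not just rationally) homologous to $\xi(\gamma_v)$ after scaling, and the equivariant linking corrections must be achievable by \emph{integral} bands of meridians; and one must check that Lemma \ref{lemmah4}'s $\Q$--linear dependence statement still lets one divide by integers $s_v$ to land back in $\G_n^\Z(\Al_\Z,\bl_\Z)$, which is a $\Q$--vector space by definition, so this causes no problem. Everything else is a transcription of the proof of Proposition \ref{propphin} with ``$\Q$'' replaced by ``$\Z$'', ``$\F^b_{n+1}(M,K)$'' kept as is, ``$\F_{n+1}(\Al,\bl)$'' replaced by ``$\F_{n+1}^\Z(\Al_\Z,\bl_\Z)$'', ``Aut'' by ``$\AutZ$'', and ``Theorem \ref{thM3}'' by ``Theorem \ref{thM3Z}''.
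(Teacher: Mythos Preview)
Your proposal is correct and follows exactly the approach the paper itself takes: the paper's ``proof'' of Proposition~\ref{propphinZ} consists of the short paragraph immediately preceding it, which simply observes that all results of Sections~\ref{secborro} and~\ref{secdiagrams} apply verbatim in the $\Z$SK--pair setting (since they work modulo $\F_{n+1}^b(M,K)$), with the single substitution of Theorem~\ref{thM3Z} for Theorem~\ref{thM3} in the analogue of Lemma~\ref{lemmaind2}. Your write-up supplies the details this paragraph suppresses, including the useful remark that the Habegger/Auclair--Lescop result lets one take the connecting surgeries to be Borromean; your only unnecessary worry is about integral versus rational realizability of leaf classes, since $\A_n^\Z(\Al_\Z,\bl_\Z)$ is by definition built from $(\Al,\bl)$--colored diagrams with $(\Al,\bl)=\Q\otimes(\Al_\Z,\bl_\Z)$, so the realization construction is unchanged from the rational case.
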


Fix $(\Al_\Z,\bl_\Z)$ and set $(\Al,\bl)=(\Q\otimes_\Z\Al_\Z,Id_\Q\otimes\bl_\Z)$. 
The corresponding map $\varphi_n$ satisfies $\varphi_n\circ p_n=\omega_n\circ\varphi_n^\Z$, where $p_n:\A^\Z_n(\Al,\bl)\twoheadrightarrow\A_n(\Al,\bl)$ 
is the natural projection and $\omega_n: \G_n^\Z(\Al_\Z,\bl_\Z) \to \G_n^b(\Al,\bl)$ is the map induced by the inclusion $\F_n^\Z(\Al_\Z,\bl_\Z) \hookrightarrow \F_n(\Al,\bl)$.

    \section{The surjective map $\varphi_n : \A_n^{aug}(\Al,\bl)\to\G_n(\Al,\bl)$} \label{secsurjective}

In this section, we prove Theorems \ref{thextend} and \ref{thG1}.

Fix a Blanchfield module $(\Al,\bl)$. 
Let $(M,K)$ be a $\Q$SK--pair in $\Ens(\Al,\bl)$. Let $\xi:(\Al,\bl)\to(\Al,\bl)(M,K)$ be an isomorphism. 
Let $n>0$. Let $D$ be an $(\Al,\bl)$--augmented diagram of degree $n$ whose Jacobi part $D_J$ is elementary. 
With an isolated vertex colored by a prime integer $p$, we associate a surgery $\frac{B_p}{B^3}$, where $B_p$ is a fixed $\Q$--ball 
such that $|H_1(B_p;\Z)|=p$. Hence, if $D_J$ is $\xi$--realizable, with a realization of $D_J$, we associate a family of $n$ disjoint null 
LP--surgeries. 
\begin{lemma} \label{lemmadiag}
 Let $(M,K)\in\Ens(\Al,\bl)$. Let $\xi:(\Al,\bl)\to(\Al,\bl)(M,K)$ be an isomorphism. Let $n>0$. Let $D$ be an $(\Al,\bl)$--augmented diagram 
whose Jacobi part $D_J$ is elementary. Let $(p_i)_{1\leq i\leq n-k}$ be the labels of the isolated vertices 
of $D$. If $D_J$ is $\xi$--realizable, let $\Gamma$ be a realization of $D_J$ in $(M,K)$ with respect to $\xi$. 
Then:
\begin{itemize}
 \item $\left[(M,K);D\right]_\xi:=\left[(M,K);\left(\frac{B_{p_i}}{B^3}\right)_{1\leq i\leq n-k},\Gamma\right]\in\G_n(\Al,\bl)$ does not depend on $(M,K)$, 
  on $\xi$ or on the realization $\Gamma$ of $D_J$.
\end{itemize}
If $D_J$ is any elementary diagram, set $[D]=\prod_{v\in V}\frac{1}{s_v}\left[(M,K);(s_v)_{v\in V}\cdot D\right]_\xi$, where $(s_v)_{v\in V}$ 
is a family of integers such that $(s_v)_{v\in V}\cdot D_J$ is $\xi$--realizable and $(s_v)_{v\in V}\cdot D$ is the disjoint union of $(s_v)_{v\in V}\cdot D_J$ 
with the $0$--valent part of $D$. Then:
\begin{itemize}
 \item $[D]\in\G_n(\Al,\bl)$ does not depend on $(s_v)_{v\in V}$, $(M,K)$ or $\xi$. 
\end{itemize}
\end{lemma}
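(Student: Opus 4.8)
The plan is to bootstrap from the Borromean-case statements of Sections~\ref{secborro} and~\ref{secdiagrams}; the only new ingredient is the bookkeeping of the auxiliary surgeries $\frac{B_{p_i}}{B^3_i}$ carried by the isolated prime vertices. I would fix disjoint standard balls $B^3_1,\dots,B^3_{n-k}\subset M\setminus K$, chosen disjoint from the ball $B$ of an admissible embedding and from the Y--link realizing $D_J$ (or $(s_v)_{v\in V}\cdot D_J$), and use two facts: each $\frac{B_{p_i}}{B^3_i}$ is a null LP--surgery supported in a ball; and surgery on a standard ball changes neither $H_1(M\setminus K;\Q)$ nor the $\Q$--coefficient infinite cyclic covering away from that ball, since the preimage of a $\Q$--ball for the trivial map to $\Z$ is a disjoint union of $\Q$--acyclic pieces. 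It follows that for every $J\subseteq\{1,\dots,n-k\}$ the pair $(M,K)_J:=(M,K)\bigl((\tfrac{B_{p_i}}{B^3_i})_{i\in J}\bigr)$ still contains $\Gamma$, that its $\Q$--Alexander module is canonically identified with that of $(M,K)$, and that the homology classes and equivariant linking numbers of the lifts of the leaves of a Y--link lying in the common submanifold are unchanged; in particular $\Gamma$ is again a realization of $D_J$ in $(M,K)_J$.

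For independence of the realization in the first bullet, I would take two realizations $\Gamma,\Gamma'$ of $D_J$ in $(M,K)$, disjoint from the $B^3_i$, and expand the $n$--fold bracket over subsets $J$: $\bigl[(M,K);(\tfrac{B_{p_i}}{B^3_i})_i,\Gamma\bigr]=\sum_J(-1)^{|J|}[(M,K)_J;\Gamma]$, and likewise for $\Gamma'$. Inside each $(M,K)_J$ the clasper calculus of Section~\ref{secborro} (Lemmas~\ref{lemmaslide4}, \ref{lemmacut4}, \ref{lemmah4}) gives $[(M,K)_J;\Gamma]-[(M,K)_J;\Gamma']=\sum_\alpha c_\alpha\,[(M,K)_J;\Delta_\alpha]$ with the $\Delta_\alpha$ null Y--links having at least $k+1$ components; and because these moves take place in a regular neighborhood of $\Gamma\cup\Gamma'$ (they insert meridians, cutting arcs and parallel copies), the $\Delta_\alpha$ and $c_\alpha$ can be chosen independently of $J$. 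Recombining the alternating sums, $\bigl[(M,K);(\tfrac{B_{p_i}}{B^3_i})_i,\Gamma\bigr]-\bigl[(M,K);(\tfrac{B_{p_i}}{B^3_i})_i,\Gamma'\bigr]=\sum_\alpha c_\alpha\bigl[(M,K);(\tfrac{B_{p_i}}{B^3_i})_i,\Delta_\alpha\bigr]$ is a combination of brackets of at least $(n-k)+(k+1)=n+1$ disjoint null LP--surgeries, hence lies in $\F_{n+1}(\Al,\bl)$; the choice of the $B^3_i$ is immaterial by the usual isotopy argument. The same recombination, applied to Lemma~\ref{lemmah4}, also upgrades its $\Q$--linearity and the scaling identity of Lemma~\ref{lemmarationalmultiples} to the present setting, which gives the independence of $(s_v)_{v\in V}$ in the second bullet.

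For independence of $(M,K)$ and $\xi$ I would follow the proof of Lemma~\ref{lemmaind2}. Setting $\zeta=\xi'\circ\xi^{-1}$ and applying Theorem~\ref{thM3}, it suffices to treat a single null LP--surgery $\frac{A'}{A}$ from $(M,K)$ to $(M',K')$ inducing $\zeta\circ m_k$ (with $m_k$ multiplication by some $t^k$), with $\Gamma$ realizing the relevant (scaled) diagram by a null Y--link in $(M\setminus K)\setminus(A\cup\bigcup_i B^3_i)$, as in the proof of Lemma~\ref{lemmaind2} and using the realization-independence just established. Then $\bigl[(M,K);(\tfrac{B_{p_i}}{B^3_i})_i,\Gamma,\tfrac{A'}{A}\bigr]=\bigl[(M,K);(\tfrac{B_{p_i}}{B^3_i})_i,\Gamma\bigr]-\bigl[(M',K');(\tfrac{B_{p_i}}{B^3_i})_i,\Gamma\bigr]$: the left-hand side is a bracket of $n+1$ disjoint null LP--surgeries, hence lies in $\F_{n+1}(\Al,\bl)$, while in $(M',K')$ the link $\Gamma$ realizes $D_J$ with respect to $\xi'\circ m_k$, hence (changing the lift of $B$, as in Lemma~\ref{lemmaind}) with respect to $\xi'$, so the second bracket equals $[(M',K');D]_{\xi'}$. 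Iterating over the surgeries of the sequence gives the first bullet, and applying it to $(s_v)_{v\in V}\cdot D$ gives the independence of $(M,K)$ and $\xi$ in the second bullet.

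The hard part, and the only step really new with respect to Section~\ref{secdiagrams}, will be the locality bookkeeping of the first paragraph: one must check carefully that surgery on a standard ball leaves the $\Q$--coefficient infinite cyclic covering --- and with it the homology classes and equivariant linking numbers entering the notion of realization --- untouched outside that ball, and that every clasper move used to pass from one realization to another (or to split and rescale a leaf) can be isotoped off the balls $B^3_i$ with an outcome independent of $J$. Granting this, what remains is a routine recombination of brackets on top of the already-established statements about $\G_n^b(\Al,\bl)$.
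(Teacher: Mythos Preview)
Your argument is correct, but the paper takes a shorter route that sidesteps the ``uniformity in $J$'' bookkeeping you flag as the hard part. Instead of expanding the $n$--fold bracket over subsets $J$ and redoing the clasper calculus inside each $(M,K)_J$, the paper starts from the already--established independence at degree $k$: Proposition~\ref{propphin} gives
\[
[(M',K');\Gamma']\;=\;[(M,K);\Gamma]\quad\bmod\ \F_{k+1}(\Al,\bl)
\]
for any two realizations of $D_J$ (in any pairs, with any $\xi$'s). It then observes that if an equality holds in $\F_0(\Al,\bl)$ modulo $\F_{m+1}$, connect--summing every $\Q$SK--pair appearing in it with $M_p=B^3\cup_\partial B_p$ and subtracting the result from the original equality produces an equality with one extra $\frac{B_p}{B^3}$ surgery on each bracket, now valid modulo $\F_{m+2}$. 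Iterating this $n-k$ times, once for each isolated vertex, gives the first bullet in one stroke; the second bullet then follows by applying the same connected--sum bootstrap to the scaling identity of Lemma~\ref{lemmarationalmultiples}. This avoids entirely the need to check that the surgery on a ball leaves the $\Q$--coefficient cyclic covering untouched and that the clasper moves can be chosen independently of $J$. Your approach has the minor virtue of not invoking Proposition~\ref{propphin} as a black box---you effectively reprove the relevant part at the higher filtration level---but the paper's argument is cleaner and makes the inductive structure on the number of isolated vertices explicit.
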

\begin{proof}
Take $(M',K')\in\Ens(\Al,\bl)$ and an isomorphism $\xi':(\Al,\bl)\to(\Al,\bl)(M',K')$ such that $D_J$ is $\xi'$--realizable. Let $\Gamma'$ be a realization of $D_J$ 
with respect to $\xi'$. By Proposition \ref{propphin}:
$$\left[(M',K');\Gamma'\right]=\left[(M,K);\Gamma\right]\ mod\ \F_{k+1}(\Al,\bl).$$
Let $p$ be a prime integer. Let $M_p=B^3\cup_{\partial B^3}B_p$. In the equality in $\F_0(\Al,\bl)$ corresponding to the above relation, 
make a connected sum of each $\Q$SK--pair with $M_p$. Then substract the new equality from the original one, to obtain:
$$\left[(M',K');\frac{B_p}{B^3},\Gamma'\right]=\left[(M,K);\frac{B_p}{B^3},\Gamma\right]\ mod\ \F_{k+2}(\Al,\bl).$$
Applying this process $n-k$ times, we get:
$$\left[(M',K');\left(\frac{B_{p_i}}{B^3}\right)_{1\leq i\leq n-k},\Gamma'\right]=\left[(M,K);\left(\frac{B_{p_i}}{B^3}\right)_{1\leq i\leq n-k},\Gamma\right]\ mod\ \F_{n+1}(\Al,\bl).$$

If $D_J$ is any elementary diagram, use Lemma \ref{lemmah4}, as in Lemma \ref{lemmarationalmultiples}, to prove that 
$\left[(M,K);D\right]_\xi=\prod_{v\in V}\frac{1}{s_v}\left[(M,K);(s_v)_{v\in V}\cdot D\right]_\xi$ does not depend on the 
family of integers $(s_v)_{v\in V}$ such that $(s_v)_{v\in V}\cdot D_J$ is $\xi$--realizable. 
Conclude with the first assertion of the lemma.
\end{proof}

The above result implies that the map $\varphi_n : \A_n(\Al,\bl) \twoheadrightarrow \G_n^b(\Al,\bl)$ extends to a canonical $\Q$--linear map 
$\varphi_n : \A_n^{aug}(\Al,\bl) \to \G_n(\Al,\bl)$ defined by $\varphi_n(D)=[D]$ for any diagram $D\in\A_n^{aug}(\Al,\bl)$ 
whose Jacobi part is elementary. To prove Theorem \ref{thextend}, it remains 
to show that the map $\varphi_n : \A_n^{aug}(\Al,\bl) \to \G_n(\Al,\bl)$ is surjective. We first recall results from \cite{M2} 
and give consequences of them.
\begin{definition}
 Let $d$ be a positive integer. A \emph{$d$--torus} is a $\Q$--torus $T_d$ such that: 
\begin{itemize}
 \item $H_1(\partial T_d;\Z)=\Z \alpha \oplus \Z\beta$, with algebraic intersection number $\langle\alpha,\beta\rangle=1$,
 \item $d\alpha=0$ in $H_1(T_d;\Z)$,
 \item $\beta=d\gamma$ in $H_1(T_d;\Z)$, where $\gamma$ is a curve in $T_d$,
 \item $H_1(T_d;\Z)=\frac{\Z}{d\Z} \alpha \oplus \Z \gamma$.
\end{itemize}
\end{definition}
\begin{definition}
 An \emph{elementary surgery} is an LP--surgery among the following ones:
\begin{enumerate}
 \item connected sum (genus 0),
 \item LP--replacement of a standard torus by a $d$--torus (genus 1),
 \item Borromean surgery (genus 3).
\end{enumerate}
\end{definition}
The next result generalizes the similar result of Habegger \cite{Habe} and Auclair and Lescop \cite{AL} for $\Z$--handlebodies and Borromean surgeries. 
\begin{theorem}[\cite{M2} Theorem 1.15] \label{thdecsur}
 If $A$ and $B$ are two $\Q$--handlebodies with LP--identified boundaries, then $B$ can be obtained from $A$ 
by a finite sequence of elementary surgeries and their inverses in the interior of the $\Q$--handlebodies.
\end{theorem}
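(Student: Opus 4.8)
The plan is to prove the statement in two stages: first reduce an arbitrary $\Q$--handlebody to an integral homology handlebody (a $\Z$--handlebody) with the same boundary, using only genus $0$ and genus $1$ elementary surgeries, and then invoke the already known case of $\Z$--handlebodies and Borromean surgeries. \emph{Stage 1.} Let $A$ be a $\Q$--handlebody of genus $g$, so $H_1(A;\Z)\cong\Z^g\oplus T$ with $T$ finite. I would induct on $|T|$, the goal being to show that, after finitely many elementary surgeries of genus $\leq 1$ performed in $Int(A)$, one reaches a $\Q$--handlebody whose first homology is torsion free; such a manifold, having the rational homology of the standard genus $g$ handlebody and torsion-free integral $H_1$, is a $\Z$--handlebody. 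The mechanism is that the $d$--torus $T_d$ and the standard solid torus have the same rational homology and the same Lagrangian (both generated by the meridian $\alpha$, since $d\alpha=0$ in $H_1(T_d;\Z)$), so replacing a standardly embedded solid torus by a $d$--torus, or conversely a $d$--torus sub-handlebody by a standard solid torus, is an LP--surgery, hence an elementary surgery; one similarly adjusts torsion and the free summands by connected sums with the rational balls $B_p$, or their inverses. The inductive step consists in locating inside $A$ a genus $1$ sub-$\Q$--handlebody that is a $d$--torus carrying a prescribed cyclic summand $\Z/d\Z$ of $T$, together with a complement that is itself a $\Q$--handlebody with LP--identified boundary, so that replacing this $d$--torus by a standard solid torus yields a $\Q$--handlebody of the same genus $g$ with strictly smaller torsion (a Mayer--Vietoris computation using the defining relations $d\alpha=0$, $\beta=d\gamma$ of $T_d$ controls the effect on $H_1$). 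Throughout, the boundary of the $\Q$--handlebody and its Lagrangian are untouched, since every surgery is supported in the interior. Applying the argument to $A$ and to $B$ produces $\Z$--handlebodies $A_\Z$, $B_\Z$ whose boundaries are LP--identified with those of $A$ and $B$ via the given homeomorphism.

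\emph{Stage 2.} Now $A_\Z$ and $B_\Z$ are $\Z$--handlebodies with LP--identified boundaries, so by Habegger \cite{Habe} and Auclair--Lescop \cite{AL} they differ by a finite sequence of Borromean surgeries, each a genus $3$ elementary surgery. Composing $A\rightsquigarrow A_\Z\rightsquigarrow B_\Z\rightsquigarrow B$ --- every move being supported in a ball, a solid torus, or a genus $3$ handlebody embedded in the interior of the current $\Q$--handlebody --- gives the required finite sequence of elementary surgeries and their inverses.

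\emph{Main obstacle.} The work is concentrated in Stage 1. Since a $\Q$--handlebody is not a handlebody, there is no compressing disk to exploit, and the content is precisely that $d$--torus replacements (together with connected sums) can simulate the missing compressions. Making the induction rigorous requires (i) exhibiting, inside $A$, a genus $1$ sub-$\Q$--handlebody that is a $d$--torus representing a given cyclic summand of $T$ and whose complement is a $\Q$--handlebody with LP--identified boundary; (ii) the Mayer--Vietoris bookkeeping showing that replacing it by a standard solid torus strictly decreases $|T|$ without changing the genus or creating new torsion; and (iii) checking that every intermediate move is Lagrangian-preserving, hence genuinely an elementary surgery. The analogue of all this control in the integral setting is exactly what \cite{Habe} and \cite{AL} supply, so Stage 2 is immediate once Stage 1 is complete.
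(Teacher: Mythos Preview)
This theorem is not proved in the present paper; it is quoted from \cite{M2} (as Theorem~1.15 there) and used as a black box, notably as input to Corollary~\ref{cordecel}. There is therefore no proof in this paper to compare your proposal against.

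That said, your two-stage strategy---reduce each $\Q$--handlebody to a $\Z$--handlebody via genus~$0$ and genus~$1$ elementary surgeries, then invoke the Habegger/Auclair--Lescop result for $\Z$--handlebodies---is the natural approach, and Stage~2 is exactly the remark the paper makes just before stating the theorem. You have also correctly located the substantive difficulty: your point~(i), producing inside an arbitrary $\Q$--handlebody an embedded $d$--torus that carries a prescribed cyclic torsion summand and whose complement is again a $\Q$--handlebody with LP--identified boundary. Your outline names this obstacle honestly but does not resolve it; as written, Stage~1 is a plan rather than a proof, and the actual construction in \cite{M2} would need to be supplied or consulted.
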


\begin{corollary} \label{cordecel}
 The space $\F_n(\Al,\bl)$ is generated by the $\left[(M,K);\left(\frac{E_i'}{E_i}\right)_{1\leq i\leq n}\right]$ defined by a $\Q$SK--pair $(M,K)\in\Ens(\Al,\bl)$ 
and elementary null LP--surgeries $\left(\frac{E_i'}{E_i}\right)$.
\end{corollary}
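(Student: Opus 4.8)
The plan is to work directly from the definition of $\F_n(\Al,\bl)$ as the $\Q$--span of the $n$--cube brackets $\left[(M,K);\left(\frac{B_i}{A_i}\right)_{1\leq i\leq n}\right]$, and to replace, one surgery slot at a time, an arbitrary null LP--surgery by a sequence of elementary ones, using Theorem \ref{thdecsur} to produce the sequence and a telescoping identity to record the resulting terms.

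First I would set up the telescoping identity for brackets. From the defining expansion one gets, for any $\Q$SK--pair $X$, any null LP--surgery $\frac{C}{A}$ on $X$ and any further disjoint family $\left(\frac{B_i}{A_i}\right)_{i\in S}$ of null LP--surgeries disjoint from $A$, the relation $\left[X;\frac{C}{A},(\ast)\right]=\left[X;(\ast)\right]-\left[X\!\left(\frac{C}{A}\right);(\ast)\right]$, where $(\ast)$ abbreviates $\left(\frac{B_i}{A_i}\right)_{i\in S}$. Applying this three times, if $X\!\left(\frac{B}{A}\right)=\bigl(X\!\left(\frac{C}{A}\right)\bigr)\!\left(\frac{B}{C}\right)$ is a factorization by null LP--surgeries supported in $A$, then
\[
\bigl[X;\tfrac{B}{A},(\ast)\bigr]=\bigl[X;\tfrac{C}{A},(\ast)\bigr]+\bigl[X\!\left(\tfrac{C}{A}\right);\tfrac{B}{C},(\ast)\bigr].
\]
Iterating along a chain $A=A^{(0)}\to A^{(1)}\to\cdots\to A^{(m)}=B$ of null LP--surgeries $\frac{E_j'}{E_j}$ with $E_j\subset\mathrm{int}\,A^{(j-1)}$, the bracket $\left[X;\frac{B}{A},(\ast)\right]$ becomes $\sum_{j=1}^m\bigl[X^{(j-1)};\frac{E_j'}{E_j},(\ast)\bigr]$, where $X^{(j)}$ is $X$ surgered along the first $j$ of the $\frac{E_\bullet'}{E_\bullet}$. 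Whenever a step is the inverse of an elementary surgery, i.e.\ $\frac{E_j}{E_j'}$ is elementary, I would rewrite $\bigl[X^{(j-1)};\frac{E_j'}{E_j},(\ast)\bigr]=-\bigl[X^{(j)};\frac{E_j}{E_j'},(\ast)\bigr]$ via $X^{(j-1)}=X^{(j)}\!\left(\frac{E_j}{E_j'}\right)$, so every term ends up with an elementary surgery in its first slot.

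Next I would feed Theorem \ref{thdecsur} into this. Given a generator $\left[(M,K);\left(\frac{B_i}{A_i}\right)_{1\leq i\leq n}\right]$ of $\F_n(\Al,\bl)$, Theorem \ref{thdecsur} expresses $B_1$ as a sequence of elementary surgeries and their inverses performed in the interiors of a chain of $\Q$--handlebodies between $A_1$ and $B_1$. Since these are supported inside $A_1$, which is null in $M\setminus K$ and disjoint from $A_2,\dots,A_n$, each step is a null LP--surgery on the corresponding intermediate $\Q$SK--pair (nullity being preserved: an LP--surgery supported in the interior of a $\Q$--handlebody does not change the rational homology of the ambient knot exterior, nor, up to natural isomorphism, the image in it of the surgered handlebody — a Mayer--Vietoris computation with $\Q$--coefficients using that LP--identified boundaries share the same Lagrangian), and it remains disjoint from $A_2,\dots,A_n$. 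The telescoped identity then rewrites the generator as a $\Q$--combination of brackets $\bigl[(M,K)^{(j-1)};\frac{E_j'}{E_j},\left(\frac{B_i}{A_i}\right)_{2\leq i\leq n}\bigr]$ with $\frac{E_j'}{E_j}$ elementary, and each $(M,K)^{(j-1)}\in\Ens(\Al,\bl)$ by Theorem \ref{thM3}, since it is obtained from $(M,K)$ by null LP--surgeries. I would then repeat the argument on the second slot of each of these brackets, then the third, and so on through the $n$--th slot, each decomposition being carried out inside the corresponding handlebody and hence disjoint from the slots already made elementary (and the surgeries already made elementary remaining null in the new ambient pair, for the same disjointness reason). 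After $n$ rounds, $\F_n(\Al,\bl)$ is exhibited as the $\Q$--span of brackets $\left[(M',K');\left(\frac{E_i'}{E_i}\right)_{1\leq i\leq n}\right]$ with $(M',K')\in\Ens(\Al,\bl)$, the $E_i$ disjoint and null in $M'\setminus K'$, and each $\frac{E_i'}{E_i}$ elementary, which is the assertion ($n=0$ being trivial).

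The telescoping is purely formal, so the main obstacle is the bookkeeping of \emph{nullity}: one must verify that each surgery produced by Theorem \ref{thdecsur} is genuinely a null LP--surgery on the relevant intermediate $\Q$SK--pair and that all intermediate pairs stay in $\Ens(\Al,\bl)$. Both facts reduce to the single statement that an LP--surgery supported in the interior of a null $\Q$--handlebody is again null and preserves the rational homology of the knot exterior; the genus $0,1,3$ shape of the elementary surgeries plays no role here — only the $\Q$--homological tameness of LP--surgeries does.
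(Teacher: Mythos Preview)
Your proof is correct and follows essentially the same route as the paper's: telescope the bracket along the chain of elementary surgeries provided by Theorem~\ref{thdecsur}, flip the sign when a step is the inverse of an elementary surgery, and iterate over the $n$ slots. You are more explicit than the paper about why each intermediate surgery is still null and why the intermediate pairs remain in $\Ens(\Al,\bl)$; the paper leaves these points implicit.
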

\begin{proof}
Consider $\left[(M,K);\left(\frac{A_i'}{A_i}\right)_{1\leq i\leq n}\right]\in\F_n(\Al,\bl)$. By Theorem \ref{thdecsur}, for each $i$, $A_i$ and $A_i'$ 
can be obtained from one another by a finite sequence of elementary surgeries or their inverses. 
Write $A_1'=A_1{\left(\frac{E_1'}{E_1}\right)}\dots{\left(\frac{E_k'}{E_k}\right)}$. 
For $0\leq j\leq k$, set $B_j=A_1{\left(\frac{E_1'}{E_1}\right)}\dots{\left(\frac{E_j'}{E_j}\right)}$. 
Then:
$$\left[(M,K);\left(\frac{A_i'}{A_i}\right)_{1\leq i\leq n}\right]=\sum_{j=1}^k \left[(M,K)\left(\frac{B_{j-1}}{B_0}\right);\frac{E_j'}{E_j},\left(\frac{A_i'}{A_i}\right)_{2\leq i\leq n}\right].$$
Decompose each surgery $\left(\frac{A_i'}{A_i}\right)$ in this way and conclude with:
$$\left[(M,K);\frac{E'}{E},\left(\frac{A_i'}{A_i}\right)_{2\leq i\leq n}\right]=-\left[(M,K)\left(\frac{E'}{E}\right);\frac{E}{E'},\left(\frac{A_i'}{A_i}\right)_{2\leq i\leq n}\right].$$
\end{proof}

Let $\F_0^{\QS}$ be the rational vector space generated by all $\Q$--spheres up to orientation-preserving homeomorphism. 
Let $(\F_n^{\QS})_{n\in\N}$ be the filtration of $\F_0^{\QS}$ defined by LP--surgeries, as before Definition \ref{definv}. 
Let $\displaystyle \G_n^{\QS}=\frac{\F_n^{\QS}}{\F_{n+1}^{\QS}}$ be the associated quotients. 
\begin{lemma}[\cite{M2} Proposition 1.8] \label{lemma07}
 For each prime integer $p$, let $B_p$ be a $\Q$--ball such that $H_1(B_p;\Z)\cong\frac{\Z}{p\Z}$. 
Then $\displaystyle \G_1^{\QS}=\bigoplus_{p\textrm{ prime}} \Q \left[S^3;\frac{B_p}{B^3}\right]$.
\end{lemma}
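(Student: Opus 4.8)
\emph{Proof proposal.} I would bound $\G_1^{\QS}=\F_1^{\QS}/\F_2^{\QS}$ from above and below: first show that the classes $e_p:=\left[S^3;\frac{B_p}{B^3}\right]$, $p$ prime, generate $\G_1^{\QS}$, then that they are $\Q$--linearly independent.

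\emph{Generation.} The argument of Corollary \ref{cordecel} goes through verbatim for $\Q$--spheres (there is no knot to keep track of), so $\F_1^{\QS}$ is generated by the brackets $\left[M;\frac{E'}{E}\right]$ for elementary LP--surgeries $\frac{E'}{E}$: connected sums (genus $0$), $d$--torus replacements (genus $1$) and Borromean surgeries (genus $3$). Since every $\Q$--sphere $M$ is obtained from $S^3$ by elementary surgeries (apply Theorem \ref{thdecsur} to the $\Q$--ball $M\setminus Int(B^3)$), the identity $\left[S^3;\frac{C'}{C},\frac{E'}{E}\right]=\left[S^3;\frac{E'}{E}\right]-\left[M;\frac{E'}{E}\right]$, valid when $M=S^3\left(\frac{C'}{C}\right)$ and $\frac{E'}{E}$ is isotoped off $C$, lets me reduce, modulo $\F_2^{\QS}$, to the case where the ambient manifold is $S^3$. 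It then suffices to treat each elementary surgery on $S^3$.
\begin{itemize}
 \item Borromean surgery on a Y--graph $\Gamma\subset S^3$: I claim $\left[S^3;\Gamma\right]\in\F_2^{\QS}$, the $\Q$--sphere analogue of the fact that a single Borromean surgery on a $\Z$--homology sphere can be reduced to degree two by clasper calculus. Concretely, since $H_1(S^3)=0$ every leaf of $\Gamma$ is null--homologous, so by repeated use of leaf--cutting (Lemma \ref{lemmacut4}), edge--sliding (Lemma \ref{lemmaslide4}) and Lemma \ref{lemmadisk4} one rewrites $\left[S^3;\Gamma\right]$ modulo $\F_2^{\QS}$ as a sum of brackets of Y--graphs each having a $0$--framed leaf bounding a disk away from the rest of the graph --- hence giving back $S^3$ --- the correction brackets produced along the way cancelling in pairs exactly as in the proofs of Lemmas \ref{lemmahomhom} and \ref{lemmatriv4}, here over the trivial covering $S^3$.
 \item Genus--$0$ surgery $\frac{B}{A}$ with $A,B$ $\Q$--balls: here $\left[M;\frac{B}{A}\right]=M-M\#N$ with $N=\overline{A}\cup_h B$ a $\Q$--sphere; decomposing $N$ by Theorem \ref{thdecsur} and telescoping in the connected summand, a straightforward induction reduces $\left[S^3;\frac{B}{A}\right]$, modulo $\F_2^{\QS}$ and after re--basing at $S^3$, to a $\Q$--combination of the $e_p$ with $p$ dividing $|\,H_1(B;\Z)_{tors}|$.
 \item Genus--$1$ surgery $\frac{T_d}{V}$: a Mayer--Vietoris computation shows $|H_1|$ is multiplied by a fixed power of $d$, independent of how the standard solid torus $V$ is embedded; combining this with Theorem \ref{thdecsur} (to split $T_d$ through smaller $\Q$--handlebodies, one prime at a time) and the genus--$0$ case gives $\left[S^3;\frac{T_d}{V}\right]\in\F_2^{\QS}+\sum_{p\mid d}\Q\,e_p$.
\end{itemize}
These three cases yield $\F_1^{\QS}=\F_2^{\QS}+\sum_{p\text{ prime}}\Q\,e_p$.

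\emph{Independence.} For a prime $p$, let $\mu_p\colon\F_0^{\QS}\to\Q$ be the $\Q$--linear extension of $M\mapsto v_p(|H_1(M;\Z)|)$, where $v_p$ is the $p$--adic valuation (this makes sense since $|H_1(M;\Z)|<\infty$ for a $\Q$--sphere). The order $|H_1(\cdot;\Z)|$ is multiplicative under LP--surgery, with multiplicative factor depending only on the surgery datum (again a Mayer--Vietoris argument), so for disjoint commuting LP--surgeries the value of $\mu_p$ on the surgered manifold is an affine function of the index subset; hence $\mu_p$ vanishes on every $2$--bracket, i.e. $\mu_p(\F_2^{\QS})=0$, and $\mu_p$ descends to $\G_1^{\QS}$. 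Since $|H_1(B_p;\Z)|=p$ we have $|H_1(M_p;\Z)|=p$ (with $M_p=B^3\cup B_p$), so $\mu_p(e_q)=-\delta_{pq}$. Thus the $(\mu_p)_p$ separate the $(e_p)_p$, the $e_p$ are $\Q$--linearly independent in $\G_1^{\QS}$, and together with the previous paragraph this gives $\G_1^{\QS}=\bigoplus_{p\text{ prime}}\Q\left[S^3;\frac{B_p}{B^3}\right]$.

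\emph{Main obstacle.} The one genuinely three--dimensional ingredient is the Borromean step: proving $\left[S^3;\Gamma\right]\in\F_2^{\QS}$ requires the full clasper calculus of Section \ref{secborro} together with a careful pairing of the correction terms. The genus--$1$ reduction is also somewhat delicate --- one must pin down exactly how $|H_1|$ (and, implicitly, the linking form) changes under $T_d$--surgeries and check that this is invisible at degree one --- but it is essentially bookkeeping. The independence half is comparatively soft, once the (standard) multiplicativity of $|H_1|$ under LP--surgery is granted.
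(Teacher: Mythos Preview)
The paper does not give its own proof of this lemma; it is simply quoted from \cite{M2}, Proposition~1.8. So there is no in-paper argument to compare against line by line, but one can infer the intended method from how the paper uses the surrounding results.

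Your independence half is correct and is precisely the mechanism the paper itself invokes later (proof of Theorem~\ref{thG1}): the maps $\nu_p(M)=v_p(|H_1(M;\Z)|)$ are degree~$1$ invariants of $\Q$--spheres by \cite[Proposition~1.9]{M2}, and they separate the $e_p$.

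Your generation half has the right skeleton --- reduce to elementary surgeries via Theorem~\ref{thdecsur}/Corollary~\ref{cordecel} and treat genus $0$, $1$, $3$ separately --- and the genus~$0$ and genus~$3$ cases are fine (the Borromean case is the classical fact that the degree~$1$ part of the Goussarov--Habiro filtration of $\Z$--spheres vanishes, and $\F_2^{\textrm{Borromean}}\subset\F_2^{\QS}$). The genuine gap is in genus~$1$. You write that Theorem~\ref{thdecsur} lets you ``split $T_d$ through smaller $\Q$--handlebodies, one prime at a time'', but that theorem only connects two $\Q$--handlebodies with LP--identified boundaries by a \emph{sequence} of elementary surgeries; it does not hand you a factorisation of the single replacement $\frac{T_d}{V}$ into prime pieces, nor a way to trade a genus~$1$ bracket for genus~$0$ brackets modulo $\F_2^{\QS}$. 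The approach actually used in \cite{M2}, and visibly recycled in this paper in the proof of Theorem~\ref{thextend}, is dual: instead of decomposing $T_d$, one proves (Lemma~\ref{lemmainvtori}) that every degree~$1$ invariant of framed $\Q$--tori vanishing on $T_0$ and on all $T_0\sharp M_p$ is identically zero. Applied to $T\mapsto\lambda([S^3;\frac{T}{T_0}])$ for an arbitrary $\lambda\in(\G_1^{\QS})^*$ killing the $e_p$, this forces every genus~$1$ bracket into $\F_2^{\QS}+\sum_p\Q\,e_p$. Your outline would be complete once this step (or an explicit decomposition of $T_d$) is supplied.
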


\begin{lemma} \label{lemmagenus0}
 For each prime integer $p$, let $B_p$ be a $\Q$--ball such that $H_1(B_p;\Z)\cong\frac{\Z}{p\Z}$. 
Let $(M,K)$ be a $\Q$SK--pair in $\Ens(\Al,\bl)$. Let $B$ be a $\Q$--ball. Let $\left(\frac{A_i'}{A_i}\right)_{1\leq i<n}$ be disjoint null LP--surgeries in $(M,K)$. 
Then $\displaystyle \left[(M,K);\frac{B}{B^3},\left(\frac{A_i'}{A_i}\right)_{1\leq i<n}\right]$ is a rational linear combination of the 
$\displaystyle \left[(M,K);\frac{B_p}{B^3},\left(\frac{A_i'}{A_i}\right)_{1\leq i<n}\right]$ and elements of $\F_{n+1}(\Al,\bl)$.
\end{lemma}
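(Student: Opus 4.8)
The plan is to use the bilinear operation $\F_0^{\QS}\otimes_\Q\F_0(\Al,\bl)\to\F_0(\Al,\bl)$ induced by connected sum of a $\Q$SK--pair with a $\Q$--sphere along a ball in the knot exterior, with $S^3$ acting as the identity. Two facts have to be checked. First, this operation is well defined: connect--summing $(M,K)$ with a $\Q$--sphere $N$ does not change the Blanchfield module, since the infinite cyclic cover of the exterior of $K$ in $M\# N$ is obtained from the one for $K$ in $M$ by connect--summing countably many copies of $N$ minus a ball, which has trivial rational homology, so the $\Qt$--module and the equivariant linking are unchanged; in particular $(M,K)\in\Ens(\Al,\bl)$ implies $(M,K)\# N\in\Ens(\Al,\bl)$. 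Second, the operation respects the filtrations, that is $\F_m^{\QS}\cdot\F_{m'}(\Al,\bl)\subseteq\F_{m+m'}(\Al,\bl)$: evaluating on generators $[N;(\tfrac{C_j'}{C_j})_{1\le j\le m}]$ and $[(M,K);(\tfrac{A_i'}{A_i})_{1\le i\le m'}]$, one chooses balls avoiding the surgery loci on both sides; in $(M,K)\# N$ the $A_i$ stay null (their classes already vanish in $H_1(M\setminus K;\Q)$), the $C_j$ become null automatically (they factor through $H_1(N\setminus B^3;\Q)=H_1(N;\Q)=0$), the two families stay disjoint, and distributing the product identifies it with $[(M,K)\# N;(\tfrac{A_i'}{A_i})_i,(\tfrac{C_j'}{C_j})_j]\in\F_{m+m'}(\Al,\bl)$, the base pair and all surgered pairs lying in $\Ens(\Al,\bl)$ by Theorem \ref{thM3}.

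The point is to recognise genus--$0$ surgeries as such products. For a $\Q$--ball $C$, set $M_C:=S^3\big(\tfrac{C}{B^3}\big)$, so that $[S^3;\tfrac{C}{B^3}]=S^3-M_C$ in $\F_0^{\QS}$ and $(M,K)\big(\tfrac{C}{B^3}\big)=(M,K)\# M_C$ whenever the surgery is performed in a ball of $M\setminus K$. Given the disjoint null surgeries $(\tfrac{A_i'}{A_i})_{1\le i<n}$ on $(M,K)$ and a standard ball $B^3\subset M\setminus K$ disjoint from all the $A_i$, applying the operation termwise and expanding gives
$$\big[S^3;\tfrac{C}{B^3}\big]\cdot\Big[(M,K);\big(\tfrac{A_i'}{A_i}\big)_{1\le i<n}\Big]=\Big[(M,K);\tfrac{C}{B^3},\big(\tfrac{A_i'}{A_i}\big)_{1\le i<n}\Big].$$

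It remains to assemble the pieces. Since $[S^3;\tfrac{B}{B^3}]\in\F_1^{\QS}$ and $\G_1^{\QS}=\bigoplus_{p\textrm{ prime}}\Q\,[S^3;\tfrac{B_p}{B^3}]$ by Lemma \ref{lemma07}, there are rationals $c_p$, all but finitely many zero, with $\omega:=[S^3;\tfrac{B}{B^3}]-\sum_p c_p\,[S^3;\tfrac{B_p}{B^3}]\in\F_2^{\QS}$. Multiplying $\omega$ by $\big[(M,K);(\tfrac{A_i'}{A_i})_{1\le i<n}\big]\in\F_{n-1}(\Al,\bl)$ and using the displayed identity on each summand yields
$$\Big[(M,K);\tfrac{B}{B^3},\big(\tfrac{A_i'}{A_i}\big)_{1\le i<n}\Big]-\sum_p c_p\Big[(M,K);\tfrac{B_p}{B^3},\big(\tfrac{A_i'}{A_i}\big)_{1\le i<n}\Big]\in\F_2^{\QS}\cdot\F_{n-1}(\Al,\bl)\subseteq\F_{n+1}(\Al,\bl),$$
which is exactly the assertion. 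The only genuine work is the compatibility $\F_m^{\QS}\cdot\F_{m'}(\Al,\bl)\subseteq\F_{m+m'}(\Al,\bl)$ of the operation with the filtrations---keeping track that after connected sum the transplanted handlebodies are still null and disjoint and that every pair involved stays in $\Ens(\Al,\bl)$---and I expect that bookkeeping, though routine, to be the main obstacle; everything else is manipulation of brackets.
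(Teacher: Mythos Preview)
Your argument is correct and is essentially the paper's own proof, repackaged in slightly more conceptual language. The paper writes the relation from Lemma~\ref{lemma07} explicitly as $\big[S^3;\tfrac{B}{B^3}\big]=\sum_p a_p\big[S^3;\tfrac{B_p}{B^3}\big]+\sum_j b_j\big[N_j;\tfrac{C_j'}{C_j},\tfrac{D_j'}{D_j}\big]$, then for each $I\subset\{1,\dots,n-1\}$ connect--sums every term with $(M,K)\big((\tfrac{A_i'}{A_i})_{i\in I}\big)$ and takes the alternating sum over $I$; this is exactly the explicit unwinding of your bilinear pairing $\F_m^{\QS}\cdot\F_{m'}(\Al,\bl)\subset\F_{m+m'}(\Al,\bl)$ applied to $\omega\cdot\big[(M,K);(\tfrac{A_i'}{A_i})_{1\le i<n}\big]$. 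Your verification that the transplanted handlebodies stay null and disjoint and that the Blanchfield module is preserved under connected sum with a $\Q$--sphere is the bookkeeping the paper leaves implicit, so nothing is missing.
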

\begin{proof}
By Lemma \ref{lemma07}, there is a relation:
$$\left[S^3;\frac{B}{B^3}\right]=\sum_{p\textrm{ prime}} a_p \left[S^3;\frac{B_p}{B^3}\right] +\sum_{j\in J} b_j \left[N_j;\frac{C_j'}{C_j},\frac{D_j'}{D_j}\right],$$
where $J$ is a finite set, the $a_p$ and $b_j$ are rational numbers, the $a_p$ are all trivial except a finite number 
and, for $j\in J$, $\left[N_j;\frac{C_j'}{C_j},\frac{D_j'}{D_j}\right]\in\F_2^{\QS}$. 
For $I\subset\{1,..,n-1\}$, make the connected sum of each $\Q$--sphere in the relation with $M\left(\left(\frac{A_i'}{A_i}\right)_{i\in I}\right)$ 
to obtain:
$$\hspace{-10cm} \left[(M,K)\left(\left(\frac{A_i'}{A_i}\right)_{i\in I}\right);\frac{B}{B^3}\right]=$$
$$\hspace{1.8cm} \sum_{p\textrm{ prime}} a_p \left[(M,K)\left(\left(\frac{A_i'}{A_i}\right)_{i\in I}\right);\frac{B_p}{B^3}\right] 
+\sum_{j\in J} b_j \left[(M\sharp N_j,K)\left(\frac{A_i'}{A_i}\right)_{i\in I};\frac{C_j'}{C_j},\frac{D_j'}{D_j}\right].$$
Summing these equalities for all $I\subset\{1,..,n-1\}$, with appropriate signs, we get:
$$\hspace{-10cm} \left[(M,K);\frac{B}{B^3},\left(\frac{A_i'}{A_i}\right)_{1\leq i<n}\right]=$$
$$\hspace{1.8cm} \sum_{p\textrm{ prime}} a_p \left[(M,K);\frac{B_p}{B^3},\left(\frac{A_i'}{A_i}\right)_{1\leq i<n}\right] 
+\sum_{j\in J} b_j \left[(M\sharp N_j,K);\frac{C_j'}{C_j},\frac{D_j'}{D_j},\left(\frac{A_i'}{A_i}\right)_{1\leq i<n}\right].$$
\end{proof}
\begin{corollary} \label{cor03}
 Let $(M,K)\in\Ens(\Al,\bl)$. Let $\left(\frac{E_i'}{E_i}\right)_{1\leq i\leq n}$ be null elementary surgeries of genus 0 or 3. 
Then $\displaystyle \left[(M,K);\left(\frac{E_i'}{E_i}\right)_{1\leq i\leq n}\right]\in\varphi_n(\A_n^{aug}(\Al,\bl))$.
\end{corollary}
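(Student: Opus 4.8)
The plan is to rewrite the bracket, modulo $\F_{n+1}(\Al,\bl)$, as a rational linear combination of brackets in which the genus $3$ surgeries realize the Jacobi part of an augmented diagram while the genus $0$ surgeries are the standard ones $\frac{B_p}{B^3}$ attached to isolated vertices colored by primes, and then to read off such brackets as values of $\varphi_n$. Since the $n$ elementary surgeries are performed in disjoint $\Q$--handlebodies, the bracket $\left[(M,K);\left(\frac{E_i'}{E_i}\right)_{1\leq i\leq n}\right]$ is symmetric in its index set, so I would first reorder the list so that $\frac{E_i'}{E_i}$ has genus $0$ for $1\leq i\leq k$ and genus $3$ for $k<i\leq n$. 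A genus $3$ null elementary surgery is a null Borromean surgery, hence surgery along one component of a null Y--link $\Gamma=\Gamma_{k+1}\sqcup\dots\sqcup\Gamma_n$ with $n-k$ components in $M\setminus K$, while a genus $0$ null elementary surgery is a connected sum, i.e.\ it has the form $\frac{B}{B^3}$ for some $\Q$--ball $B$ and some ball $B^3\subset M\setminus K$. Thus the bracket equals $\left[(M,K);\left(\frac{B_j}{B^3}\right)_{1\leq j\leq k},\Gamma\right]$ for suitable $\Q$--balls $B_j$.

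Next I would eliminate the genus $0$ part, working modulo $\F_{n+1}(\Al,\bl)$ throughout (which is harmless, since $\varphi_n$ takes values in $\G_n(\Al,\bl)=\F_n(\Al,\bl)/\F_{n+1}(\Al,\bl)$). Applying Lemma \ref{lemmagenus0} to the surgery $\frac{B_1}{B^3}$, with the remaining $n-1$ disjoint null LP--surgeries in the role of the $\left(\frac{A_i'}{A_i}\right)$, rewrites the bracket as a finite rational linear combination of brackets obtained by replacing $\frac{B_1}{B^3}$ with $\frac{B_p}{B^3}$ for various primes $p$ (where $B_p$ is the fixed $\Q$--ball with $|H_1(B_p;\Z)|=p$), modulo $\F_{n+1}(\Al,\bl)$, while $\Gamma$ and the other genus $0$ surgeries are left untouched. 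Iterating this for $\frac{B_2}{B^3},\dots,\frac{B_k}{B^3}$ in turn, I obtain that modulo $\F_{n+1}(\Al,\bl)$ the original bracket is a finite rational linear combination of brackets of the form $\left[(M,K);\left(\frac{B_{p_j}}{B^3}\right)_{1\leq j\leq k},\Gamma\right]$ with the $p_j$ prime and $\Gamma$ fixed.

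Finally I would identify each such bracket with a value of $\varphi_n$. Fixing an isomorphism $\xi:(\Al,\bl)\to(\Al,\bl)(M,K)$, the $(n-k)$--component null Y--link $\Gamma$ is, exactly as in the last paragraph of the proof of Proposition \ref{propphin}, a realization with respect to $\xi$ of an elementary $(\Al,\bl)$--colored diagram $D_J$ of degree $n-k$: the disjoint union of $n-k$ degree $1$ diagrams whose univalent vertices are colored by the homology classes of the lifted extended leaves of $\Gamma$ and whose rational fractions are the corresponding equivariant linking numbers. Adjoining to $D_J$ the $k$ isolated vertices colored by $p_1,\dots,p_k$ produces an $(\Al,\bl)$--augmented diagram $D$ of degree $n$, and Lemma \ref{lemmadiag}, together with the extension of $\varphi_n$ to $\A_n^{aug}(\Al,\bl)$ recorded just before the statement, gives $\varphi_n(D)=[D]=\left[(M,K);\left(\frac{B_{p_j}}{B^3}\right)_{1\leq j\leq k},\Gamma\right]$ in $\G_n(\Al,\bl)$. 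By $\Q$--linearity of $\varphi_n$ it follows that $\left[(M,K);\left(\frac{E_i'}{E_i}\right)_{1\leq i\leq n}\right]$ lies in $\varphi_n(\A_n^{aug}(\Al,\bl))$.

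I do not expect any genuine difficulty here, as all the relevant machinery is already in place; the one point that needs care is the bookkeeping in the iteration of Lemma \ref{lemmagenus0}. At each step the ``other'' surgeries --- which now comprise the freshly introduced $\frac{B_p}{B^3}$'s, the still-untouched genus $0$ surgeries, and the entire Y--link $\Gamma$ --- must still form a family of pairwise disjoint null LP--surgeries so that the lemma applies, and one must check that the $\F_{n+1}(\Al,\bl)$ error terms accumulated along the way are irrelevant because the whole argument takes place in $\G_n(\Al,\bl)$.
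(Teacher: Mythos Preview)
Your proposal is correct and follows essentially the same approach as the paper's proof: reduce the genus $0$ surgeries to the prime form $\frac{B_p}{B^3}$ via Lemma \ref{lemmagenus0}, then recognize the resulting bracket as $\varphi_n$ of the augmented diagram whose Jacobi part is the disjoint union of degree $1$ diagrams realized by the Y--link and whose isolated vertices are colored by the primes. The paper compresses the iteration of Lemma \ref{lemmagenus0} into a single sentence, but the content is identical to what you spell out.
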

\begin{proof}
 Thanks to Lemma \ref{lemmagenus0}, it suffices to treat the case when the genus 0 surgeries are surgeries of type $\left(\frac{B}{B^3}\right)$ 
for a $\Q$--ball $B$ such that $|H_1(B;\Z)|$ is a prime integer. In this case, the considered bracket is the image 
of a diagram given as the disjoint union of 0--valent vertices and of $(\Al,\bl)$--colored diagrams of degree 1.
\end{proof}

To conclude the proof of Theorem \ref{thextend}, we need the next result about degree 1 invariants of {\em framed $\Q$--tori}, {\em i.e.} 
$\Q$--tori equipped with an oriented longitude. Note that any two framed $\Q$--tori have a canonical LP--identification of their boundaries, 
which identifies the fixed longitudes. LP--surgeries are well-defined on framed $\Q$--tori and we have an associated notion of finite type invariants. 
\begin{lemma}[\cite{M2} Corollary 5.10] \label{lemmainvtori}
For any prime integer $p$, let $M_p$ be a $\Q$--sphere such that $H_1(M_p;\Z)\cong\Z/p\Z$. 
Let $T_0$ be a framed standard torus. 
If $\mu$ is a degree 1 invariant of framed $\Q$--tori such that $\mu(T_0)=0$ and $\mu(T_0\sharp M_p)=0$ 
for any prime integer $p$, then $\mu=0$.
\end{lemma}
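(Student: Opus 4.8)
The plan is to show that $\mu$ vanishes on the whole space $\F_0$ generated by framed $\Q$--tori, by climbing the degree filtration $\F_0\supseteq\F_1\supseteq\F_2\supseteq\cdots$ (understood here for framed $\Q$--tori). Since $\mu$ has degree $1$ it kills $\F_2$, so only the quotients $\G_0=\F_0/\F_1$ and $\G_1=\F_1/\F_2$ must be controlled. For $\G_0$: using the canonical LP--identification of the boundaries of framed $\Q$--tori, any framed $\Q$--torus $T$ can be obtained from $T_0$ by a single LP--surgery --- replace, inside a collar of $\partial T_0$, a parallel copy of $T_0$ by $T$; regluing a collar onto $\partial T$ recovers $T$ --- so $T_0-T\in\F_1$. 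Hence $T$ and $T_0$ have the same class in $\G_0$, and $\mu(T_0)=0$ gives $\mu(T)=\mu(T-T_0)$ with $T-T_0\in\F_1$; it thus remains to prove $\mu|_{\F_1}=0$.

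First I would record an auxiliary reduction to $\Q$--spheres. The assignment $N\mapsto\mu(T_0\sharp N)$ is a degree $1$ invariant of $\Q$--spheres: connected summing the two surgeries of a $2$--fold bracket of $\Q$--sphere LP--surgeries with $T_0$ yields a $2$--fold bracket of framed--$\Q$--torus LP--surgeries, on which $\mu$ vanishes. This invariant vanishes on $S^3$ (since $T_0\sharp S^3=T_0$) and on each $M_p$ by hypothesis; by the description of the low--degree filtration quotients of $\Q$--spheres in \cite{M2} --- the identifications $\G_0^{\QS}\cong\Q\cdot[S^3]$ and $\G_1^{\QS}=\bigoplus_{p}\Q\,[S^3;\frac{B_p}{B^3}]$ of Lemma \ref{lemma07}, together with the fact (from its proof) that the class of a $\Q$--sphere in $\G_1^{\QS}$ depends only on the prime factorisation of the order of its first homology --- such an invariant is identically zero. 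So $\mu(T_0\sharp N)=0$ for \emph{every} $\Q$--sphere $N$.

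Next I would decompose $\F_1$ into elementary brackets. By Theorem \ref{thdecsur} every LP--surgery on a framed $\Q$--torus factors into elementary surgeries (connected sum with a $\Q$--ball, $d$--torus replacement of a standard solid torus, Borromean surgery) and their inverses; arguing exactly as in the proof of Corollary \ref{cordecel}, $\F_1$ is spanned modulo $\F_2$ by single elementary brackets $[T;\frac{E'}{E}]$, each of which, since $T-T_0\in\F_1$, is congruent modulo $\F_2$ to $[T_0;\frac{E'}{E}]$. For a genus $0$ surgery, $[T_0;\frac{B}{B^3}]=T_0-T_0\sharp N$ with $N$ the $\Q$--sphere obtained by capping off $B$, and $\mu$ of this vanishes by the previous two paragraphs. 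For a genus $3$ (Borromean) surgery on a Y--graph $\Gamma$, the clasper--calculus lemmas of Section \ref{secborro} apply in $T_0$ with the knot $K$ playing no role (the infinite cyclic covering degenerates, so the analogue of Lemma \ref{lemmah4} becomes a statement with $H_1(\,\cdot\,;\Q)$--coefficients): the class of $[T_0;\Gamma]$ modulo $\F_2$ is $\Q$--multilinear in the homology classes of the three leaves of $\Gamma$ and, via the relation AS (as in \cite[Corollary 4.6]{GGP}), changes sign when two leaves are exchanged. Since $H_1(T_0;\Q)$ is one--dimensional, two of the three leaf classes are proportional, so $[T_0;\Gamma]$ is a rational multiple of the bracket of a Y--graph two of whose leaves are parallel copies of one curve; exchanging them shows that bracket equals its own opposite, hence is $0$. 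So $\mu([T_0;\Gamma])=0$.

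The remaining case --- a genus $1$ surgery, i.e.\ replacing a standard solid torus $T_{\mathrm{std}}$ embedded in $T_0$ by a $d$--torus $T_d$ --- is the main obstacle, and it is exactly where the rational--homological setting genuinely differs from the integral one (where $d$--torus surgeries do not occur). One has to show that this replacement, which multiplies the torsion of $H_1$ by a group of order $d$, agrees modulo $\F_2$ with the same effect produced by connected sums, i.e.\ that $[T_0;\frac{T_d}{T_{\mathrm{std}}}]$ is a rational combination of the genus $0$ brackets $[T_0;\frac{B_p}{B^3}]$, $p\mid d$, modulo $\F_2$; this should be extracted from the finer surgery analysis of \cite{M2}. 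Granting it, $\mu$ kills the genus $1$ brackets by the genus $0$ case, and combining the three types gives $\mu|_{\F_1}=0$, hence $\mu=0$.
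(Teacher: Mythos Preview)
The paper does not prove this lemma at all: it is quoted verbatim as \cite[Corollary~5.10]{M2} and used as a black box in the proof of Theorem~\ref{thextend}. So there is no ``paper's proof'' to compare your argument to; the relevant question is whether your proposal stands on its own.

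Your overall architecture is sound --- reduce to $\G_0$ and $\G_1$, decompose $\F_1$ into elementary brackets via Theorem~\ref{thdecsur}, and handle the three genera separately --- and your treatment of the genus~0 case via the auxiliary degree~1 invariant $N\mapsto\mu(T_0\sharp N)$ of $\Q$--spheres is clean. The genus~3 argument is morally correct (alternating trilinear form on a one--dimensional $H_1(T_0;\Q)$), though note that the lemmas of Section~\ref{secborro} are stated for null Y--links in knot complements, so you are implicitly invoking the more basic clasper calculus of \cite{GGP} rather than the equivariant version developed here; that is fine but should be said explicitly.

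The genuine gap is the one you yourself flag: the genus~1 ($d$--torus) case. You write that the needed congruence ``should be extracted from the finer surgery analysis of \cite{M2}'' and then proceed by ``granting it''. That is not a proof; it is precisely the content of \cite[Corollary~5.10]{M2} (and the lemmas leading up to it) that you are assuming. The analysis of $d$--torus replacements modulo $\F_2$ in \cite{M2} is nontrivial --- it involves realizing $d$--tori concretely and comparing them with connected sums of lens spaces --- and is not recoverable from the tools in the present paper. So your proposal reduces the lemma to exactly the external input the paper already cites, without supplying an independent argument for it.
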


\begin{proof}[Proof of Theorem \ref{thextend}]
Take $\lambda\in(\F_n(\Al,\bl))^*$ such that $\lambda(\F_{n+1}(\Al,\bl))=0$. Assume $\lambda(\varphi_n(\A_n^{aug}(\Al,\bl)))=0$. 
In order to prove that $\varphi_n$ is onto, it is enough to prove that $\lambda=0$. 
Thanks to Corollary \ref{cordecel}, it suffices to prove that $\lambda$ vanishes on the brackets defined by elementary surgeries. 
For elementary surgeries of genus 0 and 3, this follows from Corollary \ref{cor03}. 

Consider a bracket $\left[(M,K);\left(\frac{T_{d_i}}{T_i}\right)_{1\leq i\leq k},\left(\frac{E_i'}{E_i}\right)_{1\leq i\leq n-k}\right]$, where $(M,K)\in\Ens(\Al,\bl)$, 
the $T_i$ are standard tori null in $M\setminus K$, the $T_{d_i}$ are $d_i$--tori for some positive integers $d_i$ 
and the $\left(\frac{E_i'}{E_i}\right)$ are null elementary surgeries of genus $0$ or $3$. 
By induction on $k$, we will prove that $\lambda$ vanishes on this bracket. We have treated the case $k=0$. 
Assume $k>0$. Fix a parallel of $T_1$. If $T$ is a framed $\Q$--torus, set:
$$\bar{\lambda}(T)=\lambda\left(\left[(M,K);\frac{T}{T_1},\left(\frac{T_{d_i}}{T_i}\right)_{2\leq i\leq k},\left(\frac{E_i'}{E_i}\right)_{1\leq i\leq n-k}\right]\right),$$
where the LP--identification $\partial T\cong\partial T_1$ identifies the prefered parallels. Then $\bar{\lambda}$ is a degree~$1$ 
invariant of framed $\Q$--tori:
$$\bar{\lambda}\left( \left[T;\frac{B_1}{A_1},\frac{B_2}{A_2}\right]\right) = 
-\lambda\left(\left[(M,K)\left(\frac{T}{T_1}\right);\frac{B_1}{A_1},\frac{B_2}{A_2},\left(\frac{T_{d_i}}{T_i}\right)_{2\leq i\leq k},\left(\frac{E_i'}{E_i}\right)_{1\leq i\leq n-k}\right]\right) =0.$$
Moreover, we have $\bar{\lambda}(T_1)=0$ and, by induction, $\bar{\lambda}\left(T_1\left(\frac{B_p}{B^3}\right)\right)=0$. 
Thus, by Lemma \ref{lemmainvtori}, $\bar{\lambda}=0$.
\end{proof}

\begin{proof}[Proof of Theorem \ref{thG1}]
Theorem \ref{thextend} provides a surjective map $\varphi_1\hspace{-3.7pt}: \A_1^{aug}(\Al,\bl)\twoheadrightarrow\G_1(\Al,\bl)$. Thanks to Lemma 
\ref{lemmaodd}, we have $\A_1^{aug}(\Al,\bl)=\oplus_{p\textrm{ prime}}\Q\, \bullet_p$. Hence $\G_1(\Al,\bl)$ is generated by the images 
of the diagrams $\bullet_p$, which are the brackets $\left[(M,K);\frac{B_p}{B^3}\right]$ for all prime integers $p$, with any $(M,K)\in\Ens(\Al,\bl)$. 

For any prime integer $p$, define a $\Q$--linear map $\nu_p:\F_0\to\Q$ by setting $\nu_p(M,K)=v_p(|H_1(M;\Z)|)$ for all $\Q$SK--pair $(M,K)$, 
where $v_p$ denotes the $p$--adic valuation. 
By \cite[Proposition 1.9]{M2}, the $\nu_p$ are degree 1 invariants of $\Q$--spheres, 
hence they are also degree 1 invariants of $\Q$SK--pairs. This implies that the family $\left(\left[(M,K);\frac{B_p}{B^3}\right]\right)_{p\textrm{ prime}}$ is free 
in $\G_1(\Al,\bl)$.
\end{proof}

    \section{Extension of the Lescop/Kricker invariant} \label{secZLes}

In this section, we prove Theorem \ref{thunivinv}.

Given two invariants $\lambda_1$ and $\lambda_2$ of $\Q$SK--pairs, define their product on any $\Q$SK--pair $(M,K)$ by $(\lambda_1\lambda_2)(M,K)=\lambda_1(M,K)\lambda_2(M,K)$ 
and extend to $\F_0$ by linearity. 
The following lemma is classical and holds for any objects and any invariants with values in some ring (see for instance \cite[Lemma 6.2]{M2}).
\begin{lemma} \label{lemmaprodinvariants}
 $$\left(\prod_{j=1}^n\lambda_j\right)\left(\left[\left(M,K\right);\left(\frac{B_i}{A_i}\right)_{i\in I}\right]\right)\hspace{10cm}$$
 $$=\sum_{\emptyset=J_0\subset\dots\subset J_n=I}
 \prod_{j=1}^n\lambda_j\left(\left[(M,K)\left(\left(\frac{B_i}{A_i}\right)_{i\in J_{j-1}}\right);\left(\frac{B_i}{A_i}\right)_{i\in J_j\setminus J_{j-1}}\right]\right)$$
\end{lemma}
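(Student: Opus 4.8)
The plan is to expand both sides in terms of the surgered pairs and compare coefficients. Throughout, write $(M,K)(S):=(M,K)\left(\left(\frac{B_i}{A_i}\right)_{i\in S}\right)$ for $S\subseteq I$. Two elementary facts are used repeatedly: by definition of the bracket, $\left[(M,K)(S');\left(\frac{B_i}{A_i}\right)_{i\in S}\right]=\sum_{T\subseteq S}(-1)^{|T|}(M,K)(S'\cup T)$ whenever $S\cap S'=\emptyset$; and since the $A_i$ are pairwise disjoint in $M\setminus K$, surgering on $S'$ and then on a disjoint set $T$ yields $(M,K)(S'\cup T)$ — which in particular shows that the inner brackets on the right-hand side of the statement are well defined (surgering on $J_{j-1}$ keeps the handlebodies indexed by $J_j\setminus J_{j-1}$ disjoint and null). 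The first fact turns the left-hand side into $\sum_{S\subseteq I}(-1)^{|S|}\prod_{j=1}^n\lambda_j\bigl((M,K)(S)\bigr)$.

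I would then rewrite the right-hand side. A flag $\emptyset=J_0\subseteq\cdots\subseteq J_n=I$ is the same datum as a function $c\colon I\to\{1,\dots,n\}$, via $J_j=c^{-1}(\{1,\dots,j\})$, so that $J_j\setminus J_{j-1}=c^{-1}(j)$. Expanding each inner bracket, the right-hand side becomes
$$\sum_{c\colon I\to\{1,\dots,n\}}\ \sum_{\substack{T_j\subseteq c^{-1}(j)\\ 1\le j\le n}}\ \prod_{j=1}^n(-1)^{|T_j|}\,\lambda_j\bigl((M,K)(S_j)\bigr),\qquad S_j:=c^{-1}(\{1,\dots,j-1\})\cup T_j.$$
The key reorganisation is to attach to each $i\in I$ an activation threshold $t_i\in\{1,\dots,n+1\}$: set $t_i=c(i)$ if $i\in T_{c(i)}$ and $t_i=c(i)+1$ otherwise. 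One checks directly that $i\in S_j$ if and only if $j\ge t_i$; hence $S_j=\{\,i:t_i\le j\,\}$ depends only on the vector $(t_i)_{i\in I}$, and so does the product $\prod_j\lambda_j((M,K)(S_j))$, while the global sign equals $(-1)^{\sum_j|T_j|}=\prod_{i\in I}(-1)^{\varepsilon_i}$ with $\varepsilon_i=1$ if $i\in T_{c(i)}$ and $0$ otherwise.

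The sign cancellation then finishes the argument. Fix a threshold vector $(t_i)_{i\in I}$ and sum over all $(c,(T_j)_j)$ realising it: this factors as a product over $i\in I$, where for each $i$ we sum the sign $(-1)^{\varepsilon_i}$ over the admissible pairs $(c(i),\varepsilon_i)$, namely $(t_i,1)$ and $(t_i-1,0)$. For $2\le t_i\le n$ both pairs occur and contribute $-1+1=0$; for $t_i=1$ only $(1,1)$ occurs, with sign $-1$; for $t_i=n+1$ only $(n,0)$ occurs, with sign $+1$. Therefore the contribution of $(t_i)$ vanishes unless every $t_i\in\{1,n+1\}$, that is, unless $(t_i)$ is the indicator datum of a subset $S:=\{\,i:t_i=1\,\}\subseteq I$; for such $S$ one has $S_j=S$ for every $j$ and total sign $(-1)^{|S|}$, so the surviving terms are exactly $\sum_{S\subseteq I}(-1)^{|S|}\prod_{j=1}^n\lambda_j((M,K)(S))$, which is the expansion of the left-hand side obtained above.

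The main point requiring care is the bijective encoding of the data — flags as functions $c$, and then the pair $(c,(T_j)_j)$ as a threshold vector $(t_i)$ together with, for each $i$, a sign-weighted choice — and checking that each threshold vector is realised with total sign $0$ unless it is extremal. Once this bookkeeping is set up the cancellation is immediate, and apart from it the proof is a routine expansion. An alternative is induction on $n$, peeling off $\lambda_n$ and invoking the two-invariant case, but the direct computation above is the cleanest.
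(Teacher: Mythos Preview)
Your proof is correct. The paper does not actually supply a proof of this lemma in the text; it simply cites it as classical with a reference to \cite[Lemma~6.2]{M2}, so there is nothing to compare against beyond noting that your direct combinatorial expansion---encoding flags as colorings $c\colon I\to\{1,\dots,n\}$ and then reorganising by activation thresholds---is a clean self-contained argument for this standard identity.
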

This lemma implies in particular that a product of finite type invariants is a finite type invariant whose degree is at most the sum of the degrees of the factors.

\begin{proof}[Proof of Theorem \ref{thunivinv}]
 We begin with a preliminary remark about the invariant $Z$. It follows from the last point in Theorem \ref{thinvariantZ} that $Z_n\circ\varphi_n$ vanishes 
 on diagrams that contain isolated vertices. Now, the degree $n$ part of $Z^{aug}$ is given by:
 $$Z_n^{aug}=\sum_{k=0}^n\sum_{\substack{p_1<\dots<p_s \\ \textrm{ prime integers }}}
  \sum_{\substack{t_1+\dots+t_s=n-k\\t_i>0}}Z_k \sqcup\left(\coprod_{i=1}^s\frac{1}{t_i!}(\rho_{p_i})^{t_i}\right).$$
 That $Z_n^{aug}$ vanishes on $\F_{n+1}$ follows from Lemma \ref{lemmaprodinvariants}. 
 
 Let us compute $Z_n^{aug}\circ\varphi_n$. Let $D$ be an $(\Al,\bl)$--augmented diagram of degree~$n$. Write $D$ as the disjoint union of its Jacobi part $D_J$ 
 and its $0$--valent part $D_{\bullet}$. 
 Apply Lemma~\ref{lemmaprodinvariants}, noting that for a term in the right hand side of the obtained equality to be non trivial:
 \begin{itemize}
  \item each bracket must have exactly the order of the corresponding invariant, 
  \item each invariant $\rho_p$ must be evaluated on a bracket associated with the diagram $\bullet_p$,
  \item the invariant $Z_k$ must be evaluated on a bracket associated with a diagram without isolated vertices.
 \end{itemize}
 It follows that $Z_n^{aug}\circ\varphi_n(D)=(Z_k\circ\varphi_k(D_J))\sqcup D_{\bullet}=\psi_k(D_J)\sqcup D_{\bullet}=\psi_n(D)$.  
\end{proof}

	\section{Inverse of the map $\psib_n$} \label{secpsi}

In this section, we prove Theorem \ref{th3n}. To this end, we construct the inverse of the map $\psib_n$. The rough idea is to open the edges of a given 
$\delta$--colored diagram, inserting univalent vertices whose fixed equivariant linking is the label of the initial edge. We need some preliminaries. 

\begin{proposition} \label{prop2vertices}
 Fix a Blanchfield module $(\Al,\bl)$. Assume $\Al$ is a direct sum $\Al=\Al'\oplus\Al''$ orthogonal with respect to the Blanchfield form. 
 Let $D$ and $D'$ be $(\Al,\bl)$--colored diagrams which differ only by the labels of their univalent vertices, {\em i.e.} $D$ and $D'$ have the same underlying graph, 
 with a common set $V$ of univalent vertices, the same orientations and edges labels, and the same linkings between the univalent vertices. 
 Further assume that:
 \begin{itemize}
  \item there are two vertices $v$ and $w$ in $V$ whose labels in $D$ and $D'$ are elements of $\Al'$,
  \item for all other vertices in $V$, the labels in $D$ and $D'$ are equal and are elements of $\Al''$,
  \item for any $u\in V$ different from $v$ and $w$, we have $f_{uv}=0$ and $f_{uw}=0$ for $D$ and $D'$.
 \end{itemize}
 Then $D$ and $D'$ are equal in $\A_n(\Al,\bl)$, where $n$ is the degree of $D$ and $D'$.
\end{proposition}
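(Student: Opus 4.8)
The plan is to derive $D=D'$ directly from the defining relations of $\A_n(\Al,\bl)$, manipulating only the labels of $v$ and $w$ and treating the rest of the diagram — whose univalent vertices all carry labels in $\Al''$ and which is orthogonal, for the Blanchfield form, to whatever happens at $v$ and $w$ — as a fixed context. Write $\gamma_v,\gamma_w$ for the labels at $v,w$ in $D$ and $\gamma_v',\gamma_w'$ for those in $D'$; by hypothesis $\bl(\gamma_v,\gamma_w)=\bl(\gamma_v',\gamma_w')$, and the common value of $f_{vw}$ is a fixed lift of this element.

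First I would record two families of \emph{clean} moves on the pair of labels, i.e. moves turning one admissible coloring into another without producing any error term. (i) Given $\sigma\in Aut(\Al',\bl')$, extend $\sigma$ by the identity on $\Al''$ to an element of $Aut(\Al,\bl)$; the relation Aut then identifies $D$ with the diagram obtained by replacing $(\gamma_v,\gamma_w)$ by $(\sigma\gamma_v,\sigma\gamma_w)$ and leaving everything else unchanged (the old lifts $f$ remain valid since $\sigma$ is an isometry). (ii) Suppose $\eta\in\Al'$ generates an orthogonal direct summand $\langle\eta\rangle$ of $\Al'$ and $\bl(\eta,\gamma_w)=0$. Then replacing $\gamma_v$ by $\gamma_v+\eta$ (the lift $f_{vw}$ still works, as $\bl(\gamma_v+\eta,\gamma_w)=\bl(\gamma_v,\gamma_w)$) is clean: by LV the new diagram equals the original plus the diagram $D_2$ obtained by relabelling $v$ with $\eta$ and setting $f_{vu}=0$ for every $u\neq v$; in $D_2$ all univalent labels other than that of $v$ lie in the orthogonal complement of $\eta$ in $\Al'$ — the $\gamma_u$ because $\Al''\perp\Al'$, and $\gamma_w$ by assumption — so the automorphism of $(\Al,\bl)$ acting by $-\mathrm{id}$ on $\langle\eta\rangle$ and by the identity on its orthogonal complement sends $\eta\mapsto-\eta$ and fixes every other label; combining Aut with LV (which makes a diagram odd under negating the label of a single univalent vertex whose $f$'s all vanish) gives $D_2=-D_2$, hence $D_2=0$. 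The symmetric statement holds with $v$ and $w$ interchanged. Both kinds of clean move preserve $\bl(\gamma_v,\gamma_w)$ and the lift $f_{vw}$, so it suffices to connect $(\gamma_v,\gamma_w)$ to $(\gamma_v',\gamma_w')$ by a finite sequence of moves (i) and (ii).

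The remaining task — and the main obstacle — is purely module-theoretic. Using moves of type (ii) on $v$ and on $w$ one enlarges, step by step, the cyclic orders of the two labels and brings the submodule $\Qt\gamma_v+\Qt\gamma_w$, together with the restricted form and the distinguished pair of generators, into a \emph{normal form} depending only on $b=\bl(\gamma_v,\gamma_w)$ and on an orthogonal splitting of $(\Al',\bl')$ fine enough to contain it as an orthogonal summand; the input here is the classification of non-degenerate Hermitian linking forms over $\Qt$ as orthogonal sums of cyclic and hyperbolic pieces, available because $\bl'$ is non-degenerate. Once $(\gamma_v,\gamma_w)$ and $(\gamma_v',\gamma_w')$ have been brought to the same normal form, the correspondence between the two normal pairs extends, by a Witt-type extension theorem for this class of forms, to an automorphism of $(\Al',\bl')$, and a last move of type (i) concludes. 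The delicate points, where the hypothesis $f_{uv}=f_{uw}=0$ is genuinely used, are that each increment needed in the normalization can be chosen orthogonal to the current label of the other vertex (so that the corresponding move of type (ii) is legitimate) and that, inside a hyperbolic piece, where an element need not generate a summand, one first splits the label through LV into a sum of summand-generators and disposes of each by the $D_2=-D_2$ argument above; checking that this can always be arranged is where the bulk of the work lies.
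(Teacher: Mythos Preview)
There is a genuine gap. Your two ``clean moves'' are correct, but they are not enough: it is \emph{false} that any two pairs $(\gamma_v,\gamma_w)$ and $(\gamma_v',\gamma_w')$ in $\Al'$ with the same Blanchfield pairing can be connected by moves (i) and (ii). Take $\Al'=\Qt/(\pi^2)\,\gamma_0$ cyclic (with $\pi$ symmetric, $\bl(\gamma_0,\gamma_0)=P/\pi^2$, $P$ prime to $\pi$) and compare $(\gamma_0,\pi\gamma_0)$ with $(\pi\gamma_0,\gamma_0)$; both give $\bl=P/\pi$. Move (ii) is vacuous here: the only orthogonal summands of $\Al'$ are $0$ and $\Al'$ itself, and a generator of $\Al'$ is never orthogonal to a nonzero element (the form on $\Al'$ is non-degenerate). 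Move (i) is also impotent: an automorphism of $\Al'$ is multiplication by a unit $u$ with $u\bar u=1$, but $u\gamma_0=\pi\gamma_0$ forces $u\equiv\pi$, which is not a unit. So no sequence of your moves takes one pair to the other, and your Witt-type extension cannot help either, since the submodule spanned by each pair is already all of $\Al'$ and the identity does not match the pairs.

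What is missing is a third move, the one the paper isolates as Lemma~\ref{lemmahol}: the \emph{transfer} $(P\gamma,\eta)=(\gamma,\bar P\eta)$. For monomials $P=t^k$ this is your move (i) with the automorphism $t^{-k}$ on $\Al'$, and then one must combine these over $k$ using LV at $v$ \emph{and} recombine using LV at $w$; this genuinely mixes Aut and LV and is not an instance of (i) or (ii). With it, in the cyclic case one normalizes $\gamma_v$ to the fixed generator and then the linking $f_{vw}$ pins down $\gamma_w$. The paper's proof proceeds along these lines: it reduces to a $\pi$-component, constructs an explicit element of maximal order whose cyclic span splits off orthogonally (using the structure theorem for Blanchfield modules), uses the transfer move and your move-(ii) vanishing (Lemma~\ref{lemmadecvert}) to put the $v$-label equal to that element, and then reads off the $w$-label from $f_{vw}$. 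Your outline becomes viable once you add the transfer move, but the ``normal form plus Witt extension'' packaging you propose would still have to be executed, and in the non-cyclic $(t+1)$-component the hyperbolic pieces require a separate argument (the paper's Lemma~\ref{lemmahol2}).
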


\newcommand{\diag}[3]{\hspace{-1ex}\raisebox{-0.5cm}{
\begin{tikzpicture} [scale=0.8]
 \draw (0,0) node {\textnormal{\DJ{}}};
 \draw (-1,-0.05) node {$\scriptstyle{\bullet}$};
 \draw (1,-0.05) node {$\scriptstyle{\bullet}$};
 \draw (-1,0) -- (-0.3,0) (0.3,0) -- (1,0);
 \draw (-1,0) node[left] {$#1$};
 \draw (1,0) node[right] {$#2$};
 \draw[dashed] (-1,-0.2) .. controls +(0.3,-0.3) and +(-0.3,-0.3) .. (1,-0.2);
 \draw[white,line width=10pt] (0.3,-0.4) -- (0.7,-0.4);
 \draw (0.55,-0.4) node {$#3$};
\end{tikzpicture}}}
We first prove a few lemmas in the setting of the proposition.
In the following, we denote by $\diag{\gamma}{\eta}{f}$ the diagram identical to $D$ except for the labellings of $v$ and $w$, which are 
$\gamma$ and $\eta$ respectively, and the linking $f_{v,w}$, which is equal to $f$. 

We will use the structure of the Blanchfield module recalled in the next theorem. 
The {\em dual} of a polynomial $P(t)\in\Qt$ is the polynomial $\bar{P}(t)=P(t^{-1})$. The polynomial $P$ is {\em symmetric} if $\bar{P}(t)=at^kP(t)$ for some $a\in\Q$ and $k\in\Z$. 
\begin{theorem}[\cite{M1} Proposition 1.2 $\&$ Theorem 1.3] \label{thstructureBmod}
 The Blanchfield module $(\Al,\bl)$ of a $\Q$SK--pair is an orthogonal direct sum of:
 \begin{itemize}
  \item cyclic submodules $\frac{\Qt}{(\pi^n)}\gamma$ where $n$ is a positive integer, $\pi$ is either a symmetric prime polynomial with $\pi(\pm1)\neq0$, or $(t+2+t^{-1})$, 
   or a product of two dual non-symmetric prime polynomials, and $\bl(\gamma,\gamma)=\frac{P}{\pi^n}$ for some polynomial $P$ symmetric and prime to $\pi$,
  \item submodules $\frac{\Qt}{((t+1)^m)}\rho\oplus\frac{\Qt}{((t+1)^m)}\rho'$ where $m$ is an odd positive integer, $\bl(\rho,\rho)=0$, $\bl(\rho',\rho')=0$, 
   $\bl(\rho,\rho')=\frac{1}{(t+1)^m}$.
 \end{itemize}
\end{theorem}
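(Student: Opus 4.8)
The final statement is the classification of Blanchfield modules; I sketch the strategy (the detailed argument being the content of \cite{M1}). The plan is to combine the structure theory of torsion modules over the principal ideal domain $\Qt=\Q[t^{\pm1}]$ with the constraints imposed by the hermitian linking form and by realizability by a $\Q$SK--pair.

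First I would use that $\Al$ is a finitely generated torsion $\Qt$--module, hence a finite direct sum of cyclic modules $\Qt/(\pi^a)$ with $\pi$ irreducible. Collecting the isotypic pieces according to the pairs $\{\pi,\bar\pi\}$ of dual irreducible polynomials, where $\bar\pi(t)=\pi(t^{-1})$, one obtains a decomposition $\Al=\bigoplus_{[\pi]}\Al_{[\pi]}$. The first point to prove is that this decomposition is orthogonal for $\bl$: if $\gamma\in\Al_{[\pi]}$ and $\eta\in\Al_{[\pi']}$ with $[\pi]\neq[\pi']$, then using $\bl(P\gamma,Q\eta)=P(t)Q(t^{-1})\bl(\gamma,\eta)$ together with the fact that suitable powers of $\pi\bar\pi$ and of $\pi'\bar{\pi'}$ are coprime in $\Qt$ and annihilate $\gamma$ and $\eta$ respectively, one finds that $\bl(\gamma,\eta)$ is annihilated by two coprime elements of $\Qt$, hence vanishes in $\Q(t)/\Qt$; non-degeneracy of $\bl$ then restricts to a non-degenerate hermitian form on each $\Al_{[\pi]}$.

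Next, realizability enters: for a $\Q$SK--pair the Wang exact sequence of the infinite cyclic covering of the exterior of $K$ yields $\mathrm{coker}(t-1\colon\Al\to\Al)=0$, so $t-1$ acts invertibly on $\Al$ and the prime $\pi=t-1$ does not occur. For every remaining $[\pi]$ I would classify the non-degenerate hermitian form on $\Al_{[\pi]}$ by the usual normal-form argument for linking forms: pick an element of maximal order $\pi^n$ (respectively $(\pi\bar\pi)^n$); if its self-linking is, up to a unit, $1/\pi^n$, split off the orthogonal cyclic summand it generates and induct on the length of $\Al_{[\pi]}$; otherwise one shows that $\Al_{[\pi]}$ contains an orthogonal summand of the form $\frac{\Qt}{(\pi^m)}\rho\oplus\frac{\Qt}{(\pi^m)}\rho'$ with $\bl(\rho,\rho)=\bl(\rho',\rho')=0$ and $\bl(\rho,\rho')$ a unit times $1/\pi^m$, and one again inducts. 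For a symmetric prime $\pi$ with $\pi(\pm1)\neq0$ the involution induced on the residue field $\Qt/(\pi)$ is nontrivial, and one checks that the first alternative always applies, producing cyclic blocks $\frac{\Qt}{(\pi^n)}\gamma$ with $\bl(\gamma,\gamma)=P/\pi^n$, $P$ symmetric and prime to $\pi$. For a pair $\{\rho,\bar\rho\}$ of non-symmetric primes, the Chinese remainder theorem splits $\Qt/((\rho\bar\rho)^n)$ into its $\rho$--part and $\bar\rho$--part, and the hermitian property forces each of the two to be self-orthogonal and perfectly paired with the other, which gives the ``product of two dual non-symmetric primes'' blocks.

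The delicate case, which I expect to be the main obstacle, is $\pi=t+1$: there the involution induced on $\Qt/(t+1)=\Q$ is trivial, so $\bl$ restricts to a symmetric $\Q$--bilinear linking form, which cannot always be diagonalized; this is the analogue of the well-known fact that a linking pairing on a finite $2$--group need not be diagonalizable. Here I would argue that the self-linking of a generator of a putative cyclic summand can be normalized into $\frac{1}{(t+1)^a}\Qt$, and that a discriminant/parity computation, exploiting that $\bl$ arises from a Seifert-type pairing so that its leading behaviour at $t=-1$ is constrained, forces $a$ to be even whenever the summand is not hyperbolic; this produces the blocks $\frac{\Qt}{((t+2+t^{-1})^n)}\gamma=\frac{\Qt}{((t+1)^{2n})}\gamma$, while odd length can occur only in the hyperbolic blocks $\frac{\Qt}{((t+1)^m)}\rho\oplus\frac{\Qt}{((t+1)^m)}\rho'$ with $m$ odd and $\bl(\rho,\rho')=\frac{1}{(t+1)^m}$. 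Assembling the blocks from all primary components yields the stated orthogonal decomposition; the converse, that every such abstract module is the Blanchfield module of some $\Q$SK--pair, is obtained by realizing each block by an explicit Lagrangian-preserving surgery and taking connected sums, which I would only indicate.
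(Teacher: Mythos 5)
You should first note that the paper contains no proof of Theorem \ref{thstructureBmod}: it is imported verbatim from \cite{M1} (Proposition 1.2 and Theorem 1.3), so the comparison is with that reference rather than with anything internal. Your outline does follow the same strategy as \cite{M1}: primary decomposition of the torsion $\Qt$--module and orthogonality of the components attached to the classes $\{\pi,\bar\pi\}$ via sesquilinearity and coprimality, exclusion of $(t-1)$--torsion by the Wang sequence (this, together with torsionness and non-degeneracy of $\bl$, is the only topological input the decomposition needs), splitting off cyclic summands of maximal order for symmetric primes $\pi$ with $\pi(\pm1)\neq0$ using the nontrivial residue involution, and recombining the $\rho$-- and $\bar\rho$--parts of a dual pair of non-symmetric primes into the cyclic $\frac{\Qt}{((\rho\bar\rho)^n)}$ blocks.

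Where your sketch falls short is exactly the case you flag as delicate, $\pi=t+1$, and the mechanism you propose there is misdirected. The parity constraint does not come from ``$\bl$ arising from a Seifert-type pairing'': it is purely formal. If $\frac{\Qt}{((t+1)^a)}\gamma$ carried a non-degenerate self-linking $\bl(\gamma,\gamma)=A/(t+1)^a$ with $A$ prime to $t+1$, hermitianness gives $A(t)\equiv t^aA(t^{-1})\ mod\ (t+1)^a$, and evaluating at $t=-1$ yields $A(-1)=(-1)^aA(-1)$; for $a$ odd this forces $A(-1)=0$, a contradiction. So no realizability argument is needed (nor would an unspecified ``discriminant/parity computation'' from a Seifert form obviously deliver the statement). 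What your proposal then leaves unproven is the actual local classification at $t+1$, where the residue involution is trivial: that the even-exponent part splits orthogonally into cyclic blocks $\frac{\Qt}{((t+2+t^{-1})^n)}\gamma$ with symmetric numerator prime to $\pi$, that odd exponents occur only in pairs with $\bl(\rho,\rho)=\bl(\rho',\rho')=0$ and $\bl(\rho,\rho')$ normalized to exactly $\frac{1}{(t+1)^m}$, and the maximal-order induction making all these splittings orthogonal. That is the substantive content of \cite{M1} and it is deferred, not proved, in your sketch; the realization converse you mention at the end is not part of the quoted statement.
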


\begin{lemma} \label{lemmadecvert}
 Assume $\Al'=\Al_1\oplus^\perp\Al_2$. If $\gamma\in\Al_1$ and $\eta \in \Al_2$, then: 
 $$\diag{\gamma}{\eta}{0}=0.$$
\end{lemma}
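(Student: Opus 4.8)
The plan is to produce the automorphism of $(\Al,\bl)$ that flips the sign of the label at $v$ only, and then to play it off against the relation LV. Using the orthogonal decomposition $\Al=\Al_1\oplus^\perp\Al_2\oplus^\perp\Al''$ (which follows from $\Al=\Al'\oplus^\perp\Al''$ and $\Al'=\Al_1\oplus^\perp\Al_2$), let $\sigma$ be the $\Qt$--linear map acting by $-\mathrm{id}$ on $\Al_1$ and by $\mathrm{id}$ on $\Al_2\oplus^\perp\Al''$. First I would check that $\sigma\in Aut(\Al,\bl)$: it is an isomorphism of $\Qt$--modules, and it preserves $\bl$ because $\Al_1\perp(\Al_2\oplus^\perp\Al'')$, so all cross terms vanish and the sign cancels on $\Al_1\times\Al_1$ (using sesquilinearity and the hermitian property). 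In $\diag{\gamma}{\eta}{0}$ the vertex $v$ is the only univalent vertex with label in $\Al_1$, whereas $w$ (label $\eta\in\Al_2$) and all remaining univalent vertices (labels in $\Al''$) are fixed by $\sigma$; the fixed equivariant linkings are unchanged since $\sigma$ preserves $\bl$. Hence the relation Aut gives
$$\diag{\gamma}{\eta}{0}=\diag{-\gamma}{\eta}{0}$$
in $\A_n(\Al,\bl)$.

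Next I would apply the relation LV at $v$. By the third hypothesis of Proposition \ref{prop2vertices}, $f_{vu}=0$ for every univalent vertex $u\neq v$ (in particular $f_{vw}=0$), so in both $\diag{\gamma}{\eta}{0}$ and $\diag{-\gamma}{\eta}{0}$ all equivariant linkings at $v$ vanish and the compatibility condition in LV is trivially met for any choice of rational coefficients. Applying LV with coefficients $\tfrac12,\tfrac12$ to the pair $\bigl(\diag{\gamma}{\eta}{0},\diag{-\gamma}{\eta}{0}\bigr)$ yields $\tfrac12\diag{\gamma}{\eta}{0}+\tfrac12\diag{-\gamma}{\eta}{0}=E$, where $E:=\diag{0}{\eta}{0}$ is the diagram with $v$ relabelled by $0$ and all linkings at $v$ still zero; applying LV with coefficients $1,1$ yields $\diag{\gamma}{\eta}{0}+\diag{-\gamma}{\eta}{0}=E$ with the same $E$. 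Substituting the Aut--identity $\diag{\gamma}{\eta}{0}=\diag{-\gamma}{\eta}{0}$ into these two equations gives $\diag{\gamma}{\eta}{0}=E$ and $2\diag{\gamma}{\eta}{0}=E$, whence $\diag{\gamma}{\eta}{0}=0$.

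There is no real obstacle here: once $\sigma$ is available, the whole argument is a two--line computation in the $\Q$--vector space $\A_n(\Al,\bl)$. The only points needing care are that $\sigma$ genuinely lies in $Aut(\Al,\bl)$ (well--definedness and invariance of $\bl$, both immediate from the orthogonality of $\Al_1$ with $\Al_2\oplus^\perp\Al''$) and that the hypotheses of Proposition \ref{prop2vertices} on the linkings involving $v$ and $w$ are precisely what make the relation LV applicable with unconstrained coefficients.
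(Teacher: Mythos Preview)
Your proof is correct and uses exactly the same automorphism as the paper: $-\mathrm{id}$ on $\Al_1$ and $\mathrm{id}$ on $\Al_2\oplus\Al''$. The only difference is cosmetic: after obtaining $\diag{\gamma}{\eta}{0}=\diag{-\gamma}{\eta}{0}$ from Aut, the paper implicitly uses LV once (with coefficient $-1$, the linkings at $v$ all being zero) to get $\diag{-\gamma}{\eta}{0}=-\diag{\gamma}{\eta}{0}$ and hence $2\diag{\gamma}{\eta}{0}=0$; your two applications of LV with coefficients $(\tfrac12,\tfrac12)$ and $(1,1)$ reach the same conclusion by a slightly longer route.
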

\begin{proof}
 Apply the Aut relation with the automorphism of $(\Al,\bl)$ given by the opposite of the identity on $\Al_1$ and the identity on $\Al_2\oplus\Al''$. 
\end{proof}

\begin{corollary} \label{cordecvert}
 Assume $\Al'$ is the orthogonal direct sum of submodules $\Al_i$, $i=1,\dots,k$. Let $\gamma, \eta \in \Al'$. Write $\gamma=\sum_{i=1}^k\gamma_i$ 
 and $\eta=\sum_{i=1}^k\eta_i$ with $\gamma_i,\eta_i\in\Al_i$. Then: 
 $$\diag{\gamma}{\eta}{f}=\sum_{i=1}^k\diag{\gamma_i}{\eta_i}{f_i},$$
 for all families of rational fractions $f_i$ such that $\bl(\gamma_i,\eta_i)=f_i\ mod\ \Qt$ and $\sum_{i=1}^kf_i=f$.
\end{corollary}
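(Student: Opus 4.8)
The plan is to break both distinguished univalent vertices $v$ and $w$ into their $\Al_i$--components by two successive uses of the linearity relation LV, and then to kill every resulting ``off-diagonal'' term — one in which $v$ is coloured in $\Al_i$ and $w$ in $\Al_j$ with $i\neq j$ — by Lemma \ref{lemmadecvert}. No other relations are needed.

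First I would split $\gamma=\gamma_1+\dots+\gamma_k$ at the vertex $v$. Iterating LV (with all coefficients equal to $1$) gives
$$\diag{\gamma}{\eta}{f}=\sum_{i=1}^k\diag{\gamma_i}{\eta}{f_i},$$
where in the $i$-th diagram I prescribe the fixed linking between $v$ and $w$ to be the given $f_i$, and the fixed linking between $v$ and every other univalent vertex to be $0$. These prescriptions are admissible: by orthogonality of the $\Al_i$ one has $\bl(\gamma_i,\eta)=\bl(\gamma_i,\eta_i)=f_i\bmod\Qt$, and by orthogonality of $\Al'$ and $\Al''$ one has $\bl(\gamma_i,\gamma_u)=0$ for any vertex $u\notin\{v,w\}$ (whose colour lies in $\Al''$); moreover the additivity constraint in LV holds since $\sum_i f_i=f=f^D_{vw}$ and $0=f^D_{vu}$ for the other vertices $u$.

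Next I would apply LV once more to each summand, splitting $\eta=\eta_1+\dots+\eta_k$ at the vertex $w$, and I would arrange the linking between $w$ and $v$ to be $f_i$ in the $j=i$ term and $0$ in the $j\neq i$ terms (admissible because $\bl(\gamma_i,\eta_j)=0$ for $i\neq j$), keeping all linkings of $w$ with the remaining vertices equal to $0$:
$$\diag{\gamma_i}{\eta}{f_i}=\diag{\gamma_i}{\eta_i}{f_i}+\sum_{j\neq i}\diag{\gamma_i}{\eta_j}{0}.$$
For $i\neq j$, Lemma \ref{lemmadecvert} applied to the orthogonal splitting $\Al'=\Al_i\oplus^\perp\bigl(\bigoplus_{l\neq i}\Al_l\bigr)$ — which puts $\gamma_i$ in the first summand and $\eta_j$ in the second — shows that $\diag{\gamma_i}{\eta_j}{0}=0$. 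Combining this with the first display then gives $\diag{\gamma}{\eta}{f}=\sum_{i=1}^k\diag{\gamma_i}{\eta_i}{f_i}$, as wanted.

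The step I expect to require the most care is the linking bookkeeping inside the two iterated applications of LV: at each elementary splitting one must check both that the value assigned to every fixed equivariant linking reduces mod $\Qt$ to the correct Blanchfield pairing (so that the smaller diagrams are legitimate $(\Al,\bl)$--colored diagrams) and that these values add up correctly along the splitting. The orthogonality hypotheses — $\Al_i\perp\Al_j$ for $i\neq j$ and $\Al'\perp\Al''$ — are exactly what make the convenient bookkeeping ($f_i$ on the diagonal, $0$ off it) consistent, and in particular they are what brings every off-diagonal term within the scope of Lemma \ref{lemmadecvert}. A small point worth noting is that the argument works verbatim even when some $\gamma_i$ or $\eta_i$ is zero, because LV used with unit coefficients still permits a (necessarily polynomial) linking to be carried by a vertex coloured $0$; so no separate degenerate case arises, and the relations LD and LE are not invoked at all.
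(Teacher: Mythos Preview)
Your proof is correct and is exactly the argument the paper has in mind: the corollary is stated without proof precisely because it follows from Lemma~\ref{lemmadecvert} via two applications of LV to split $v$ and $w$ into their $\Al_i$--components, followed by the lemma to kill the off-diagonal terms. Your linking bookkeeping is right, and note that the vanishing of $f_{uv}$ and $f_{uw}$ for $u\notin\{v,w\}$ is already built into the $\diag{\,}{}{\,}$ notation (it keeps all data of $D$ except the labels of $v,w$ and $f_{vw}$), so you need not reassign those.
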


\begin{lemma} \label{lemmahol}
 If $\gamma, \eta \in \Al'$ and $P\in\Qt$, then:
 $$\diag{P\gamma}{\eta}{f}=\diag{\gamma}{\bar{P}\eta}{f}.$$
\end{lemma}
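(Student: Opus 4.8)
\textbf{Proof plan for Lemma~\ref{lemmahol}.}

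The plan is to realize both sides as brackets of null $Y$--links in a common $\Q$SK--pair and to move a power of $t$ from one univalent vertex to the other by rerouting the corresponding leaf through $K$, using the equivariant clasper calculus of Section~\ref{secborro}. Concretely, fix $(M,K)\in\Ens(\Al,\bl)$ and an isomorphism $\xi$, and choose positive integers $(s_u)_{u\in V}$ so that a suitable integral rescaling of the diagram becomes $\xi$--realizable; by Lemma~\ref{lemmarationalmultiples} it is enough to prove the identity for those rescaled (elementary) diagrams, since the rescaling multiplies both sides by the same factor and both sides lie in $\G_n^b(M,K)$. After this reduction, I only need to handle $P=t$ (hence, by iteration and by the relation OR together with $\bar P$, the case of an arbitrary $P\in\Qt$ follows by $\Q$--linearity and multiplicativity), and moreover I may assume $\gamma,\eta$ are represented by knots in the infinite cyclic cover $\tilde X$.

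The key step is the following geometric move. Let $\Gamma$ be a realization of $\diag{t\gamma}{\eta}{f}$, and let $\ell_v$ be the leaf attached at $v$, so that its chosen lift $\tilde\ell_v$ is homologous to $\xi(t\gamma)=\tau_*(\xi(\gamma))$. Then $\ell_v$ is isotopic in $M\setminus K$ to a leaf $\ell_v'$ whose lift with the \emph{same} basepoint is homologous to $\xi(\gamma)$, followed by ``pushing'' one strand of the attaching band once across $K$ in the positive direction; equivalently, one lets the internal vertex of this $Y$--graph travel once around $K$. By Lemma~\ref{lemmaind} (its proof, where changing the lift $\tilde B$ of the base ball $B$ is shown not to affect the bracket) this operation does not change $[(M,K);\Gamma]\ \mathrm{mod}\ \F_{n+1}^b(M,K)$, but it reinterprets the same $Y$--link as a realization in which $v$ is coloured by $\gamma$ and the leaf at $w$ has been correspondingly ``shifted by $t^{-1}$''. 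The effect on the fixed equivariant linking data is exactly $lk_e(\tilde\ell_v,\tilde\ell_w)\mapsto lk_e(t^{-1}\tilde\ell_v,\tilde\ell_w)=t^{-1}\,lk_e(\tilde\ell_v,\tilde\ell_w)$, so that $f_{vw}$ is unchanged and the diagram read off is $\diag{\gamma}{\bar t\,\eta}{f}=\diag{\gamma}{t^{-1}\eta}{f}$. Here I use the identity $lk_e(P(\tau)J_1,Q(\tau)J_2)(t)=P(t)Q(t^{-1})lk_e(J_1,J_2)(t)$ from Section~\ref{secresults}, which guarantees that $lk_e(t\gamma,\eta)=lk_e(\gamma,t^{-1}\eta)$, so the hypothesis $\bl(P\gamma,\eta)=\bl(\gamma,\bar P\eta)$ needed for the two diagrams to be well-formed is automatic, and there is no constraint coming from the third bullet of Proposition~\ref{prop2vertices} since $f_{uv}=f_{uw}=0$ is preserved.

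The main obstacle is bookkeeping rather than conceptual: one must check that rerouting $\ell_v$ around $K$ can be done \emph{without} crossing the rest of $\Gamma$, or, if crossings are unavoidable, that the resulting correction terms lie in $\F_{n+1}^b(M,K)$. This is handled by Lemma~\ref{lemmahomhom} and Lemma~\ref{lemmatriv4}: the two leaves $\ell_v$ and $\ell_v'$ are homotopic in $M\setminus K$ (they differ by a loop around $K$, pushed off), and their lifts with common basepoint differ by an element that is rationally null-homologous in $\tilde X\setminus(\widetilde{\Gamma\setminus\ell_v})$ precisely because we arranged $\tilde\ell_v\sim \xi(t\gamma)$ and $\tilde\ell_v'\sim\xi(\gamma)$ after the change of lift; hence Lemma~\ref{lemmahomhom} gives equality of brackets modulo $\F_{n+1}^b$. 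Assembling: $\diag{P\gamma}{\eta}{f}=\diag{\gamma}{\bar P\eta}{f}$ in $\G_n^b(M,K)$, and by Lemma~\ref{lemmaind2} this descends to the asserted equality in $\A_n(\Al,\bl)$ (via the surjection $\varphi_n$ of Proposition~\ref{propphin} it suffices to prove the relation on the geometric side, and LV/EV have already been shown to be respected). Dividing back by $\prod_v s_v$ finishes the proof.
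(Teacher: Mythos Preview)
Your approach has a genuine gap: you are proving the identity in the wrong space. The lemma asserts an equality in the diagram space $\A_n(\Abb)$, which is defined purely by generators and relations. Your argument, by realizing both diagrams as $Y$--links and comparing brackets, establishes at best that $\varphi_n\big(\diag{P\gamma}{\eta}{f}\big)=\varphi_n\big(\diag{\gamma}{\bar P\eta}{f}\big)$ in $\G_n^b(\Al,\bl)$. To lift this back to $\A_n(\Abb)$ you would need $\varphi_n$ to be \emph{injective}, but only surjectivity is known (Proposition~\ref{propphin}); injectivity is precisely the content of the open Questions~\ref{quconj} and~\ref{qupsin}. Worse, this lemma is a step toward Proposition~\ref{prop2vertices} and hence Theorem~\ref{th3n}, which is one of the few cases where injectivity is actually established---so invoking injectivity here would be circular.

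The paper's proof stays entirely inside the diagram space and is a two--line argument. For $P=t^k$, the map ``multiplication by $t^{-k}$ on $\Al'$, identity on $\Al''$'' is an automorphism of $(\Abb,\bar\bl)$ because $\bl(t^{-k}\alpha,t^{-k}\beta)=t^{-k}t^{k}\bl(\alpha,\beta)=\bl(\alpha,\beta)$. The Aut relation applied with this automorphism sends the label $t^k\gamma$ at $v$ to $\gamma$ and the label $\eta$ at $w$ to $t^{-k}\eta=\bar P\eta$, leaves all other labels (in $\Al''$) and all linkings unchanged, and yields exactly $\diag{\gamma}{\bar P\eta}{f}$. The general $P\in\Qt$ then follows from the LV relation by $\Q$--linearity in the vertex label. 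No geometry is needed.
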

\begin{proof}
 In the case where $P$ is a power of $t$, apply the Aut relation with the automorphism of $(\Al,\bl)$ given by multiplication by some power of $t$ 
 on $\Al'$ and identity on $\Al''$. Conclude with the LV relation.
\end{proof}

\begin{corollary} \label{corhol}
 Assume $\displaystyle\Al'=\frac{\Qt}{(\pi)}\gamma$. Then $D=\diag{\gamma}{P\gamma}{f}$ for some $P\in\Qt$, with $f=f_{vw}^D$.
\end{corollary}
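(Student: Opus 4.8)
The plan is to exploit that $\Al'$ is generated by the single element $\gamma$, so that both distinguished univalent vertices carry labels that are polynomial multiples of $\gamma$, and then to transfer all of the polynomial weight from $v$ onto $w$ by means of Lemma \ref{lemmahol}.

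Concretely, I would first choose polynomials $Q,R\in\Qt$ whose classes modulo $\pi$ represent the label of $v$ and the label of $w$ respectively, so that these labels are $Q(t)\gamma$ and $R(t)\gamma$ in $\Al'=\frac{\Qt}{(\pi)}\gamma$. By the definition of the shorthand $\diag{\cdot}{\cdot}{\cdot}$ this rewrites $D$ as $\diag{Q\gamma}{R\gamma}{f}$ with $f=f_{vw}^D$. Then I would apply Lemma \ref{lemmahol} with the polynomial $Q$ and the element $\eta=R\gamma\in\Al'$, which gives
$$D=\diag{Q\gamma}{R\gamma}{f}=\diag{\gamma}{\bar{Q}R\gamma}{f}$$
in $\A_n(\Al,\bl)$. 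Setting $P:=\bar{Q}R\in\Qt$ then yields the claimed identity $D=\diag{\gamma}{P\gamma}{f}$.

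The one point that needs a moment of care — and which is already built into the statement of Lemma \ref{lemmahol} — is the compatibility of the prescribed equivariant linking $f$ with the new pair of labels, namely the requirement $f\ \mathrm{mod}\ \Qt=\bl(\gamma,P\gamma)$. This is automatic from the hermitian property of the Blanchfield form, since $\bl(Q\gamma,R\gamma)=Q(t)R(t^{-1})\,\bl(\gamma,\gamma)=\bl(\gamma,\bar{Q}R\gamma)$, so the same rational fraction $f$ legitimately labels the pair $(v,w)$ before and after the move. Beyond this consistency remark there is no substantive obstacle: the corollary is a pure normalization step, packaging the cyclicity of $\Al'$ with Lemma \ref{lemmahol}, which will later let me reduce both $\Al'$--labelled vertices of $D$ to a standard form before bringing in Corollary \ref{cordecvert} and the structure theorem \ref{thstructureBmod} for $(\Al,\bl)$.
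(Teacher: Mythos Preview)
Your proof is correct and is exactly the intended derivation: the paper states this as a corollary of Lemma \ref{lemmahol} with no separate proof, and the argument you wrote---expressing both labels as $Q\gamma$, $R\gamma$ by cyclicity and then shifting $Q$ across via Lemma \ref{lemmahol} to obtain $P=\bar{Q}R$---is precisely what is implicit. The compatibility check on $f$ that you spelled out is automatic from the hermitian property of $\bl$, as you noted.
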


\begin{lemma} \label{lemmahol2}
 Assume $\displaystyle\Al'=\frac{\Qt}{((t+1)^m)}\rho\oplus\frac{\Qt}{((t+1)^m)}\rho'$ with $m$ odd, $\bl(\rho,\rho)=0$, $\bl(\rho',\rho')=0$, 
 $\bl(\rho,\rho')=\frac{1}{(t+1)^m}$. Then $D=\diag{\rho}{Q\rho'}{f}$ for some $Q\in\Qt$, with $f=f_{vw}^D$.
\end{lemma}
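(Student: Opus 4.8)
The strategy mirrors that of Corollary~\ref{corhol}, but with the extra bookkeeping required by the two-generator hyperbolic summand. First I would use Corollary~\ref{cordecvert} applied to the decomposition $\Al'=\frac{\Qt}{((t+1)^m)}\rho\oplus\frac{\Qt}{((t+1)^m)}\rho'$: writing the labels of $v$ and $w$ in $D$ as $\gamma=A\rho+B\rho'$ and $\eta=C\rho+D'\rho'$ with $A,B,C,D'\in\Qt$, the diagram $D$ is a sum of four diagrams, one for each pair of components, with appropriately chosen rational fractions $f_i$ summing to $f_{vw}^D$. Since $\bl(\rho,\rho)=\bl(\rho',\rho')=0$, for the $(\rho,\rho)$ and $(\rho',\rho')$ terms the linking fraction $f_i$ can be taken in $\Qt$, hence by the LD relation (absorbing a strut labelled by an element of $\Qt$) these two diagrams are equal to the same diagrams with $f_i=0$, and then Lemma~\ref{lemmadecvert} is not directly available but instead one invokes Corollary~\ref{corhol}-type reasoning: a diagram $\diag{A\rho}{C\rho}{0}$ lies in the sub-situation where $\Al'$ is replaced by the cyclic module generated by $\rho$, in which $\bl$ is identically zero, so that after the moves available there it can be absorbed. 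I expect the cleanest route is to note that in $\frac{\Qt}{((t+1)^m)}\rho$ with $\bl(\rho,\rho)=0$ the relations LV, LD and Aut (with the automorphism $-\mathrm{id}$ on $\rho$) force $\diag{A\rho}{C\rho}{0}=0$ exactly as in Lemma~\ref{lemmadecvert}.

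Next, for the two mixed terms $\diag{A\rho}{D'\rho'}{f'}$ and $\diag{B\rho'}{C\rho}{f''}$, I would use Lemma~\ref{lemmahol} to move polynomial factors across the strut: $\diag{A\rho}{D'\rho'}{f'}=\diag{\rho}{\bar A D'\rho'}{f'}$ and similarly $\diag{B\rho'}{C\rho}{f''}=\diag{\rho}{\bar C B\rho'}{f''}$ after also swapping the roles of the two univalent vertices via the AS/OR relations (which reverse the strut and conjugate the linking). Collecting, $D$ equals a single diagram of the form $\diag{\rho}{Q\rho'}{f}$ with $Q=\bar A D'+\bar C B\in\Qt$ (up to the holonomy adjustments coming from the strut reversals, which only multiply $Q$ by a power of $t$ and can be absorbed into the choice of representative), and with $f=f_{vw}^D$ as required once one checks that the compatibility condition $\bl(\rho,Q\rho')=f\bmod\Qt$ holds --- this follows because $\bl(\gamma,\eta)=f_{vw}^D\bmod\Qt$ by hypothesis and the hermitian bilinearity of $\bl$ together with $\bl(\rho,\rho)=\bl(\rho',\rho')=0$ gives $\bl(\gamma,\eta)=\bl(A\rho+B\rho',C\rho+D'\rho')=A\bar C\cdot0+A\bar{D'}\bl(\rho,\rho')+B\bar C\bl(\rho',\rho)+0$, which is visibly of the form $\bl(\rho,Q\rho')$ modulo $\Qt$ for the stated $Q$.

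The main obstacle I anticipate is the careful tracking of holonomy factors ($t^k$) and signs introduced when one reverses a strut via OR and relabels via Lemma~\ref{lemmahol}: these need to be reconciled so that the final $Q$ is a genuine element of $\Qt$ and so that the linking fraction attached to the resulting diagram is literally $f_{vw}^D$ and not merely congruent to it modulo $\Qt$. Resolving this requires using the LD relation to adjust $f$ by elements of $\Qt$ in concert with corresponding changes of $Q$ by elements of $((t+1)^m)$ (which act trivially on $\rho'$), so that one lands on the precise representative $\diag{\rho}{Q\rho'}{f}$ with $f=f_{vw}^D$. Everything else is a routine application of the relations AS, OR, LV, LD, Hol and Corollary~\ref{cordecvert} already established in the excerpt.
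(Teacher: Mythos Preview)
Your overall plan is the same as the paper's---decompose the two labels along $\rho,\rho'$, kill the ``diagonal'' pieces, and merge the two ``off-diagonal'' pieces via Lemma~\ref{lemmahol}---but two of your steps do not go through as written.

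First, to kill $\diag{A\rho}{C\rho}{0}$ you invoke ``Aut with the automorphism $-\mathrm{id}$ on $\rho$''.  But $\rho\mapsto-\rho$, $\rho'\mapsto\rho'$ is \emph{not} an automorphism of $(\Al,\bl)$: it sends $\bl(\rho,\rho')=\frac{1}{(t+1)^m}$ to $-\frac{1}{(t+1)^m}$, and these are not congruent modulo $\Qt$.  (Nor can Lemma~\ref{lemmadecvert} be used, since $\langle\rho\rangle$ and $\langle\rho'\rangle$ are not $\bl$--orthogonal.)  The paper instead applies Aut with the scaling automorphism $\rho\mapsto2\rho$, $\rho'\mapsto\tfrac12\rho'$, identity on $\Al''$, which \emph{is} Blanchfield--preserving; this turns $\diag{A\rho}{C\rho}{0}$ into $4$ times itself, hence it vanishes, and similarly for the $(\rho',\rho')$ term.

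Second, for the mixed term $\diag{B\rho'}{C\rho}{f''}$ you propose to ``swap the roles of the two univalent vertices via the AS/OR relations''.  AS and OR act locally at trivalent vertices and on edge orientations; they do not interchange the abstract vertices $v$ and $w$, which sit at prescribed, generally inequivalent, positions in the fixed graph \DJ{}.  The paper again uses Aut, now with the swap automorphism $\rho\mapsto\rho'$, $\rho'\mapsto t^{m}\rho$, identity on $\Al''$ (one checks directly that this preserves $\bl$ since $m$ is odd), to obtain $\diag{B\rho'}{C\rho}{f''}=\diag{Bt^{m}\rho}{C\rho'}{f''}$.  After this, Lemma~\ref{lemmahol} combines the two mixed terms into $\diag{\rho}{Q\rho'}{f}$ with $Q=\bar A D'+\bar B\,t^{-m}C$, as you anticipated.
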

\begin{proof}
 Write $D=\diag{\nu}{\nu'}{f}$, with $\nu=A\rho+B\rho'$ and $\nu'=A'\rho+B'\rho'$. Applying the Aut relation with the automorphism given by $\rho\mapsto2\rho$, 
 $\rho'\mapsto\frac{1}{2}\rho'$, and identity on $\Al''$, we see that the diagrams $\diag{A\rho}{A'\rho}{0}$ and $\diag{B\rho'}{B'\rho'}{0}$ are trivial. 
 Hence we can decompose $D$ as follows:
 $$D=\diag{A\rho}{B'\rho'}{f_1}+\diag{B\rho'}{A'\rho}{f_2}.$$
 Now the automorphism given by $\rho\mapsto\rho'$, $\rho'\mapsto t^{m}\rho$, and identity on $\Al''$ gives $$\diag{B\rho'}{A'\rho}{f_2}=\diag{Bt^{m}\rho}{A'\rho'}{f_2}.$$ 
 Thanks to Lemma \ref{lemmahol}, we get $D=\diag{\rho}{P\rho'}{f},$ with $P=\bar{A}B'+\bar{B}t^{-m}A'$. 
\end{proof}

\begin{proof}[Proof of Proposition \ref{prop2vertices}]
 For $\pi\in\Qt$, the {\em $\pi$--component} of a $\Qt$--module is the submodule of its elements of order some power of $\pi$. 
 Any Blanchfield module is the direct sum of its $\pi$--components, where $\pi$ runs through all 
 prime symmetric polynomials (including $t+1$) and all products of two dual prime non-symmetric polynomials. 
 Thanks to Corollary \ref{cordecvert}, we can assume that $\Al'$ is reduced to one $\pi$--component. 
 \paragraph{First case: $\pi(-1)\neq0$.}
 The module $\Al'$ can be written as an orthogonal direct sum:
 $$\Al'=\bigoplus_{i=1}^p\frac{\Qt}{(\pi^{n_i})}\gamma_i$$
 with $\bl(\gamma_i,\gamma_i)=\frac{P_i}{\pi^{n_i}}$ for some symmetric polynomial $P_i$ prime to $\pi$, and $n:=n_1=\dots=n_q>n_{q+1}\geq\dots\geq n_p$. 
 Replacing $\gamma_1$ by some rational multiple if necessary, we can assume that $\sum_{i=1}^qP_i$ is prime to $\pi$. Set $\gamma=\sum_{i=1}^p\gamma_i$. 
 Then $\bl(\gamma,\gamma)=\frac{P}{\pi^{n}}$ with $P$ symmetric and prime to $\pi$. It follows that the submodule $\langle\gamma\rangle$ of $\Al'$ generated by 
 $\gamma$ has a trivial intersection with its orthogonal $\langle\gamma\rangle^\perp$, thus:
 $$\Al'=\langle\gamma\rangle\oplus^\perp\langle\gamma\rangle^\perp.$$
 
 By Corollaries \ref{cordecvert} and \ref{corhol}, we can decompose $D$ as $D=\sum_{i=1}^p\diag{\gamma_i}{Q_i\gamma_i}{f_i}$ for some polynomials $Q_i$. 
 Corollary \ref{cordecvert} gives $D=\diag{\gamma}{\eta}{f}$ with $\eta=\sum_{i=1}^pQ_i\gamma_i$ and $f=\sum_{i=1}^pf_i$. 
 Write $\eta=A\gamma+\mu$ with $\mu\in\langle\gamma\rangle^\perp$. Since $\diag{\gamma}{\mu}{0}=0$ by Lemma \ref{lemmadecvert}, we get $D=\diag{\gamma}{A\gamma}{f}$. 
 Similarly $D'=\diag{\gamma}{B\gamma}{f}$.  The condition on $f$ implies $\frac{AP}{\pi^n}=\frac{BP}{\pi^n}\ mod\ \Qt$, thus $A=B\ mod\ \pi^n$ and $A\gamma=B\gamma$.
 
 \paragraph{Second case: $\pi=t+1$.}
 In that case, the decomposition of $\Al'$ may involve non-cyclic submodules. We have $\Al'=\Al_1\oplus^\perp\Al_2$ with:
 $$\Al_1=\left({\bigoplus_{i=1}^p}^\perp\frac{\Qt}{(t+2+t^{-1})^{n_i}}\gamma_i\right) \qquad 
 \Al_2=\left({\bigoplus_{j=1}^k}^\perp\left(\frac{\Qt}{(t+1)^{m_j}}\rho_j\oplus\frac{\Qt}{(t+1)^{m_j}}\rho_j'\right)\right)$$
 with $\bl(\gamma_i,\gamma_i)=\frac{P_i}{(t+2+t^{-1})^{n_i}}$, $P_i(-1)\neq0$, $\bl(\rho_j,\rho_j)=0$, $\bl(\rho_j',\rho_j')=0$, 
 $\bl(\rho_j,\rho_j')=\frac{1}{(t+1)^{m_j}}$, $n_1=\dots=n_q>n_{q+1}\geq\dots\geq n_p$, $m_1\geq\dots\geq m_k$, $m_j$ odd. 
 We can assume $\sum_{i=1}^qP_i$ prime to $(t+1)$. Set $\gamma=\sum_{i=1}^p\gamma_i$ and $\rho=\sum_{j=1}^k\rho_j$. 
 
 Proceeding as in the first case, applications of Corollaries \ref{cordecvert} and \ref{corhol} and Lemma \ref{lemmahol2} give:
 $$D=\diag{\gamma}{\alpha}{f_1}+\diag{\rho}{\beta}{f_2},$$
 with $\alpha\in\Al_1$ and $\beta\in\Al_2$. Finally, $D=\diag{(\gamma+\rho)}{\eta}{f}$ with $\eta\in\Al'$ and $f=f_{vw}^D$. Similarly 
 $D'=\diag{(\gamma+\rho)}{\eta'}{f}$ with $\eta'\in\Al'$. 
 
 First assume $2n_1>m_1$. We have $\bl(\gamma+\rho,\gamma+\rho)=\sum_{i=1}^p\frac{P_i}{(t+2+t^{-1})^{n_i}}=\frac{P}{(t+2+t^{-1})^{n_1}}$ with $P(-1)\neq0$. 
 We get $\Al'=\langle\gamma+\rho\rangle\oplus^\perp\langle\gamma+\rho\rangle^\perp$ and we conclude as in the first case.
 
 Now assume $m_1>2n_1$. It is easily checked that $\langle\gamma+\rho,\rho_1'\rangle\cap\langle\gamma+\rho,\rho_1'\rangle^\perp=0$. 
 Hence $\Al'=\langle\gamma+\rho,\rho_1'\rangle\oplus^\perp\langle\gamma+\rho,\rho_1'\rangle^\perp$, and we can assume 
 $\eta,\eta'\in\langle\gamma+\rho,\rho_1'\rangle$. By Theorem \ref{thstructureBmod}, there is a basis $(\mu,\mu')$ of $\langle\gamma+\rho,\rho_1'\rangle$ 
 such that $\bl(\mu,\mu)=0$, $\bl(\mu',\mu')=0$, and $\bl(\mu,\mu')=\frac{1}{(t+1)^{m_1}}$. By Lemma~\ref{lemmahol2}, we have 
 $D=\diag{\mu}{A\mu'}{f}$ and $D'=\diag{\mu}{B\mu'}{f}$. Since the linking $f$ is the same, we get $A=B\ mod\ (t+1)^{m_1}$ and $A\mu'=B\mu'$.
\end{proof}

Let us fix some notations. Let $n$ be an even positive integer and $N\geq3n/2$. For $i=1,\dots,N$, let $(\Al_i,\bl_i)$ be a copy of $(\Al,\bl)$ 
and fix an isomorphism $\xi_i:(\Al,\bl)\fl{\cong}(\Al_i,\bl_i)$. Let $(\Abb)$ be the orthogonal direct sum of the $(\Al_i,\bl_i)$. Define permutation 
automorphisms $\xi_{ij}$ of $(\Abb)$ by $\xi_j\circ\xi_i^{-1}$ on $\Al_i$, $\xi_i\circ\xi_j^{-1}$ on $\Al_j$ and identity on the other $\Al_\ell$'s.
Given a diagram $D$ with set of univalent vertices $V$, denote by $D((\gamma_v)_{v\in V},(f_{vw})_{v\neq w\in V})$ the diagram obtained 
from $D$ by replacing the label of the vertex $v$ by $\gamma_v$ and the linking between $v$ and $w$ by $f_{vw}$. If all the linkings are the same as in $D$, 
we drop this part of the notation. 
\begin{definition} \label{defdistributed}
 An $(\Abb)$--colored diagram $D$ is {\em distributed} if there are a decomposition of the set of univalent vertices of $D$ as $V=\sqcup_{i=1}^{|V|/2}\{v_i,w_i\}$ 
 and indices $\ell_i$ with $\ell_i\neq\ell_j$ if $i\neq j$ such that the labels of $v_i$ and $w_i$ are elements of $\Al_{\ell_i}$ for all $i$ and the linkings between vertices 
 in different pairs are trivial.
\end{definition}

\begin{proposition} \label{propdist}
 The space $\A_n(\Abb)$ is generated by distributed $(\Abb)$--colored diagrams. 
\end{proposition}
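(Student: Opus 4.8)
The plan is to show that an arbitrary $(\Abb)$--colored diagram $D$ of degree $n$ can be rewritten, modulo the relations defining $\A_n(\Abb)$, as a linear combination of distributed ones. First I would reduce to the case of an elementary diagram (edges between trivalent vertices labelled by powers of $t$, edges at univalent vertices labelled by $1$) using the relations LE, Hol, EV, as is standard; this is harmless since these generate $\tilde{\A}_n$. Then, since $D$ has degree $n$ and $n$ is even, it has an even number $2k$ of univalent vertices with $2k\le n$ (a degree $n$ diagram has at most $n$ univalent vertices, one at the end of each of at most $n$ legs, but more carefully: the number of univalent vertices is at most $n+2$ in general — however for the distribution argument all we need is that $|V|\le 3n/2\le N$, which I would verify from the Euler-characteristic count: a connected component with $u$ univalent and $t$ trivalent vertices has $u\le t+2$, and summing over components with the degree being the total number of trivalent vertices gives $|V|$ bounded in terms of $n$; in any case $|V|\le N$ holds once $N\ge 3n/2$).

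Next, I would expand each univalent vertex label $\gamma_v\in\Abb$ along the orthogonal decomposition $\Abb=\oplus_{i=1}^N(\Al_i,\bl_i)$ using the LV relation (linearity for vertices) together with the LD relation to split the fixed equivariant linkings $f_{vw}$ accordingly; this writes $D$ as a sum of diagrams in which every univalent vertex is labelled by an element of a single summand $\Al_i$. Now group the vertices by which summand their label lies in. If two vertices $v,w$ in the \emph{same} summand $\Al_i$ are not in the same pair of a putative pairing, or if more than two vertices land in one summand, I want to move them apart into distinct copies. The key tool here is Proposition \ref{prop2vertices} together with the permutation automorphisms $\xi_{ij}$ of $\Abb$ and the Aut relation: using $\xi_{ij}$ I can freely relabel all vertices carrying $\Al_i$-labels into $\Al_j$-labels, so the only real content is reducing the number of vertices attached to any single summand down to two, and arranging that those two have all cross-linkings with the rest trivial. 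This is exactly the hypothesis situation of Proposition \ref{prop2vertices}, which says that in an orthogonal direct sum two $\Al'$-labelled vertices (with vanishing linking to everything else) can be freely moved — so I would iterate: whenever a summand carries $\ge 4$ vertices or two summands are "entangled" by a nonzero $f_{vw}$ across an intended pairing, use an Aut automorphism swapping in a fresh unused copy (available because $|V|/2 \le 3n/4 < N$, leaving spare copies) and apply Proposition \ref{prop2vertices} to relocate a pair, decreasing a suitable complexity measure.

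The main obstacle I expect is the bookkeeping of the \emph{linkings}: the definition of distributed requires not only that paired vertices share a summand but that linkings between vertices in different pairs are trivial, and Proposition \ref{prop2vertices} only applies when the two vertices being moved have trivial linking with all others. So the induction has to simultaneously control (i) how many vertices sit in each summand and (ii) which pairs are "linked" to other pairs, and I must make sure that when I apply LD/LV to kill an unwanted cross-linking $f_{vw}$ I do not reintroduce linkings that were previously trivial — this is where the argument is delicate and where I would set up a careful potential function (e.g. lexicographically: number of summands with $>2$ vertices, then number of nontrivial inter-pair linkings) and check each move strictly decreases it. Once every vertex-pair occupies its own private copy $\Al_{\ell_i}$ with trivial inter-pair linkings, the diagram is distributed by definition, and the $\Q$-span of such diagrams therefore surjects onto $\A_n(\Abb)$, proving the proposition. $\square$
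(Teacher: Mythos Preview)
There is a genuine gap: the mechanism you propose for reducing the number of univalent vertices attached to a single summand $\Al_i$ does not work, and you have not identified the key idea that the paper uses.

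Proposition~\ref{prop2vertices} does \emph{not} relocate vertices between summands. Its hypothesis is that both diagrams $D,D'$ have the two distinguished vertices $v,w$ labelled in the \emph{same} summand $\Al'$, with all other labels in $\Al''$; its conclusion is that the specific choice of labels in $\Al'$ is irrelevant once the linking $f_{vw}$ is fixed. So it lets you change the labels \emph{within} a summand, not move them to a different one. Likewise, the permutation automorphisms $\xi_{ij}$ used via the Aut relation move \emph{all} $\Al_i$--labels to $\Al_j$ simultaneously; they cannot peel off just one pair. Hence neither tool, nor any combination of the two, decreases the maximum number of vertices sitting in a single summand, and your induction has no engine.

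The paper's actual trick is different and rather elegant: when some $\Al_i$ carries $2s$ vertices with $s>1$ and $\Al_j$ is empty, one applies the Aut relation with the ``rotation'' automorphism
\[
\chi_{ij}\colon\ \gamma\mapsto x\gamma+y\,\xi_{ij}(\gamma)\ \text{on }\Al_i,\qquad \gamma\mapsto y\,\xi_{ij}(\gamma)-x\gamma\ \text{on }\Al_j,
\]
with $x^2+y^2=1$. Expanding via LV gives $D=x^{2s}D+y^{2s}\xi_{ij}(D)+C$ where $C$ is a combination of diagrams with fewer than $2s$ vertices in each of $\Al_i,\Al_j$. Since $\xi_{ij}(D)=D$ by the permutation Aut relation and $x^{2s}+y^{2s}<1$ for $s>1$, one solves for $D$ as a multiple of $C$ and iterates. (One also first uses the $(-1)$--on--$\Al_i$ automorphism to kill any diagram with an odd number of vertices in some summand.) Note that Proposition~\ref{prop2vertices} plays no role here; it enters only later in constructing the inverse map $\varPhi_n$. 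Finally, your vertex count is off: a degree~$n$ diagram has up to $3n$ univalent vertices (each trivalent vertex has three half-edges), so $|V|/2\le 3n/2\le N$, not $|V|/2\le 3n/4$.
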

\begin{proof}
 Let $D$ be an $(\Abb)$--colored diagram of degree $n$. First note that $D$ has $n$ trivalent vertices and each univalent vertex is related to a trivalent vertex by an edge 
 since we avoid struts, hence $D$ has at most $3n$ univalent vertices. We shall prove that $D$ is a linear combination of distributed diagrams. Thanks to the LV relation, 
 we can assume that all labels of univalent vertices of $D$ are elements of the $\Al_i$'s. Thanks to the LD and LV relations, we can assume that all univalent vertices 
 have non-trivial labels and the linking $f_{vw}$ is trivial if $v$ and $w$ are labelled in different $\Al_i$'s. If $D$ has an odd number of univalent vertices 
 labelled in some $\Al_i$, application of the automorphism given by opposite identity on $\Al_i$ and identity on the other $\Al_j$'s shows it is trivial. 
 Assume $D$ has an even number of univalent vertices labelled in each $\Al_i$. Let $i$ be an index such that the number of univalent vertices of $D$ 
 labelled in $\Al_i$ is maximal; denote $2s$ this number. If $s>1$, there is an $\Al_j$ that contains no labels of univalent vertices of $D$. 
 Consider the following automorphism $\chi_{ij}$ of $(\Abb)$:
 $$\chi_{ij}(\gamma):=\left\lbrace\begin{array}{l l}
                                       x\gamma+y\xi_j\circ\xi_i^{-1}(\gamma) & \textrm{ if } \gamma\in\Al_i \\
                                       y\xi_i\circ\xi_j^{-1}(\gamma)-x\gamma & \textrm{ if } \gamma\in\Al_j \\
                                       \gamma & \textrm{ if } \gamma\in\Al_\ell \textrm{ with } \ell\neq i,j 
                                      \end{array}\right.,$$
 where $x$ and $y$ are positive rational numbers such that $x^2+y^2=1$.
 Apply the Aut relation with $\chi_{ij}$ to $D$ and use the LV relation to express $D=D((\gamma_v)_{v\in V})$ as the sum of $x^{2s}D$, 
 $y^{2s}D((\xi_{ij}(\gamma_v))_{v\in V})$ and a linear combination $C$ of diagrams with strictly less than $2s$ vertices 
 in $\Al_i$ and in $\Al_j$. Now $D$ and $D((\xi_{ij}(\gamma_v))_{v\in V})$ are equal thanks to the Aut relation with $\xi_{ij}$. 
 It follows that $D$ is a rational multiple of $C$. Conclude by iterating. 
\end{proof}
\begin{remark}
 In the case of $\Z$SK--pairs, Proposition \ref{propdist} is the point in this section that does not work. Indeed, this proposition uses automorphisms $\chi_{ij}$ 
 whose definition is based on rational numbers $x$ and $y$ that are not integers. Thus it is not clear whether such isomorphisms are induced by isomorphisms 
 of the underlying integral Blanchfield module. For instance, consider the integral Blanchfield module $(\Al_\Z,\bl_\Z)$ defined by 
 $$\Al_\Z=\frac{\Zt}{(\delta)}\gamma\oplus^\perp\frac{\Zt}{(\delta)}\eta, \textrm{ with } \delta(t)=t-1+t^{-1} \textrm{ and }
   \bl_\Z(\gamma,\gamma)=\bl_\Z(\eta,\eta).$$
 Then any isomorphism of $(\Al_\Z,\bl_\Z)$ preserves the given direct sum decomposition. 
 Indeed, an isomorphism of $(\Al_\Z,\bl_\Z)$ has the following form:
 $$\left\lbrace \begin{array}{l} \gamma\mapsto P\gamma+Q\eta \\ \eta\mapsto R\gamma+S\eta \end{array} \right.,$$
 with $P\bar{P}+Q\bar{Q}=1$, $R\bar{R}+S\bar{S}=1$ and $P\bar{R}+Q\bar{S}=0$, where the polynomials are considered in $\Zt/(\delta)$. 
 Since $\delta$ has degree~$2$, one can write $P(t)=at+b$ and $Q(t)=ct+d$ with $a,b,c,d\in\Z$. This gives:
 $$P\bar{P}+Q\bar{Q}=a^2+b^2+ab+c^2+d^2+cd=\frac{1}{2}\big((a+b)^2+a^2+b^2+(c+d)^2+c^2+d^2\big).$$
 If $PQ\neq0$, then $a\neq0$ or $b\neq0$, and $c\neq0$ or $d\neq0$. It follows that $P\bar{P}+Q\bar{Q}\geq2$, contradicting the first condition on $P$ and $Q$. 
 Hence $PQ=0$ and the conditions on the polynomials $P,Q,R,S$ give $P=S=0$ or $Q=R=0$. 
\end{remark}

Recall the map $\iota_n:\A_n(\Al,\bl)\to\A_n(\Abb)$ is defined on diagrams by $\iota_n(D((\gamma_v)_{v\in V}))=D((\xi_1(\gamma_v))_{v\in V})$.
\begin{proposition}
 If $D$ is an $(\Al,\bl)$--colored diagram of degree $n$ with an even number of univalent vertices:
 $$\iota_n(D((\gamma_v)_{v\in V},(f_{vw})_{v\neq w\in V}))=
   \frac{1}{s!}\sum_{\sigma\in\Upsilon}D((\xi_{\sigma(v)}(\gamma_v))_{v\in V},(\delta_{\sigma(v)\sigma(w)}f_{vw})_{v\neq w\in V}),$$
 where $s=|V|/2$ and $\Upsilon=\left\lbrace\sigma:V\to\{1,\dots,s\}\textrm{ such that }|\sigma^{-1}(i)|=2\textrm{ for all }i=1,\dots,s\right\rbrace$. 
\end{proposition}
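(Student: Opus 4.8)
The plan is to compute $\iota_n$ on a diagram $D$ directly from the definition $\iota_n(D((\gamma_v)_{v\in V}))=D((\xi_1(\gamma_v))_{v\in V})$ and then massage it, using the Aut relation with the permutation automorphisms $\xi_{ij}$ together with Proposition \ref{prop2vertices}, into the symmetrized form on the right-hand side. First I would observe that, by hypothesis, $|V|=2s$ is even, so the claimed formula is at least plausible in that both sides vanish when $|V|$ is odd. The starting point is that $D((\xi_1(\gamma_v))_{v\in V})$ has all its univalent vertices labelled in the single summand $\Al_1$ of $(\Abb)$; I want to rewrite this as an average of diagrams in which the labels are spread over $s$ distinct summands, two per summand, exactly as prescribed by the maps $\sigma\in\Upsilon$.

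The key step is a symmetry argument. For a fixed $\sigma\in\Upsilon$, I would show that the diagram $D((\xi_{\sigma(v)}(\gamma_v))_{v\in V},(\delta_{\sigma(v)\sigma(w)}f_{vw})_{v\neq w\in V})$ equals $D((\xi_1(\gamma_v))_{v\in V})=\iota_n(D)$ in $\A_n(\Abb)$. Indeed, one can move all the labels back into $\Al_1$ one pair at a time: given a pair $\{v,w\}=\sigma^{-1}(i)$ with $i\neq 1$, apply the Aut relation with the permutation automorphism swapping $\Al_i$ and $\Al_1$ — but this only works cleanly once the labels already in $\Al_1$ are out of the way, which is where Proposition \ref{prop2vertices} enters: it guarantees that two univalent vertices labelled in a summand $\Al'$ orthogonal to the rest, with zero linking to the other vertices, can have their labels changed (to anything with the same self-pairing data and linking) without affecting the class of the diagram. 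Applying this repeatedly, I would argue that for every $\sigma\in\Upsilon$ the summand on the right equals $\iota_n(D)$; since $|\Upsilon|=\binom{2s}{2}\binom{2s-2}{2}\cdots\binom{2}{2}=\frac{(2s)!}{2^s}$ — wait, I need the count to be $s!$, so more precisely $\Upsilon$ has $\frac{(2s)!}{2^s}$ elements if we track the pairing, but here $\sigma$ only records which fiber $\sigma^{-1}(i)$ each vertex lies in together with the \emph{value} $i$, and the normalization $\frac{1}{s!}$ in the statement must be checked against the actual cardinality; I would reconcile this by noting that $\sigma$ and $\sigma'$ give the same diagram whenever they induce the same pairing with the same index assignment, so the relevant count is indeed $s!$ times the number of distributed diagrams obtained, and the averaging is exact.

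The main obstacle I expect is bookkeeping the combinatorics of $\Upsilon$ correctly and verifying that the self-linkings $f_{vv}$ and the cross-linkings within a pair are transported consistently under the permutation automorphisms $\xi_{ij}$ — these automorphisms act by an isometry between $\Al_i$ and $\Al_j$, so $\bl$ is preserved, but one must check that the constraint $f^D_{vw}\bmod\Qt=\bl(\gamma_v,\gamma_w)$ on the right-hand side diagrams is satisfied with $\delta_{\sigma(v)\sigma(w)}f_{vw}$ as the new linking: when $\sigma(v)\neq\sigma(w)$ the vertices are in orthogonal summands so $\bl=0$ and the zero linking is consistent, while when $\sigma(v)=\sigma(w)$ the isometry $\xi_i\circ\xi_1^{-1}$ carries $\bl(\gamma_v,\gamma_w)$ to $\bl(\xi_{\sigma(v)}(\gamma_v),\xi_{\sigma(w)}(\gamma_w))$ and $f_{vw}$ is unchanged. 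Once these consistency checks are in place, the identity follows: each of the $s!$ terms in the sum equals $\iota_n(D)$ by the Aut/Proposition \ref{prop2vertices} argument, so their average is $\iota_n(D)$, which is exactly the claim. I would also remark that the same computation, read backwards, is what makes $\psib_n$ the inverse of the map built by "opening edges," which is the purpose of this proposition in the section.
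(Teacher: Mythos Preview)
Your proposal has a genuine gap: the central claim that each individual summand
\[
D\big((\xi_{\sigma(v)}(\gamma_v))_{v\in V},(\delta_{\sigma(v)\sigma(w)}f_{vw})_{v\neq w\in V}\big)
\]
equals $\iota_n(D)$ in $\A_n(\Abb)$ is false in general. A distributed diagram records only the linkings $f_{vw}$ for the $s$ pairs $\sigma^{-1}(i)$ and forgets the cross-pair linkings, so different perfect matchings of $V$ produce genuinely different classes; indeed, under $\psib_n$ they go to different $\delta$--colored diagrams. Your hand-waving around the count already hints at the problem: since $|\Upsilon|=(2s)!/2^s$, your argument would yield $\iota_n(D)=\frac{(2s)!}{2^s\,s!}\,\iota_n(D)$, which is absurd for $s\geq 2$. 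Grouping $\sigma$'s that differ by a permutation of $\{1,\dots,s\}$ only shows that the right-hand side is a \emph{sum over the $(2s-1)!!$ perfect matchings} of the corresponding distributed diagrams---not that these are all equal.

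Neither the permutation automorphisms $\xi_{ij}$ nor Proposition~\ref{prop2vertices} can bridge this gap. Applying $\xi_{1i}$ swaps $\Al_1$ and $\Al_i$ globally, so it moves the pair already in $\Al_1$ out while moving the $\Al_i$-pair in; you never land on a diagram with all labels in $\Al_1$. And Proposition~\ref{prop2vertices} only lets you change the labels of $v,w$ \emph{within} the fixed summand $\Al'$; it does not let you transport labels across orthogonal summands. The paper's proof takes a different route: it uses the ``rotation'' automorphisms $\chi_{12}$ (with $x^2+y^2=1$) from the proof of Proposition~\ref{propdist}, which genuinely mix $\Al_1$ and $\Al_2$, and then runs an induction on $s$. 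Applying Aut with $\chi_{12}$ to $D$ produces a weighted sum over splittings $V=V_1\sqcup V_2$; isolating the $|V_1|=2s$ and $|V_1|=0$ terms and applying the induction hypothesis to the remaining terms yields the formula after a binomial computation. That mixing step is the missing idea in your approach.
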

\begin{proof}
 We apply the method of the previous proposition with precise computations. We indeed prove a slightly more general result. 
 Consider an $(\Abb)$--colored diagram $D=D((\gamma_v)_{v\in V\sqcup W},(f_{vw})_{v\neq w\in V\sqcup W})$ with $|V|=2s$, $\gamma_v\in\Al_1$ if $v\in V$, 
 $\gamma_w\in\Al_i$ with $i>s$ if $w\in W$ and $f_{vw}=0$ if $v\in V$ and $w\in W$. We prove by induction on $s$ that in $\A_n(\Abb)$:
 $$D=\frac{1}{s!}\sum_{\sigma\in\Upsilon}D\big((\xi_{1\sigma(v)}(\gamma_v))_{v\in V}\cup(\gamma_w)_{w\in W},(\delta_{\sigma(v)\sigma(w)}f_{vw})_{v\neq w\in V}\cup(f_{vw})_{v\neq w\in W}\big),$$
 where the non-indicated linkings are trivial. We will use that our formulas remain valid when permuting the indices of the $\Al_i$'s, without mentioning it. 
 
 The result is trivial if $s=1$. Take $s>1$. 
 Applying the Aut relation with $\chi_{12}$ to $D$, we get:
 $$D=\sum_{k=0}^s\sum_{\substack{V=V_1\sqcup V_2 \\ |V_1|=2k}}x^{2k}y^{2(s-k)}D((\xi_{11}(\gamma_v))_{v\in V_1}\cup(\xi_{12}(\gamma_v))_{v\in V_2}\cup(\gamma_w)_{w\in W}),$$
 with, for the diagram in the right hand side, the linking $\delta_{\sigma(v)\sigma(w)}f_{vw}$ if $v\neq w$ are both in $V_1$ or both in $V_2$, $f_{vw}$ if $v\neq w$ are both in $W$ 
 and $0$ otherwise. Now apply twice the induction hypothesis with $V_1$ and $V_2$ instead of $V$ to obtain:
 $$(1-x^{2s}-y^{2s})D=\hspace{12cm}$$
 $$\sum_{k=1}^{s-1}\sum_{\substack{V=V_1\sqcup V_2 \\ |V_1|=2k}}\frac{x^{2k}y^{2(s-k)}}{k!(s-k)!}\sum_{\sigma\in\Upsilon_1}\sum_{\nu\in\Upsilon_2}
 D((\xi_{1\sigma(v)}(\gamma_v))_{v\in V_1}\cup(\xi_{1\nu(v)}(\gamma_v))_{v\in V_2}\cup(\gamma_w)_{w\in W}),$$
 with the required linkings, where $\Upsilon_1$ (resp. $\Upsilon_2$) is defined as $\Upsilon$ with $V_1$ and $\{1,\dots,k\}$ (resp. $V_2$ and $\{k+1,\dots,s\}$) 
 instead of $V$ and $\{1,\dots,s\}$. To conclude, note that each diagram in the right hand side occurs once for each value of $k$.
\end{proof}

\begin{proof}[Proof of Theorem \ref{th3n}]
 Define the inverse $\varPhi_n$ of the map $\psib_n:\A_{n}(\Abb)\to\A_n(\delta)$ in the following way. Given a $\delta$--colored diagram $D$ 
 of degree $n$, denote by $e_i$, $i=1,\dots,k$ its edges whose label is non-polynomial. ``Open'' each such edge $e_i$ as represented in Figure \ref{figopenedge}, 
 \begin{figure}[htb] 
\begin{center}
\begin{tikzpicture} [scale=0.2]
\begin{scope} [xshift=-21cm]
 \draw[dashed] (0,3.4) -- (1,1.7) (0,-3.4) -- (1,-1.7) (11,1.7) -- (12,3.4) (11,-1.7) -- (12,-3.4);
 \draw (1,1.7) -- (2,0) -- (10,0) -- (11,1.7) (1,-1.7) -- (2,0) (10,0) -- (11,-1.7);
 \draw[->] (5.9,0) -- (6,0);
 \draw (6,0) node[below] {$f$};
\end{scope}
\draw (0,0) node {$\rightsquigarrow$};
\begin{scope} [xshift=9cm]
 \draw [dashed] (0,3.4) -- (1,1.7) (0,-3.4) -- (1,-1.7) (13,1.7) -- (14,3.4) (13,-1.7) -- (14,-3.4);
 \draw (1,1.7) -- (2,0) -- (5,0) (9,0) -- (12,0) -- (13,1.7) (1,-1.7) -- (2,0) (12,0) -- (13,-1.7);
 \draw (5,0) node {$\scriptstyle{\bullet}$};
 \draw (9,0) node {$\scriptstyle{\bullet}$};
 \draw (5,0) node[below] {$v$};
 \draw (9,0) node[below] {$w$};
\end{scope}
\end{tikzpicture}
\end{center} \caption{Opening an edge} \label{figopenedge}
 \end{figure}
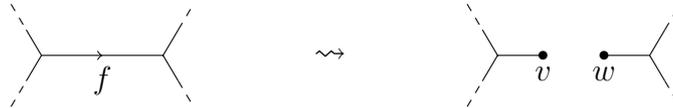
 label the created vertices $v$ and $w$ by some $\gamma_v$ and $\gamma_w$ in $\Al_i$ such that $\bl(\gamma_v,\gamma_w)=f\ (mod\ \Qt)$, 
 and fix the linking $f_{vw}=f$. Such $\gamma_v$ and $\gamma_w$ always exist: note that $\gamma_v$ can be chosen of order $\delta$, the annihilator of $\Al_i$, 
 use the non-degeneracy of the Blanchfield form and the fact that the denominator of $f$ has to divide $\delta$. Fix the other linkings to $0$, 
 so that we obtain a distributed diagram $\varPhi_n(D)$. It does not depend on the numbering of the edges of $D$ thanks to the Aut relation 
 in $\A_n(\Abb)$ with the permutation automorphisms $\xi_{ij}$. It is also independent of the choice of labels $\gamma_v,\gamma_w\in\Al_i$ 
 by Proposition \ref{prop2vertices}. Note that these independence arguments imply that any distributed diagram in $\A_{n}(\Abb)$ is a $\varPhi_n(D)$. 
 
 We have to check that the relations defining $\A_n(\delta)$ are respected. It is immediate for AS and IHX. OR follows from the rule $f_{wv}(t)=f_{vw}(t^{-1})$ 
 on linkings. Hol and Hol' are recovered {\em via} Hol and EV. LE follows from LE when the involved edges have polynomial labels, from LD when one 
 of the involved edges has a polynomial label and from LV when the involved edges have non-polynomial labels. In this latter case, note that one can open 
 this edge with the same label on one univalent vertex for the three diagrams. 
 Finally, we have a well-defined map $\varPhi_n:\A_n(\delta)\to\A_n(\Abb)$ satisfying by construction $\psib_n\circ\varPhi_n=\textrm{id}_{\A_n(\delta)}$. 
 Now $\varPhi_n$ is surjective by Proposition \ref{propdist}. Thus $\psib_n$ and $\varPhi_n$ are inverse isomorphisms.  
\end{proof}

We end with a few results that derive from the above argument and prove useful in the study of the structure of the diagram space $\A_n(\Abb)$, as it appears in \cite{AM}. 
The first one gives a simplified presentation of $\A_n(\Abb)$. 
\begin{proposition} \label{proppresalt}
 Keeping the notations fixed before Definition \ref{defdistributed}, we have:
 $$\A_n(\Abb)\cong\frac{\Q\langle\textrm{ degree }n\textrm{ distributed }(\Abb)\textrm{--colored diagrams }\rangle}
 {\Q\langle\textrm{AS, IHX, Hol, OR, LE, LV, EV, LD, Aut}_{res}\rangle}$$
 where the relation Aut$_{res}$ is the Aut relation restricted to the following automorphisms of $(\Abb)$: permutation automorphisms $\xi_{ij}$, 
 automorphisms fixing one $\Al_i$ setwise and the others pointwise.
 Moreover, if $(\Al,\bl)$ is cyclic, we can further restrict the Aut relation to: permutation automorphisms $\xi_{ij}$, 
 multiplication by $t$ or $-1$ on one $\Al_i$ and identity on the others.
\end{proposition}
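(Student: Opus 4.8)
The plan is to identify the right-hand side of the claimed isomorphism — call it $\mathcal{R}_n$ — with the domain of an explicit inverse of the tautological projection $j\colon\mathcal{R}_n\to\A_n(\Abb)$; this $j$ is well defined because $\textrm{Aut}_{res}$ is a restriction of the $\textrm{Aut}$ relation. First I would repeat verbatim the construction of the map $\varPhi_n$ from the proof of Theorem \ref{th3n} — open each non-polynomial edge of a $\delta$--colored diagram, label the two created univalent vertices by elements of a fresh copy $\Al_i$ (possible since $N\geq 3n/2$ bounds the number of opened edges by $3n/2$), record the edge label as their mutual linking, and set all other new linkings to $0$ — but regard the output as living in $\mathcal{R}_n$, getting a $\Q$--linear map $\varPhi_n^{\mathcal{R}}\colon\A_n(\delta)\to\mathcal{R}_n$ with $j\circ\varPhi_n^{\mathcal{R}}=\varPhi_n$. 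The point of this step is that every well-definedness check in Theorem \ref{th3n} survives: independence of the ordering of the opened edges uses only the $\textrm{Aut}$ relation for the permutation automorphisms $\xi_{ij}$; independence of the chosen labels $\gamma_v,\gamma_w\in\Al_i$ is Proposition \ref{prop2vertices} applied with $\Al'=\Al_i$ and $\Al''=\bigoplus_{j\ne i}\Al_j$; and compatibility with AS, IHX, OR, LE, Hol, Hol$'$ is obtained as there from AS, IHX, OR, LE, LV, EV, LD and Hol. Crucially, every automorphism invoked in the proof of Proposition \ref{prop2vertices} and of Lemmas \ref{lemmadecvert}, \ref{lemmahol}, \ref{lemmahol2} and Corollaries \ref{cordecvert}, \ref{corhol} — the opposite of the identity on a summand, multiplication by a power of $t$ on a summand, and the automorphisms $\rho\mapsto 2\rho$, $\rho'\mapsto\tfrac12\rho'$ and $\rho\mapsto\rho'$, $\rho'\mapsto t^m\rho$ — acts within the single copy $\Al_i$ and fixes each $\Al_j$, $j\ne i$, pointwise, hence lies in $\textrm{Aut}_{res}$; and all relations used belong to $\mathcal{R}_n$. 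Thus $\varPhi_n^{\mathcal{R}}$ is well defined.

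Next I would show $\varPhi_n^{\mathcal{R}}$ is an isomorphism. It is injective because $j\circ\varPhi_n^{\mathcal{R}}=\varPhi_n=\psib_n^{-1}$ is bijective by Theorem \ref{th3n}. For surjectivity, given a distributed diagram $D'$, normalize it inside $\mathcal{R}_n$: by LE and EV transfer the $\Qt$--labels of the edges adjacent to univalent vertices onto the vertex colorings so that those edges become labelled $1$; by LD replace every pair of univalent vertices whose fixed linking is polynomial by an inner edge carrying that polynomial, after which the surviving two univalent vertices have $\bl$--value $0$ with their partner, their fixed linking is $0$, so by Proposition \ref{prop2vertices} they may be taken labelled $0$, and LV kills that summand. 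What remains is a combination of distributed diagrams each literally obtained by opening the non-polynomial edges of a well-defined degree $n$ $\delta$--colored diagram (its ``closure''), Proposition \ref{prop2vertices} again absorbing the label choices; hence $D'\in\mathrm{im}(\varPhi_n^{\mathcal{R}})$. So $\varPhi_n^{\mathcal{R}}$ is bijective and $j=\varPhi_n\circ(\varPhi_n^{\mathcal{R}})^{-1}$ is an isomorphism, which is the first presentation.

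For the cyclic refinement, one only has to cut $\textrm{Aut}_{res}$ down further. By Theorem \ref{thstructureBmod} a cyclic $(\Al,\bl)$ is $\frac{\Qt}{(\pi^n)}\gamma$ with $\bl(\gamma,\gamma)=\frac{P}{\pi^n}$, $P$ prime to $\pi$, so every $\bl$--preserving automorphism of a copy $\Al_i$ is multiplication by a unit $u$ with $u\bar u\equiv 1\pmod{\pi^n}$. In a distributed diagram the only univalent vertices lying in $\Al_i$ form a single pair $\{v,w\}$, so the $\textrm{Aut}$ relation for such a $u$ just asserts that relabelling $v,w$ by $u\gamma_v,u\gamma_w$ changes nothing; via Lemma \ref{lemmahol} — itself a consequence of Hol, EV, LV and the $\textrm{Aut}$ relation for multiplication by powers of $t$ on $\Al_i$ — this amounts to $(\bar u u)\gamma_w=\gamma_w$, which holds since $\bar u u\equiv 1\pmod{\pi^n}$. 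Hence the relation for each $u$ follows from the relations already retained together with the $\textrm{Aut}$ relations for multiplication by $t$ and by $-1$ on single summands, and substituting this into the presentation of $\mathcal{R}_n$ gives the refined one.

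The main obstacle is the audit underlying the first paragraph: one must check step by step that the entire chain — from the construction of $\varPhi_n$ back through Proposition \ref{prop2vertices} and its supporting lemmas — never uses an automorphism mixing two distinct copies $\Al_i,\Al_j$, in particular that the automorphisms $\chi_{ij}$ that are essential to Proposition \ref{propdist} are invoked only on the $\A_n(\Abb)$ side and never in the construction of the inverse map. A milder nuisance is the surjectivity normalization, where one has to make sure that introducing inner polynomial edges and $0$--labelled vertices keeps every intermediate diagram distributed so that all manipulations are legal in $\mathcal{R}_n$.
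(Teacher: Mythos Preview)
Your proposal is correct and follows essentially the same route as the paper: the paper's proof simply says to redo the argument of Theorem~\ref{th3n} with the restricted presentation in place of $\A_n(\Abb)$, checking that the construction of $\varPhi_n$ (via Proposition~\ref{prop2vertices} and its supporting lemmas) only invokes automorphisms in $\textrm{Aut}_{res}$, which is exactly the audit you carry out. Your surjectivity normalization and the cyclic refinement are spelled out in more detail than the paper's two-sentence proof, but the underlying idea---that every distributed diagram is a $\varPhi_n(D)$ and that this holds already in $\mathcal{R}_n$---is the same.
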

\begin{proof}
 We can see that the space defined by the given presentation is isomorphic to $\A_n(\delta)$ as we did for $\A_n(\Abb)$ in the proof of Theorem \ref{th3n}. 
 At the level of generators, the proof of Theorem \ref{th3n} only uses distributed diagrams and at the level of relations, one has to check that the proof 
 of Proposition \ref{prop2vertices} only uses the Aut relation with the allowed automorphisms.
\end{proof}

In order to study $\A_n(\Abb)$, it is natural and helpful to consider the fitration induced by the number of univalent vertices. 
For $k=0,\dots,3n$, let $\A_n^{(k)}(\Al,\bl)$ be the subspace of $\A_n(\Al,\bl)$ generated by diagrams with at most $k$ univalent vertices and set:
$$\widehat{\A}_n^{(k)}(\Al,\bl)=\frac{\Q \langle(\Al,\bl)\mbox{--colored diagrams of degree $n$ with at most $k$ univalent vertices}\rangle}
                                     {\Q \langle\mbox{AS, IHX, LE, OR, Hol, LV, EV, LD, Aut}\rangle}.$$
Similarly, let $\A_n^{(k)}(\delta)$ be the subspace of $\A_n(\delta)$ generated by diagrams with at most $k/2$ edges with a non-polynomial label and set: 
$$\widehat{\A}_n^{(k)}(\delta)=\frac{\Q \left\langle\mbox{\begin{tabular}{c} $\delta$--colored diagrams
    of degree $n$ \\ with at most $k/2$ edges with a non-polynomial label \end{tabular}}\right\rangle}{\Q \langle\mbox{AS, IHX, LE, OR, Hol, Hol'}\rangle}.$$
Recall that all these diagram spaces are trivial when $n$ is odd. Moreover, the number of trivalent vertices and the number of univalent vertices in a uni-trivalent 
graph have the same parity. So we are only interested in cases where $n$ and $k$ are even. 
Define a map $\widehat{\psi}_n^{(k)}:\widehat{\A}_n^{(k)}(\Al,\bl)\to\widehat{\A}_n^{(k)}(\delta)$ via pairings of vertices as $\psi_n$ was defined 
in Section \ref{subsecLesKri}. 
\begin{proposition}
 Let $n$, $k$ and $K$ be integers such that $0\leq k\leq 3n$ and $k\leq 2K$. Then:
 \begin{itemize}
  \item the isomorphism $\psib_n:\A_{n}(\Abb)\to\A_n(\delta)$ induces an isomorphism $\A_n^{(k)}(\Abb)\cong\A_n^{(k)}(\delta)$,
  \item the map $\widehat{\psi}_n^{(k)}:\widehat{\A}_n^{(k)}\left((\Al,\bl)^{\oplus K}\right)\to\widehat{\A}_n^{(k)}(\delta)$ is an isomorphism,
  \item the space $\widehat{\A}_n^{(k)}\left((\Al,\bl)^{\oplus K}\right)$ admits the following presentation:
    $$\widehat{\A}_n^{(k)}\left((\Al,\bl)^{\oplus K}\right)\cong
    \frac{\Q\left\langle\begin{array}{c}
                         \textrm{degree }n\textrm{ distributed }\left((\Al,\bl)^{\oplus K}\right)\textrm{-colored diagrams} \\
                         \textrm{ with at most $k$ univalent vertices }
                        \end{array}\right\rangle} 
    {\Q\langle\textrm{AS, IHX, Hol, OR, LE, LV, EV, LD, Aut}_{res}\rangle}.$$
 \end{itemize}
\end{proposition}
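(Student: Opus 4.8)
The plan is to mirror, level by level, the three global results already proved in this section: Theorem \ref{th3n} (the isomorphism $\psib_n:\A_n(\Abb)\to\A_n(\delta)$ built via $\varPhi_n$), the simplified presentation of Proposition \ref{proppresalt}, and the generation-by-distributed-diagrams statement of Proposition \ref{propdist}. The key observation is that all three constructions are compatible with the filtration by the number of univalent vertices: opening an edge with a non-polynomial label creates exactly two univalent vertices, and the relations AS, IHX, Hol, Hol', OR, LE, LV, EV, LD, Aut either preserve the number of univalent vertices or change it in a controlled way, and in all cases stay within $\widehat\A_n^{(k)}$ provided $k$ is large enough to accommodate the diagrams appearing. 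So the task reduces to re-running each earlier argument while bookkeeping this count.

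First I would prove the second bullet, $\widehat\psi_n^{(k)}:\widehat\A_n^{(k)}((\Al,\bl)^{\oplus K})\to\widehat\A_n^{(k)}(\delta)$ is an isomorphism, which is the heart of the matter. For surjectivity/existence of an inverse, I would repeat the construction of $\varPhi_n$ from the proof of Theorem \ref{th3n}: given a $\delta$--colored diagram of degree $n$ with at most $k/2$ non-polynomial edges, open those edges, obtaining a distributed diagram with at most $k$ univalent vertices; since $K\geq k/2$ by the hypothesis $k\le 2K$, there are enough copies $\Al_i$ to distribute the pairs. The well-definedness checks (independence of the numbering of edges via permutation automorphisms $\xi_{ij}$; independence of the choice of labels via Proposition \ref{prop2vertices}; compatibility with AS, IHX, OR, Hol, Hol', LE) go through verbatim, and crucially every diagram produced in these verifications has at most $k$ univalent vertices, so the whole argument lives inside $\widehat\A_n^{(k)}((\Al,\bl)^{\oplus K})$. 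That $\widehat\psi_n^{(k)}\circ\varPhi_n^{(k)}=\mathrm{id}$ is immediate as before. For the reverse composition — i.e. that $\varPhi_n^{(k)}$ is surjective, equivalently that distributed diagrams with $\le k$ univalent vertices generate $\widehat\A_n^{(k)}((\Al,\bl)^{\oplus K})$ — I would re-run the proof of Proposition \ref{propdist}: it only ever replaces a diagram by diagrams with the same or fewer univalent vertices (the correction term $C$ has strictly fewer vertices in $\Al_i$ and $\Al_j$, hence no more univalent vertices overall), so the induction stays within $\widehat\A_n^{(k)}$; and the automorphisms $\chi_{ij}$ used require an index $j$ carrying no labels, which exists because a diagram with $\le k \le 2K$ univalent vertices labelled in the $K$ copies, with the maximal copy carrying $2s$ of them and $s>1$, still leaves a copy untouched (at most $k/2\le K$ copies are used). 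The third bullet, the simplified presentation with $\mathrm{Aut}_{res}$ and $\le k$ univalent vertices, then follows exactly as Proposition \ref{proppresalt} followed from Theorem \ref{th3n}: one checks that the proof of Proposition \ref{prop2vertices} uses only the allowed automorphisms, and that all diagrams occurring in it stay within the prescribed univalent-vertex bound.

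For the first bullet, that $\psib_n$ restricts to an isomorphism $\A_n^{(k)}(\Abb)\cong\A_n^{(k)}(\delta)$: here $\A_n^{(k)}$ denotes the \emph{image} subspace in $\A_n$, so I would argue that $\psib_n$ maps $\A_n^{(k)}(\Abb)$ into $\A_n^{(k)}(\delta)$ (a pairing of $\le k$ univalent vertices produces $\le k/2$ non-polynomial edges) and that $\varPhi_n$ maps $\A_n^{(k)}(\delta)$ into $\A_n^{(k)}(\Abb)$ (opening $\le k/2$ edges produces $\le k$ univalent vertices); since $\psib_n$ and $\varPhi_n$ are mutually inverse isomorphisms of the ambient spaces $\A_n(\Abb)\cong\A_n(\delta)$ by Theorem \ref{th3n}, their restrictions to these subspaces are mutually inverse isomorphisms as well. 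I should also note, using the argument of Proposition \ref{propdist} as in the previous paragraph, that $\A_n^{(k)}(\Abb)$ is spanned by distributed diagrams with at most $k$ univalent vertices, which is what makes the containment $\varPhi_n(\A_n^{(k)}(\delta))\subset\A_n^{(k)}(\Abb)$ surjective onto $\A_n^{(k)}(\Abb)$ rather than merely landing inside it; alternatively one can observe that the natural map $\widehat\A_n^{(k)}(\Abb)\to\A_n(\Abb)$ has image $\A_n^{(k)}(\Abb)$ and factors through the isomorphisms of the second and third bullets.

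The main obstacle I anticipate is not any single deep step but the bookkeeping: one must verify that \emph{every} diagram appearing in \emph{every} relation-check and in \emph{every} inductive reduction of the proofs of Theorem \ref{th3n}, Proposition \ref{proppresalt} and Proposition \ref{propdist} respects the bound ``at most $k$ univalent vertices'', and simultaneously that ``at most $K$ copies of $(\Al,\bl)$ are ever needed''. The two bounds are linked by $k\le 2K$, and the delicate point is the appearance, in the proof of Proposition \ref{prop2vertices} via Corollary \ref{cordecvert}, of diagrams where a single univalent vertex is split into several according to an orthogonal decomposition of $\Al'$ — this does increase the univalent-vertex count, so one must check that in the situation at hand (two distinguished vertices $v,w$, all others fixed) the increase is absorbed, i.e. that the relevant intermediate diagrams still fit under $k$. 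I expect this works because the decomposition is applied only to the labels of $v$ and $w$ and the resulting diagrams, after reassembling via the LV relation, return to having the original number of univalent vertices; but making this precise, and similarly tracking the EV/LD manipulations in the proof of Proposition \ref{propdist}, is where the real care is required.
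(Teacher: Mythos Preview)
Your approach is correct and matches the paper's own proof, which simply records that the first point holds because $\psib_n$ and its inverse $\varPhi_n$ exchange diagrams with at most $k$ univalent vertices and diagrams with at most $k/2$ non-polynomial edges, while the last two points follow by re-running the arguments of Theorem~\ref{th3n} and Proposition~\ref{proppresalt} verbatim. Your worry about Corollary~\ref{cordecvert} is unfounded: every diagram appearing there shares the same underlying uni-trivalent graph as $D$ (only the labels of the two distinguished vertices $v,w$ and their linking change), so the number of univalent vertices is constant throughout the proof of Proposition~\ref{prop2vertices}.
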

\begin{proof}
 The first point is due to the fact that $\psib_n$ identifies the $(\Al,\bl)$--colored diagrams with at most $k$ univalent vertices with the $\delta$--colored diagrams 
 with at most $k/2$ edges with a non-polynomial label. The last two points follow from the same argument as Theorem~\ref{th3n} and Proposition~\ref{proppresalt}.
\end{proof}
In \cite{AM}, a Blanchfield module $(\Al,\bl)$ is explicited for which $\widehat{\A}_2^{(4)}(\Abb)\ncong\A_2^{(4)}(\Abb)$ and this provides the example with a negative 
answer to Question \ref{qupsin} mentioned in the introduction.

\def\cprime{$'$}
\providecommand{\bysame}{\leavevmode ---\ }
\providecommand{\og}{``}
\providecommand{\fg}{''}
\providecommand{\smfandname}{\&}
\providecommand{\smfedsname}{\'eds.}
\providecommand{\smfedname}{\'ed.}
\providecommand{\smfmastersthesisname}{M\'emoire}
\providecommand{\smfphdthesisname}{Th\`ese}

\end{document}